\numberwithin{equation}{section}
\newtheorem{theorem}{Theorem}[section]
\newtheorem{lemma}[theorem]{Lemma}
\newtheorem{proposition}[theorem]{Proposition}
\newtheorem{corollary}[theorem]{Corollary}
\newtheorem*{theorem*}{Theorem}
\theoremstyle{remark}
\newtheorem{remark}[theorem]{Remark}
\newtheorem{definition}[theorem]{Definition}
\newtheorem{example}[theorem]{Example}
\numberwithin{equation}{section}
\newcommand{\lif}[1]{\widetilde{#1}}  
\renewcommand{\P}[1]{\Phi(#1)}
\newcommand{\Q}[1]{\Psi(#1)}
\newcommand{\Pbd}[1]{\Phi_{bdd}(#1)}
\newcommand{\Pdt}[1]{\Phi_{2}(#1)}
\newcommand{\cQ}[1]{\bar{\Psi}(#1)}
\newcommand{\cPdt}[1]{\bar{\Phi}_{2}(#1)}
\newcommand{\p}{\phi}
\newcommand{\q}{\psi}
\newcommand{\Pkt}[1]{\Pi_{#1}}
\newcommand{\cPkt}[1]{\bar{\Pi}_{#1}}
\renewcommand{\r}{\pi}
\newcommand{\sH}{\bar{\mathcal{H}}}
\newcommand{\cS}[1]{\bar{S}_{#1}}
\renewcommand{\S}[1]{\mathcal{S}_{#1}}
\newcommand{\+}{\oplus}
\newcommand{\x}{\omega}
\renewcommand{\L}[1]{{}^L#1}
\newcommand{\D}[1]{\widehat{#1}}
\newcommand{\Gal}[1]{\Gamma_{#1}}
\renewcommand{\Im}{\text{Im}\,}
\newcommand{\Ind}{\text{Ind}}
\newcommand{\Cent}{\text{Cent}}
\newcommand{\Two}{\mathbb{Z}_{2}}
\newcommand{\C}{\mathbb{C}}
\newcommand{\Norm}{\text{Norm}}
\newcommand{\Int}{\text{Int}}
\newcommand{\e}{\varepsilon}
\newcommand{\Rep}{\text{Rep}}
\newcommand{\Jac}{\text{Jac}}
\newcommand{\ul}{\underline{l}}
\newcommand{\ueta}{\underline{\eta}}
\newcommand{\Tran}{\text{Tran}}
\begin{document}
\title{On M{\oe}glin's parametrization of Arthur packets for p-adic quasisplit $Sp(N)$ and $SO(N)$}

\author{Bin Xu}

\address{Department of Mathematics and Statistics\\University of Calgary\\2500 University Dr. NW
Calgary\\Alberta\\Canada\\T2N 1N4}
\email{bin.xu2@ucalgary.ca}

\subjclass[2010]{22E50 (primary); 11F70 (secondary)}
\keywords{symplectic and orthogonal group, Arthur packet, endoscopy}


\maketitle

\begin{abstract}
We give a survey on M{\oe}glin's construction of representations in the Arthur packets for $p$-adic quasisplit symplectic and orthogonal groups. The emphasis is on comparing M{\oe}glin's parametrization of elements in the Arthur packets with that of Arthur. 

\end{abstract}

\section{Introduction}
\label{sec: introduction}

Let $F$ be a number field and $G$ be a quasisplit connected reductive group over $F$. The local components of the automorphic representations of $G$ belong to a very special class of irreducible smooth representations, which is usually referred to as the ``Arthur class". In the archimedean case, there is a geometric theory of irreducible smooth representations (see \cite{ABV:1992}), which suggests a possible way to characterize the Arthur class. In the $p$-adic case, the general characterization of the Arthur class remains a mystery. Nonetheless, when $G$ is a general linear group, the Arthur class is known in both cases due to M{\oe}glin-Waldspurger's classification of the discrete spectrum of automorphic representations of general linear groups \cite{MW:1989}. In this paper, we will only consider the $p$-adic case. So from now on, let us assume $F$ is a $p$-adic field, and we will also denote $G(F)$ by $G$, which should not cause any confusion in the context. To describe the Arthur class for general linear groups, we need to introduce some notations first. If $G = GL(n)$, let us take $B$ to be the group of upper-triangular matrices and $T$ to be the group of diagonal matrices, then the standard Levi subgroup $M$ can be identified with 
\[
GL(n_{1}) \times \cdots \times GL(n_{r})
\]
for any partition of $n = n_{1} + \cdots + n_{r}$ as follows
\[
\begin{pmatrix}
GL(n_{1})&& \\
&\ddots & \\
&& GL(n_{r})\\
\end{pmatrix}
\]
\begin{align*}
(g_{1}, \cdots, g_{r}) \longrightarrow \text{diag}\{g_{1}, \cdots, g_{r}\}.
\end{align*}
For $\r = \r_{1} \otimes \cdots \otimes \r_{r}$, where $\r_{i}$ is a finite-length smooth representation of $GL(n_{i})$ for $1 \leqslant i \leqslant r$, we denote the normalized parabolic induction $\Ind_{P}^{G} (\r)$ by 
\[
\r_{1} \times \cdots \times \r_{r}.
\] 
Moreover, we denote the direct sum of its irreducible subrepresentations by $<\r_{1} \times \cdots \times \r_{r}>$. An irreducible supercuspidal representation of a general linear group can always be written in a unique way as $\rho ||^{x} : = \rho \otimes |\det(\cdot)|^{x}$ for an irreducible unitary supercuspidal representation $\rho$ and a real number $x$. To fix notations, we will always denote by $\rho$ an irreducible unitary supercuspidal representation of $GL(d_{\rho})$. Now for a finite length arithmetic progression of real numbers of common length $1$ or $-1$
\[
x, \cdots, y
\]
and an irreducible unitary supercuspidal representation $\rho$ of $GL(d_{\rho})$, it is a general fact that 
\[
\rho||^{x} \times \cdots \times \rho||^{y}
\]
has a unique irreducible subrepresentation, denoted by $<\rho; x, \cdots, y>$ or $<x, \cdots, y>$. If $x \geqslant y$, it is called a Steinberg representation; if $x < y$, it is called a Speh representation. Such sequence of ordered numbers is called a {\bf segment}, and we denote it by $[x, y]$ or $\{x, \cdots, y\}$. 
In particular, when $x = -y > 0$, we can let $a = 2x + 1 \in \mathbb{Z}$ and write 
\[
St(\rho, a) := <\frac{a-1}{2}, \cdots, -\frac{a-1}{2}>,
\]
which is an irreducible smooth representation of $GL(ad_{\rho})$. It follows from Zelevinksy's classification theory that all discrete series of $GL(n)$ can be given by $St(\rho, a)$ for pairs $(\rho, a)$ satisfying $n = ad_{\rho}$, and this is a bijection. We define a {\bf generalized segment} to be a matrix 
\begin{align*}
  \begin{bmatrix}
   x_{11} & \cdots & x_{1n} \\
   \vdots &  & \vdots \\
   x_{m1} & \cdots & x_{mn}
  \end{bmatrix}
\end{align*}
such that each row is a decreasing (resp. increasing) segment and each column is an increasing (resp. decreasing) segment. The normalized induction
\[
\times_{i \in [1, m]} <\rho; x_{i1}, \cdots, x_{in} >
\]
has a unique irreducible subrepresentation, and we denote it by $<\rho; \{x_{ij}\}_{m \times n}>$. If there is no ambiguity with $\rho$, we will also write it as $<\{x_{ij}\}_{m \times n}>$ or
\begin{align*}
  \begin{pmatrix}
   x_{11} & \cdots & x_{1n} \\
   \vdots &  & \vdots \\
   x_{m1} & \cdots & x_{mn}
  \end{pmatrix}.
\end{align*}
Moreover, 
\[
<\rho; \{x_{ij}\}_{m \times n}> \cong <\rho; \{x_{ij}\}^{T}_{m \times n}>
\] 
where $\{x_{ij}\}^{T}_{m \times n}$ is the transpose of $\{x_{ij}\}_{m \times n}$. Let $a, b$ be positive integers, we define $Sp(St(\rho, a), b)$ to be the unique irreducible subrepresentation of 
\[
St(\rho, a)||^{-(b-1)/2} \times St(\rho, a)||^{-(b-3)/2} \times \cdots \times St(\rho, a)||^{(b-1)/2}.
\]
Then one can see $Sp(St(\rho, a), b)$ is given by the following generalized segment
\begin{align*}
  \begin{bmatrix}
   (a-b)/2 & \cdots & 1-(a+b)/2 \\
   \vdots &  & \vdots \\
   (a+b)/2-1 & \cdots & -(a-b)/2
  \end{bmatrix}.
\end{align*}
The Arthur class for $GL(n)$ consists of irreducible representations 
\begin{align}
\label{eq: Arthur class GL(N)}
\times_{i = 1}^{r} \Big(\underbrace{Sp(St(\rho_{i}, a_{i}), b_{i}) \times \cdots \times Sp(St(\rho_{i}, a_{i}), b_{i})}_{l_{i}}\Big)
\end{align}
for any set of triples $(\rho_{i}, a_{i}, b_{i})$ with multiplicities $l_{i}$ such that $\sum_{i = 1}^{r} l_{i} a_{i}b_{i}d_{\rho_{i}} = n$. In particular, it contains all the discrete series. The local Langlands correspondence for general linear groups gives a bijection between the set of equivalence classes of irreducible unitary supercuspidal representations of $GL(d)$ with the equivalence classes of $d$-dimensional irreducible unitary representations of the Weil group $W_{F}$. If we identify $\rho_{i}$ in \eqref{eq: Arthur class GL(N)} with the corresponding $d_{\rho_{i}}$-dimensional representations of $W_{F}$, then we get an equivalence class of $n$-dimensional representations of $W_{F} \times SL(2, \C) \times SL(2, \C)$ by taking 
\[
\bigoplus_{i = 1}^{r} l_{i} (\rho_{i} \otimes \nu_{a_{i}} \otimes \nu_{b_{i}}),
\]
where $\nu_{a_{i}}$ (resp. $\nu_{b_{i}}$) is the $(a_{i} - 1)$-th (resp. $(b_{i} - 1)$-th) symmetric power representation of $SL(2, \C)$. So the Arthur class for $GL(n)$ can be parameterized by the set of equivalence classes of $n$-dimensional representations of 
\[
\q: W_{F} \times SL(2, \C) \times SL(2, \C) \rightarrow GL(n, \C)
\]
such that $\q|_{W_{F}}$ is unitary and $\q|_{SL(2, \C) \times SL(2, \C)}$ is algebraic. We call such $\q$ an Arthur parameter for $GL(n)$.
The two copies of $SL(2, \C)$ in the definition of Arthur parameters have their own meanings. The first one introduced by Deligne, corresponds to some monodromy operator, and is usually integrated with the Weil group as $L_{F} : = W_{F} \times SL(2, \C)$, named Weil-Deligne group (or local Langlands group). The second $SL(2, \C)$ is introduced by Arthur, and it corresponds to the non-temperedness of the associated irreducible smooth representation of $GL(n)$ (cf. \eqref{eq: Arthur class GL(N)}).

For general $G$, we can define an Arthur parameter to be a $\D{G}$-conjugacy class of admissible homomorphisms from $L_{F} \times SL(2, \C)$ to $\L{G}$, which are bounded on their restrictions to $W_{F}$. And we denote the set of Arthur parameters by $\Q{G}$. It is conjectured that the Arthur class for $G$ should be parameterized by $\Q{G}$. 
To be more precise, for any $\q \in \Q{G}$, we are expecting to be able to associate it with a finite set $\Pkt{\q}$ of irreducible smooth representations of $G$, which is called an Arthur packet. The structure of $\Pkt{\q}$ can be very delicate in general, for example, we would expect these packets to have nontrivial intersections with each other. When $G$ is a classical group, M{\oe}glin has developed a theory to characterize the elements in $\Pkt{\q}$ (cf. \cite{Moeglin:2006}, \cite{Moeglin:2009}, etc.). The main goal of this paper is to present her results in the case of quasisplit symplectic and orthogonal groups. First of all, we need to give the definition of $\Pkt{\q}$ in these cases.

To simplify the discussion in the introduction, we assume $G = Sp(2n)$ if not specified. We should point out all the theorems and propositions that we state for symplectic groups below also have  their analogues for orthogonal groups. For $\q \in \Q{G}$, there is a natural $GL(N, \C)$-conjugacy class of embeddings $\L{G} \hookrightarrow GL(N, \C)$ for $N = 2n+1$. So we can view $\q$ as an equivalence class of representations of $L_{F} \times SL(2, \C)$, or an Arthur parameter for $GL(N)$. Moreover, such $\q$ is necessarily self-dual. So by the previous discussion we can associate it with an irreducible smooth representation $\r_{\q}$ of $GL(N)$ (cf. \eqref{eq: Arthur class GL(N)}), which is also self-dual. Arthur \cite{Arthur:2013} showed one can associate $\q$ with a ``multi-set" $\Pkt{\q}$ of irreducible smooth representations of $G$ such that the spectral transfer of some linear combination of characters in $\Pkt{\q}$ is the twisted character of $\r_{\q}$. If we define $\S{\q}$ to be the component group of the centralizer of the image of $\q$ in $\D{G}$ (which can be made independent of the choice of representatives of $\q$, and shown to be abelian), then Arthur further showed there is a ``canonical" map from $\Pkt{\q}$ to the characters $\D{\S{\q}}$ of $\S{\q}$. So for any element $\e \in \D{\S{\q}}$, we can write $\r(\q, \e)$ for the direct sum of elements in $\Pkt{\q}$ which are associated with $\e$, then $\r(\q, \e)$ is a finite-length smooth representation of $G$. 
The possibility for $\Pkt{\q}$ being a multi-set rather than a set suggests the irreducible constituents in $\r(\q, \e)$ may have multiplicities, and also $\r(\q, \e)$ may have common irreducible constituents for different $\e \in \D{\S{\q}}$. But these possibilities are all ruled out by the following deep theorem of M{\oe}glin.

\begin{theorem}[M{\oe}glin, \cite{Moeglin1:2011}]
\label{thm: main theorem 1}
For $G = Sp(2n)$ and $\q \in \Q{G}$, $\Pkt{\q}$ is multiplicity free.
\end{theorem}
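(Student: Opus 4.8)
The plan is to reduce the multiplicity-freeness of $\Pkt{\q}$ to a statement about Jacquet modules, exploiting Mœglin's explicit construction of the members of the packet. Recall that for a general Arthur parameter $\q = \boxplus_i \rho_i \otimes \nu_{a_i} \otimes \nu_{b_i}$ of $G = Sp(2n)$, Mœglin builds each constituent of $\Pkt{\q}$ as the (unique irreducible subrepresentation of a) parabolic induction from a representation attached to a "generalized segment" datum, roughly a choice of function $\underline{l}$ and a sign character $\underline{\eta}$ on the set of Jordan blocks, together with an iterated application of the partial-Jacquet/derivative operators $\Jac$ that move the exponents. First I would set up the combinatorial parametrizing set: each representation in $\Pkt{\q}$ arises from a datum $(\underline{l}, \underline{\eta})$ (after choosing a suitable ordering of the blocks), and Mœglin's construction produces, from this datum, a genuinely irreducible representation $\pi(\underline{l}, \underline{\eta})$ of $G$. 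The theorem is then equivalent to the assertion that $\Pkt{\q}$, as the image of this parametrizing set, consists of pairwise-distinct irreducibles counted once — i.e.\ no repetitions occur in the list, so the "multi-set" is a set.

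The key steps, in order, would be: (1) Reduce to the case where $\q$ is \emph{elliptic} (discrete), i.e.\ all $(\rho_i, a_i, b_i)$ distinct with multiplicity one, by analyzing how $\Pkt{\q}$ for general $\q$ is obtained from elliptic pieces via parabolic induction and Langlands quotients, and checking that multiplicities cannot be created by this induction step (a Langlands-classification/central-character bookkeeping argument). (2) For elliptic $\q$, use Mœglin's explicit formulas for the Jacquet modules (with respect to maximal parabolics with $GL$-Levi factors) of each $\pi(\underline{l}, \underline{\eta})$: these Jacquet modules contain a distinguished "leading" term of the shape $\rho\|^{x} \otimes (\text{something in a smaller packet})$, and the exponents $x$ together with the residual datum recover $(\underline{l}, \underline{\eta})$ uniquely. (3) Run an induction on $n$ (or on the total size of the generalized segments): if $\pi(\underline{l}, \underline{\eta}) \cong \pi(\underline{l}', \underline{\eta}')$, then their Jacquet modules coincide, so the leading terms coincide, forcing $x = x'$ and an equality of the "smaller" representations, which by the inductive hypothesis and the injectivity of Mœglin's parametrization in smaller rank forces $(\underline{l}, \underline{\eta}) = (\underline{l}', \underline{\eta}')$. (4) Conclude that $\r(\q,\e)$ is multiplicity free for each $\e$ and that these have no common constituents, by matching the character $\e \in \D{\S{\q}}$ attached to $\pi(\underline{l},\underline{\eta})$ with the block-sign data — the map datum $\mapsto \e$ being essentially injective on the relevant range.

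The main obstacle, I expect, is step (2)–(3): controlling the Jacquet modules of the representations $\pi(\underline{l},\underline{\eta})$ precisely enough to read off the full datum. Mœglin's construction is a delicate iterated procedure — one first forms Steinberg-type or Speh-type pieces $St(\rho,a)$, twists, induces, and then repeatedly applies $\Jac_{\rho\|^{x}}$ to descend exponents — and the Jacquet module of the result is \emph{not} simply a single irreducible; it is a sum, and cross-terms from different blocks $\rho_i \otimes \nu_{a_i}\otimes \nu_{b_i}$ and $\rho_j\otimes\nu_{a_j}\otimes\nu_{b_j}$ sharing the same $\rho$ (so that segments can "link") can in principle collide. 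The heart of the argument is therefore a careful analysis — due to Mœglin — showing that there is always a well-defined extremal exponent $x$ (the largest, or smallest, occurring) whose corresponding isotypic component in the Jacquet module is \emph{irreducible} and carries exactly the information needed to strip off one layer of the datum, so that the induction can proceed. Establishing the irreducibility and the uniqueness of this extremal layer — and checking that the twisted-endoscopic character identities force the required injectivity of datum $\mapsto(\q,\e)$ — is where essentially all the work lies; the remaining bookkeeping (central characters, infinitesimal characters, reduction to the elliptic case) is routine by comparison.
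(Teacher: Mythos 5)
Your proposal correctly identifies the broad outline of M{\oe}glin's strategy --- construct representations explicitly from combinatorial data $(\underline{l},\underline{\eta})$, then show the parametrization is injective by analyzing Jacquet modules --- and this is indeed what Proposition~\ref{prop: M parametrization}, Corollary~\ref{cor: M parametrization for G}, and Proposition~\ref{prop: multiplicity one} accomplish for the non-elementary cases. But there are two substantive gaps.

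First, your reduction step (1) misstates the structure of the argument. The reduction is not from general $\q$ to ``elliptic'' (i.e.\ multiplicity-free $Jord(\q)$) and thence by parabolic induction; even when $Jord(\q)$ is multiplicity free the Jordan blocks can overlap badly, and the packet is not simply a parabolic induction from discrete-parameter packets. The paper's actual reduction chain has three layers: from general $\q$ to a dominating parameter $\q_{\gg}$ with \emph{discrete diagonal restriction} via iterated truncated Jacquet functors $\Jac_{(\rho,A_{\gg},B_{\gg},\zeta)\mapsto(\rho,A,B,\zeta)}$, then from discrete diagonal restriction to \emph{elementary} parameters via the recursive formula of Proposition~\ref{prop: recursive formula for packet}, and finally from elementary to tempered. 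Each stage requires its own argument, and the first stage in particular can \emph{kill} representations (some $\r_M(\q,\ul,\ueta)$ are zero), so one must show the Jacquet descent sends irreducibles to irreducibles-or-zero without collapsing distinct data to the same target (Proposition~\ref{prop: multiplicity one}).

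Second, and more seriously, your steps (2)--(3) implicitly treat irreducibility of $\pi(\ul,\ueta)$ as given and try to distinguish the pieces by reading off a ``leading exponent'' from the Jacquet module. That works for the non-elementary cases, but fails at the base of the induction. For elementary parameters (where $A=B$ for every Jordan block), there is no useful leading segment to strip off, and the paper says explicitly that proving irreducibility ``does not admit a direct approach.'' The missing ingredient is the generalized Aubert involution $inv_{<X_0}$ (Section~\ref{subsec: Aubert dual for G}): one proves Theorem~\ref{thm: irreducibility of Aubert dual} that this involution takes the class of constructed representations to itself up to sign, and then transports irreducibility from the tempered case across the involution. Establishing that $inv_{<X_0}$ preserves irreducibility --- together with the compatibility of the involution with (twisted) endoscopic transfer, diagram~\eqref{diag: twisted compatible with Aubert dual} --- is the heart of the matter, and your Jacquet-module-based strategy cannot substitute for it. Related to this, you also have no mechanism to track the sign/normalization issues: matching M{\oe}glin's parametrization $\r_M$ to Arthur's $\r_W$ requires the character $\e^{M/W}_{\q}$, which is assembled from the M{\oe}glin--Waldspurger normalization $\theta_{MW}$ and the involution-sign computations (Theorem~\ref{thm: sign}, Proposition~\ref{prop: sign for G}); without that bookkeeping, step (4) cannot be carried out.
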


In fact, for $\q \in \Q{G}$ and $\e \in \D{\S{\q}}$, M{\oe}glin constructed a finite-length semisimple smooth representation $\r_{M}(\q, \e)$ of $G$. She showed $\Pkt{\q}$ consists of $\r_{M}(\q, \e)$ for all $\e \in \D{\S{\q}}$, and by studying their properties she is able to conclude Theroem~\ref{thm: main theorem 1}. A subtle point here is $\r(\q, \e)$ in Arthur's parametrization can be different from $\r_{M}(\q, \e)$. This point has been emphasized in various works of M{\oe}glin, and she also gave the relation between these two. Our second goal in this paper is to make that relation more transparent, and in the meantime we are able to clarify the fact that the representations $\r_{M}(\q, \e)$ constructed by M{\oe}glin are indeed elements in the Arthur packet $\Pkt{\q}$. For this purpose, we would like to rewrite Arthur's parametrization $\r(\q, \e)$ by $\r_{W}(\q, \e)$ to emphasize its dependence on certain kind of Whittaker normalization (see Section~\ref{sec: Arthur packet}). And the relation between $\r_{W}(\q, \e)$ and $\r_{M}(\q, \e)$ can be given in the following theorem.

\begin{theorem}
\label{thm: main theorem 2}
For $G = Sp(2n)$ and $\q \in \Q{G}$, there exists a character $\e_{\q}^{M/W} \in \D{\S{\q}}$, such that for any $\e \in \D{\S{\q}}$
\[
\r_{M}(\q, \e) = \r_{W}(\q, \e \e_{\q}^{M/W}).
\]
\end{theorem}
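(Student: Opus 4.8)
The plan is to prove the two statements packaged in the theorem simultaneously: that M{\oe}glin's representations $\r_{M}(\q,\e)$ do lie in the Arthur packet $\Pkt{\q}$, and that her labelling of $\Pkt{\q}$ by $\D{\S{\q}}$ differs from Arthur's Whittaker-normalized labelling by a \emph{fixed translation}. As a preliminary, I would record that once Theorem~\ref{thm: main theorem 1} is available and one knows, from M{\oe}glin's construction, that the family $\{\r_{M}(\q,\e) : \e \in \D{\S{\q}}\}$ exhausts $\Pkt{\q}$, both $\e \mapsto \r_{M}(\q,\e)$ and $\e \mapsto \r_{W}(\q,\e)$ are bijections $\D{\S{\q}} \to \Pkt{\q}$; hence there is a unique bijection $\tau_{\q}$ of $\D{\S{\q}}$ with $\r_{M}(\q,\e) = \r_{W}(\q,\tau_{\q}(\e))$, and the whole point is that $\tau_{\q}$ is multiplication by the character $\e_{\q}^{M/W} := \tau_{\q}(\mathbf{1})$. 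This is not formal, since a priori $\tau_{\q}$ is merely some permutation of the finite abelian group $\D{\S{\q}}$.

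The mechanism for pinning down $\tau_{\q}$ is the system of (twisted) endoscopic character identities that characterizes the Whittaker-normalized parametrization $\r_{W}(\q,\e)$. M{\oe}glin shows that her explicitly built representations satisfy these same identities, but with the indexing shifted: in the identity attached to a semisimple $s$ in the centralizer of (the image of) $\q$, the coefficient of the character of $\r_{M}(\q,\e)$ is $(\e\cdot\e_{\q}^{M/W})(s)$ rather than $\e(s)$, where $\e_{\q}^{M/W}$ is an explicit product over the Jordan blocks $(\rho,a,b)$ of $\q$ of local signs encoding the discrepancy between her normalization conventions and Arthur's. Linear independence of irreducible characters of $G$ then converts this into $\r_{M}(\q,\e) = \r_{W}(\q,\e\cdot\e_{\q}^{M/W})$, with $\e_{\q}^{M/W}$ visibly independent of $\e$; and since the right-hand side is by definition an element of $\Pkt{\q}$, the membership assertion comes along for free.

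Establishing that M{\oe}glin's representations satisfy the shifted identities is itself carried out by induction, following the architecture of her construction. One first reduces a general $\q$ to parameters with discrete diagonal restriction by means of her partial-Jacquet-module operations, then treats that case through the explicit Jacquet-module description of $\r_{M}(\q,\e)$, and finally bootstraps back to arbitrary $\q$. Accordingly I would: (i) verify the comparison in the base cases, in particular for tempered $\q$, where $\Pkt{\q}$ is an $L$-packet, $\r_{W}(\q,\mathbf{1})$ is the generic member for the fixed Whittaker datum, and M{\oe}glin's tempered comparison supplies $\e_{\q}^{M/W}$; and (ii) show that each inductive step --- adding a Jordan block, a Jacquet module along a maximal parabolic, analytic continuation in the cuspidal support --- transforms Arthur's labelling in precisely the way M{\oe}glin's formalism transforms hers, up to a controlled, $\e$-independent sign contributed by the affected Jordan block. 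Multiplicativity of these contributions over the Jordan blocks then assembles the global formula for $\e_{\q}^{M/W}$.

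The main obstacle is step (ii): Arthur's labelling $\e \mapsto \r_{W}(\q,\e)$ is defined through endoscopy and the Whittaker normalization, not through the local operations (parabolic induction, Jacquet modules, deformation of the cuspidal support) on which M{\oe}glin's construction rests, so one must prove by hand that it behaves well under them. This is where Arthur's local intertwining relation enters, to track how the base point --- and hence the whole character labelling --- moves under induction, together with careful bookkeeping of the Whittaker datum and of every sign introduced along the way. A further layer of care is needed for even orthogonal groups, where the packets and the group $\S{\q}$ carry an action of the outer automorphism, representations of $SO(N)$ must be distinguished from those of $O(N)$, and the comparison --- including the character $\e_{\q}^{M/W}$ --- must be made compatibly with that action.
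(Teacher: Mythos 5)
Your broad mechanism is right: M{\oe}glin's representations satisfy the (twisted) endoscopic character identities with a shifted index, linear independence of characters then yields both membership in $\Pkt{\q}$ and the translation formula $\r_{M}(\q,\e)=\r_{W}(\q,\e\e_{\q}^{M/W})$, and the shift is assembled multiplicatively from signs attached to Jordan blocks. But the inductive scheme you propose omits the two devices on which the paper's argument actually rests, and invokes a tool the paper does not use. First, the comparison is not carried out in one step: the paper interposes the M{\oe}glin--Waldspurger normalization $\theta_{MW}(\q)$ of the twisted intertwining operator on $\r_{\q}$ and the corresponding relabelling $\r_{MW}(\q,\bar{\e})=\r_{W}(\q,\bar{\e}\bar{\e}_{\q}^{MW/W})$, and splits $\e_{\q}^{M/W}=\e_{\q}^{M/MW}\,\e_{\q}^{MW/W}$. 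The first factor is computed in Theorem~\ref{thm: sign} together with Propositions~\ref{prop: MW/W discrete} and~\ref{prop: MW/W}. The entire point of this normalization is that $f_{N^{\theta},MW}(\r_{\q})$ then obeys the recursive formula of Theorem~\ref{thm: MW formula}, which transfers to the expansion of $\cPkt{MW}(\q)$ in Proposition~\ref{prop: recursive formula for packet}; that is what reduces parameters with discrete diagonal restriction to elementary ones and lets one match M{\oe}glin's own recursion for $\r_{M}$ term by term (Lemma~\ref{lemma: recursive formula for endoscopy}, Theorem~\ref{thm: M/MW discrete}). None of this appears in your sketch.

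Second, the elementary case is not reached from the tempered one by any of the operations you list (adding a Jordan block, Jacquet modules, cuspidal-support deformation). What the paper uses there is the generalized Aubert involution $inv_{<X_{0}}$ and its $\theta_{N}$-twisted analogue, and the key input is the compatibility of these involutions with spectral endoscopic transfer encoded in diagrams~\eqref{diag: twisted compatible with Aubert dual} and~\eqref{diag: compatible with Aubert dual}, proved in Appendix~\ref{sec: compatibility}. The character $\e_{\q}^{M/MW}$ is then extracted from the sign $\beta(\q,\rho,<X_{0})$ governing the involution on both sides (Proposition~\ref{prop: sign for G}, Lemma~\ref{lemma: sign}, Theorem~\ref{thm: M/MW elementary}); without the Aubert involution one cannot leave the tempered ground floor. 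Finally, your appeal to Arthur's local intertwining relation is misplaced: the paper never invokes it. The movement of Arthur's Whittaker-normalized labelling under M{\oe}glin's operations is controlled entirely by the explicit $\theta_{MW}/\theta_{W}$ computation on the $GL(N)$ side and by the compatibility of (twisted) endoscopic transfer with parabolic induction and Jacquet functors.
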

For the statement in this theorem to be true, we have implicitly put some restrictions on M{\oe}glin's parametrization $\r_{M}(\q, \e)$. 
In the most general setting, we will attach $\r_{M}(\q, \e)$ to characters $\e$ in $\D{\S{\q^{>}}}$ (see Section~\ref{sec: Arthur parameter}) which contains $\D{\S{\q}}$, and we will also define $\e_{\q}^{M/W}$ in $\D{\S{\q^{>}}}$. 
The starting point of this comparison theorem is in the case of discrete series. Let us define
\[
\Pdt{G} := \{\p \in \Q{G}: \p = \+_{i = 1}^{r} \rho_{i} \otimes \nu_{a_{i}} \otimes \nu_{1}, \text{ and } \rho_{i}^{\vee} = \rho_{i}\}.
\] 
Then the following theorem of Arthur showed $\Pdt{G}$ parametrizes the discrete series of $G$.

\begin{theorem}[Arthur]
\label{thm: discrete series}
For $G = Sp(2n)$, the set of irreducible discrete series representations of $G$ admits a disjoint decomposition
\[
\Pkt{2}(G) = \bigsqcup_{\p \in \Pdt{G}} \Pkt{\p}.
\]
Moreover, for any $\p \in \Pdt{G}$ and $\e \in \D{\S{\p}}$, $\r_{W}(\p, \e)$ is an irreducible representation.
\end{theorem}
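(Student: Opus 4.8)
The plan is to deduce this from Arthur's endoscopic classification \cite{Arthur:2013}, specialised to discrete parameters. The argument has two parts: a ``which packets occur'' part, which is elementary once the tempered classification is granted, and a ``structure of a single discrete packet'' part, which is the real content.

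First I would recall that Arthur attaches to every tempered (bounded) $\p \in \Q{G}$ a packet $\Pkt{\p}$ together with a map $\Pkt{\p} \to \D{\S{\p}}$, characterised by the twisted endoscopic character identities relating the stable distribution $\sum_{\r \in \Pkt{\p}} \langle \cdot, \r \rangle \, \Theta_{\r}$ on $G$ to the $\theta$-twisted character of $\r_{\p}$ on $GL(N)$ and to its transfers to the elliptic endoscopic data of $G$; moreover the tempered packets exhaust $\Pi_{\mathrm{temp}}(G)$, coincide with the $L$-packets of the corresponding $L$-parameters, and distinct tempered parameters give disjoint packets. Recall also that $\Pdt{G}$ consists of the square-integrable (elliptic) parameters: the $\rho_{i} \otimes \nu_{a_{i}}$ are pairwise inequivalent and each self-dual of orthogonal type, which for a homomorphism into $\D{G} = SO(N, \C)$ amounts to $\S{\p}$ being finite. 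If a tempered $\p$ is not discrete, its $L$-parameter factors through a proper Levi $\lM$ of $\L{G}$, and $\Pkt{\p}$ then consists of irreducible constituents of the (unitary, hence semisimple) parabolic inductions of members of $\Pkt{\p_{M}}$; by Casselman's square-integrability criterion none of these lies in $\Pi_{2}(G)$. Conversely every discrete series lies in some tempered $\Pkt{\p}$, and by the preceding sentence that $\p$ must be discrete, so $\p \in \Pdt{G}$. Together with $\Pkt{\p} \subseteq \Pi_{2}(G)$ for $\p \in \Pdt{G}$ (treated next) and the disjointness of distinct tempered packets, this yields the asserted disjoint decomposition of $\Pi_{2}(G)$.

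It remains to fix $\p \in \Pdt{G}$ and establish: (i) $\Pkt{\p} \subseteq \Pi_{2}(G)$; (ii) $\e \mapsto \r_{W}(\p, \e)$ is a bijection $\D{\S{\p}} \to \Pkt{\p}$; (iii) each $\r_{W}(\p, \e)$ is irreducible. Claim (i) holds because $\Pkt{\p}$ is the $L$-packet of the discrete parameter $\p$ and a tempered representation with discrete parameter is square-integrable (no constituent of a proper parabolic induction carries a discrete parameter). For (ii) and (iii) I would invoke the local intertwining relation together with a global argument: realise $\p$ as the localisation of a suitable global discrete parameter, apply the multiplicity formula for the discrete automorphic spectrum, and compare with the known $GL(N)$ situation through the stabilised twisted trace formula; combining linear independence of the characters $\Theta_{\r_{W}(\p, \e)}$ on the elliptic set, the count $\lvert \D{\S{\p}} \rvert = \lvert \S{\p} \rvert$, and the spectral transfer identity then forces $\Pkt{\p}$ to consist of exactly $\lvert \S{\p} \rvert$ distinct irreducible representations, each occurring with multiplicity one and the $\e$-labelling a bijection.

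The hard part is this last step: for a discrete parameter, irreducibility and bijectivity of the parametrisation are not formal consequences of the existence of the packets and the character identities --- they rest on Arthur's full comparison of trace formulas, in particular the stabilisation of the twisted trace formula of $GL(N) \rtimes \theta$ and the local intertwining relation, which is where the depth of \cite{Arthur:2013} lies. A realistic exposition would isolate those two inputs and show carefully how the elementary square-integrability and disjointness arguments above reduce the theorem to them.
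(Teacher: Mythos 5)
The paper does not prove this theorem; it states it as a result of Arthur and implicitly refers to \cite{Arthur:2013}, treating it as a black box. Your outline correctly identifies the two ingredients: the elementary reduction (disjointness of tempered packets, Casselman's criterion showing discrete series can only occur in packets attached to discrete parameters, and the converse that all of $\Pkt{\p}$ for discrete $\p$ is square-integrable because no constituent has Langlands data factoring through a proper Levi), and the genuinely deep part (that each $\r_{W}(\p, \e)$ is a single irreducible representation and $\e \mapsto \r_{W}(\p, \e)$ is a bijection), which rests on the stabilised twisted trace formula for $GL(N) \rtimes \theta$ and the local intertwining relation. This matches what Arthur actually does, so your sketch is a fair account of the provenance of the theorem; since the paper itself contributes nothing beyond the citation, there is no distinct ``paper's proof'' to compare against. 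One small imprecision worth flagging: you describe the condition for $\p \in \Pdt{G}$ as the $\rho_i \otimes \nu_{a_i}$ being pairwise inequivalent and of orthogonal type. The displayed definition of $\Pdt{G}$ in the introduction only states $b_i = 1$ and $\rho_i^\vee = \rho_i$, but for $\p$ to factor through $SO(2n+1,\C)$ with finite centralizer one indeed needs the orthogonal-type condition and multiplicity one on the summands, which the paper leaves implicit; your reading is the intended one.
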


For $\p \in \Pdt{G}$ and $\e \in \D{\S{\p}}$, we can simply define 
\[
\r_{M}(\p, \e) := \r_{W}(\p, \e).
\] 
To justify this definition, we need to recall M{\oe}glin's construction (joint with Tadi{\'c}) of discrete series of $G$. We start by introducing some more notations, and here we will also include the case of special orthogonal groups.

If $G = Sp(2n)$, let us define it with respect to 
\[
\begin{pmatrix} 
      0 & -J_{n} \\
      J_{n} &  0 
\end{pmatrix},
\]
where 
\[
J_{n} = 
\begin{pmatrix}
        &&1\\
        &\iddots&\\
        1&&&
\end{pmatrix}.
\]
Let us take $B$ to be subgroup of upper-triangular matrices in $G$ and $T$ to be subgroup of diagonal matrices in $G$,  then the standard Levi subgroup $M$ can be identified with 
\[
GL(n_{1}) \times \cdots \times GL(n_{r}) \times G_{-}
\] 
for any partition $n = n_{1} + \cdots + n_{r} + n_{-}$ and $G_{-} = Sp(2n_{-})$ as follows
\[
\begin{pmatrix}
GL(n_{1})&&&&&&0 \\
&\ddots &&&&& \\
&& GL(n_{r})&&&&\\
&&&G_{-} &&&\\
&&&&GL(n_{r})&& \\
&&&&&\ddots&\\
0&&&&&&GL(n_{1})
\end{pmatrix}
\]
\begin{align}
\label{eq: embedding}
(g_{1}, \cdots g_{r}, g) \longrightarrow \text{diag}\{g_{1}, \cdots, g_{r}, g, {}_tg^{-1}_{r}, \cdots, {}_tg^{-1}_{1}\},
\end{align}
where ${}_tg_{i} = J_{n_{i}}{}^tg_{i}J^{-1}_{n_{i}}$ for $1 \leqslant i \leqslant r$. Note $n_{-}$ can be $0$, in which case we simply write $Sp(0) = 1$. For $\r = \r_{1} \otimes \cdots \otimes \r_{r} \otimes \sigma$, where $\r_{i}$ is a finite-length smooth representation of $GL(n_{i})$ for $1 \leqslant i \leqslant r$ and $\sigma$ is a finite-length smooth representation of $G_{-}$, we denote the normalized parabolic induction $\Ind_{P}^{G}(\r)$ by 
\[
\r_{1} \times \cdots \times \r_{r} \rtimes \sigma.
\] 
Moreover, we denote the direct sum of its irreducible subrepresentations by $<\r_{1} \times \cdots \times \r_{r} \rtimes \sigma>$. These notations can be easily extended to special orthogonal groups. If $G = SO(N)$ split, we define it with respect to $J_{N}$. When $N$ is odd, the situation is exactly the same as the symplectic case. When $N = 2n$, there are two distinctions. First, the standard Levi subgroups given through the embedding \eqref{eq: embedding} do not exhaust all standard Levi subgroups of $SO(2n)$. To get all of them, we need to take the $\theta_{0}$-conjugate of $M$ given in \eqref{eq: embedding}, where 
\[
\theta_{0} = 
\begin{pmatrix}
1 &&&&& \\
& \ddots &&&& \\
&&& 1 && \\
&& 1 &&& \\
&&&& \ddots & \\
&&&&& 1
\end{pmatrix}.
\]
Note $M^{\theta_{0}} \neq M$ only when $n_{-} = 0$ and $n_{r} > 1$. In order to distinguish the $\theta_{0}$-conjugate standard Levi subgroups of $SO(2n)$, we will only identify those Levi subgroups $M$ in \eqref{eq: embedding} with $GL(n_{1}) \times \cdots \times GL(n_{r}) \times G_{-}$, and we denote the other one simply by $M^{\theta_{0}}$. Second, if the partition $n = n_{1} + \cdots + n_{r} + n_{-}$ satisfies $n_{r} =1$ and $n_{-} = 0$, then we can rewrite it as $n = n_{1} + \cdots + n_{r - 1} + n'_{-}$ with $n'_{-} = 1$, and the corresponding Levi subgroup is the same. This is because $GL(1) \cong SO(2)$. To fix notation, we will always write it as $SO(2)$. In this paper, we will also consider $G = SO(2n, \eta)$, which is the outer form of the split $SO(2n)$ with respect to a quadratic extension $E/F$ and $\theta_{0}$. Here $\eta$ is the associated quadratic character of $E/F$ by the local class field theory. Then the standard Levi subgroups of $SO(2n, \eta)$ will be the outer form of those $\theta_{0}$-stable standard Levi subgroups of $SO(2n)$. In particular, they can be identified with $GL(n_{1}) \times \cdots \times GL(n_{r}) \times SO(n_{-}, \eta)$ and $n_{-} \neq 0$. Note in the case of $SO(8)$, there is another outer form, but we will not consider it in this paper.


Now we are back to the case $G = Sp(2n)$. For $\p = \+_{i = 1}^{r} \rho_{i} \otimes \nu_{a_{i}} \otimes \nu_{1} \in \Pdt{G}$, we define 
\[
Jord(\p) := \{(\rho_{i}, a_{i}): 1 \leqslant i \leqslant q \},
\] 
and 
\[
Jord_{\rho}(\p) := \{ a_{i} : \rho = \rho_{i}\}.
\]
Then we can identify $\D{\S{\p}}$ with the subspace of $\Two$-valued functions $\e(\cdot)$ on $Jord(\p)$ such that 
\[
\prod_{(\rho, a) \in Jord(\p)} \e(\rho, a) = 1
\]
(see Section~\ref{sec: Arthur parameter}). The following theorem gives a parametrization of irreducible supercuspidal representations of $G$.

\begin{theorem}[\cite{Moeglin:2011}, Theorem 1.5.1]
\label{thm: supercuspidal parametrization}
For $G = Sp(2n)$, the irreducible supercuspidal representations of $G$ are parametrized by $\p \in \Pdt{G}$, and $\e \in \D{\S{\p}}$ satisfying the following properties:

\begin{enumerate}

\item if $(\rho, a) \in Jord(\p)$, then $(\rho, a-2) \in Jord(\p)$ as long as $a - 2 > 0$;

\item if $(\rho, a), (\rho, a-2) \in Jord(\p)$, then $\e(\rho, a) \e(\rho, a - 2) = -1$;

\item if $(\rho, 2) \in Jord(\p)$, then $\e(\rho, 2) = -1$.

\end{enumerate}

\end{theorem}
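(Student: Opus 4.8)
The plan is to reduce the statement about supercuspidal representations to Mœglin–Tadić's explicit construction of discrete series, and then to read off the three conditions from the inductive structure of that construction together with Jacquet-module computations. First I would recall that by Theorem~\ref{thm: discrete series} the irreducible discrete series of $G = Sp(2n)$ are parametrized by pairs $(\p, \e)$ with $\p \in \Pdt{G}$ and $\e \in \D{\S{\p}}$, so it suffices to determine exactly which of these $\r_{W}(\p,\e)$ are supercuspidal. The key tool is Mœglin's inductive description: for a discrete series $\r = \r_{W}(\p,\e)$, if $(\rho,a), (\rho,a-2) \in Jord(\p)$ with $a \geqslant 3$, or more generally if there is ``room to descend'', then one can produce a nonzero map from $\r$ into a parabolic induction $\rho\|^{x} \rtimes \sigma$ for a suitable half-integer or integer $x > 0$ and a discrete series $\sigma$ of a smaller group; this exhibits a nonzero Jacquet module along a proper parabolic, so $\r$ cannot be supercuspidal. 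Conversely, supercuspidality forces every such descent to be blocked.

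Concretely, I would organize the argument around the partial cuspidal support. Given $(\p,\e)$, Mœglin's construction attaches to it a ``level of non-temperedness'' data and, crucially, a notion of when the parameter is \emph{without gaps}: condition~(1), that $(\rho,a) \in Jord(\p)$ implies $(\rho,a-2) \in Jord(\p)$ whenever $a-2 > 0$, says precisely that for each $\rho$ the set $Jord_{\rho}(\p)$ is an interval of the form $\{a_{0}, a_{0}+2, \dots\}$ with minimal element $1$ or $2$. If there were a gap — some $a$ with $(\rho,a) \in Jord(\p)$ but $(\rho,a-2) \notin Jord(\p)$ and $a - 2 > 0$ — then the Mœglin–Tadić algorithm extracts from $\r$ a segment representation (a nontrivial $\rho\|^{x}\times\cdots$ factor), again contradicting supercuspidality; so (1) is necessary. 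For the sign conditions, I would use that $\e$ controls which irreducible constituent of the relevant induced representation one lands in, and that the Jacquet module of a supercuspidal must vanish. If $(\rho,a),(\rho,a-2) \in Jord(\p)$ and $\e(\rho,a)\e(\rho,a-2) = +1$, Mœglin's explicit formulas show $\r_{W}(\p,\e)$ embeds in $\langle (a-1)/2, \dots, (a-3)/2 + 1\rangle \rtimes (\text{something})$ — i.e., a $GL$-segment of length $\geqslant 1$ appears — so $\r$ is not supercuspidal; hence (2). Similarly, when $(\rho,2) \in Jord(\p)$, the ``bottom'' piece of the tower forces a $\rho\|^{1/2}$-type Jacquet module unless $\e(\rho,2) = -1$, giving (3). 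The converse direction — that whenever (1), (2), (3) all hold the representation $\r_{W}(\p,\e)$ is genuinely supercuspidal — is handled by running the Mœglin–Tadić construction and checking that \emph{every} Jacquet module vanishes: the conditions are engineered so that at each stage there is no admissible $x > 0$ and no admissible partner representation, so the construction terminates immediately at the cuspidal level.

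The main obstacle I anticipate is the bookkeeping in the ``converse'' direction: one must verify that conditions (1)–(3), which are manifestly \emph{necessary} from single-step Jacquet module nonvanishing, are also jointly \emph{sufficient}, and this requires controlling Jacquet modules along \emph{all} maximal parabolics simultaneously, including the subtle case $\rho\|^{0}$ (self-dual twist) and the interplay between different $\rho$'s in $Jord(\p)$. This is exactly where Mœglin's fine analysis of the Jacquet modules of the representations $\r_{W}(\p,\e)$ — the ``Jord'' combinatorics together with the $\e$-signs determining embeddings versus quotients — does the real work; I would quote the relevant Jacquet-module formulas from \cite{Moeglin:2011} rather than reprove them. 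A secondary point to get right is the normalization: one must check that the parametrization obtained is independent of the Whittaker datum choices implicit in $\r_{W}$, or rather track how (3) in particular interacts with that choice, but since we are only asserting \emph{which} $(\p,\e)$ occur and not pinning down the bijection beyond $\Pdt{G} \times \D{\S{\p}}$, this is a routine consistency check.
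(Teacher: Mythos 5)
The paper does not actually prove this statement: it is quoted directly from Mœglin (\cite{Moeglin:2011}, Theorem~1.5.1) and the paper records it as input. So there is no internal proof to compare against; what one can do is check whether your outline is consistent with the machinery the paper does develop, and it is. Your strategy — reduce to the discrete-series parametrization of Theorem~\ref{thm: discrete series}, then decide supercuspidality by Jacquet-module (non)vanishing — is exactly the role played by Proposition~\ref{prop: parabolic reduction}, which the paper states immediately after this theorem and explicitly presents as the tool for analyzing cuspidal supports. Cases~(1), (2), (3) of that proposition are precisely the three embeddings you invoke as obstructions to supercuspidality: a gap in $Jord_{\rho}(\p)$, two adjacent blocks with equal sign, and $\e(\rho,a_{\min})=+1$ with $a_{\min}$ even. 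It would strengthen your write-up to cite that proposition directly rather than gesture at ``Mœglin's explicit formulas.''

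Two remarks on accuracy. First, your displayed embedding in the case $\e(\rho,a)\e(\rho,a-2)=+1$ is not quite right: $\langle (a-1)/2,\dots,(a-3)/2+1\rangle$ collapses to the one-term segment $\langle (a-1)/2\rangle$, whereas Proposition~\ref{prop: parabolic reduction}(2) produces the longer segment $\langle (a-1)/2,\dots,-(a_{-}-1)/2\rangle$ with $a_{-}=a-2$; the conclusion (a nonzero Jacquet module, hence non-supercuspidality) is unaffected, but the formula as written is wrong. Second, and more substantively, the direction you flag as the ``main obstacle'' — that conditions (1)–(3) are jointly sufficient — is genuinely the hard part and your sketch does not engage with it beyond deferring to Mœglin: one must show that when all three conditions hold, $\Jac_{x}\r_{W}(\p,\e)=0$ for every unitary supercuspidal $\rho'$ of every $GL(d_{\rho'})$ and every real $x$, not merely that the particular descents in Proposition~\ref{prop: parabolic reduction} are blocked. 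That global vanishing is exactly what Mœglin proves via an inductive analysis of the discrete-series construction, and citing it is legitimate, but as written your argument establishes only necessity and asserts sufficiency. Since the paper itself treats this as a quoted result, that level of detail is arguably adequate, but you should state clearly that you are citing the sufficiency direction rather than proving it.
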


For non-supercuspidal irreducible representations of $G$, we can characterize their cuspidal supports by the following proposition.

\begin{proposition}[\cite{Xu:preprint3}, Proposition 9.3]
\label{prop: parabolic reduction}
For $G = Sp(2n)$, suppose $\p \in \Pdt{G}$, and $\e \in \D{\S{\p}}$. For any $(\rho, a) \in Jord(\p)$, we denote by $a_{-}$ the biggest positive integer smaller than $a$ in $Jord_{\rho}(\p)$. And we also write $a_{min}$ for the minimum of $Jord_{\rho}(\p)$. If $a = a_{min}$, we let $a_{-} = 0$ if $a$ is even, and $-1$ otherwise. In this case, we always assume $\e(\rho, a)\e(\rho, a_{-}) = -1$.

\begin{enumerate}

\item If $\e(\rho, a)\e(\rho, a_{-}) = -1$ and $a_{-} < a - 2$, then 
\begin{align}
\label{eq: parabolic reduction 1}
\r_{W}(\p, \e) \hookrightarrow <(a-1)/2, \cdots, (a_{-} + 3)/2> \rtimes \r_{W}(\p', \e')
\end{align}
as the unique irreducible subrepresentation, where 
\[
Jord(\p') = Jord(\p) \cup \{(\rho, a_{-} + 2)\} \backslash \{(\rho, a)\},
\]
and 
\[
\e'(\cdot)= \e(\cdot) \text{ over } Jord(\p) \backslash \{(\rho, a)\}, \quad \quad \e'(\rho, a_{-}+2) = \e(\rho, a).
\]

\item If $\e(\rho, a)\e(\rho, a_{-}) = 1$, then 
\begin{align}
\label{eq: parabolic reduction 2}
\r_{W}(\p, \e) \hookrightarrow <(a-1)/2, \cdots, -(a_{-} -1)/2> \rtimes \r_{W}(\p', \e'),
\end{align}
where 
\[
Jord(\p') = Jord(\p) \backslash \{(\rho, a), (\rho, a_{-})\},
\]
and $\e'(\cdot)$ is the restriction of $\e(\cdot)$. In particular, suppose $\e_{1} \in \D{\S{\p}}$ satisfying $\e_{1}(\cdot) = \e(\cdot)$ over $Jord(\p')$ and
\[
\e_{1}(\rho, a) = -\e(\rho, a), \quad \quad \e_{1}(\rho, a_{-}) = -\e(\rho, a_{-}).
\]
Then the induced representation in \eqref{eq: parabolic reduction 2} has two irreducible subrepresentations, namely
\[
\r_{W}(\p, \e) \+ \r_{W}(\p, \e_{1}).
\]

\item If $\e(\rho, a_{min}) = 1$ and $a_{min}$ is even, then 
\begin{align}
\label{eq: parabolic reduction 3}
\r_{W}(\p, \e) \hookrightarrow <(a_{min}-1)/2, \cdots, a_{0}> \rtimes \r_{W}(\p', \e')
\end{align}
as the unique irreducible subrepresentation, where 
\[
Jord(\p') = Jord(\p) \backslash \{(\rho, a_{min})\},
\] 
and $\e'(\cdot)$ is the restriction of $\e(\cdot)$.

\end{enumerate}

\end{proposition}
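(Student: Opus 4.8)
The plan is to prove Proposition~\ref{prop: parabolic reduction} by tracing through M{\oe}glin--Tadi{\'c}'s explicit construction of discrete series via successive ``parabolic reduction'' (or Jacquet module) steps, reading each of the three cases as one such step. The key input is that for $\p \in \Pdt{G}$ the representation $\r_{W}(\p,\e)$ is irreducible (Theorem~\ref{thm: discrete series}), so it suffices to identify the unique irreducible subrepresentation of the relevant induced representation on the right-hand side and match its data. Concretely, I would first set up the combinatorial bookkeeping: given $(\rho,a) \in Jord(\p)$ with immediate predecessor $a_{-}$ in $Jord_{\rho}(\p)$, the segment $\Delta$ appearing in each case records exactly the ``difference'' between the Jordan block $(\rho,a)$ and either the shrunken block $(\rho,a_{-}+2)$ (case 1), the pair removed $(\rho,a),(\rho,a_{-})$ (case 2), or the removed minimal even block $(\rho,a_{min})$ (case 3). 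The heart of the argument is a Jacquet-module computation: one shows that $\r_{W}(\p,\e)$ appears in the appropriate induced representation by computing $\Jac$ of $\r_{W}(\p,\e)$ along the segment $\Delta$ (using Tadi{\'c}'s formula for Jacquet modules of parabolically induced representations of classical groups) and checking it is nonzero with the predicted irreducible quotient $\r_{W}(\p',\e')$, which lies in a discrete series packet for a smaller group $G'$.

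For case~(1), the condition $\e(\rho,a)\e(\rho,a_{-}) = -1$ with $a_{-} < a-2$ is precisely the situation where M{\oe}glin's ``reduction to smaller Jordan block'' (decreasing $a$ by $2$ repeatedly) applies, and the segment $\langle (a-1)/2, \cdots, (a_{-}+3)/2 \rangle$ is a Steinberg-type segment of $GL$. I would show $\r_{W}(\p,\e)$ embeds in $\langle (a-1)/2, \cdots, (a_{-}+3)/2\rangle \rtimes \r_{W}(\p',\e')$ by: (i) verifying $\p' \in \Pdt{G'}$ and $\e' \in \D{\S{\p'}}$ are well-defined (the product condition $\prod \e' = 1$ is inherited since we only move a block); (ii) checking via the Langlands classification / Zelevinsky data that the induced representation has a unique irreducible subrepresentation; and (iii) matching endoscopic character identities — the twisted character of $\r_{\q}$-type transfer, or equivalently Arthur's multiplicity formula compatibility under parabolic induction — to force the subrepresentation to be $\r_{W}(\p,\e)$ with the stated $\e$. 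Case~(3) is analogous but simpler: it is the ``removing the minimal even block when $\e$ there is $+1$'' step, and the segment runs down to $a_{0}$ (the appropriate endpoint, $=1/2$ or $1$ depending on parity conventions).

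Case~(2) is where I expect the main obstacle. Here $\e(\rho,a)\e(\rho,a_{-}) = 1$ means the induced representation $\langle (a-1)/2,\cdots,-(a_{-}-1)/2\rangle \rtimes \r_{W}(\p',\e')$ is \emph{reducible} — it has the two non-isomorphic irreducible subrepresentations $\r_{W}(\p,\e) \oplus \r_{W}(\p,\e_{1})$, where $\e$ and $\e_{1}$ differ by sign on both $(\rho,a)$ and $(\rho,a_{-})$ (so $\e_{1}$ still satisfies the product-one condition). Establishing this requires more than a Langlands-classification uniqueness statement: one needs the precise reducibility point of the relevant rank-one (or rather, segment-induced) intertwining operator, together with the fact that the two subrepresentations are distinguished by their $\S{\p}$-characters. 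I would obtain this from Arthur's endoscopic character relations: the $R$-group / normalized intertwining operator analysis at this induced representation gives a $2$-element packet contribution, and the sign change of $\e$ on exactly the two blocks $(\rho,a),(\rho,a_{-})$ is dictated by the local intertwining relation (the ``$\Norm$'' normalization in Section~\ref{sec: Arthur packet}). The delicate point is bookkeeping the Whittaker normalization so that the labels $\e$ versus $\e_{1}$ come out in the stated form rather than swapped; this is handled by induction on $\dim G$ using the base cases — supercuspidal representations as in Theorem~\ref{thm: supercuspidal parametrization} — where the signs are pinned down by conditions (2) and (3) there.
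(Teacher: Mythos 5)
The paper does not prove this proposition: it is quoted verbatim from \cite{Xu:preprint3}, Proposition~9.3, so there is no internal proof to compare against. I can, however, evaluate your approach against the machinery the paper actually develops (and against what such a proof must contain).

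Your global strategy is sound and is indeed what the literature does: invert the M{\oe}glin--Tadi{\'c} construction step by step, use irreducibility of $\r_{W}(\p,\e)$ from Theorem~\ref{thm: discrete series}, and pin down the $\e$-labels by endoscopic character relations with induction from the supercuspidal base case (Theorem~\ref{thm: supercuspidal parametrization}). You also correctly read $a_{0}$ in case~(3) as $1/2$ and identified case~(2) as the delicate two-component case. But there is a genuine gap, and one framing that I think is wrong.

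The gap: you propose to ``compute $\Jac$ of $\r_{W}(\p,\e)$ along the segment $\Delta$ (using Tadi{\'c}'s formula for Jacquet modules of parabolically induced representations).'' That formula computes Jacquet modules of representations presented as inductions, but $\r_{W}(\p,\e)$ is \emph{not} given to you as an induction --- it is characterized only through Arthur's endoscopic character identities \eqref{eq: nontempered character relation GL(N)} and \eqref{eq: nontempered character relation}. The way its Jacquet modules are actually accessed is through the compatibility of (twisted) endoscopic transfer with Jacquet modules and parabolic induction: one computes $\Jac$ of the twisted character $f_{N^{\theta}}(\r_{\p})$ and of the stable packet sums $\sum_{\e}\e(s)\r_{W}(\p,\e)$ (transferring from $GL(N)$ or from the elliptic endoscopic groups $H$), and then uses linear independence of characters over $\S{\p}$ to isolate the individual $\r_{W}(\p,\e)$. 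This is exactly the mechanism the paper uses repeatedly (see for instance the proof of Proposition~\ref{prop: MW/W} and the Appendix~\ref{sec: compatibility}), and nothing in your proposal replaces it. Without it, the claim that the predicted $\Jac$ is nonzero and equals $\r_{W}(\p',\e')$ is circular: you would be assuming the very embedding you are trying to prove.

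The framing issue: in case~(2) you invoke ``$R$-group / normalized intertwining operator analysis.'' The segment $\langle (a-1)/2,\cdots,-(a_{-}-1)/2\rangle$ is essentially square-integrable with a \emph{nonzero} real central exponent (since $a>a_{-}$), so the induction is not tempered and $R$-group theory does not directly govern its socle. The fact that there are exactly two irreducible subrepresentations, distinguished as $\r_{W}(\p,\e)\oplus\r_{W}(\p,\e_{1})$, is a M{\oe}glin--Tadi{\'c} reducibility statement: it comes from the cuspidal reducibility points (Proposition~\ref{prop: cuspidal reducibility}) propagated up through the construction, together with the observation that the two characters $\e$ and $\e_{1}$, differing by a simultaneous sign flip on $(\rho,a)$ and $(\rho,a_{-})$, restrict to the same $\e'$ on $Jord(\p')$, so the reduction map is two-to-one on packets precisely in this case. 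Re-casting this as an $R$-group argument at the tempered level is possible only after first descending to a unitary induction stage, which your outline does not set up. The rest of your case~(2) discussion --- inducting from the supercuspidal base case where the signs are forced by conditions (2) and (3) of Theorem~\ref{thm: supercuspidal parametrization} --- is correct in spirit.
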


The construction of discrete series by M{\oe}glin and Tadi{\'c} can be obtained by reversing the steps \eqref{eq: parabolic reduction 1}, \eqref{eq: parabolic reduction 2} and \eqref{eq: parabolic reduction 3} in this proposition. 
Finally, in the general construction of $\r_{M}(\q, \e)$, one requires various reducibility results, which are all based on the following basic criterion.

\begin{proposition}[\cite{Xu:preprint3}, Corollary 9.1]
\label{prop: cuspidal reducibility}
For $G = Sp(2n)$, suppose $\r$ is a supercuspidal representation of $G$ and $\r \in \Pkt{\p}$ for some $\p \in \Pdt{G}$. Then for any unitary irreducible supercuspidal representation $\rho$ of $GL(d_{\rho})$, the parabolic induction 
\[
\rho||^{\pm(a_{\rho} +1)/2} \rtimes \r
\]
reduces exactly for
\begin{align}
\label{eq: cuspidal reducibility full orthogonal group}
a_{\rho} = \begin{cases}
                 \text{ max } Jord_{\rho}(\p), & \text{ if } Jord_{\rho}(\p) \neq \emptyset, \\
                                    0, & \text{ if $Jord_{\rho}(\p) = \emptyset$, $\rho$ is self-dual and is of opposite type to $\D{G}$}, \\
                                    -1, & \text{ otherwise. }
                 \end{cases}
\end{align}

\end{proposition}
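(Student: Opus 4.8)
The plan is to deduce this cuspidal reducibility criterion from the endoscopic classification of Arthur, together with the theory of $R$-groups and Harish-Chandra's theory of the Plancherel measure. Since $\r$ is supercuspidal and lies in $\Pkt{\p}$ for $\p \in \Pdt{G}$, the representation $\r$ is in particular a discrete series, so the reducibility of $\rho\|^{s} \rtimes \r$ for $s \in \R$ is governed by the poles of the standard intertwining operator, equivalently by the Plancherel measure $\mu(s)$ attached to the maximal Levi $M \cong GL(d_\rho) \times G_{-}$ of $G$ with $G_{-}$ of the same type as $G$ but of smaller rank. It is a classical fact (Silberger, Harish-Chandra) that $\rho\|^{s}\rtimes\r$ reduces at a real point $s = s_0 > 0$ precisely when $\mu(s)$ has a zero of even order there, and that there is a unique such point; moreover $\rho\|^{0}\rtimes\r$ reduces if and only if $\rho$ is self-dual and $s_0 = 0$ occurs. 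So the entire content is to identify this unique reducibility point $s_0$ with $(a_\rho+1)/2$ for the $a_\rho$ in \eqref{eq: cuspidal reducibility full orthogonal group}.

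First I would fix the Langlands parameter side: let $\p = \+_{i} \rho_i \otimes \nu_{a_i} \otimes \nu_1 \in \Pdt{G}$ be the discrete parameter with $\r \in \Pkt{\p}$. Arthur's theory packages the Plancherel measure for $\rho\|^{s}\rtimes\r$ in terms of the adjoint $L$-function / normalizing factors attached to the parameter $\rho\otimes\nu_1$ (viewed as a parameter for $GL(d_\rho)$) paired against $\p$ and against the ``new'' root spaces in $\D{G}$; concretely the relevant local factor is built from $L(s, \rho \times \p^\vee)$ and an Asai- or exterior/symmetric-square-type factor $L(2s, \rho, r)$ where $r$ is the representation of $GL(d_\rho,\C)$ on the Lie algebra direction added when one passes from $\D{M}$ to $\D{G}$. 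The order of vanishing of $\mu(s)$ at a positive real $s$ is then read off from the orders of the poles of these $L$-functions. When $Jord_\rho(\p) \neq \emptyset$, the factor $L(s,\rho\times\p^\vee)$ contributes and the largest $a_i$ with $\rho_i = \rho$ produces the pole forcing reducibility exactly at $s = (\max Jord_\rho(\p)+1)/2$; when $Jord_\rho(\p) = \emptyset$ the contribution comes entirely from $L(2s,\rho,r)$, which has a pole at $s = 1/2$ precisely when $\rho$ is self-dual of the type making $r$ the relevant (symmetric vs. exterior square, or Asai) representation — i.e. $\rho$ self-dual of type opposite to $\D{G}$, giving $a_\rho = 0$ — and otherwise there is no real reducibility point, encoded by the convention $a_\rho = -1$.

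The remaining step is to pin down \emph{which} self-dual type of $\rho$ gives reducibility at $s=1/2$ and which gives irreducibility on the whole line, i.e. to check that the dichotomy is ``opposite type to $\D{G}$.'' I would handle this by a compatibility argument: the packet $\Pkt{\p}$ and its component group $\S{\p}$ already encode, via the sign conditions in Theorem~\ref{thm: supercuspidal parametrization}, exactly which self-dual $(\rho,a)$ can be adjoined; adjoining $(\rho,1)$ to $Jord(\p)$ is legitimate only when $\rho$ is of the same type as the $\rho_i$'s already appearing, which is the type such that $\rho\otimes\nu_1\otimes\nu_1$ is a sub-parameter of a parameter for $G$ — and this is precisely the type opposite to $\D{G}$ in the standard normalization (symplectic $\rho$ for $\D{G}$ orthogonal, and vice versa). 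Matching this against the $L$-function computation above gives the stated trichotomy. I expect the main obstacle to be bookkeeping the normalizations: correctly tracking the half-integral shift between ``$a$'' and ``$(a+1)/2$,'' and getting the self-dual type matching exactly right (which of $Sym^2$, $\wedge^2$, Asai is the relevant $r$ for each of $Sp$, $SO_{odd}$, $SO_{even}$, including the quasisplit outer form $SO(2n,\eta)$), since an off-by-one or a swapped type there is the only place the argument can go wrong.
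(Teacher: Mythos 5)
The paper does not prove this proposition; it cites it from \cite{Xu:preprint3}, Corollary 9.1, where the argument goes through Arthur's twisted endoscopic character identity and an analysis of Jacquet modules of $\r_{\p}$ on the $GL(N)$ side, not through the Plancherel density. So your route --- Silberger--Harish-Chandra reducibility via zeros of $\mu(s)$, then Shahidi's $L$-function formula for $\mu$ --- is a genuinely different and classically attractive approach. Its main virtue is that it localizes the problem to the two $\gamma$-factors $\gamma(s, \rho\times\p)\,\gamma(2s,\rho,r)$ with $r = \wedge^{2}$ (since $\D{G}=SO(2n+1,\C)$ for $G=Sp(2n)$), from which the value $s_{0} = (\max Jord_{\rho}(\p)+1)/2$ drops out cleanly when $Jord_{\rho}(\p)\neq\emptyset$. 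But the central step of this route is precisely the one you pass over in a sentence: Shahidi's formula for $\mu$ in terms of $L$-functions of the parameter is proved only for \emph{generic} $\r$, and your $\r$ is an arbitrary supercuspidal in $\Pkt{\p}$, typically non-generic. To carry the formula over you need a nontrivial input --- constancy of the Plancherel measure on tempered packets (a consequence of Arthur's stability results and local intertwining relation), or a direct transfer of the intertwining operator to $GL(N)$. ``Arthur's theory packages the Plancherel measure\ldots'' is naming this gap, not closing it; it is exactly where the endoscopic character identity enters in the cited proof, and it deserves to be a lemma, not an aside.

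Second, there is a sign error of exactly the kind you flagged as the danger zone. You assert that adjoining $(\rho,1)$ to $Jord(\p)$ is legitimate precisely when $\rho$ is self-dual of type \emph{opposite} to $\D{G}$. That is backwards: $(\rho,1,1)$ factors through $\L{G}=SO(2n+1,\C)$ iff $\rho$ is orthogonal, i.e., of the \emph{same} type as $\D{G}$. So the clean dichotomy is: $\rho$ of the same type as $\D{G}$ means $(\rho,1)$ could have been present, the virtual maximum of $Jord_{\rho}(\p)$ is $-1$, and the unique reducibility is at $s=0$; $\rho$ of opposite type means $(\rho,2)$ is the smallest admissible block, the virtual maximum is $0$, and reducibility is at $s=1/2$. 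Your $L$-function computation ($L(s,\rho,\wedge^{2})$ has a pole at $s=0$ iff $\rho$ is symplectic, hence opposite to $SO$) already gives the correct answer $a_{\rho}=0$ for opposite type; it is the ``consistency check'' paragraph that contradicts it, and as written the two halves of your argument do not match rather than confirm each other. After fixing the parity statement, the two sides agree and the trichotomy is correct.
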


The main tool in M{\oe}glin's construction of elements in the Arthur packets of classical groups is the Jacquet module. Here we would like to summarize the relevant notations about Jacquet modules used in her work. For general $G$, we denote by $\Rep(G)$ the category of finite-length smooth representations of $G$. We include the zero space in $\Rep(G)$, and by an irreducible representation we always mean it is nonzero. Now let $G$ be a quasisplit symplectic or special orthogonal group of $\bar{F}$-rank $n$.  We fix a unitary irreducible supercuspidal representation $\rho$ of $GL(d_{\rho})$, and we assume $M = GL(d_{\rho}) \times G_{-}$ is the Levi component of a standard maximal parabolic subgroup $P$ of $G$. Note in case $G_{-} = 1$ and $G$ is special even orthogonal, we require $P$ to be contained in the standard parabolic subgroup of $GL(2n)$ by our convention. Then for $\r \in \Rep(G)$, we can decompose the semisimplification of the Jacquet module
\[
s.s. \Jac_{P}(\r) = \bigoplus_{i} \tau_{i} \otimes \sigma_{i},
\] 
where $\tau_{i} \in \Rep(GL(d_{\rho}))$ and $\sigma_{i} \in \Rep(G_{-})$, both of which are irreducible. We define $\Jac_{x}\r$ for any real number $x$ to be 
\[
\Jac_{x}(\r) = \bigoplus_{\tau_{i} = \rho||^{x}} \sigma_{i}.
\]
If we have an ordered sequence of real numbers $\{x_{1}, \cdots, x_{s}\}$, we can define
\[
\Jac_{x_{1}, \cdots, x_{s}}\r = \Jac_{x_{s}} \circ \cdots \circ \Jac_{x_{1}} \r.
\]
Moreover, let 
\[
\bar{\Jac}_{x} = \begin{cases}
                         \Jac_{x} + \Jac_{x} \circ \theta_{0},  & \text{ if $G = SO(2n)$ and $n = d_{\rho} \neq 1$, } \\
                         \Jac_{x},         & \text{ otherwise. }
                         \end{cases}
\]
Then $\bar{\Jac}_{x}$ defines a functor on the category of $O(2n)$-conjugacy classes of finite-length smooth representations of $SO(2n)$. It is not hard to see $\Jac_{x}$ can be defined for $GL(n)$ in a similar way by replacing $G_{-}$ by $GL(n_{-})$. Furthermore, we can define $\Jac^{op}_{x}$ analogous to $\Jac_{x}$ but with respect to $\rho^{\vee}$ and the standard Levi subgroup $GL(n_{-}) \times GL(d_{\rho^{\vee}})$. So let us define $\Jac^{\theta}_{x} = \Jac_{x} \circ \Jac^{op}_{-x}$ for $GL(n)$. There are some explicit formulas for computing these Jacquet modules, and we refer the readers to (\cite{Xu:preprint3}, Section 5).

{\bf Acknowledgements}: The author wants to thank the Institute for Advanced Study (Princeton), where this work was carried out. He also gratefully acknowledges the support from University of Calgary and the Pacific Institute for the Mathematical Sciences (PIMS), when this work took its final form. During his time at IAS, he was supported by the NSF under Grant No. DMS-1128155 and DMS-1252158.


\section{Arthur parameter}
\label{sec: Arthur parameter}

Let $F$ be a $p$-adic field and $G$ be a quasisplit symplectic or special orthogonal group. We define the local Langlands group as $L_{F} = W_{F} \times SL(2, \C)$, where $W_{F}$ is the usual Weil group. We write $\Gal{F} = \Gal{\bar{F}/F}$ for the absolute Galois group over $F$. Let $\D{G}$ be the complex dual group of $G$, and $\L{G}$ be the Langlands dual group of $G$. An Arthur parameter of $G$ is a $\D{G}$-conjugacy class of admissible homomorphisms
\(
\underline{\q}: L_{F} \times SL(2, \C) \longrightarrow \L{G},
\)
such that $\underline{\q}|_{W_{F}}$ is bounded. If $\underline{\q}|_{SL(2, \C)} = 1$, we say the parameter is tempered. We denote by $\Q{G}$ the set of Arthur parameters of $G$. Here we can simplify the Langlands dual groups as in the following table:
\begin{center}
\begin{spacing}{1.5}
\begin{tabular}{| c | c | }
     \hline
      $G$                        &      $\L{G}$              \\
     \hline
      $Sp(2n)$                &      $SO(2n+1, \C)$  \\
     \hline
      $SO(2n+1)$           &      $Sp(2n, \C)$ \\
     \hline
      $SO(2n, \eta)$       &      $SO(2n, \C) \rtimes \Gal{E/F}$     \\     
     \hline
\end{tabular}
\end{spacing}
\end{center}
In the last case, $\eta$ is a quadratic character associated with a quadratic extension $E/F$ and $\Gal{E/F}$ is the associated Galois group. We fix an isomorphism $SO(2n, \C) \rtimes \Gal{E/F} \cong O(2n, \C)$. So in either of these cases, there is a natural embedding $\xi_{N}$ of $\L{G}$ into $GL(N, \C)$ up to $GL(N, \C)$-conjugacy, where $N = 2n+1$ if $G = Sp(2n)$ or $N = 2n$ otherwise. We fix an outer automorphism $\theta_{0}$ of $G$ preserving an $F$-splitting. If $G$ is symplectic or special odd orthogonal, we let $\theta_{0} = id$. If $G$ is special even orthogonal, we let $\theta_{0}$ be induced from the conjugate action of the nonconnected component of the full orthogonal group. Let $\D{\theta}_{0}$ be the dual automorphism of $\theta_{0}$. We write $\Sigma_{0} = <\theta_{0}>$, $G^{\Sigma_{0}} = G \rtimes <\theta_{0}>$, and $\D{G}^{\Sigma_{0}} = \D{G} \rtimes <\D{\theta}_{0}>$. So in the special even orthogonal case, $G^{\Sigma_{0}}$ (resp. $\D{G}^{\Sigma_{0}}$) is isomorphic to the full (resp. complex) orthogonal group. Let $\x_{0}$ be the character of $G^{\Sigma_{0}}/G$, which is nontrivial when $G$ is special even orthogonal. 

By composing $\q$ with $\xi_{N}$, we can view $\q$ as an equivalence class of $N$-dimensional self-dual representation of $L_{F} \times SL(2, \C)$. So we can decompose $\q$ as follows
\begin{align}
\label{eq: Arthur parameter}
\q = \bigoplus_{i=1}^{r} l_{i} \q_{i} = \bigoplus_{i=1}^{r}l_{i}(\rho_{i} \otimes \nu_{a_{i}} \otimes \nu_{b_{i}}).
\end{align}
Here $\rho_{i}$ are equivalence classes of irreducible unitary representations of $W_{F}$, which can be identified with irreducible unitary supercuspidal representations of $GL(d_{\rho_{i}})$ under the local Langlands correspondence (cf. \cite{HarrisTaylor:2001}, \cite{Henniart:2000}, and \cite{Scholze:2013}). And $\nu_{a_{i}}$ (resp. $\nu_{b_{i}}$) are the $(a_{i}-1)$-th (resp. $(b_{i}-1)$-th) symmetric power representations of $SL(2, \C)$. The irreducible constituent $\rho_{i} \otimes \nu_{a_{i}} \otimes \nu_{b_{i}}$ has dimension $n_{i} = n_{(\rho_{i}, a_{i}, b_{i})}$ and multiplicity $l_{i}$. We define the multi-set of Jordan blocks for $\q$ as follows,
\[
Jord(\q) := \{(\rho_{i}, a_{i}, b_{i}) \text{ with multiplicity } l_{i}: 1 \leqslant i \leqslant r \}.
\]
For any $\rho$, let us define
\[
Jord_{\rho}(\q) := \{(\rho', a', b') \in Jord(\q): \rho' = \rho\}.
\]
Fix a representative $\underline{\q}$, we define for any subgroup $\Sigma \subseteq \Sigma_{0}$
\[
S^{\Sigma}_{\underline{\q}} = \Cent(\Im \underline{\q}, \D{G}^{\Sigma}),
\]
\[
\cS{\underline{\q}}^{\Sigma} = S^{\Sigma}_{\underline{\q}} / Z(\D{G})^{\Gal{F}},
\]
\[
\S{\underline{\q}}^{\Sigma} = \cS{\underline{\q}}^{\Sigma} / \cS{\underline{\q}}^{0} = S_{\underline{\q}}^{\Sigma} / S_{\underline{\q}}^{0}Z(\D{G})^{\Gal{F}}.
\]
We denote by $s_{\underline{\q}}$ the image of the nontrivial central element of $SL(2, \C)$ in $\S{\underline{\q}}$.

To characterize the centralizer groups $\S{\underline{\q}}$ and $\S{\underline{\q}}^{\Sigma_{0}}$, we need to introduce a parity condition on the set of Jordan blocks $Jord(\q)$. There is a common way to define the parity for self-dual irreducible representations $\rho$ of $W_{F}$ (see \cite{Xu:preprint3}, Section 3). We say $(\rho_{i}, a_{i}, b_{i}) $ is of {\bf orthogonal type} if $\rho_{i} \otimes \nu_{a_{i}} \otimes \nu_{b_{i}}$ factors through an orthogonal group, or equivalently $a_{i} + b_{i}$ is even when $\rho_{i}$ is of orthogonal type and $a_{i} + b_{i}$ is odd when $\rho_{i}$ is of symplectic type. Similarly we say $(\rho_{i}, a_{i}, b_{i})$ is of {\bf symplectic type} if $\rho_{i} \otimes \nu_{a_{i}} \otimes \nu_{b_{i}}$ factors through a symplectic group, or equivalently $a_{i} + b_{i}$ is odd when $\rho_{i}$ is of orthogonal type and $a_{i} + b_{i}$ is even when $\rho_{i}$ is of symplectic type. Let $\q_{p}$ be the parameter whose Jordan blocks consists of those in $Jord(\q)$ with the same parity as $\D{G}$, and 
let $\q_{np}$ be any Arthur parameter of general linear group such that 
\[
\q = \q_{np} \+ \q_{p} \+ \q_{np}^{\vee}.
\]
We denote by $Jord(\q)_{p}$ the set of Jordan blocks in $Jord(\q_{p})$ without multiplicity. After this preparation, we can identify those centralizer groups above with certain quotient space of $\Two$-valued functions on $Jord(\q)_{p}$. To be more precise, let $s_{0} = (s_{0,i}) \in \Two^{Jord(\q)_{p}}$ be defined as $s_{0,i} = 1$ if $l_{i}$ is even and $s_{0,i} = -1$ if $l_{i}$ is odd. Then
\[
\S{\underline{\q}}^{\Sigma_{0}} \cong \{s  = (s_{i}) \in \Two^{Jord(\q)_{p}} \} / <s_{0}>,
\]
and
\[
\S{\underline{\q}} \cong \{ s  = (s_{i}) \in \Two^{Jord(\q)_{p}} : \prod_{i}(s_{i})^{n_{i}} = 1\} / <s_{0}>
\] 
if $G$ is special even orthogonal. Under these identifications, $s_{\underline{\q}} = s_{\q} := (s_{\q, i}) \in \Two^{Jord(\q)_{p}}$ with $s_{\q, i} = (-1)^{l_{i}}$ if $b_{i}$ is even and $s_{\q, i} =1$ if $b_{i}$ is odd. Let us denote by $\S{\q}$ (resp. $\S{\q}^{\Sigma_{0}}$) the corresponding quotient space of $\Two$-valued functions on $Jord(\q)_{p}$ such that $\S{\q} \cong \S{\underline{\q}}$ (resp. $\S{\q}^{\Sigma_{0}} \cong \S{\underline{\q}}^{\Sigma_{0}}$).

There is a natural inner product on $\Two^{Jord(\q)_{p}}$ which identify its dual with itself. Let $\e = (\e_{i})$ and $s = (s_{i})$ be two elements in $\Two^{Jord(\q)_{p}}$, then their inner product is defined by $\e(s) = \prod_{i}(\e_{i} \ast s_{i})$, where
\[
\e_{i} \ast s_{i} = \begin{cases}
                           -1, & \text{ if } \e_{i} = s_{i} = -1, \\
                           1,  & \text{ otherwise. } 
                           \end{cases}
\]
So on the dual side, 
\[
\D{\S{\q}^{\Sigma_{0}}} = \{\e = (\e_{i}) \in \Two^{Jord(\q)_{p}}: \prod_{i} \e_{i}^{l_{i}} = 1\}.
\]
When $G$ is special even orthogonal, let $\e_{0} = (\e_{0,i}) \in \Two^{Jord(\q)_{p}}$ be defined as $\e_{0,i} = 1$ if $n_{i}$ is even, or $\e_{0,i} = -1$ if $n_{i}$ is odd, then $\e_{0} \in \D{\S{\q}^{\Sigma_{0}}}$ is always trivial when restricted to $\S{\q}$, and  
\[
\D{\S{\q}} = \{\e = (\e_{i}) \in \Two^{Jord(\q)_{p}}: \prod_{i} \e_{i}^{l_{i}} = 1\} / <\e_{0}>.
\]
In general, we can let $\e_{0} = 1$ if $G$ is not special even orthogonal. In this paper, we will always denote elements in $\D{\S{\q}^{\Sigma_{0}}}$ by $\e$ and denote its image in $\D{\S{\q}}$ by $\bar{\e}$.

For computational purpose, it is more convenient to view $\S{\q}^{\Sigma_{0}}$ as functions on $Jord(\q_{p})$. In fact there is a natural projection  
\begin{align}
\label{eq: projection}
\xymatrix{ \Two^{Jord(\q_{p})} \ar[rr]^{Cont} && \Two^{Jord(\q)_{p}} \\
                 s \ar@{|->}[rr] && s'}
\end{align}
such that 
\[
s'(\rho, a, b) = \prod_{\substack{(\rho', a', b') \in Jord(\q_{p}) \\ (\rho', a', b') = (\rho, a, b) \text{ in } Jord(\q)_{p} }} s(\rho', a', b')
\]
for $(\rho, a, b) \in Jord(\q)_{p}$. In particular, $s_{0}$ has a natural representative $s_{0}^{>}$ in $\Two^{Jord(\q_{p})}$ given by $s_{0}^{>}(\rho, a, b) = -1$ for all $(\rho, a, b) \in Jord(\q_{p})$. When $G$ is special even orthogonal, the determinant condition for defining $\S{\q}$ becomes 
\begin{align}
\label{eq: orthogonal group condition}
\prod_{(\rho, a, b) \in Jord(\q_{p})} s(\rho, a, b)^{n_{(\rho, a, b)}} = 1. 
\end{align}
Moreover, $s_{\q}$ also has a natural representative $s^{>}_{\q}$ in $\Two^{Jord(\q_{p})}$ given by $s_{\q}^{>}(\rho, a, b) = -1$ if $b$ is even or $1$ if $b$ is odd. 
We define
\[
\S{\q^{>}}^{\Sigma_{0}} = \{s(\cdot) \in \Two^{Jord(\q_{p})} \} / <s_{0}^{>}>,
\]
and 
\[
\S{\q^{>}} = \{ s(\cdot) \in \Two^{Jord(\q_{p})} : \prod_{(\rho, a, b) \in Jord(\q_{p})} s(\rho, a, b)^{n_{(\rho, a, b)}} = 1\} / <s_{0}^{>}>
\] 
if $G$ is special even orthogonal. Then there are surjections $\S{\q^{>}}^{\Sigma_{0}} \rightarrow \S{\q}^{\Sigma_{0}}$ and $\S{\q^{>}} \rightarrow \S{\q}$.

On the dual side, we have a natural inclusion
\[
\xymatrix{ \Two^{Jord(\q)_{p}} \ar@{^{(}->}[rr]^{Ext} && \Two^{Jord(\q_{p})} \\
                 \e \ar@{|->}[rr] && \e'}
\]
such that 
\[
\e'(\rho, a, b) = \e(\rho, a, b)
\]
for $(\rho, a, b) \in Jord(\q_{p})$. We can define an inner product on $\Two^{Jord(\q_{p})}$ as for $\Two^{Jord(\q)_{p}}$. Then this inclusion is adjoint to the previous projection in the sense that
\[
\e(Cont(s)) = Ext(\e)(s)
\]
for $\e \in \Two^{Jord(\q)_{p}}$ and $s \in \Two^{Jord(\q_{p})}$. Therefore $\e_{0}$ can also be viewed as a function on $Jord(\q_{p})$ through the inclusion map, and the condition imposed on defining $\D{\S{\q}^{\Sigma_{0}}}$ becomes
\[
\prod_{(\rho, a, b) \in Jord(\q_{p})} \e(\rho, a, b) = 1.
\]
We also define
\[
\D{\S{\q^{>}}^{\Sigma_{0}}} = \{\e(\cdot) \in \Two^{Jord(\q_{p})} : \prod_{(\rho, a, b) \in Jord(\q_{p})} \e(\rho, a, b) = 1\}, 
\]
and 
\[
\D{\S{\q^{>}}} = \{\e(\cdot) \in \Two^{Jord(\q_{p})} : \prod_{(\rho, a, b) \in Jord(\q_{p})} \e(\rho, a, b) = 1\} / <\e_{0}>
\] 
if $G$ is special even orthogonal. 
Then there are inclusions $\D{\S{\q}^{\Sigma_{0}}} \hookrightarrow \D{\S{\q^{>}}^{\Sigma_{0}}}$ and $\D{\S{\q}} \hookrightarrow \D{\S{\q^{>}}}$. For $\e \in \D{\S{\q^{>}}^{\Sigma_{0}}}$, we denote its image in $\D{\S{\q^{>}}}$ by $\bar{\e}$.

In the end, we are going to associate any Arthur parameter $\q \in \Q{G}$ with two Langlands parameters of $G$ naturally. For the first one, we define
\[
\p_{\q}(u) = \q \begin{pmatrix} u,  & \begin{pmatrix} |u|^{\frac{1}{2}} & 0 \\ 0 & |u|^{-\frac{1}{2}}  \end{pmatrix} \end{pmatrix},  \,\,\,\,\,\,\, u \in L_{F}.
\]
Note $\p_{\q} \in \P{G}$ is nontempered, and in the notation of \eqref{eq: Arthur parameter} we can write it as
\[
\p_{\q} = \bigoplus_{i=1}^{r} l_{i}\Big(\bigoplus_{j = 0}^{b_{i}-1}(\rho_{i}||^{(b_{i}-1)/2 - j} \otimes \nu_{a_{i}})\Big).
\]
For the second one, we can compose $\q$ with 
\[
\Delta: W_{F} \times SL(2, \C) \rightarrow W_{F} \times SL(2, \C) \times SL(2, \C),
\]
which is the diagonal embedding of $SL(2, \C)$ into $SL(2, \C) \times SL(2, \C)$ when restricted to $SL(2, \C)$, and is identity on $W_{F}$. Note the composition $\q_{d} := \q \circ \Delta \in \Pbd{G}$. To expand $\q_{d}$, we need to introduce some more notations. For $(\rho, a, b) \in Jord(\q)$, let us write $A = (a+b)/2-1$, $B = |a-b|/2$, and set $\zeta = \zeta_{a, b} = \text{Sign}(a - b)$ if $a \neq b$ and arbitrary otherwise. Then we can replace $(\rho, a, b)$ by $(\rho, A, B, \zeta)$. Under this new notation, we have
\[
\q_{d} = \bigoplus_{i=1}^{r} l_{i} \Big(\bigoplus_{j \in [A_{i}, B_{i}]} \rho_{i} \otimes \nu_{2j+1} \Big),
\]
where $j$ is taken over half-integers in the segment $[A_{i}, B_{i}]$.

Finally, $\Sigma_{0}$ acts on $\Q{G}$ through $\D{\theta}_{0}$, and we denote the corresponding set of $\Sigma_{0}$-orbits by $\cQ{G}$. It is clear that for $\q \in \Q{G}$, $Jord(\q)$ only depends on its image in $\cQ{G}$. It is because of this reason, we will also denote the elements in $\cQ{G}$ by $\q$. Moreover, through the natural embedding $\xi_{N}$, we can view $\cQ{G}$ as a subset of equivalence classes of $N$-dimensional self-dual representations of $L_{F} \times SL(2, \C)$.

\section{Endoscopy}
\label{sec: endoscopy}

Before we can introduce the Arthur packets, we need to talk about the relevant cases of endoscopy in this paper. The discussion here will be parallel with that in (\cite{Xu:preprint3}, Section 4). Suppose $\q \in \Q{G}$ and $s \in \cS{\underline{\q}}$ is semisimple. In our case, there is a quasisplit reductive group $H$ with the property that 
\[
\D{H} \cong \Cent(s, \D{G})^{0},
\] and the isomorphism extends to an embedding 
\[
\xi: \L{H} \rightarrow \L{G}
\] 
such that $\xi(\L{H}) \subseteq \Cent(s, \L{G})$ and $\underline{\q}$ factors through $\L{H}$. So from $\q$ we get a parameter $\q_{H} \in \Q{H}$. We say $(H, \underline{\q}_{H})$ corresponds to $(\underline{\q}, s)$ through $\xi$, and denote this relation by $(H, \underline{\q}_{H}) \rightarrow (\underline{\q}, s)$. Such $H$ is called an {\bf endoscopic group} of $G$. In the following examples we will always assume $\q = \q_{p}$. 

\begin{example}
\label{eg: endoscopy}
\begin{enumerate}

\item If $G = Sp(2n)$, then $\L{G} = SO(2n + 1, \C)$. For $s \in \S{\q^{>}}$, it gives a partition on $Jord(\q)$ depending on $s(\rho, a, b) = 1$ or $-1$, i.e.,
\[
Jord(\q) = Jord_{+} \sqcup Jord_{-}.
\]
Without loss of generality, let us assume 
\[
\sum_{(\rho, a, b) \in Jord_{+}} n_{(\rho, a, b)} = 2n_{I} + 1 = N_{I} \text{ and } \sum_{(\rho, a, b) \in Jord_{-}} n_{(\rho, a, b)} = 2n_{II} = N_{II}.
\] 
Define
\[
\eta_{I} = \eta_{II} = \prod_{(\rho, a, b) \in Jord_{-}}\eta_{(\rho, a, b)}
\]
where $\eta_{(\rho, a, b)}$ is the quadratic character dual to $\det (\rho \otimes \nu_{a} \otimes \nu_{b})$. Let 
\[
G_{I} = Sp(2n_{I}) \text{ and } G_{II} = SO(2n_{II}, \eta_{II}).
\] 
Then we have
\[
H = G_{I} \times G_{II} \text{ and } \L{H} = (\D{G}_{I} \times \D{G}_{II}) \rtimes \Gal{E_{II}/F},
\] 
where $E_{II}$ is the quadratic extension of $F$ associated with $\eta_{II}$. Let
\[
\xi_{i}: \L{G}_{i} \hookrightarrow GL(N_{i}, \C)
\]
be the natural embedding for $i = I, II$. Then 
\[
\xi := (\xi_{I} \otimes \eta_{I}) \+ \xi_{II}
\] 
factors through $\L{G}$ and defines an embedding $\L{H} \hookrightarrow \L{G}$. We define $\q_{I} \in \cQ{G_{I}}$ by 
\[
Jord(\q_{I}) := \{(\rho \otimes \eta_{I}, a, b):  (\rho, a, b) \in Jord_{+}\},
\] 
and $\q_{II} \in \cQ{G_{II}}$ by 
\[
Jord(\q_{II}) := \{(\rho, a, b) \in Jord_{-} \}.
\]
Let $\q_{H} = \q_{I} \times \q_{II}$.

\item If $G = SO(2n+1)$, then $\L{G} = Sp(2n, \C)$. For $s \in \S{\q^{>}}$, it gives a partition on $Jord(\q)$ depending on $s(\rho, a, b) = 1$ or $-1$, i.e.,
\[
Jord(\q) = Jord_{+} \sqcup Jord_{-}.
\]
We can assume 
\[
\sum_{(\rho, a, b) \in Jord_{+}} n_{(\rho, a, b)} = 2n_{I} = N_{I} \text{ and } \sum_{(\rho, a, b) \in Jord_{-}} n_{(\rho, a, b)} = 2n_{II} = N_{II}.
\] 
Define $\eta_{I} = \eta_{II} = 1$. Let 
\[
G_{I} = SO(2n_{I}+1) \text{ and } G_{II} = SO(2n_{II}+1).
\] 
Then we have 
\[
H = G_{I} \times G_{II} \text{ and } \L{H} = \D{G}_{I} \times \D{G}_{II}
\]
Let
\[
\xi_{i}: \L{G}_{i} \hookrightarrow GL(N_{i}, \C)
\]
be the natural embedding for $i = I, II$. Then 
\[
\xi := \xi_{I} \+ \xi_{II}
\] 
factors through $\L{G}$ and defines an embedding $\L{H} \hookrightarrow \L{G}$. We define $\q_{I} \in \cQ{G_{I}}$ by 
\[
Jord(\q_{I}) := \{(\rho, a, b) \in Jord_{+}\},
\] 
and $\q_{II} \in \cQ{G_{II}}$ by 
\[
Jord(\q_{II}) := \{(\rho, a, b) \in Jord_{-} \}.
\]
Let $\q_{H} = \q_{I} \times \q_{II}$.

\item If $G = SO(2n, \eta)$, then $\L{G} = SO(2n, \C) \rtimes \Gal{E/F}$. For $s \in \S{\q^{>}}$, it gives a partition on $Jord(\q)$ depending on $s(\rho, a, b) = 1$ or $-1$, i.e.,
\[
Jord(\q) = Jord_{+} \sqcup Jord_{-}.
\]
By the condition \eqref{eq: orthogonal group condition}, we can assume 
\[
\sum_{(\rho, a, b) \in Jord_{+}} n_{(\rho, a, b)} = 2n_{I} = N_{I} \text{ and } \sum_{(\rho, a, b) \in Jord_{-}} n_{(\rho, a, b)} = 2n_{II} = N_{II}.
\] 
Define
\[
\eta_{I} = \prod_{(\rho, a, b) \in Jord_{+}}\eta_{(\rho, a, b)} \text{ and } \eta_{II} = \prod_{(\rho, a, b) \in Jord_{-}}\eta_{(\rho, a, b)},
\]
where $\eta_{(\rho, a, b)}$ is the quadratic character dual to $\det (\rho \otimes \nu_{a} \otimes \nu_{b})$. We also denote by $E_{i}$ the quadratic extension of $F$ associated with $\eta_{i}$ for $i = I, II$.
Let 
\[
G_{I} = SO(2n_{I}, \eta_{I}) \text{ and } G_{II} = SO(2n_{II}, \eta_{II}).
\]
Then we have 
\[
H = G_{I} \times G_{II} \text{ and } \L{H}  = (\D{G}_{I} \times \D{G}_{II}) \rtimes \Gal{L/F} 
\]
where $L = E_{I} E_{II}$. Let
\[
\xi_{i}: \L{G}_{i} \hookrightarrow GL(N_{i}, \C)
\]
be the natural embedding for $i = I, II$. Then 
\[
\xi := \xi_{I} \+ \xi_{II}
\] 
factors through $\L{G}$ and defines an embedding $\L{H} \hookrightarrow \L{G}$. We define $\q_{I} \in \cQ{G_{I}}$ by 
\[
Jord(\q_{I}) := \{(\rho, a, b) \in Jord_{+}\},
\] 
and $\q_{II} \in \cQ{G_{II}}$ by 
\[
Jord(\q_{II}) := \{(\rho, a, b) \in Jord_{-} \}.
\]
Let $\q_{H} = \q_{I} \times \q_{II}$.

\end{enumerate}
\end{example}

In the examples above, $H$ is called an {\bf elliptic endoscopic group} of $G$. We can define $\cQ{H} = \cQ{G_{I}} \times \cQ{G_{II}}$, then $\q_{H} \in \cQ{H}$. For $s \in \S{\q^{>}}$, we still say $(H, \q_{H})$ correspond to $(\q, s)$ through $\xi$, and denote this relation by $(H, \q_{H}) \rightarrow (\q, s)$. 

In part (3), it is possible to also choose $s \in \S{\q^{>}}^{\Sigma_{0}}$ but not in $\S{\q^{>}}$, and then we get a partition on $Jord(\q)$, i.e., 
\[
Jord(\q) = Jord_{+} \sqcup Jord_{-}
\]
so that 
\[
\sum_{(\rho, a, b) \in Jord_{+}} n_{(\rho, a, b)} = 2n_{I} + 1= N_{I} \text{ and } \sum_{(\rho, a, b) \in Jord_{-}} n_{(\rho, a, b)} = 2n_{II} + 1 = N_{II}.
\] 
Define
\[
\eta_{I} = \prod_{(\rho, a, b) \in Jord_{+}}\eta_{(\rho, a, b)} \text{ and } \eta_{II} = \prod_{(\rho, a, b) \in Jord_{-}}\eta_{(\rho, a, b)},
\]
where $\eta_{(\rho, a, b)}$ is the quadratic character dual to $\det (\rho \otimes \nu_{a} \otimes \nu_{b})$.
Let 
\[
G_{I} = Sp(2n_{I}) \text{ and } G_{II} = Sp(2n_{II})
\] 
Then we can define $\q_{I} \in \cQ{G_{I}}$ by 
\[
Jord(\q_{I}) := \{(\rho \otimes \eta_{I}, a, b) \in Jord_{+}\},
\] 
and $\q_{II} \in \cQ{G_{II}}$ by 
\[
Jord(\q_{II}) := \{(\rho \otimes \eta_{II}, a, b) \in Jord_{-} \}.
\]
Let
\[
H = G_{I} \times G_{II} \text{ and } \L{H} = \D{G}_{I} \times \D{G}_{II}.
\]
In this case, $H$ is called a {\bf twisted elliptic endoscopic group} of $G$. Let 
\[
\xi_{i}: \L{G}_{i} \hookrightarrow GL(N_{i}, \C)
\]
be the natural embedding for $i = I, II$. Then 
\[
\xi := (\xi_{I} \otimes \eta_{I}) \+ (\xi_{II} \otimes \eta_{II})
\] 
factors through $\L{G}$ and defines an embedding $\L{H} \hookrightarrow \L{G}$. Let 
\[
\q_{H} = \q_{I} \times \q_{II}.
\] 
We say $(H, \q_{H})$ corresponds to $(\q, s)$ through $\xi$, and write $(H, \q_{H}) \rightarrow (\q, s)$.

In this paper, we also want to consider the twisted elliptic endoscopic groups of $GL(N)$, but we will only need the simplest case here. Recall for $\q \in \Q{G}$, we can view $\underline{\q}$ as a self-dual $N$-dimensional representation through the natural embedding 
\[
\xi_{N}: \L{G} \rightarrow GL(N, \C),
\]
and in this way we get a self-dual Arthur parameter for $GL(N)$. We fix an outer automorphism $\theta_{N}$ of $GL(N)$ preserving an $F$-splitting, and let $\D{\theta}_{N}$ be the dual automorphism on $GL(N, \C)$, then $\xi_{N}(\L{G}) \subseteq \Cent(s, GL(N, \C))$ and $\D{G} = \Cent(s, GL(N, \C))^{0}$ for some semisimple $s \in GL(N, \C) \rtimes \D{\theta}_{N}$. So we call $G$ a twisted elliptic endoscopic group of $GL(N)$.

What lies in the heart of endoscopy theory is a (twisted) transfer map on the spaces of smooth compactly supported functions from $G$ to its (twisted) elliptic endoscopic group $H$ (similarly from $GL(N)$ to its twisted elliptic endoscopic group $G$). The existence of the (twisted) transfer map is quite deep, and it was conjectured by Langlands, Shelstad and Kottwitz. In a series of papers Waldspurger \cite{Waldspurger:1995} \cite{Waldspurger:1997} \cite{Waldspurger:2006} \cite{Waldspurger:2008} was able to reduce it to the {\bf Fundamental Lemma} for Lie algebras over the function fields. Finally it is in this particular form of the fundamental lemma, Ngo \cite{Ngo:2010} gave his celebrated proof. Let us denote such transfers by 
\begin{align}
\label{eq: endoscopic transfer}
\xymatrix{C^{\infty}_{c}(G) \ar[r]  & C^{\infty}_{c}(H) \\
                f \ar[r] & f^{H}}
\end{align}
and similarly 
\begin{align}
\label{eq: twisted endoscopic transfer}
\xymatrix{C^{\infty}_{c}(GL(N)) \ar[r]  & C^{\infty}_{c}(G) \\
                f \ar[r] & f^{G}.}
\end{align}
In the definition of the (twisted) transfer maps, there is a normalization issue. To resolve that, we will always fix a $\Sigma_{0}$-stable (resp. $\theta_{N}$-stable) Whittaker datum for $G$ (resp. $GL(N)$) in this paper, and we will take the so-called Whittaker normalization on the transfer maps. We should also point out these transfer maps are only well defined after we pass to the space of (twisted) {\bf orbital integrals} on the source and the space {\bf stable orbital integrals} on the target. Note the space of (twisted) (resp. stable) orbital integrals are dual to the space of (twisted) (resp. stable) invariant distributions on $G$, i.e. one can view the (twisted) (resp. stable) invariant distributions of $G$ as linear functionals of the space of (twisted) (resp. stable) orbital integrals. So dual to these transfer maps, the stable distributions on $H$ (resp. $G$) will map to (twisted) invariant distributions on $G$ (resp. $GL(N)$). We call this map the (twisted) {\bf spectral endoscopic transfer}. Since we can identity $C^{\infty}_{c}(G \rtimes \theta_{0})$ (resp. $C^{\infty}_{c}(GL(N) \rtimes \theta_{N})$) with $C^{\infty}_{c}(G)$ (resp. $C^{\infty}_{c}(GL(N))$) by sending $g \rtimes \theta_{0}$ (resp. $g_{N} \rtimes \theta_{N}$) to $g$ (resp. $g_{N}$), we can define the twisted transfer map also for $C^{\infty}_{c}(G \rtimes \theta_{0})$ (resp. $C^{\infty}_{c}(GL(N) \rtimes \theta_{N})$).

If $\r$ is an irreducible smooth representation of $G$, then it defines an invariant distribution on $G$ by the trace of 
\[
\r(f) = \int_{G}f(g)\r(g)dg
\] 
for $f \in C^{\infty}_{c}(G)$. We call this the character of $\r$ and denote it by $f_{G}(\r)$. For any irreducible representation $\r^{\Sigma_{0}}$ of $G^{\Sigma_{0}}$, which contains $\r$ in its restriction to $G$, we define a twisted invariant distribution on $G$ by the trace of 
\[
\r^{\Sigma_{0}}(f) = \int_{G \rtimes \theta_{0}} f(g)\r^{\Sigma_{0}}(g)dg
\] 
for $f \in C^{\infty}_{c}(G \rtimes \theta_{0})$. We call this the twisted character of $G$, and denote it by $f_{G}(\r^{\Sigma_{0}})$. We can also define the twisted characters for $GL(N)$ similarly, but we will write it in a slightly different way. Let $\r$ be a self-dual irreducible smooth representation of $GL(N)$, we can define a twisted invariant distribution on $GL(N)$ by taking the trace of 
\[
\r(f) \circ A_{\r}(\theta_{N})
\] 
for $f \in C^{\infty}_{c}(GL(N))$, where $A_{\r}(\theta_{N})$ is an intertwining operator between $\r$ and $\r^{\theta_{N}}$. We call this the twisted character of $\r$ and denote it by $f_{N^{\theta}}(\r)$. 

Since the (twisted) elliptic endoscopic groups $H$ in our case are all products of quasisplit symplectic and special orthogonal groups, we can define a group of automorphisms of $H$ by taking the product of $\Sigma_{0}$ on each factor, and we denote this group again by $\Sigma_{0}$. Let $\sH(G)$ (resp. $\sH(H)$) be the subspace of $\Sigma_{0}$-invariant functions in $C_{c}^{\infty}(G)$ (resp. $C_{c}^{\infty}(H)$). Then it follows from a simple property of the transfer map (which we will not explain here) that we can restrict both \eqref{eq: endoscopic transfer} and \eqref{eq: twisted endoscopic transfer} to $\sH(G)$ and $\sH(H)$.

\section{Arthur packet}
\label{sec: Arthur packet}

For $\q \in \cQ{G}$, we define 
\[
\r_{\q} = \times_{(\rho, a, b) \in Jord(\q)} Sp(St(\rho, a), b).
\]
From \cite{Tadic:1986}, we know $\r_{\q}$ is a unitary self-dual irreducible representation of $GL(N)$, and there is a Whittaker normalization of the intertwining operator $A_{\r_{\q}}(\theta_{N})$ on $\r_{\q}$ (see \cite{Arthur:2013}, 2.2). Now we can state Arthur's local theory for $G$.

\begin{theorem}[Arthur]
\label{thm: Arthur packet}

For any $\q \in \cQ{G}$ and $\bar{\e} \in \D{\S{\q}}$, there is a canonical way to associate a finite-length semisimple unitary representation viewed as $\sH(G)$-module $\r(\q, \bar{\e})$ (which can be zero), satisfying the following properties:

\begin{enumerate}

\item
\[
f(\q) := \sum_{\bar{\e} \in \D{\S{\q}}} \bar{\e}(s_{\q})f_{G}(\r(\q, \bar{\e}))
\]
defines a stable distribution for $f \in \sH(G)$. Moreover,
\begin{align}
\label{eq: nontempered character relation GL(N)}
f^{G}(\q) = f_{N^{\theta}}(\r_{\q}), \,\,\,\,\,\,\,\,\,\,\,\,\,\,   f \in C^{\infty}_{c}(GL(N)),
\end{align}
after we normalize the Haar measures on $G$ and $GL(N)$ in a compatible way.

\item Suppose $\q = \q_{p}$ and $s \in \S{\q^{>}}$. Let $(H, \q_{H}) \rightarrow (\q, s)$, and we define a stable distribution $f(\q_{H})$ for $f \in \sH(H)$ as in (1), then after we normalize the Haar measures on $G$ and $H$ in a compatible way the following identity holds
\begin{align}
\label{eq: nontempered character relation}
f^{H}(\q_{H}) = \sum_{\bar{\e} \in \D{\S{\q}}} \bar{\e}(ss_{\q})f_{G}(\r(\q, \bar{\e})), \,\,\,\,\,\,\,\,\,\,\, f \in \sH(G),
\end{align}
where we denote the image of $s$ in $\S{\q}$ again by $s$.

\end{enumerate}
\end{theorem}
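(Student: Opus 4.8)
The plan is to deduce this statement — which is Arthur's local classification specialized to a $p$-adic field — from the stabilization of the twisted trace formula for $GL(N)\rtimes\theta_{N}$ together with the stable trace formulas of the (twisted) elliptic endoscopic groups of $GL(N)$, all of which are products of quasisplit symplectic and special orthogonal groups of the type considered here. First I would realize the given local Arthur parameter $\q$ as the localization at one place $v_{0}$ of a global parameter $\dot{\q}$ for a suitable global form $\dot{G}$ with $\dot{G}_{v_{0}}=G$, choosing the remaining ramified places to carry parameters already under control by an induction on $N$ (e.g. unramified, or built from supercuspidals via Theorem~\ref{thm: supercuspidal parametrization}). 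Feeding $\dot{\q}$ into the global spectral identity — the $\theta_{N}$-discrete part of the twisted trace formula for $GL(N)$ equals a sum over elliptic endoscopic data of stable discrete parts — and isolating its contribution, I would then use linear independence of characters on $\sH(\dot{G})$ to extract, place by place, the modules $\r(\q,\bar{\e})$ together with the identities \eqref{eq: nontempered character relation GL(N)} and \eqref{eq: nontempered character relation}. The Whittaker data fixed in Section~\ref{sec: endoscopy} is exactly what makes the base point $\bar{\e}=1$, and hence the whole association $\bar{\e}\mapsto\r(\q,\bar{\e})$, canonical.

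The induction would be on $N$, and internally on invariants of $\q$, with the two assertions feeding one another: assertion (1), including the comparison with the $\theta_{N}$-twisted character of $\r_{\q}$, comes from the $GL(N)$ side and needs the local intertwining relation and the twisted spectral transfer, while assertion (2) follows from comparison with the stable trace formula of the proper elliptic endoscopic group $H=G_{I}\times G_{II}$, which has strictly smaller rank and so is handled by the inductive hypothesis — the semisimple element $s$ entering through the endoscopic datum and producing the twist $\bar{\e}(ss_{\q})$. I would first treat tempered $\q=\p$ (i.e. $\q|_{SL(2,\C)}=1$): here $\Pkt{\p}$ and its map to $\D{\S{\p}}$ are pinned down by realizing its members inside parabolic inductions $\Ind_{P}^{G}(\sigma)$ with $\sigma$ in a discrete-series packet of a Levi — the discrete-series packets (Theorem~\ref{thm: discrete series}) being obtained by reversing the parabolic-reduction steps of Proposition~\ref{prop: parabolic reduction} from the supercuspidal seeds of Theorem~\ref{thm: supercuspidal parametrization} — and matching the action of $\S{\p}$ on these inductions with Whittaker-normalized intertwining operators.

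For a general parameter $\q$ I would then pass from the tempered theory via the associated Langlands parameter $\p_{\q}$: the module $\r(\q,\bar{\e})$ is extracted from the standard module of $\p_{\q}$, or equivalently the packet is determined by descent from the explicit Speh-type representation $\r_{\q}$ of $GL(N)$ using \eqref{eq: nontempered character relation GL(N)} and M{\oe}glin--Waldspurger's classification of the unitary dual of $GL(N)$. The unitarity and self-duality of $\r_{\q}$ recorded from \cite{Tadic:1986} are precisely what make $f_{N^{\theta}}(\r_{\q})$ well defined with the Whittaker-normalized operator $A_{\r_{\q}}(\theta_{N})$, while the sign $\bar{\e}(s_{\q})$ is forced by tracking the element $s_{\q}$ through the identification of $\S{\q}$ with $\Two$-valued functions on $Jord(\q)_{p}$ made in Section~\ref{sec: Arthur parameter}. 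Once (1) and (2) are known for all smaller $N$ and for tempered parameters, they follow for $\q$ by the same linear-algebra extraction as above.

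The hard part — and the reason this is quoted here rather than proved — is entirely in the inputs: the stabilization of the twisted trace formula for $GL(N)\rtimes\theta_{N}$ and the local intertwining relations rest on the twisted weighted fundamental lemma (Ng\^o, Waldspurger, Chaudouard--Laumon) and on a long \emph{simultaneous} global induction in which the generic and non-generic parameters, the global multiplicity formula, and — for special even orthogonal groups — the subtleties around $\theta_{0}$, the character $\x_{0}$, and the passage between $\S{\q}$, $\S{\q^{>}}$ and their duals must all be controlled at once. In the present survey this is taken as known input, and the remaining task is only to compare this parametrization with M{\oe}glin's, which is what the rest of the paper does.
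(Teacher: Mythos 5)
This statement is quoted in the paper as Arthur's theorem (attributed to \cite{Arthur:2013}) with no proof given; the paper treats it as foundational input and is devoted to comparing the resulting parametrization with M{\oe}glin's. So there is no ``paper's own proof'' to compare against. That said, your sketch is a fair high-level outline of what Arthur's actual argument involves, and you are right to flag at the end that the substance lives entirely in the inputs: you correctly identify the stabilization of the twisted trace formula for $GL(N)\rtimes\theta_{N}$, globalization of the local parameter, the simultaneous induction on $N$, the use of linear independence of characters to isolate local contributions, the role of the local intertwining relation, and the fact that the Whittaker normalization is what makes the map $\bar{\e}\mapsto\r(\q,\bar{\e})$ canonical.

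One small logical point worth correcting: in your description of the tempered case you propose to build the discrete-series packets by ``reversing the parabolic-reduction steps of Proposition~\ref{prop: parabolic reduction} from the supercuspidal seeds of Theorem~\ref{thm: supercuspidal parametrization}.'' This reverses the actual direction of dependence. Proposition~\ref{prop: parabolic reduction} and Theorem~\ref{thm: supercuspidal parametrization} are consequences of Arthur's classification (combined with M{\oe}glin--Tadi{\'c}'s construction), not ingredients in its proof; Arthur pins down the discrete-series packets directly via the global stable multiplicity formula and comparison with the twisted trace formula, with the explicit cuspidal-support structure of the elements only becoming visible afterwards. Similarly, the phrase ``extracted from the standard module of $\p_{\q}$'' is not quite how Arthur defines $\r(\q,\bar{\e})$ --- the packets are defined through the stable character identities \eqref{eq: nontempered character relation GL(N)} and \eqref{eq: nontempered character relation}, and the relation to standard modules of $\p_{\q}$ is an output (and in fact part of what M{\oe}glin's theory clarifies), not the definition. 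These are inaccuracies in the expository flow rather than gaps, since, as you say, the whole argument is being taken as a black box here.
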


When $G$ is special even orthogonal, we have an additional character relation. 

\begin{theorem}[Arthur]
\label{thm: character relation full orthogonal group}
Suppose $G$ is special even orthogonal, $\q = \q_{p} \in \cQ{G}$ and $\e \in \D{\S{\q}^{\Sigma_{0}}}$, for any irreducible representation $\r$ viewed as $\sH(G)$-module $[\r]$ in $\r(\q, \bar{\e})$ such that $\r^{\theta_{0}} \cong \r$, one can associate it with an extension $\r^{\Sigma_{0}}$ to $G^{\Sigma_{0}}$. Then for any $s \in \S{\q^{>}}^{\Sigma_{0}}$ but not in $\S{\q^{>}}$ and $(H, \q_{H}) \rightarrow (\q, s)$ the following identity holds
\begin{align}
\label{eq: character relation full orthogonal group}
f^{H}(\q_{H}) = \sum_{\substack{\bar{\e} \in \D{\S{\q}}, \, [\r] \in \r(\q, \bar{\e}) : \\ \r^{\theta_{0}} \cong \r  }} \e(ss_{\q}) f_{G}(\r^{\Sigma_{0}}), \,\,\,\,\,\,\,\,\,\,\, f \in C^{\infty}_{c}(G \rtimes \theta_{0}),
\end{align}
where $\e \in \D{\S{\q}^{\Sigma_{0}}}$ is in the preimage of $\bar{\e}$, and it depends on the extension $\r^{\Sigma_{0}}$. We denote the image of $s$ in $\S{\q}^{\Sigma_{0}}$ again by $s$, and we normalize the Haar measures on $G$ and $H$ in a compatible way. 
\end{theorem}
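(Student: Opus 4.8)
Since \eqref{eq: character relation full orthogonal group} is the complementary half of the very endoscopic decomposition of $f_{N^\theta}(\r_\q)$ whose ``genuine'' part is Theorem~\ref{thm: Arthur packet}, the plan is to read it off from that decomposition rather than reprove anything from scratch; almost all of the remaining work is identifying the twisted endoscopic data correctly and tracking the normalization of the extensions $\r^{\Sigma_0}$. First I would recall the stabilization of the $\theta_N$-twisted trace formula for $GL(N)$, which expands $f_{N^\theta}(\r_\q)$ for $f \in C^\infty_c(GL(N))$ as a sum over (twisted) elliptic endoscopic data $(G',s')$ of $GL(N)\rtimes\theta_N$ of coefficients times $(f^{G'})(\q_{G'})$. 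When $G = SO(2n,\eta)$, the relevant $G'$ are $G$ itself --- giving the stable distribution $f^G(\q)$, hence after a second round of (ordinary and $\theta_0$-twisted) endoscopy of $G$ the terms already recorded in \eqref{eq: nontempered character relation GL(N)} and \eqref{eq: nontempered character relation} --- together with data of the form $H = Sp(2n_I)\times Sp(2n_{II})$ with $N_I = 2n_I+1$ and $N_{II}=2n_{II}+1$ both odd. These last are exactly the twisted elliptic endoscopic groups of $(G,\theta_0)$ attached at the end of Section~\ref{sec: endoscopy} to elements $s \in \S{\q^{>}}^{\Sigma_0}\setminus\S{\q^{>}}$, so the coefficient of each $f^H(\q_H)$ in the decomposition is the content of \eqref{eq: character relation full orthogonal group}; the appearance of the larger group $\S{\q}^{\Sigma_0}$ in the pairing is forced because these $s$ do not lie in $\S{\q}$.

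Second, I would analyze the spectral side of such an $H$-contribution. For an irreducible $\r$ occurring in $\r(\q,\bar{\e})$ with $\r^{\theta_0}\cong\r$, the intertwining operator between $\r$ and $\r^{\theta_0}$, normalized using the fixed $\Sigma_0$-stable Whittaker datum (the same datum used to normalize the transfer $f\mapsto f^H$), singles out an extension $\r^{\Sigma_0}$ to $G^{\Sigma_0}$; the other extension is $\r^{\Sigma_0}\otimes\x_0$. One then checks that replacing $\r^{\Sigma_0}$ by $\r^{\Sigma_0}\otimes\x_0$ is compensated by replacing the chosen preimage $\e\in\D{\S{\q}^{\Sigma_0}}$ of $\bar{\e}$ by $\e\e_0$, using that $\x_0$ contributes the sign $\e_0(ss_\q)$ and that $\e_0$ is nontrivial precisely on $\S{\q^{>}}^{\Sigma_0}\setminus\S{\q^{>}}$; this makes each summand $\e(ss_\q)\,f_G(\r^{\Sigma_0})$ well defined for the $s$ in question and matches the $\theta_0$-twisted character $f_G(\r^{\Sigma_0})$ with the correct coefficient in the endoscopic expansion of $f_{N^\theta}(\r_\q)$. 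Summing over all $\bar{\e}\in\D{\S{\q}}$ and all $\theta_0$-fixed $[\r]$ in $\r(\q,\bar{\e})$ then produces the right-hand side of \eqref{eq: character relation full orthogonal group}.

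The genuinely hard inputs --- the stabilization of the twisted trace formula and Arthur's local classification for quasisplit even orthogonal groups, including the existence and Whittaker normalization of the packets $\r(\q,\bar{\e})$ and of the extensions $\r^{\Sigma_0}$ --- are exactly what Theorem~\ref{thm: Arthur packet} and the companion results of Arthur already provide, so I would cite them. The main obstacle that remains is the bookkeeping of the second step: ensuring that the Whittaker normalization of $\r^{\Sigma_0}$ is the one compatible with the Whittaker normalization used to define $f^H$ and $\q_H$, so that it is $\e$ and not $\e\e_0$ that appears in \eqref{eq: character relation full orthogonal group}, and that this choice is consistent simultaneously across all $\r$ in all packets $\r(\q,\bar{\e})$ with $\bar{\e}\in\D{\S{\q}}$.
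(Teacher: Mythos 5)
This theorem is not proved in the paper: it is stated with the attribution ``[Arthur]'' and imported wholesale from Arthur's book, so there is no in-paper argument to compare your sketch against. Measured against what Arthur's proof actually does, your sketch has a genuine structural confusion in its first step.

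You propose to ``read off'' \eqref{eq: character relation full orthogonal group} from the endoscopic decomposition of $f_{N^{\theta}}(\r_{\q})$, treating it as a sum over twisted elliptic endoscopic data $(G', s')$ of $GL(N) \rtimes \theta_{N}$. But the local spectral identity \eqref{eq: nontempered character relation GL(N)} is not such a sum; it is the single equality $f^{G}(\q) = f_{N^{\theta}}(\r_{\q})$ attached to the one endoscopic datum through which $\q$ factors. The sum over elliptic data you describe is the \emph{global} stabilization of the twisted trace formula, and even there, each term pairs a stable distribution on $G'$ with the twisted trace on $GL(N)$, i.e.\ it involves $(f^{G'})(\q_{G'})$ for $f \in C^{\infty}_{c}(GL(N))$. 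Identity \eqref{eq: character relation full orthogonal group} is of a different shape: it pairs the stable distribution $f^{H}(\q_{H})$ on $H$ with $\theta_{0}$-twisted characters $f_{G}(\r^{\Sigma_{0}})$ on $G$, for $f \in C^{\infty}_{c}(G \rtimes \theta_{0})$. Neither side of \eqref{eq: character relation full orthogonal group} appears in the $GL(N)$ decomposition, so the identity cannot simply be extracted from it. What is actually needed is the stabilization of the $\theta_{0}$-twisted trace formula for $G$ itself (not merely the $\theta_{N}$-twisted one for $GL(N)$), which is where the data $H = Sp(2n_{I}) \times Sp(2n_{II})$ with $N_{I}, N_{II}$ odd enter as twisted elliptic endoscopic groups of $(G, \theta_{0})$. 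That the same $H$ also sits inside the $GL(N)$ hierarchy is true and is used in Arthur's induction, but it does not make \eqref{eq: character relation full orthogonal group} a formal consequence of the $GL(N)$ identity.

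Your second step is on firmer ground: the role of the $\Sigma_{0}$-stable Whittaker datum in fixing the extension $\r^{\Sigma_{0}}$, and the observation that swapping $\r^{\Sigma_{0}}$ for $\r^{\Sigma_{0}} \otimes \x_{0}$ is compensated by replacing $\e$ with $\e \e_{0}$ (cf.\ \eqref{eq: discrete series full orthogonal character}), is exactly the bookkeeping one must do, and it matches the well-definedness remark built into the statement. But the consistency of this normalization across all members of all packets is itself established inside the same global induction you are trying to shortcut, so it is not something you can impose a posteriori on an identity read off from $GL(N)$ alone. The missing idea, concretely, is the separate stabilization of the $\theta_{0}$-twisted trace formula for special even orthogonal $G$ and the spectral comparison that produces the twisted local character relation from it.
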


We denote the set of $\sH(G)$-modules $\r(\q, \bar{\e})$ for fixed $\q \in \cQ{G}$ and all $\bar{\e} \in \D{\S{\q}}$ by $\cPkt{\q}$. 
One can see from both \eqref{eq: nontempered character relation GL(N)} and \eqref{eq: nontempered character relation} that the parametrization inside $\cPkt{\q}$ by $\D{\S{\q}}$ depends on the normalization of $A_{\r_{\q}}(\theta_{N})$ and also those of intertwining operators related to $\q_{H}$ (i.e., $A_{\r_{\q_{i}}}(\theta_{N_{i}})$ for $i = I, II$). In Arthur's theory, we always use the Whittaker normalization, as it is the most natural normalization from the global point of view, and it is in this sense that we say the association of $\r(\q, \bar{\e})$ with $\bar{\e} \in \D{\S{\q}}$ is canonical. But as it has been pointed out in \cite{MW:2006}, locally there is no reason to privilege the Whittaker normalization. Later on we will discuss another normalization used by M{\oe}glin and Waldspurger in \cite{MW:2006}, which is critical for studying the structure of $\r(\q, \bar{\e})$. So in order to distinguish different parametrizations with respect to various normalizations, we will denote $\r(\q, \bar{\e})$ in Arthur's theory by $\r_{W}(\q, \bar{\e})$, and similarly denote $f(\q)$ by $f_{W}(\q)$ and denote $f_{N^{\theta}}(\r_{\q})$ by $f_{N^{\theta}, W}(\r_{\q})$. 

Unlike the tempered case where all $\r_{W}(\q, \bar{\e})$ are distinct and irreducible (see Theorem~\ref{thm: discrete series} and \cite{Xu:preprint3}, Theorem 2.2), Arthur's theory says little about $\r_{W}(\q, \bar{\e})$ except for its unitarity. In fact, $\r_{W}(\q, \bar{\e})$ can be reducible or even zero in general, and it is the main goal of this paper to explore the inner structure of $\r_{W}(\q, \bar{\e})$. To do so, we will mainly follow \cite{Moeglin:2006}, \cite{Moeglin:2009} and \cite{MW:2006}. 

As a consequence of M{\oe}glin's results about $\r_{W}(\q, \bar{\e})$, we will be able to define the Arthur packet for $G^{\Sigma_{0}}$ and describe its structure (see Section~\ref{sec: general case}). 
To begin with, we define $\Pkt{\p}^{\Sigma_{0}}$ for $\p \in \cPdt{G}$ to be set of irreducible representations of $G^{\Sigma_{0}}$, whose restriction to $G$ belong to $\cPkt{\p}$. Then Theorem~\ref{thm: character relation full orthogonal group} allows us to parametrize $\Pkt{\p}^{\Sigma_{0}}$ by $\D{\S{\p}^{\Sigma_{0}}}$, and we have the following result.

\begin{theorem}[Arthur]
\label{thm: discrete series full orthogonal group}
Suppose $\p \in \cPdt{G}$, there is a canonical bijection between $\Pkt{\p}^{\Sigma_{0}}$ and $\D{\S{\p}^{\Sigma_{0}}}$
\[
\xymatrix{\D{\S{\p}^{\Sigma_{0}}}  \ar[r] & \Pkt{\p}^{\Sigma_{0}} \\
                 \e \ar@{{|}->}[r]  & \r^{\Sigma_{0}}_{W}(\p, \e),}
\]
such that 
\begin{itemize}

\item
\begin{align}
\label{eq: discrete series full orthogonal character}
\r^{\Sigma_{0}}_{W}(\p, \e \e_{0}) \cong \r^{\Sigma_{0}}_{W}(\p, \e) \otimes \x_{0}.
\end{align}

\item
$[\r^{\Sigma_{0}}_{W}(\p, \e)|_{G}] = 2 \r_{W}(\p , \bar{\e})$ if $G$ is special even orthogonal and $\S{\p}^{\Sigma_{0}} = \S{\p}$, or $\r_{W}(\p ,\bar{\e})$ otherwise.

\item For any $s \in \S{\p}^{\Sigma_{0}}$ but not in $\S{\p}$ and $(H, \p_{H}) \rightarrow (\p, s)$, the following identity holds

\begin{align*}
f^{H}_{W}(\p_{H}) = \sum_{\bar{\e} \in \D{\S{\p}}} \e(s) f_{G}(\r_{W}^{\Sigma_{0}}(\p, \e)), \,\,\,\,\,\,\,\,\,\,\, f \in C^{\infty}_{c}(G \rtimes \theta_{0}).
\end{align*}

\end{itemize}

\end{theorem}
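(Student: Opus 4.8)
\textbf{Proof proposal for Theorem~\ref{thm: discrete series full orthogonal group}.}

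The plan is to deduce everything from Arthur's results for the connected group $G$ (Theorem~\ref{thm: discrete series} and Theorem~\ref{thm: character relation full orthogonal group}) together with Clifford theory for the index-two inclusion $G \subseteq G^{\Sigma_{0}}$. First I would record the dichotomy at the level of $G$: for $\p \in \cPdt{G}$ and $\bar\e \in \D{\S{\p}}$ the representation $\r_{W}(\p,\bar\e)$ is irreducible, and by Theorem~\ref{thm: character relation full orthogonal group} the $\theta_{0}$-fixed members of $\cPkt{\p}$ are exactly those $\bar\e$ whose preimage in $\D{\S{\p}^{\Sigma_{0}}}$ is a single point, i.e.\ those for which $\e_{0}$ restricts trivially — equivalently the ambiguity is governed by whether $\S{\p}^{\Sigma_{0}}=\S{\p}$ or the index-two extension. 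Concretely: if $\S{\p}^{\Sigma_{0}}\neq\S{\p}$ then $\e_0$ is non-trivial on $\S\p$, each $\bar\e$ has a connected two-element fibre, the two preimages $\e,\e\e_{0}$ give a $\theta_0$-\emph{pair} of non-isomorphic irreducibles of $G$ swapped by $\theta_0$, and the induced representation $\Ind_{G}^{G^{\Sigma_0}}\r_W(\p,\bar\e)$ is irreducible of dimension matching $\r_W(\p,\bar\e)$ after restriction; if $\S\p^{\Sigma_0}=\S\p$ then $\e_0$ is trivial on $\S\p$, $\r_W(\p,\bar\e)$ is $\theta_0$-stable, and it extends in exactly two ways to $G^{\Sigma_0}$, the two extensions differing by the sign character $\x_0$ of $G^{\Sigma_0}/G$. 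This Clifford-theoretic bookkeeping produces a set $\Pkt\p^{\Sigma_0}$ and, choosing in the first case the label $\e$ (not $\bar\e$) for the induced representation and in the second case one of the two extensions for $\e$ and the other for $\e\e_0$, a bijection with $\D{\S\p^{\Sigma_0}}$ realising the restriction/branching formula in the second bullet and the twist-by-$\x_0$ identity~\eqref{eq: discrete series full orthogonal character} in the first bullet.

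The substance is the third bullet, the twisted endoscopic character identity, and here I would argue as follows. Fix $s\in\S\p^{\Sigma_0}\setminus\S\p$ and $(H,\p_H)\to(\p,s)$; this is the genuinely ``odd'' (spin-norm) endoscopic datum, and Theorem~\ref{thm: character relation full orthogonal group} with $\q=\p$ already gives
\[
f^{H}_{W}(\p_{H}) = \sum_{\substack{\bar\e\in\D{\S\p},\,[\r]\in\r_W(\p,\bar\e):\\ \r^{\theta_0}\cong\r}} \e(s\,s_{\p})\, f_{G}(\r^{\Sigma_0}),\qquad f\in C_c^\infty(G\rtimes\theta_0),
\]
where, since $\p$ is tempered, $s_\p=1$. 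The remaining task is purely formal: match the right-hand side above with $\sum_{\bar\e}\e(s)f_G(\r_W^{\Sigma_0}(\p,\e))$ under the bijection just constructed. When $\S\p^{\Sigma_0}\neq\S\p$ every $\r_W(\p,\bar\e)$ is part of a $\theta_0$-swapped pair, hence \emph{not} $\theta_0$-fixed, so \emph{both} sides are identically zero and there is nothing to check — I should make this vanishing explicit. When $\S\p^{\Sigma_0}=\S\p$, every $\bar\e$ lifts and every $\r_W(\p,\bar\e)$ is $\theta_0$-stable with a chosen extension $\r_W^{\Sigma_0}(\p,\e)$; then the two sides literally coincide term by term once one notes $s_\p=1$, the subtle point being that the extension $\r^{\Sigma_0}$ implicitly chosen inside Theorem~\ref{thm: character relation full orthogonal group} (which is pinned down by the \emph{Whittaker normalisation} of the twisted transfer and of $A_{\r_{\p_H}}(\theta_N)$) is exactly the one we are calling $\r_W^{\Sigma_0}(\p,\e)$. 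Verifying that these two normalisations agree — i.e.\ that the Clifford-theoretic extension labelled by $\e$ is the Whittaker-normalised one — is the main obstacle; I expect to resolve it by the same reduction to the generic (Whittaker) constituent that underlies Arthur's construction: among the members of $\Pkt\p^{\Sigma_0}$ there is a unique $\x_0$-generic one, it corresponds to the trivial character of $\S\p^{\Sigma_0}$, and compatibility on this one element forces compatibility everywhere by transport along the $\D{\S\p^{\Sigma_0}}$-torsor structure.

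Finally I would verify the claimed \emph{canonicity} and independence of choices: the only choice above is, in the $\S\p^{\Sigma_0}=\S\p$ case, which of the two $\x_0$-related extensions is labelled by the trivial character; fixing it to be the $\x_0$-generic extension (relative to the chosen $\Sigma_0$-stable Whittaker datum of $G$) removes the ambiguity, and~\eqref{eq: discrete series full orthogonal character} then holds because tensoring by $\x_0$ interchanges the two extensions of each $\theta_0$-stable member, which on parameters is exactly multiplication of $\e$ by $\e_0$ (recall $\e_0$ is, in this case, precisely the character of $\S\p^{\Sigma_0}$ detecting the $\x_0$-twist). The branching statement in bullet two follows from Frobenius reciprocity and Mackey: $[\r_W^{\Sigma_0}(\p,\e)|_G]$ equals $\r_W(\p,\bar\e)\oplus\r_W(\p,\bar\e)^{\theta_0}$ in the induced (non-stable) case, which collapses to $\r_W(\p,\bar\e)$ as an $\sH(G)$-module since $\r_W(\p,\bar\e)^{\theta_0}=\r_W(\p,\bar\e)$ inside that category, and equals $2\,\r_W(\p,\bar\e)$ exactly when $\r_W(\p,\bar\e)$ is already $\theta_0$-stable and $\Ind$ splits, i.e.\ when $\S\p^{\Sigma_0}=\S\p$ — matching the stated dichotomy. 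The archimedean-style subtleties do not arise here because in the discrete-series range $\r_W(\p,\bar\e)$ is irreducible, so all multiplicities are $1$ or $2$ and no genuinely new phenomenon appears beyond the Clifford theory.
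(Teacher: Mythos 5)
Your strategy — deduce the $G^{\Sigma_{0}}$-statement from Theorem~\ref{thm: discrete series} and Theorem~\ref{thm: character relation full orthogonal group} via Clifford theory for the index-two inclusion $G\subseteq G^{\Sigma_{0}}$ — is the right one, but the Clifford dichotomy you write down is exactly inverted, and this inversion propagates through the rest of the argument. You can read the correct dichotomy off the second bullet of the theorem itself, and it is confirmed in the paper by Corollary~\ref{cor: full orthogonal character elementary} (which applies to tempered parameters): for $G$ special even orthogonal and $\r_{W}(\p,\bar{\e})=[\r]$, one has $\r^{\theta_{0}}\cong\r$ if and only if $\S{\p}^{\Sigma_{0}}\neq\S{\p}$. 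So: when $\S{\p}^{\Sigma_{0}}\neq\S{\p}$ (equivalently $\e_{0}$ is a \emph{nontrivial} character of $\S{\p}^{\Sigma_{0}}$ — note $\e_{0}$ is \emph{always} trivial on $\S{\p}$, contrary to what you wrote), each $\bar{\e}$ has two preimages $\e,\e\e_{0}$, the underlying $G$-irreducible is $\theta_{0}$-stable, and the two preimages label its two extensions to $G^{\Sigma_{0}}$, which differ by $\x_{0}$; hence $[\r^{\Sigma_{0}}_{W}(\p,\e)|_{G}]=\r_{W}(\p,\bar{\e})$. When $\S{\p}^{\Sigma_{0}}=\S{\p}$ (equivalently $\e_{0}=1$, so $\D{\S{\p}^{\Sigma_{0}}}=\D{\S{\p}}$), each $\bar{\e}$ has a unique preimage, $\r_{W}(\p,\bar{\e})$ consists of a $\theta_{0}$-swapped pair, the unique preimage labels the irreducible induction $\Ind_{G}^{G^{\Sigma_{0}}}\r$, and $[\r^{\Sigma_{0}}_{W}(\p,\e)|_{G}]=2\,\r_{W}(\p,\bar{\e})$. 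You have these two cases swapped.

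Because of the inversion, your treatment of the third bullet also fails. When $\S{\p}^{\Sigma_{0}}\neq\S{\p}$ the members of the packet \emph{are} $\theta_{0}$-fixed, so the twisted endoscopic character identity is nontrivial, and that is precisely where the Whittaker normalization must be used to pin down which extension is $\r^{\Sigma_{0}}_{W}(\p,\e)$ and which is $\r^{\Sigma_{0}}_{W}(\p,\e\e_{0})$; your claim that ``both sides are identically zero'' there is wrong. Conversely, when $\S{\p}^{\Sigma_{0}}=\S{\p}$ there is no element $s\in\S{\p}^{\Sigma_{0}}\setminus\S{\p}$ at all, so the third bullet is vacuous in that case — you attempt to verify it precisely where there is nothing to verify. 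Once the dichotomy is corrected, the rest of your outline (Frobenius reciprocity/Mackey for the branching, and normalizing the labels by the $\x_{0}$-generic constituent) is sound.
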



\section{M{\oe}glin-Waldspurger's normalization}
\label{sec: M-W normalization}

The main reference for this section is \cite{MW:2006}. Suppose $\q \in \cQ{G}$, we denote the normalized action of $\theta_{N}$ on $\r_{\q}$ by $\theta(\q)$ for simplicity. If it is the Whittaker normalization, we denote it by $\theta_{W}(\q)$. Our aim is to introduce the normalization used by M{\oe}glin and Waldspurger, which we denote by $\theta_{MW}(\q)$, and to calculate explicitly the difference $\theta_{MW}(\q)/\theta_{W}(\q)$. 

To give the definition, we need to specify a class of parameters in $\cQ{G}$ called parameters with ``discrete diagonal restriction". To be more precise, $\q \in \cQ{G}$ is said to have {\bf discrete diagonal restriction} if $\q_{d} \in \cPdt{G}$. It is an easy exercise to see that this is equivalent to require $\q = \q_{p}$ and for any fixed $\rho$, the segments $[A, B]$ for $(\rho, A, B, \zeta) \in Jord_{\rho}(\q)$ are disjoint. In particular this implies $Jord(\q)$ is multiplicity free. Among this class of parameters, we call $\q$ is {\bf elementary} if $A = B$ for all $(\rho, A, B, \zeta) \in Jord(\q)$, or equivalently $inf(a, b) = 1$ for all $(\rho, a, b) \in Jord(\q)$. Note in the original terminology of M{\oe}glin and Waldspurger, elementary parameters are not required to have discrete diagonal restriction, nevertheless whenever they treat the elementary parameters, they include the condition of discrete diagonal restriction. This is the reason that we include the condition of discrete diagonal restriction in our definition of elementary parameters. For simplicity, if $\q$ is elementary we also denote by $Jord_{\rho}(\q_{d})$ the set of integers $\alpha$ such that $(\rho, \alpha, 1) \in Jord(\q_{d})$, and we write $(\rho, \alpha, \delta_{\alpha})$ for $(\rho, (\alpha-1)/2, (\alpha-1)/2, \delta_{\alpha}) \in Jord(\q)$.

We first give the definition of $\theta_{MW}(\q)$ for those elementary parameters. Suppose for all $(\rho, B, B, \zeta) \in Jord(\q)$, we have $B=0$, then simply let $\theta_{MW}(\q) = \theta_{W}(\q)$. Otherwise, we fix $\rho$ and let $B_{0}$ be the smallest number with $(\rho, B_{0}, B_{0}, \zeta_{0}) \in Jord(\q)$. If $B_{0} \neq 0$, we have 
\[
\r_{\q} \hookrightarrow \rho||^{\zeta_{0} B_{0}} \times \r_{\q'} \times \rho||^{-\zeta_{0} B_{0}}
\]
as the unique irreducible subrepresentation, where $Jord(\q')$ is obtained from $Jord(\q)$ by changing $(\rho, B_{0}, B_{0}, \zeta_{0})$ to $(\rho, B_{0} - 1, B_{0} - 1, \zeta_{0})$ when $B_{0} \geqslant 1$, or removing $(\rho, B_{0}, B_{0}, \zeta_{0})$ otherwise. Then we take $\theta_{MW}(\q)$ to be induced from $\theta_{MW}(\q')$. If $B_{0} = 0$, let $B_{1}$ be the next smallest number with $(\rho, B_{1}, B_{1}, \zeta_{1}) \in Jord(\q)$, and we have
\[
\r_{\q} \hookrightarrow <\zeta_{1}B_{1}, \cdots, 0>  \times \r_{\q'} \times <0, \cdots, -\zeta_{1}B_{1}>
\]
where $Jord(\q')$ is obtained from $Jord(\q)$ by removing $(\rho, B_{0}, B_{0}, \zeta_{0})$ and $(\rho, B_{1}, B_{1}, \zeta_{1})$. Note $\r_{\q}$ appears with multiplicity one in the induced representation, then again we take $\theta_{MW}(\q)$ to be induced from $\theta_{MW}(\q')$. This finishes the case of elementary parameters.

Next we consider the case of parameters with discrete diagonal restriction. We choose $(\rho, A, B, \zeta)$ with  $A > B$, then 
\[
\r_{\q} \hookrightarrow <\zeta B, \cdots, -\zeta A> \times \r_{\q'} \times <\zeta A, \cdots, -\zeta B>,
\]
as the unique irreducible subrepresentation, where $Jord(\q') = Jord(\q) \cup \{(\rho, A-1, B+1, \zeta)\} \backslash \{(\rho, A, B, \zeta)\}$. Then we take $\theta_{MW}(\q)$ to be induced from $\theta_{MW}(\q')$.

\begin{lemma}
In the set up above, $\theta_{MW}(\q)$ is independent of the choice of $(\rho, A, B, \zeta)$.
\end{lemma}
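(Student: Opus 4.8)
The plan is to argue by a confluence (``diamond'') lemma together with an induction on the complexity
\[
c(\q) := \sum_{(\rho, A, B, \zeta) \in Jord(\q)} (A - B),
\]
a non‑negative integer which strictly decreases under each reduction step in the definition; the base of the induction is the elementary case, already handled in the previous paragraph. If $\q$ admits at most one Jordan block with $A > B$, the choice in the reduction step is forced at every stage and there is nothing to prove. Otherwise I would fix two distinct blocks $\mathfrak{e}_{1} = (\rho_{1}, A_{1}, B_{1}, \zeta_{1})$ and $\mathfrak{e}_{2} = (\rho_{2}, A_{2}, B_{2}, \zeta_{2})$ with $A_{i} > B_{i}$, write $\tau_{i} := <\rho_{i}; \zeta_{i} B_{i}, \cdots, -\zeta_{i} A_{i}>$ for the associated segment representations, and let $\q''$ be the parameter obtained by performing both reductions; since the two operations act on distinct Jordan blocks they commute, $\q''$ again has discrete diagonal restriction, and $c(\q'') < c(\q)$. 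It then suffices to show that reducing $\mathfrak{e}_{1}$ first and reducing $\mathfrak{e}_{2}$ first produce the same operator, for any two admissible first choices can be compared by passing to their common further reduction and applying the induction hypothesis.

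Next I would iterate the defining embeddings: using exactness of parabolic induction one gets
\[
\r_{\q} \hookrightarrow \tau_{1} \times \tau_{2} \times \r_{\q''} \times \tau_{2}^{\vee} \times \tau_{1}^{\vee} \qquad\text{and}\qquad \r_{\q} \hookrightarrow \tau_{2} \times \tau_{1} \times \r_{\q''} \times \tau_{1}^{\vee} \times \tau_{2}^{\vee},
\]
with $\r_{\q}$ the unique irreducible subrepresentation in each case (a fact internal to the M\oe glin--Waldspurger formalism, also checkable directly from the Langlands data of $\r_{\q}$). By construction, $\theta_{MW}(\q)$ computed via $\mathfrak{e}_{1}$, resp. $\mathfrak{e}_{2}$, first is the restriction to $\r_{\q}$ of the operator induced from $\theta_{MW}(\q'')$ on the left‑hand, resp. right‑hand, representation above, $\theta_{MW}(\q'')$ being well defined by the induction hypothesis. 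So the task reduces to producing an isomorphism $\Phi$ between these two representations that intertwines their respective induced $\theta_{N}$‑actions: since $\r_{\q}$ occurs with multiplicity one in each, any such $\Phi$ sends the socle $\r_{\q}$ to the socle $\r_{\q}$ up to a scalar, and that scalar cancels in the resulting identity of operators on $\r_{\q}$.

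For $\Phi$ I would take the intertwining operator transposing $\tau_{1}$ and $\tau_{2}$ in the leftmost pair of factors and, symmetrically, $\tau_{2}^{\vee}$ and $\tau_{1}^{\vee}$ in the rightmost pair, leaving $\r_{\q''}$ fixed; that it is an isomorphism follows from the commutativity of parabolic induction on $GL(N)$ for associate parabolics, using that discrete diagonal restriction forces the segments $[A_{1}, B_{1}]$ and $[A_{2}, B_{2}]$ to be disjoint and hence, by a short case analysis on $\zeta_{1}, \zeta_{2}$, the segments underlying $\tau_{1}$ and $\tau_{2}$ to be nested, so that no linking phenomena occur. The hard part will be checking that this $\Phi$ really does intertwine the two induced $\theta_{N}$‑actions, i.e.\ that commuting $\tau_{1}$ past $\tau_{2}$ introduces no sign relative to the prescribed recipe for inducing $\theta_{MW}(\q'')$: this is precisely the $\theta_{N}$‑equivariance of the normalized swap intertwining operator for $GL(N)$, a standard but normalization‑sensitive point (the relevant reference being \cite{MW:2006}), whose verification must keep track of Steinberg‑ versus Speh‑type segments and of the positions of $\zeta_{1}$ and $\zeta_{2}$. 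Once this equivariance is in hand the two‑block case is settled, and feeding it into the induction on $c(\q)$ completes the proof; the analogous well‑definedness statement in the elementary case is immediate, the reductions there being either forced (for a fixed $\rho$) or involving segment representations with distinct cuspidal supports (for different $\rho$).
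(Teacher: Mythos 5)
You are attempting to prove a lemma that the paper itself does not prove: it simply cites (\cite{MW:2006}, Lemmas 1.12.1--1.12.2), where the argument is intertwined with the ``unipotent'' normalization $\theta_{U}$ introduced in Section 5 of that reference and recalled here in the proof of Theorem~\ref{thm: sign}. Your proposed route --- induction on $c(\q)=\sum (A-B)$, unrolling two first choices $\mathfrak{e}_{1},\mathfrak{e}_{2}$ to the common two-step reduction $\q''$, and comparing via the swap intertwining operator $\Phi$ --- is a reasonable and apparently different attack. The observation that discrete diagonal restriction forces the segments underlying $\tau_{1},\tau_{2}$ to be nested, hence unlinked, so that $\tau_{1}\times\tau_{2}$ is irreducible and $\Phi$ is an isomorphism, is correct, and so is the observation that the scalar ambiguity in the choice of $\Phi$ is harmless.

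There are, however, two genuine gaps. First, you identify the $\theta_{N}$-equivariance of $\Phi$ as ``the hard part'' and then defer it; but this is precisely the content of the lemma, so you have reformulated it, not proved it. The equivariance holds up to a scalar $\lambda$ by multiplicity one, and what must be shown is $\lambda=1$: the ambiguity in normalizing $\Phi$ cancels out of the final comparison of $\theta$-actions on $\r_{\q}$, but $\lambda$ does not, so it must actually be computed, and that computation has to track Steinberg versus Speh and the signs $\zeta_{1},\zeta_{2}$ --- exactly the delicate part. Second, you assert ``by construction'' that $\theta_{MW}(\q)$ computed via $\mathfrak{e}_{i}$ is the restriction to $\r_{\q}$ of the operator on the five-factor induction obtained by inducing $\theta_{MW}(\q'')$ in a single step. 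But the construction is genuinely two-step: one restricts the operator induced from $\theta_{MW}(\q'')$ on $\tau_{j}\times\r_{\q''}\times\tau_{j}^{\vee}$ to $\r_{\q_{i}'}$ first, and then induces again. Identifying the two-step and one-step induced $\theta$-actions is a transitivity statement involving the identifications $(\tau^{\vee})^{\theta}\cong\tau$ at each stage; it is plausible but is not literally part of the definition and must be verified separately. Until both points are supplied, the argument is a strategy, not a proof.
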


The proof of this Lemma can be found in (\cite{MW:2006}, Lemma 1.12.1 and Lemma 1.12.2).

Now we can consider the general case. If $\q \neq \q_{p}$, we can write
\[
\r_{\q} \cong \Big(\times_{(\rho, a, b)}Sp(St(\rho, a), b)\Big) \times \r_{\q_{p}} \times \Big(\times_{(\rho, a, b)}Sp(St(\rho, a), b)^{\vee}\Big),
\]
where $(\rho, a, b)$ are taken over $Jord(\q_{np})$, and hence define $\theta_{MW}(\q)$ to be induced from $\theta_{MW}(\q_{p})$. So without loss of generality, we may assume $\q = \q_{p}$. The general case requires us to put some total order $>_{\q}$ on $Jord(\q_{p})$ satisfying the following condition. 

($\mathcal{P}$): $\forall (\rho, A, B, \zeta), (\rho, A', B', \zeta') \in Jord(\q)$ with $A > A', B > B'$ and $\zeta = \zeta'$, then $(\rho, A, B, \zeta) >_{\q} (\rho, A', B', \zeta')$.

The necessity of this condition will be discussed in a moment. The point is there are many orders satisfying this condition and we do not have a privileged one in general. Nonetheless, for parameters with discrete diagonal restriction, we can always choose an order such that for any $\rho$, $(\rho, A, B, \zeta) >_{\q} (\rho, A', B', \zeta')$ if and only if $A > A'$. We call such orders the {\bf natural orders} for parameters with discrete diagonal restriction. For $\q \in \cQ{G}$ with order $>_{\q}$, we call $\q_{\gg} \in \cQ{G_{\gg}}$ with order $>_{\q_{\gg}}$ dominates $\q$ with respect to $>_{\q}$, if there is an order preserving bijection between $Jord(\q_{\gg})$ and $Jord(\q)$, which sends $(\rho, A_{\gg}, B_{\gg}, \zeta_{\gg})$ to $(\rho, A, B, \zeta)$ satisfying $A_{\gg} - A = B_{\gg} - B \geqslant 0$ and $\zeta_{\gg} = \zeta$. 

Suppose $(\q_{\gg}, >_{\q_{\gg}})$ dominates $(\q, >_{\q})$ with both orders satisfying condition $(\mathcal{P})$, and $\q_{\gg}$ has discrete diagonal restriction, we have 
\begin{align}
\label{eq: general representation GL}
\r_{\q} = \circ _{(\rho, A, B, \zeta) \in Jord(\q)} \Jac^{\theta}_{(\rho, A_{\gg}, B_{\gg}, \zeta) \mapsto (\rho, A, B, \zeta)} \r_{\q_{\gg}}
\end{align}
where the composition is taken in the decreasing order with respect to $>_{\q}$. Note if the condition ($\mathcal{P}$) is not satisfied, this may not be true. To describe the Jacquet functor in \eqref{eq: general representation GL}, we consider the following generalized segment:
\begin{align}
X^{\gg}_{(\rho, A, B, \zeta)} =
\label{eq: shift matrix}
  \begin{bmatrix}
   \zeta B_{\gg} & \cdots & \zeta (B + 1) \\
   \vdots &  & \vdots \\
   \zeta A_{\gg} & \cdots & \zeta (A+1)
  \end{bmatrix}.
\end{align}
Then the Jacquet functor in \eqref{eq: general representation GL} means applying $\Jac^{\theta}_{x}$ consecutively for $x$ ranges over $X^{\gg}_{(\rho, A, B, \zeta)}$ from top to bottom and from left to right.
Now it is clear how to define $\theta_{MW}(\q)$. We first choose an order $>_{\q}$ satisfying condition $(\mathcal{P})$, and then choose a dominating parameter $\q_{\gg}$ with discrete diagonal restriction and natural order. We define $\theta_{MW}(\q)$ to be the one induced from $\theta_{MW}(\q_{\gg})$ through the Jacquet module. The upshot is $\theta_{MW}(\q)$ only depends on the order $>_{\q}$, but not on the dominating parameter $\q_{\gg}$. This is explained in \cite{MW:2006}, and one can also see this when we derive the formula for $\theta_{MW}(\q) / \theta_{W}(\q)$.

Suppose $\q \in \cQ{G}$ and we fix an order $>_{\q}$ on $Jord(\q_{p})$ satisfying $(\mathcal{P})$, then we can define a set $\mathcal{Z}_{MW/W}(\q)$ of {\bf unordered pairs} of Jordan blocks from $Jord(\q_{p})$ as follows. 

\begin{definition}
\label{def: MW/W}
A pair $\{(\rho, a, b), (\rho', a', b') \in Jord(\q_{p})\}$ is contained in $\mathcal{Z}_{MW/W}(\q)$ if and only if $\rho = \rho'$, and it is in one of the following situations.

\begin{enumerate}

\item Case: $a, b$ are even and $a', b'$ are odd. 

   \begin{enumerate}
   
   \item If $\zeta_{a, b} = -1$ and 
             \(
            \begin{cases}
                      & \zeta_{a', b'} = -1 \Rightarrow (\rho, a, b) >_{\q} (\rho, a', b'), a > a'. \\
                      & \zeta_{a', b'} = +1 \Rightarrow a > a'.          
             \end{cases}
             \)
   \item If $\zeta_{a, b} = \zeta_{a', b'} = +1$ and \(
            \begin{cases}
                      & (\rho, a, b) >_{\q} (\rho, a', b') \Rightarrow a' > a, b > b'. \\
                      & (\rho, a, b) <_{\q} (\rho, a', b') \Rightarrow a > a', b > b'.          
             \end{cases}
             \)
                
   \end{enumerate}

\item Case : $a$ is odd, $b$ is even and $a'$ is even, $b'$ is odd. 

   \begin{enumerate}
   
   \item If $\zeta_{a, b} = -1$ and 
             \(
            \begin{cases}
                      & \zeta_{a', b'} = -1 \Rightarrow (\rho, a, b) >_{\q} (\rho, a', b'), a < a'. \\
                      & \zeta_{a', b'} = +1 \text{ and } \begin{cases}
                                                        (\rho, a, b) >_{\q} (\rho, a', b') \Rightarrow a < a'. \\        
                                                        (\rho, a, b) <_{\q} (\rho, a', b') \Rightarrow a > a'.
                                                      \end{cases}
             \end{cases}
             \)
   
   \item If $\zeta_{a, b} = \zeta_{a', b'} = +1$ and \(
            \begin{cases}
                      & (\rho, a, b) >_{\q} (\rho, a', b') \Rightarrow a < a', b > b'. \\
                      & (\rho, a, b) <_{\q} (\rho, a', b') \Rightarrow a > a', b > b'.          
             \end{cases}
             \)
                
   \end{enumerate}

\end{enumerate}

\end{definition}

\begin{theorem}
\label{thm: sign}
For $\q \in \cQ{G}$, $\theta_{MW}(\q)/\theta_{W}(\q) = (-1)^{|\mathcal{Z}_{MW/W}(\q)|}$.
\end{theorem}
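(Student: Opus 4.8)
The plan is to reduce the general statement to the case of parameters with discrete diagonal restriction by tracking how both normalizations behave under the Jacquet-module construction of $\theta_{MW}(\q)$ and $\theta_{W}(\q)$, and then to compute the sign in the discrete-diagonal-restriction case directly by an explicit induction following M{\oe}glin–Waldspurger's recursive definition. First I would set up the bookkeeping: fix $\q \in \cQ{G}$ with an order $>_{\q}$ satisfying $(\mathcal{P})$ and a dominating parameter $\q_{\gg}$ with discrete diagonal restriction and natural order, so that $\r_{\q}$ is obtained from $\r_{\q_{\gg}}$ by the iterated $\Jac^{\theta}$-operators of \eqref{eq: general representation GL}. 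The key observation is that the Whittaker normalization $\theta_{W}$ is compatible with parabolic induction and Jacquet restriction in a canonical way (this is built into Arthur's normalization of $A_{\r_{\q}}(\theta_{N})$ via \cite{Arthur:2013}, 2.2), and $\theta_{MW}$ is \emph{defined} to be induced through the same functors; hence the ratio $\theta_{MW}(\q)/\theta_{W}(\q)$ equals $\theta_{MW}(\q_{\gg})/\theta_{W}(\q_{\gg})$ provided the passage through each elementary Jacquet step introduces no sign discrepancy between the two normalizations. So the first real task is to check that $\mathcal{Z}_{MW/W}(\q)$ is unchanged (as a count mod $2$, indeed as a set under the order-preserving bijection) when one replaces $\q$ by a dominating $\q_{\gg}$: this is immediate from Definition~\ref{def: MW/W} since all the conditions there are phrased purely in terms of the parities of $a,b,a',b'$, the signs $\zeta_{a,b},\zeta_{a',b'}$, the inequalities among $a,a'$ and $b,b'$, and the order $>_{\q}$, and a dominating shift $A \mapsto A_{\gg}$, $B \mapsto B_{\gg}$ with $A_{\gg}-A = B_{\gg}-B \geqslant 0$ preserves all of these (note $a = A-B+1$ and the parities of $a,b$ are preserved since $A_{\gg}-A = B_{\gg}-B$, and $\zeta$ is preserved by definition of domination).

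Granting that reduction, I would then prove the discrete-diagonal-restriction case. Here there are two sub-steps. First, the passage from a general discrete-diagonal-restriction parameter to an elementary one: by the recursive clause ``choose $(\rho,A,B,\zeta)$ with $A>B$, then $\r_{\q} \hookrightarrow <\zeta B, \cdots, -\zeta A> \times \r_{\q'} \times <\zeta A, \cdots, -\zeta B>$'', where $\q'$ replaces $(\rho,A,B,\zeta)$ by $(\rho,A-1,B+1,\zeta)$. One must show that $\theta_{MW}$ and $\theta_{W}$, when compared through this single induction step, differ by exactly the sign accounting for whichever pairs in $\mathcal{Z}_{MW/W}$ involve the block $(\rho,A,B,\zeta)$ but not $(\rho,A-1,B+1,\zeta)$ — and crucially that shrinking $A \to A-1$, $B \to B+1$ does \emph{not} change membership of any pair in $\mathcal{Z}_{MW/W}$ (again because parities and $\zeta$ are unchanged and the relevant $a = A-B+1$ decreases by $2$, $b = A+B+1$ unchanged... wait — here $A$ decreases and $B$ increases so $a = A-B+1$ decreases by $2$ and $b = A+B+1$ is \emph{unchanged}; one checks Definition~\ref{def: MW/W} is insensitive to this because it compares $a$ with $a'$ only through which is larger, and the extreme values of $a$ are attained by the elementary-level block). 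The cleanest route is: iterate this step all the way down, so that comparing $\theta_{MW}(\q)$ to $\theta_{MW}(\q_{el})$ for the associated elementary $\q_{el}$ introduces no sign (each step is a ``symmetric'' $<\zeta B,\dots,-\zeta A>$-induction on both sides), and likewise for $\theta_{W}$; hence the ratio for $\q$ equals the ratio for $\q_{el}$, and $\mathcal{Z}_{MW/W}(\q) = \mathcal{Z}_{MW/W}(\q_{el})$ under the natural identification. Second, the elementary case: here $Jord(\q)$ consists of blocks $(\rho,B,B,\zeta)$, equivalently $(\rho,\alpha,\delta_{\alpha})$ with $\alpha = 2B+1$, and $\theta_{MW}$ is defined by peeling off the smallest $B_{0}$: either $\r_{\q} \hookrightarrow \rho||^{\zeta_{0}B_{0}} \times \r_{\q'} \times \rho||^{-\zeta_{0}B_{0}}$ (when $B_{0} \neq 0$) or $\r_{\q} \hookrightarrow <\zeta_{1}B_{1},\cdots,0> \times \r_{\q'} \times <0,\cdots,-\zeta_{1}B_{1}>$ (when $B_{0} = 0$).

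The heart of the argument — and the step I expect to be the main obstacle — is computing, in the elementary case, the sign $\theta_{W}(\q)/\theta_{W}(\q')$ for one such peeling step and comparing it to the recursive rule for $\theta_{MW}$. For $\theta_{MW}$ this ratio is $+1$ by construction. For $\theta_{W}$ one must use the explicit description of the Whittaker-normalized intertwining operator on a Speh/Steinberg-type induced representation: the sign by which $A_{\r_{\q}}(\theta_{N})$ differs from the operator induced from $A_{\r_{\q'}}(\theta_{N_{-}})$ through $\rho||^{x} \times (-) \times \rho||^{-x}$ is governed by the normalized intertwining operator $\rho||^{x} \times \rho||^{-x} \to \rho||^{-x} \times \rho||^{x}$ evaluated at the relevant point, whose sign is a Plancherel/Langlands–Shahidi normalizing factor, ultimately an explicit $\pm 1$ depending on the parity of $a = 2B+1$ (equivalently whether $\rho||^{x}$ with $x = \zeta_{0}B_{0}$ sits at a point of reducibility of $\rho||^{x}\rtimes 1$-type), on $b$, and on $\zeta$. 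Carefully matching these local sign contributions, summed over all peeling steps in some fixed order, with the combinatorial count $|\mathcal{Z}_{MW/W}(\q)|$ from Definition~\ref{def: MW/W} — and verifying order-independence of the total — is the computational core; I would organize it as an induction on $|Jord(\q_{p})|$, at each step isolating the contribution of the newly peeled block $(\rho,\alpha,\delta_{\alpha})$ against all previously peeled blocks $(\rho,\alpha',\delta_{\alpha'})$ and checking term-by-term against the four cases (1)(a), (1)(b), (2)(a), (2)(b) of the definition. The main obstacle is precisely bookkeeping these signs without error: the parity conditions ($a,b$ even vs.\ $a$ odd $b$ even, etc.) in Definition~\ref{def: MW/W} are exactly the conditions under which the Whittaker normalizing factor for interchanging two ``conjugate'' blocks is $-1$ rather than $+1$, and one must verify this matching is stable under the reductions above and does not depend on which admissible order $>_{\q}$ was chosen (since $\theta_{W}$ manifestly does not depend on $>_{\q}$, whereas $\mathcal{Z}_{MW/W}(\q)$ a priori does — so a parity-invariance lemma for $|\mathcal{Z}_{MW/W}(\q)|$ under change of order is also needed, or else the equality with the order-independent $\theta_{MW}(\q)/\theta_{W}(\q)$ furnishes it a posteriori, which is the more efficient packaging).
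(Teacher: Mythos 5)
Your plan founders on the central reduction step, and the error is not one of bookkeeping but of principle. You assert that $\theta_{MW}(\q)/\theta_W(\q) = \theta_{MW}(\q_\gg)/\theta_W(\q_\gg)$ because both normalizations are ``compatible with parabolic induction and Jacquet restriction,'' and correspondingly that $\mathcal{Z}_{MW/W}(\q) = \mathcal{Z}_{MW/W}(\q_\gg)$ under the order-preserving bijection is ``immediate from Definition~\ref{def: MW/W}.'' Both claims are false. The Whittaker normalization is \emph{not} preserved by the iterated $\Jac^\theta$-step in general: by (\cite{MW:2006}, Proposition 5.4.1), $\theta_W$ is preserved under the step $\q_\gg \leadsto \q$ on a block with $\zeta = -1$, but when $\zeta = +1$ it is the auxiliary ``unipotent'' normalization $\theta_U$, not $\theta_W$, that is preserved. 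The paper's proof is built entirely around this dichotomy: it introduces $\theta_U(\q)$, the comparison $\theta_W(\q)/\theta_U(\q) = (-1)^{|\mathcal{Z}(\q)|}$ from (\cite{MW:2006}, Theorem 5.5.7), and then shrinks $\q_\gg$ to $\q$ one block at a time, converting $\theta_{MW}/\theta_W$ into $\theta_{MW}/\theta_U$ on $\zeta = +1$ steps so as to exploit the preservation. The entire bulk of the proof is the verification that the resulting discrepancy factor $(-1)^{|\mathcal{Z}_k(\q^{k-1},\q^k)|}$ exactly matches the change in $|\mathcal{Z}_{MW/W}|$ under that step — so far from being constant, $\mathcal{Z}_{MW/W}$ changes at every $\zeta = +1$ shrinking step, and the case analysis $(*$-$1)$ through $(*$-$6)$ is precisely the accounting of which pairs appear and disappear.

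The specific flaw in your ``immediate'' claim: the dominating shift $A \mapsto A+T$, $B \mapsto B+T$ preserves $A-B$ and shifts $A+B$ by $2T$. Since $\{a,b\} = \{A-B+1, A+B+1\}$ with the assignment depending on $\zeta$, the shift preserves $\min(a,b)$ but raises $\max(a,b)$ by $2T$. Because distinct blocks are shifted by distinct $T$'s (as required to reach discrete diagonal restriction), the inequalities $a \gtrless a'$ and $b \gtrless b'$ in Definition~\ref{def: MW/W} are genuinely not preserved. This is not a gap you can patch with a parity argument; the set of pairs really changes, and the content of Theorem~\ref{thm: sign} is that it changes in lock-step with $\theta_U/\theta_W$. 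Without the unipotent normalization and the set $\mathcal{Z}(\q)$ — which your proposal never mentions — there is no mechanism in your argument to account for these changes, and your intended reduction to the elementary case is unsound from the start. Your final observation that $\theta_{MW}(\q)/\theta_W(\q)$ is order-independent while $\mathcal{Z}_{MW/W}(\q)$ is a priori not, and that equality furnishes the invariance a posteriori, is a correct and useful remark, but it presupposes the theorem rather than helping to prove it.
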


\begin{proof}
By our definition it suffices to prove the theorem for $\q = \q_{p}$, so we will assume $\q = \q_{p}$ from now on. The proof we give here is incomplete for we will need to refer to (\cite{MW:2006}, Section 5) for several ingredients. First, we would like to assume this theorem for $\q$ having discrete diagonal restriction and natural order, and we refer interested readers to (\cite{MW:2006}, Theorem 5.6.1). Secondly, we need to use the ``unipotent normalization" $\theta_{U}(\q)$ introduced in (\cite{MW:2006}, Section 5), and we will recall two of its most important properties as follows. 

The first property of $\theta_{U}(\q)$ is parallel with a similar property for the Whittaker normalization $\theta_{W}(\q)$. Let $(\rho, A, B, \zeta) \in Jord(\q)$, and we get $\q_{\gg}$ simply by changing $(\rho, A, B, \zeta)$ to $(\rho, A_{\gg}, B_{\gg}, \zeta)$ with $A_{\gg} - A = B_{\gg} - B \geqslant 0$ and $\zeta_{\gg} = \zeta$. Suppose $\r_{\q} = \Jac^{\theta}_{(\rho, A_{\gg}, B_{\gg}, \zeta) \mapsto (\rho, A, B, \zeta)} \r_{\q_{\gg}}$ with an action $\theta(\q)$ induced from some $\theta(\q_{\gg})$. Then if $\zeta = -1$ and $\theta(\q_{\gg}) = \theta_{W}(\q_{\gg})$, then $\theta(\q) = \theta_{W}(\q)$; if $\zeta = +1$ and $\theta(\q_{\gg}) = \theta_{U}(\q_{\gg})$, then $\theta(\q) = \theta_{U}(\q)$ (see \cite{MW:2006}, Proposition 5.4.1).

To state the second property, let us define $\mathcal{Z}(\q)$ to be the set of {\bf unordered pairs} $\{(\rho, a, b), (\rho, a', b')\}$ in $Jord(\q_{p})$ such that $sup(b, b')$ and $sup(a, a')$ are both even, and $inf(b, b')$ and $inf(a, a')$ are both odd. Then we have $\theta_{W}(\q) / \theta_{U}(\q) = (-1)^{|\mathcal{Z}(\q)|}$ (see \cite{MW:2006}, Theorem 5.5.7).

Now we can start the proof. Let us index the Jordan blocks in $Jord(\q)$ according to the order $>_{\q}$, i.e., $(\rho_{i}, a_{i}, b_{i}) >_{\q} (\rho_{i-1}, a_{i-1}, b_{i-1})$. And we assume $Jord(\q) = \{(\rho_{i}, a_{i}, b_{i})\}_{i=1}^{l}$. Let $\q_{\gg}$ be a dominating parameter with discrete diagonal restriction and natural order. Then we can also obtain $\q^{k}$ from $\q_{\gg}$ by changing $(\rho_{i}, a_{\gg, i}, b_{\gg, i})$ to $(\rho_{i}, a_{i}, b_{i})$ for $1 \leqslant i \leqslant k$. In particular, we can set $\q^{0} = \q_{\gg}$. Let $\Jac^{k} := \Jac^{\theta}_{(\rho_{k}, a_{\gg, k}, b_{\gg, k}) \mapsto (\rho_{k}, a_{k}, b_{k})}$. Then we have the following sequence:
\[
\xymatrix{ \r_{\q_{\gg}} = \r_{\q^{0}} \ar[r]^{\quad \Jac^{1}} & \cdots \ar[r]^{\Jac^{k}} & \r_{\q^{k}} \ar[r]^{\Jac^{k+1}} & \cdots \ar[r]^{\Jac^{l} \quad} & \r_{\q^{l}} = \r_{\q}. }
\]
From the properties of $\theta_{W}(\q)$ and $\theta_{U}(\q)$ that we have recalled above, we can compute $\theta_{MW}(\q^{k}) / \theta_{W}(\q^{k})$. If $\zeta_{k} = -1$, we have $\theta_{MW}(\q^{k}) / \theta_{W}(\q^{k}) = \theta_{MW}(\q^{k-1}) / \theta_{W}(\q^{k-1})$. If $\zeta_{k} = +1$, we have
\begin{align*}
\theta_{MW}(\q^{k}) / \theta_{W}(\q^{k}) & = \theta_{MW}(\q^{k}) / \theta_{U}(\q^{k}) \cdot \theta_{U}(\q^{k}) / \theta_{W}(\q^{k}) = \theta_{MW}(\q^{k-1}) / \theta_{U}(\q^{k-1}) \cdot \theta_{U}(\q^{k}) / \theta_{W}(\q^{k}) \\
& = \theta_{MW}(\q^{k-1}) / \theta_{W}(\q^{k-1}) \cdot \theta_{W}(\q^{k-1}) / \theta_{U}(\q^{k-1}) \cdot \theta_{U}(\q^{k}) / \theta_{W}(\q^{k}) \\
& = \theta_{MW}(\q^{k-1}) / \theta_{W}(\q^{k-1}) \cdot (-1)^{|\mathcal{Z}(\q^{k-1})|} \cdot (-1)^{|\mathcal{Z}(\q^{k})|}.
\end{align*}
Moreover, let $\mathcal{Z}_{k}(\q^{k-1})$ (resp. $\mathcal{Z}_{k}(\q^{k})$) be the subset of pairs in $\mathcal{Z}(\q^{k-1})$ (resp. $\mathcal{Z}(\q^{k})$) containing $(\rho_{k}, a_{\gg, k}, b_{\gg, k})$ (resp. $(\rho_{k}, a_{k}, b_{k})$), then
\begin{align*}
\theta_{MW}(\q^{k}) / \theta_{W}(\q^{k}) & = \theta_{MW}(\q^{k-1}) / \theta_{W}(\q^{k-1}) \cdot (-1)^{|\mathcal{Z}_{k}(\q^{k-1})| + |\mathcal{Z}_{k}(\q^{k})|} \\
& = \theta_{MW}(\q^{k-1}) / \theta_{W}(\q^{k-1}) \cdot (-1)^{|(\mathcal{Z}_{k}(\q^{k-1}) \cup \mathcal{Z}_{k}(\q^{k})) \backslash (\mathcal{Z}_{k}(\q^{k-1}) \cap \mathcal{Z}_{k}(\q^{k}))|},
\end{align*}
where we identify $(\rho_{k}, a_{\gg, k}, b_{\gg, k})$ with $(\rho_{k}, a_{k}, b_{k})$ in taking the intersection and union. To simplify the formula above, let us denote by $\mathcal{Z}_{k}(\q^{k-1}, \q^{k})$ the set $(\mathcal{Z}_{k}(\q^{k-1}) \cup \mathcal{Z}_{k}(\q^{k})) \backslash (\mathcal{Z}_{k}(\q^{k-1}) \cap \mathcal{Z}_{k}(\q^{k}))$. 

The proof is given by induction on $k$. So let us assume the theorem is valid for $\theta_{MW}(\q_{k}) / \theta_{W}(\q_{k})$ with $0 \leqslant k \leqslant s$. Note when $k=0$, this is our assumption at the beginning. We need to prove the theorem for $k = s+1$. According to our formula, we need to divide into two cases with respect to the parity of $a_{s+1} + b_{s+1}$. Here we will only treat the case when $a_{s+1} + b_{s+1}$ is even, while the other case is similar. Let $\rho = \rho_{s+1}$. From our previous discussion, we have 
\[
\theta_{MW}(\q^{s+1}) / \theta_{W}(\q^{s+1}) = \begin{cases}
                                                                   \theta_{MW}(\q^{s}) / \theta_{W}(\q^{s}), \text{ if $\zeta_{s+1} = -1$, } \\
                                                                   \theta_{MW}(\q^{s}) / \theta_{W}(\q^{s}) \cdot (-1)^{|\mathcal{Z}_{s+1}(\q^{s}, \q^{s+1})|}, \text{ if $\zeta_{s+1} = +1$.}
                                                                   \end{cases}
\]

We first consider the case when $\zeta_{s+1} = -1$. Suppose $\{(\rho, a_{\gg, s+1}, b_{\gg, s+1}), (\rho, a, b)\}$ belongs to $\mathcal{Z}_{MW/W}(\q^{s})$, then by our definition we are in one of the following situations.

\begin{enumerate}

\item If $(\rho, a_{\gg, s+1}, b_{\gg, s+1}) >_{\q^{s}} (\rho, a, b), 
         \begin{cases}
         a_{\gg, s+1} \text{ even; } a, b \text{ odd } \Rightarrow a_{\gg, s+1} > a. \\
         a_{\gg, s+1} \text{ odd; } a, b \text{ even } \Rightarrow \text{ impossible. }                                                                                       
         \end{cases}$

\item If $(\rho, a_{\gg, s+1}, b_{\gg, s+1}) <_{\q^{s}} (\rho, a, b), 
         \begin{cases}
         a_{\gg, s+1} \text{ even; } a, b \text{ odd } \Rightarrow a_{\gg, s+1} > a, \zeta_{a, b} = +1. \\
         a_{\gg, s+1} \text{ odd; } a, b \text{ even } \Rightarrow a_{\gg, s+1} < a, \zeta_{a, b} = -1.                                                                                     
         \end{cases}$                                                                                     

\end{enumerate}
Note $a_{\gg, s+1} = a_{s+1}$, so in all the situations we have $\{(\rho, a_{s+1}, b_{s+1}), (\rho, a, b)\}$ belonging to $\mathcal{Z}_{MW/W}(\q^{s+1})$ as well. In the same way, one can show 
\[
\{(\rho, a_{s+1}, b_{s+1}), (\rho, a, b)\} \in \mathcal{Z}_{MW/W}(\q^{s+1}) \Rightarrow \{(\rho, a_{\gg, s+1}, b_{\gg, s+1}), (\rho, a, b)\} \in \mathcal{Z}_{MW/W}(\q^{s}).
\]
This means our formula is valid for $k = s+1$ in this case.

Next we come to the more difficult case $\zeta_{s+1} = +1$. Similarly, we first suppose $\{(\rho, a_{\gg, s+1}, b_{\gg, s+1}), (\rho, a, b)\}$ belongs to $\mathcal{Z}_{MW/W}(\q^{s})$, and we will be in one of the following situations.

\begin{enumerate}

\item If $(\rho, a_{\gg, s+1}, b_{\gg, s+1}) >_{\q^{s}} (\rho, a, b), 
         \begin{cases}
         a_{\gg, s+1} \text{ even; } a, b \text{ odd } \Rightarrow a_{\gg, s+1} < a,  b_{\gg, s+1} > b. \\
         a_{\gg, s+1} \text{ odd; } a, b \text{ even } \Rightarrow \begin{cases}
                                                                                                                    a_{\gg, s+1} < a, \zeta_{a, b} = -1. \\
                                                                                                                    a_{\gg, s+1} < a, b_{\gg, s+1} < b, \zeta_{a, b} = +1.                                                                                                                  
                                                                                                                    \end{cases}
         \end{cases}$

\item If $(\rho, a_{\gg, s+1}, b_{\gg, s+1}) <_{\q^{s}} (\rho, a, b), 
         \begin{cases}
         a_{\gg, s+1} \text{ even; } a, b \text{ odd } \Rightarrow a_{\gg, s+1} > a, b_{\gg, s+1} > b, \zeta_{a, b} = +1. \,\,\,\, (*-1)\\
         a_{\gg, s+1} \text{ odd; } a, b \text{ even } \Rightarrow \begin{cases}
                                                                                                                    a_{\gg, s+1} < a, \zeta_{a, b} = -1. \\
                                                                                                                    a_{\gg, s+1} > a, b_{\gg, s+1} < b, \zeta_{a, b} = +1. (*-2)                                                                                                                   
                                                                                                                    \end{cases}                                                                                     
         \end{cases}$                                                                                     

\end{enumerate}
Note $a_{s+1} < a_{\gg, s+1}$ and $b_{s+1} = b_{\gg, s+1}$, so $\{(\rho, a_{s+1}, b_{s+1}), (\rho, a, b)\} \in \mathcal{Z}_{MW/W}(\q^{s+1})$ in all the situations except for $(*-1)$ and $(*-2)$ with the additional condition $a_{s+1} < a$. It is easy to check in the exceptional cases, either $\{(\rho, a_{\gg, s+1}, b_{\gg, s+1}), (\rho, a, b)\}$ or $\{(\rho, a_{s+1}, b_{s+1}), (\rho, a, b)\}$ belongs to $\mathcal{Z}_{s+1}(\q^{s}, \q^{s+1})$.

Conversely, if we suppose $\{(\rho, a_{s+1}, b_{s+1}), (\rho, a, b)\}$ belongs to $\mathcal{Z}_{MW/W}(\q^{s+1})$, then we will be in one of the following situations.

\begin{enumerate}

\item If $(\rho, a_{s+1}, b_{s+1}) >_{\q^{s+1}} (\rho, a, b), 
         \begin{cases}
         a_{s+1} \text{ even; } a, b \text{ odd } \Rightarrow a_{s+1} < a, b_{s+1} > b. \quad \quad \quad \quad \quad \quad \, (*-3) \\
         a_{s+1} \text{ odd; } a, b \text{ even } \Rightarrow \begin{cases}
                                                                                                                    a_{s+1} < a, \zeta_{a, b} = -1. \quad \quad \quad \quad \quad (*-4) \\
                                                                                                                    a_{s+1} < a, b_{s+1} < b, \zeta_{a, b} = +1. \quad (*-5)                                                                                                                   
                                                                                                                    \end{cases}
         \end{cases}$

\item If $(\rho, a_{s+1}, b_{s+1}) <_{\q^{s+1}} (\rho, a, b), 
         \begin{cases}
         a_{s+1} \text{ even; } a, b \text{ odd } \Rightarrow a_{s+1} > a, b_{s+1} > b, \zeta_{a, b} = +1. \\
         a_{s+1} \text{ odd; } a, b \text{ even } \Rightarrow \begin{cases}
                                                                                                                    a_{s+1} < a, \zeta_{a, b} = -1. \quad \quad \quad \quad \quad (*-6)\\
                                                                                                                    a_{s+1} > a, b_{s+1} < b, \zeta_{a, b} = +1.                                                                                                                    
                                                                                                                    \end{cases}                                                                                     
         \end{cases}$                                                                                     

\end{enumerate}
We find $\{(\rho, a_{\gg, s+1}, b_{\gg, s+1}), (\rho, a, b)\} \notin \mathcal{Z}_{MW/W}(\q^{s+1})$ only for $(*-3), (*-4), (*-5), (*-6)$ with the additional condition $a_{\gg, s+1} > a$. Again, it is easy to check in these cases, either $\{(\rho, a_{\gg, s+1}, b_{\gg, s+1}), (\rho, a, b)\}$ or $\{(\rho, a_{s+1}, b_{s+1}), (\rho, a, b)\}$ belongs to $\mathcal{Z}_{s+1}(\q^{s}, \q^{s+1})$.

Finally, it suffices to figure out the set $\mathcal{Z}_{s+1}(\q^{s}, \q^{s+1})$, and show it consists of exactly those pairs that we have encountered in $(*-1)$-$(*-6)$ with their additional conditions respectively. So let us suppose either $\{(\rho, a_{\gg, s+1}, b_{\gg, s+1}), (\rho, a, b)\}$ or $\{(\rho, a_{s+1}, b_{s+1}), (\rho, a, b)\}$ belongs to $\mathcal{Z}_{s+1}(\q^{s}, \q^{s+1})$, and we list all the possibilities.

\begin{enumerate}

\item If $(\rho, a_{\gg, s+1}, b_{\gg, s+1}) >_{\q^{s}} (\rho, a, b), \\
        \begin{cases}
        a_{\gg, s+1} \text{ even; } a, b \text{ odd } \Rightarrow a_{\gg, s+1} > a > a_{s+1}, b_{\gg, s+1} > b.  \text{\quad \quad \quad \quad \quad \quad \, $(*-3)$ with $a_{\gg, s+1} > a$ } \\
        a_{\gg, s+1} \text{ odd; } a, b \text{ even } \Rightarrow \begin{cases}
                                                                                                                   a_{\gg, s+1} > a > a_{s+1}, b_{\gg, s+1} < b, \zeta_{a, b} = +1. \text{ \quad $(*-5)$ with $a_{\gg, s+1} > a$ } \\                                                                                                                  
                                                                                                                   a_{\gg, s+1} > a > a_{s+1}, b_{\gg, s+1} < b, \zeta_{a, b} = -1. \text{ \quad $(*-4)$ with $a_{\gg, s+1} > a$ }                                                                                                                                                                                                                                      
                                                                                                                   \end{cases}       
        \end{cases}$
          
\item If $(\rho, a_{\gg, s+1}, b_{\gg, s+1}) <_{\q^{s}} (\rho, a, b), \\
        \begin{cases}
        a_{\gg, s+1} \text{ even; } a, b \text{ odd } \Rightarrow a_{\gg, s+1} > a > a_{s+1},  b_{\gg, s+1} > b.  \text{ \quad \quad \quad \quad \quad \quad $\, (*-1)$ with $a_{s+1} < a$ } \\
        a_{\gg, s+1} \text{ odd; } a, b \text{ even } \Rightarrow \begin{cases}
                                                                                                                   a_{\gg, s+1} > a > a_{s+1}, b_{\gg, s+1} < b, \zeta_{a, b} = +1. \text{ \quad $(*-2)$ with $a_{s+1} < a$ } \\
                                                                                                                   a_{\gg, s+1} > a > a_{s+1}, b_{\gg, s+1} < b, \zeta_{a, b} = -1. \text{ \quad $(*-6)$ with $a_{\gg, s+1} > a$ }                                                                                                                   
                                                                                                                   \end{cases}       
        \end{cases}$

\end{enumerate}
Note each case here corresponds exactly to one of $(*-1)$-$(*-6)$ with the required additional conditions, as we indicate on their right. This finishes the proof.

\end{proof}

\begin{remark}
There is a slight difference between our definition of $\mathcal{Z}_{MW/W}(\q)$ (also $\mathcal{Z}(\q)$) and that in \cite{MW:2006}, namely they use ordered pairs rather than unordered pairs. Moreover, this theorem slightly generalizes the formula in \cite{MW:2006} in the sense that we only require $>_{\q}$ satisfies $(\mathcal{P})$.
\end{remark}

We would also like to see the effect of M{\oe}glin-Waldspurger's normalization on the parametrizations of representations inside Arthur packets. To do so, we need the following definition.

\begin{definition}
\label{def: MW/W}
For $\q \in \cQ{G}$ and $(\rho, a, b) \in Jord(\q_{p})$, 
\(
\mathcal{Z}_{MW/W}(\q)_{(\rho, a, b)} := \{(\rho', a', b') \in Jord(\q_{p}) : \text{ the pair of } (\rho, a, b) \text { and } (\rho', a', b') \text { lies in } \mathcal{Z}_{MW/W}(\q)\},
\)
and
\(
\e^{MW/W}_{\q}(\rho, a, b) := (-1)^{|\mathcal{Z}_{MW/W}(\q)_{(\rho, a, b)}|}.
\)
\end{definition}




\begin{proposition}
\label{prop: MW/W discrete}

Suppose $\q \in \cQ{G}$ has discrete diagonal restriction.


\begin{enumerate}

\item $\e^{MW/W}_{\q} \in \D{\S{\q}^{\Sigma_{0}}}$ and $\e^{MW/W}_{\q}(s_{\q}) = \theta_{MW}(\q) / \theta_{W}(\q)$.

\item If we write $\r_{MW}(\q, \bar{\e}) := \r_{W}(\q, \bar{\e} \bar{\e}^{MW/W}_{\q})$ for $\bar{\e} \in \D{\S{\q}}$, then the character identities in Theorem~\ref{thm: Arthur packet} can be rewritten as follows.
        
        \begin{enumerate}
        
        \item Let 
        \[
        f_{MW}(\q) := \sum_{\bar{\e} \in \D{\S{\q}}} \bar{\e}(s_{\q})f_{G}(\r_{MW}(\q, \bar{\e})), \,\,\,\,\,\,\,\,\, f \in \sH(G).
        \]
        Then 
        \begin{align}
        \label{eq: MW character relation GL(N)}
        f^{G}_{MW}(\q) = f_{N^{\theta}, MW}(\r_{\q}), \,\,\,\,\,\,\,\,\,\,\,\,\,\,   f \in C^{\infty}_{c}(GL(N)).
        \end{align}
        
        \item  If $s \in \S{\q}$ and $(H, \q_{H}) \rightarrow (\q, s)$, 
        then we can define a stable distribution $f_{MW}(\q_{H})$ on $H$ as in (a), and the
        following identity holds
        \begin{align}
        \label{eq: MW character relation}
        f^{H}_{MW}(\q_{H}) = \sum_{\bar{\e} \in \D{\S{\q}}} \bar{\e}(ss_{\q})f_{G}(\r_{MW}(\q, \bar{\e})), \,\,\,\,\,\,\,\,\,\,\, f \in \sH(G).
        \end{align}
       
        \end{enumerate}

\end{enumerate}

\end{proposition}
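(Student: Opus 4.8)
The plan is to reduce the whole proposition to Theorem~\ref{thm: sign} together with the formal shape of the identities in Theorem~\ref{thm: Arthur packet}. Throughout one uses that discrete diagonal restriction forces $\q=\q_{p}$ with $Jord(\q)$ multiplicity free, so that $\S{\q^{>}}=\S{\q}$ and $\e^{MW/W}_{\q}$, $s_{\q}^{>}$, and $s$ may all be regarded as $\Two$-valued functions on $Jord(\q_{p})=Jord(\q)_{p}$, with pairing $\e(s)=\prod_{s(\rho,a,b)=-1}\e(\rho,a,b)$; here $\e^{MW/W}_{\q}(s)$ does not depend on the representative of $s$ once part (1) is known.

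For part (1) I would argue by bookkeeping. The defining relation of $\D{\S{\q}^{\Sigma_{0}}}$ is $\prod_{(\rho,a,b)\in Jord(\q_{p})}\e^{MW/W}_{\q}(\rho,a,b)=1$; since $\sum_{(\rho,a,b)}|\mathcal{Z}_{MW/W}(\q)_{(\rho,a,b)}|$ counts each unordered pair of $\mathcal{Z}_{MW/W}(\q)$ exactly twice it equals $2|\mathcal{Z}_{MW/W}(\q)|$, so the product is $+1$ and $\e^{MW/W}_{\q}\in\D{\S{\q}^{\Sigma_{0}}}$. For the pairing with $s_{\q}$, multiplicity freeness makes $s_{\q}^{>}$ the function equal to $-1$ exactly on those $(\rho,a,b)$ with $b$ even, whence $\e^{MW/W}_{\q}(s_{\q})=(-1)^{N}$ with $N=\sum_{b\text{ even}}|\mathcal{Z}_{MW/W}(\q)_{(\rho,a,b)}|$; inspection of Definition~\ref{def: MW/W} shows each pair of $\mathcal{Z}_{MW/W}(\q)$ contains exactly one block with $b$ even (the one with $a,b$ even in Case~(1), the one with $a$ odd in Case~(2)), so $N=|\mathcal{Z}_{MW/W}(\q)|$ and $\e^{MW/W}_{\q}(s_{\q})=(-1)^{|\mathcal{Z}_{MW/W}(\q)|}=\theta_{MW}(\q)/\theta_{W}(\q)$ by Theorem~\ref{thm: sign}.

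For part (2)(a) I would reindex. Substituting $\bar{\e}'=\bar{\e}\,\bar{\e}^{MW/W}_{\q}$ in the definition of $f_{MW}(\q)$ and using $(\bar{\e}^{MW/W}_{\q})^{2}=1$ together with part (1) gives $f_{MW}(\q)=\bar{\e}^{MW/W}_{\q}(s_{\q})\,f_{W}(\q)=(\theta_{MW}(\q)/\theta_{W}(\q))\,f_{W}(\q)$ as a functional on $\sH(G)$. Applying the (linear) twisted endoscopic transfer and \eqref{eq: nontempered character relation GL(N)} then gives $f^{G}_{MW}(\q)=(\theta_{MW}(\q)/\theta_{W}(\q))\,f_{N^{\theta},W}(\r_{\q})$; since the twisted character of $\r_{\q}$ is linear in the normalized intertwining operator and $\theta_{MW}(\q)/\theta_{W}(\q)$ is exactly the ratio of the two normalizations, the right side equals $f_{N^{\theta},MW}(\r_{\q})$, which is \eqref{eq: MW character relation GL(N)}.

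Part (2)(b) is where the genuine work lies, and I expect it to be the main obstacle. The same reindexing turns \eqref{eq: nontempered character relation} into
\[
f^{H}_{W}(\q_{H})=\bar{\e}^{MW/W}_{\q}(s)\,\frac{\theta_{MW}(\q)}{\theta_{W}(\q)}\sum_{\bar{\e}\in\D{\S{\q}}}\bar{\e}(ss_{\q})\,f_{G}(\r_{MW}(\q,\bar{\e})),
\]
while applying part (1) and the reindexing of (2)(a) to the parameter $\q_{H}$ (which again has discrete diagonal restriction), together with linearity of the endoscopic transfer, gives $f^{H}_{W}(\q_{H})=(\theta_{MW}(\q_{H})/\theta_{W}(\q_{H}))\,f^{H}_{MW}(\q_{H})$. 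Comparing, identity \eqref{eq: MW character relation} becomes equivalent to the combinatorial sign identity
\[
\frac{\theta_{MW}(\q_{H})}{\theta_{W}(\q_{H})}=\bar{\e}^{MW/W}_{\q}(s)\cdot\frac{\theta_{MW}(\q)}{\theta_{W}(\q)}.
\]
To prove this I would expand all three factors by Theorem~\ref{thm: sign}. Writing $Jord(\q)=Jord_{+}\sqcup Jord_{-}$ for the partition attached to $s$ as in Example~\ref{eg: endoscopy}, choosing on $\q_{I},\q_{II}$ the orders restricted from $>_{\q}$ (still satisfying $(\mathcal{P})$, with $\q_{I},\q_{II}$ again of discrete diagonal restriction), Theorem~\ref{thm: sign} applied to $\q_{I},\q_{II}$ gives $\theta_{MW}(\q_{H})/\theta_{W}(\q_{H})=(-1)^{|\mathcal{Z}_{MW/W}(\q_{I})|+|\mathcal{Z}_{MW/W}(\q_{II})|}$. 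Because the membership criterion in Definition~\ref{def: MW/W} for an unordered pair depends only on $\rho$, on $a,b,a',b'$, on $\zeta_{a,b},\zeta_{a',b'}$, and on the relative order of the two blocks, all of which are preserved by the (possibly trivial) character twist $\rho\mapsto\rho\otimes\eta_{i}$ relating $Jord_{+},Jord_{-}$ to $Jord(\q_{I}),Jord(\q_{II})$, a pair lying entirely in $Jord_{+}$ (resp. $Jord_{-}$) lies in $\mathcal{Z}_{MW/W}(\q)$ iff the corresponding pair lies in $\mathcal{Z}_{MW/W}(\q_{I})$ (resp. $\mathcal{Z}_{MW/W}(\q_{II})$); hence $|\mathcal{Z}_{MW/W}(\q)|=|\mathcal{Z}_{MW/W}(\q_{I})|+|\mathcal{Z}_{MW/W}(\q_{II})|+m$, where $m$ counts the pairs with one member in each of $Jord_{+},Jord_{-}$. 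Finally $\bar{\e}^{MW/W}_{\q}(s)=\prod_{(\rho,a,b)\in Jord_{-}}\e^{MW/W}_{\q}(\rho,a,b)=(-1)^{\sum_{Jord_{-}}|\mathcal{Z}_{MW/W}(\q)_{(\rho,a,b)}|}=(-1)^{m}$, since pairs inside $Jord_{-}$ are counted twice and mixed pairs once. Substituting the three evaluations proves the sign identity, and with it \eqref{eq: MW character relation}. The delicate point, whose verification will mirror the case analysis in the proof of Theorem~\ref{thm: sign}, is precisely the compatibility claim that restricting the order and twisting by $\eta_{i}$ leave $\mathcal{Z}_{MW/W}$ unchanged pair by pair.
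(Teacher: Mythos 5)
Your proof follows the paper's argument essentially step by step: the parity counting for part (1), the reindexing $\bar\e\mapsto\bar\e\,\bar\e^{MW/W}_{\q}$ in (2)(a), and the reduction of (2)(b) to the sign identity $\bar\e^{MW/W}_{\q}(s)=\theta_{MW}(\q_{H})/\theta_{W}(\q_{H})\cdot\theta_{MW}(\q)/\theta_{W}(\q)$ verified via Theorem~\ref{thm: sign} and the decomposition of $\mathcal{Z}_{MW/W}(\q)$ into pairs inside $Jord_{+}$, inside $Jord_{-}$, and mixed. You spell out a couple of points the paper leaves implicit (that each pair in $\mathcal{Z}_{MW/W}(\q)$ has exactly one member with $b$ even, and that restriction of the order together with the $\eta_{i}$-twist identifies $\mathcal{Z}_{MW/W}(\q_{i})$ with the sub-collection of pairs in $\mathcal{Z}_{MW/W}(\q)$ lying entirely in $Jord_{\pm}$), but the route and the key lemma are the same.
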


\begin{proof}
For part (1), we have
\begin{align*}
\prod_{(\rho, a, b) \in Jord(\q)} \e_{\q}^{MW/W}(\rho, a, b) & = \prod_{(\rho, a, b) \in Jord(\q)} (-1)^{|\mathcal{Z}_{MW/W}(\q)_{(\rho, a, b)}|} \\
& = (-1)^{\sum_{(\rho, a, b) \in Jord(\q)}|\mathcal{Z}_{MW/W}(\q)_{(\rho, a, b)}|} = (-1)^{2|\mathcal{Z}_{MW/W}(\q)|} = 1,
\end{align*}
and hence $\e_{\q}^{MW/W}$ defines a character of $\S{\q}^{\Sigma_{0}}$. To compute $\e_{\q}^{MW/W}(s_{\q})$, let us recall
\[
s_{\q}(\rho, a, b) = \begin{cases}
                              -1, & \text{ if $b$ is even,} \\
                               1, & \text{ if $b$ is odd,} 
                              \end{cases}
\]
for $(\rho, a, b) \in Jord(\q)$. Then
\begin{align*}
\e_{\q}^{MW/W}(s_{\q}) &= \prod_{\substack{(\rho, a, b) \in Jord(\q) \\ b \text{ is even}}} \e_{\q}^{MW/W}(\rho, a, b) = (-1)^{\sum_{\substack{(\rho, a, b) \in Jord(\q) \\ b \text{ is even}}} |\mathcal{Z}_{MW/W}(\q)_{(\rho, a, b)}|} \\
&= (-1)^{|\mathcal{Z}_{MW/W}(\q)|} = \theta_{MW}(\q) / \theta_{W}(\q).
\end{align*}

Now we consider part (2). First by definition we have for $f \in \sH(G)$
\begin{align*}
f_{MW}(\q) & = \sum_{\bar{\e} \in \D{\S{\q}}} \bar{\e}(s_{\q})f_{G}(\r_{MW}(\q, \bar{\e})) = \sum_{\bar{\e} \in \D{\S{\q}}} \bar{\e}(s_{\q})f_{G}(\r_{W}(\q, \bar{\e} \bar{\e}^{MW/W}_{\q})) \\
& = \sum_{\bar{\e} \in \D{\S{\q}}} \bar{\e} \bar{\e}^{MW/W}_{\q}(s_{\q}) f_{G}(\r_{W}(\q, \bar{\e})) = \sum_{\bar{\e} \in \D{\S{\q}}} \bar{\e}(s_{\q}) \bar{\e}^{MW/W}_{\q}(s_{\q}) f_{G}(\r_{W}(\q, \bar{\e})) \\
& = \bar{\e}^{MW/W}_{\q}(s_{\q}) \sum_{\bar{\e} \in \D{\S{\q}}} \bar{\e}(s_{\q}) f_{G}(\r_{W}(\q, \bar{\e})) = \bar{\e}^{MW/W}_{\q}(s_{\q}) f_{W}(\q).
\end{align*}
Combined with part (1) and \eqref{eq: nontempered character relation GL(N)}, we then get
\[
f^{G}_{MW}(\q) = \theta_{MW}(\q) / \theta_{W}(\q) f_{N^{\theta}, W}(\r_{\q}) = f_{N^{\theta}, MW}(\r_{\q})
\]
for $f \in C^{\infty}_{c}(GL(N))$. Next, for any $s \in \S{\q}$ and $(H, \q_{H}) \rightarrow (\q, s)$, let $\q_{H} = \q_{I} \times \q_{II}$ (see Example~\ref{eg: endoscopy}). Then by \eqref{eq: nontempered character relation} we have
\[
f^{H}_{W}(\q_{H}) = \sum_{\bar{\e} \in \D{\S{\q}}} \bar{\e}(s s_{\q})f(\r_{W}(\q, \bar{\e})).
\]
Also note the right hand side of \eqref{eq: MW character relation} is
\begin{align*}
\text{RHS} & = \sum_{\bar{\e} \in \D{\S{\q}}} \bar{\e}(s s_{\q})f(\r_{W}(\q, \bar{\e} \bar{\e}^{MW/W}_{\q})) \\
& = \sum_{\bar{\e} \in \D{\S{\q}}} \bar{\e} \bar{\e}^{MW/W}_{\q}(s s_{\q})f(\r_{W}(\q, \bar{\e})) \\
& = \bar{\e}^{MW/W}_{\q}(s s_{\q}) \sum_{\bar{\e} \in \D{\S{\q}}} \bar{\e}(s s_{\q})f(\r_{W}(\q, \bar{\e})),
\end{align*}
and the left hand side of \eqref{eq: MW character relation} is
\[
\text{LHS} = \bar{\e}^{MW/W}_{\q_{H}}(s_{\q_{H}})f^{H}_{W}(\q_{H}),
\]
where $s_{\q_{H}} = s_{\q_{I}} \times s_{\q_{II}}$ and $\e^{MW/W}_{\q_{H}} = \e^{MW/W}_{\q_{I}} \otimes \e^{MW/W}_{\q_{II}}$. So it suffices to show 
\[
\bar{\e}^{MW/W}_{\q_{H}}(s_{\q_{H}}) = \bar{\e}^{MW/W}_{\q}(s s_{\q}).
\]
Moreover, by using part (1) this equality can be reduced to 
\begin{align}
\label{eq: sign}
\bar{\e}^{MW/W}_{\q}(s) = \theta_{MW}(\q_{H})/\theta_{W}(\q_{H}) \cdot \theta_{MW}(\q) / \theta_{W}(\q),
\end{align}
where 
\[
\theta_{MW}(\q_{H})/\theta_{W}(\q_{H}) = \theta_{MW}(\q_{I})/\theta_{W}(\q_{I}) \cdot \theta_{MW}(\q_{II})/\theta_{W}(\q_{II}).
\]
To show \eqref{eq: sign}, one considers the partition $Jord(\q) = Jord_{+} \sqcup Jord_{-}$ (see Example~\ref{eg: endoscopy}). Then 
\[
\bar{\e}^{MW/W}_{\q}(s) = (-1)^{m},
\]
where
\[
m = \sharp \Big\{ \{(\rho, a, b), (\rho', a', b') \} \in \mathcal{Z}_{MW/W}(\q): (\rho, a, b) \in Jord_{+}, (\rho', a', b') \in Jord_{-}  \Big\}.
\]
By Theorem~\ref{thm: sign}, we can write the other side of \eqref{eq: sign} as
\(
(-1)^{|\mathcal{Z}_{MW/W}(\q)| - |\mathcal{Z}_{MW/W}(\q_{I})| - |\mathcal{Z}_{MW/W}(\q_{II})|},
\) 
and hence the validity of \eqref{eq: sign} is clear.

\end{proof}

For $\q = \q_{p} \in \cQ{G}$, we fix an order $>_{\q}$ on $Jord(\q)$ satisfying condition $(\mathcal{P})$. We also choose $\q_{\gg}$ dominating $\q$ with discrete diagonal restriction and natural order. We identify $\S{\q_{\gg}}$ with $\S{\q^{>}}$ and then $s^{>}_{\q} = s_{\q_{\gg}}$.
For $\bar{\e} \in \D{\S{\q^{>}}}$, we define
\begin{align}
\label{eq: MW general}
\r_{MW}(\q, \bar{\e}):= \circ _{(\rho, A, B, \zeta) \in Jord(\q)} \bar{\Jac}_{(\rho, A_{\gg}, B_{\gg}, \zeta) \mapsto (\rho, A, B, \zeta)} \r_{MW}(\q_{\gg}, \bar{\e}),
\end{align}
where the Jacquet functor is defined as in \eqref{eq: shift matrix} and the composition is taken in the decreasing order. For these $\sH(G)$-modules, we have the following proposition.

\begin{proposition}
\label{prop: MW/W}
Suppose $\q = \q_{p} \in \cQ{G}$, and $>_{\q}$ is an order on $Jord(\q)$ satisfying condition $(\mathcal{P})$. Suppose $\q_{\gg}$ has discrete diagonal restriction and dominates $\q$. Then 
\begin{enumerate}

\item $\e^{MW/W}_{\q} \in \D{\S{\q^{>}}^{\Sigma_{0}}}$ and $\e^{MW/W}_{\q}(s^{>}_{\q}) = \theta_{MW}(\q) / \theta_{W}(\q)$.

\item For $\bar{\e} \in \D{\S{\q^{>}}}$,
\[
\r_{MW}(\q, \bar{\e}) = \begin{cases}
                           \r_{W}(\q, \bar{\e} \bar{\e}^{MW/W}_{\q}), & \text{ if $\bar{\e} \bar{\e}^{MW/W}_{\q} \in \D{\S{\q}}$,} \\
                            0, & \text{ otherwise.}
                          \end{cases}
\]

\end{enumerate}
\end{proposition}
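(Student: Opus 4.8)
The plan is to treat the two parts separately. Part~(1) is a short combinatorial identity that reruns the bookkeeping in the proof of Proposition~\ref{prop: MW/W discrete}(1), now over $Jord(\q_{p})$ instead of $Jord(\q)$. Since $\mathcal{Z}_{MW/W}(\q)$ is a set of \emph{unordered} pairs, each pair contributes $1$ to $|\mathcal{Z}_{MW/W}(\q)_{(\rho,a,b)}|$ for each of its two members, so $\prod_{(\rho,a,b)\in Jord(\q_{p})}\e^{MW/W}_{\q}(\rho,a,b)=(-1)^{2|\mathcal{Z}_{MW/W}(\q)|}=1$, which is exactly the condition for $\e^{MW/W}_{\q}$ to lie in $\D{\S{\q^{>}}^{\Sigma_{0}}}$. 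For the value at $s^{>}_{\q}$ I would note that, by the defining conditions of $\mathcal{Z}_{MW/W}(\q)$, every pair $\{(\rho,a,b),(\rho,a',b')\}\in\mathcal{Z}_{MW/W}(\q)$ has exactly one member with $b$ even (both Case~1 and Case~2 are of this shape); hence restricting the product to the blocks with $b$ even counts each pair once, so $\e^{MW/W}_{\q}(s^{>}_{\q})=(-1)^{|\mathcal{Z}_{MW/W}(\q)|}$, which equals $\theta_{MW}(\q)/\theta_{W}(\q)$ by Theorem~\ref{thm: sign}. No use of discrete diagonal restriction enters.

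For part~(2): by the recursive definition \eqref{eq: MW general} and Proposition~\ref{prop: MW/W discrete}(2) applied to $\q_{\gg}$ (which has discrete diagonal restriction), the family $\{\r_{MW}(\q_{\gg},\bar{\e})\}_{\bar{\e}\in\D{\S{\q^{>}}}}$ already satisfies the M{\oe}glin--Waldspurger normalized stable and endoscopic identities. The first step is to verify that the iterated Jacquet functor $\circ_{(\rho,A,B,\zeta)\in Jord(\q)}\bar{\Jac}_{(\rho,A_{\gg},B_{\gg},\zeta)\mapsto(\rho,A,B,\zeta)}$ on the $G$-side is compatible with the corresponding combinatorial moves on the twisted $GL(N)$-side --- which is precisely \eqref{eq: general representation GL}, along which $\theta_{MW}(\q)$ and $\r_{\q}$ are by construction obtained from $\theta_{MW}(\q_{\gg})$ and $\r_{\q_{\gg}}$ --- and on each elliptic endoscopic factor $\q_{I},\q_{II}$ of $(\q,s)$. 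Transporting the identities of Proposition~\ref{prop: MW/W discrete}(2) from $\q_{\gg}$ to $\q$ along this compatibility then gives $f^{G}_{MW}(\q)=f_{N^{\theta},MW}(\r_{\q})$ and $f^{H}_{MW}(\q_{H})=\sum_{\bar{\e}\in\D{\S{\q^{>}}}}\bar{\e}(ss^{>}_{\q})f_{G}(\r_{MW}(\q,\bar{\e}))$ for all $s\in\S{\q^{>}}$ and $(H,\q_{H})\to(\q,s)$; in particular this shows $\r_{MW}(\q,\bar{\e})$ depends only on $\q$ and the order $>_{\q}$, not on $\q_{\gg}$.

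It remains to compare with the Whittaker normalization for $\q$ itself. By part~(1), $f_{N^{\theta},MW}(\r_{\q})=\e^{MW/W}_{\q}(s^{>}_{\q})\,f_{N^{\theta},W}(\r_{\q})=\e^{MW/W}_{\q}(s^{>}_{\q})\,f^{G}_{W}(\q)$ using \eqref{eq: nontempered character relation GL(N)}, and similarly on each endoscopic side; reindexing the right-hand sides of Arthur's identities by $\bar{\e}\mapsto\bar{\e}\,\bar{\e}^{MW/W}_{\q}$ (pairings computed via the $Cont$/$Ext$ adjunction of Section~\ref{sec: Arthur parameter}) one obtains, for every $s\in\S{\q^{>}}$, an equality $\sum_{\bar{\e}\in\D{\S{\q^{>}}}}\bar{\e}(ss^{>}_{\q})f_{G}(\r_{MW}(\q,\bar{\e}))=\sum_{\bar{\e}\in\D{\S{\q}}}\bar{\e}(ss^{>}_{\q})f_{G}(\r_{W}(\q,\bar{\e}\,\bar{\e}^{MW/W}_{\q}))$. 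Fourier inversion over $\S{\q^{>}}$ on the left against $\S{\q}$ on the right, together with the linear independence of the characters of irreducible $\sH(G)$-modules, forces $\r_{MW}(\q,\bar{\e})=\r_{W}(\q,\bar{\e}\,\bar{\e}^{MW/W}_{\q})$ when $\bar{\e}\,\bar{\e}^{MW/W}_{\q}\in\D{\S{\q}}$ and $\r_{MW}(\q,\bar{\e})=0$ otherwise, the vanishing reflecting the collapse of the index set $\D{\S{\q^{>}}}$ onto the coset $\D{\S{\q}}\,\bar{\e}^{MW/W}_{\q}$.

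I expect the main obstacle to be the descent step above: showing that the iterated Jacquet functor intertwines the twisted endoscopic transfers for $GL(N)\to G$ and $G\to H$ with the corresponding moves on Arthur parameters, so that a stable (resp.\ endoscopic) character identity descends intact from $\q_{\gg}$ to $\q$. This is exactly where condition $(\mathcal{P})$ is used --- it is what makes \eqref{eq: general representation GL} valid and keeps every intermediate parameter of the expected shape --- and it is also the point where one must check that the constituents dropping out on the $\sH(G)$-side are matched precisely by the reduction of the parametrizing group from $\S{\q^{>}}$ to $\S{\q}$.
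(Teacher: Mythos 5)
Your argument follows the paper's proof closely in both parts. Part~(1) reruns the computation of Proposition~\ref{prop: MW/W discrete}(1) over $Jord(\q_{p})$, including the key observation that each pair in $\mathcal{Z}_{MW/W}(\q)$ has exactly one member with even $b$, which turns $\sum_{b \text{ even}}|\mathcal{Z}_{MW/W}(\q)_{(\rho,a,b)}|$ into $|\mathcal{Z}_{MW/W}(\q)|$. Part~(2) is built from the same three ingredients the paper uses: descent of the MW-normalized endoscopic identities from $\q_{\gg}$ to $\q$ along the iterated Jacquet functor (the content of equation~\eqref{eq: MW/W}), the scalar $\bar{\e}^{MW/W}_{\q}(ss^{>}_{\q})=\theta_{MW}(\q_{H})/\theta_{W}(\q_{H})$ tracking the normalization mismatch on the endoscopic side, and Fourier inversion combined with the fact that $\cPkt{W,s}(\q)$ depends only on the image of $s$ in $\S{\q}$, which produces the vanishing.

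One caution about your displayed identity $\sum_{\bar{\e}\in\D{\S{\q^{>}}}}\bar{\e}(ss^{>}_{\q})f_{G}(\r_{MW}(\q,\bar{\e}))=\sum_{\bar{\e}\in\D{\S{\q}}}\bar{\e}(ss^{>}_{\q})f_{G}(\r_{W}(\q,\bar{\e}\,\bar{\e}^{MW/W}_{\q}))$: when $\bar{\e}^{MW/W}_{\q}\notin\D{\S{\q}}$ --- which is exactly the nontrivial case where the stated vanishing occurs --- the symbol $\r_{W}(\q,\bar{\e}\,\bar{\e}^{MW/W}_{\q})$ is undefined for $\bar{\e}\in\D{\S{\q}}$, so the reindexing cannot be made as written, and if one extends $\r_{W}$ by zero outside $\D{\S{\q}}$ the two sides genuinely disagree. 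The cleaner route, which is what the paper actually does, is to Fourier-invert first and obtain, for each $\bar{\e}\in\D{\S{\q^{>}}}$, the formula $\r_{MW}(\q,\bar{\e})=|\S{\q^{>}}|^{-1}\,\bar{\e}(s^{>}_{\q})\sum_{s\in\S{\q^{>}}}\bar{\e}(s)\,\bar{\e}^{MW/W}_{\q}(ss^{>}_{\q})\,\cPkt{W,s}(\q)$; substituting $\bar{\e}\mapsto\bar{\e}\,\bar{\e}^{MW/W}_{\q}$ and then using that $\cPkt{W,s}(\q)$ factors through $\S{\q}$ gives the two cases directly according to whether $\bar{\e}$ is trivial on $\ker(\S{\q^{>}}\to\S{\q})$, with no appeal to linear independence of characters needed.
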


\begin{proof}
The proof of part (1) is the same as that in Proposition~\ref{prop: MW/W discrete}. So we will only show part (2) here. For $s \in \S{\q^{>}}$, we denote its image in $\S{\q}$ again by $s$. Let
\begin{align*}
\cPkt{MW, s}(\q_{\gg}) & = \sum_{\bar{\e} \in \D{\S{\q_{\gg}}}} \bar{\e}(ss_{\q_{\gg}}) \r_{MW}(\q_{\gg}, \bar{\e}), \\
\cPkt{W, s}(\q) & = \sum_{\bar{\e} \in \D{\S{\q}}} \bar{\e}(ss_{\q}) \r_{W}(\q, \bar{\e}).
\end{align*}
It follows for $\bar{\e} \in \D{\S{\q^{>}}}$,
\[
\r_{MW}(\q_{\gg}, \bar{\e}) = \frac{\bar{\e}(s_{\q_{\gg}})}{|\S{\q_{\gg}}|} \sum_{s \in \S{\q_{\gg}}} \bar{\e}(s) \cPkt{MW, s}(\q_{\gg}).
\]
Suppose $(H_{\gg}, \q_{H_{\gg}}) \rightarrow (\q_{\gg}, s)$ and $(H, \q_{H}) \rightarrow (\q, s)$, then $\q_{H_{\gg}}$ dominates $\q_{H}$. By \eqref{eq: nontempered character relation} and \eqref{eq: MW character relation} we have
\begin{align}
\label{eq: MW/W}
\circ _{(\rho, A, B, \zeta) \in Jord(\q)} \bar{\Jac}_{(\rho, A_{\gg}, B_{\gg}, \zeta) \mapsto (\rho, A, B, \zeta)} \cPkt{MW, s}(\q_{\gg}) = \theta_{MW}(\q_{H})/\theta_{W}(\q_{H}) \cPkt{W, s}(\q).
\end{align}
Analogous to \eqref{eq: sign}, one can show
\[
\theta_{MW}(\q_{H})/\theta_{W}(\q_{H}) = \bar{\e}^{MW/W}_{\q}(ss^{>}_{\q}).
\]
Therefore
\[
\r_{MW}(\q, \bar{\e}) = \frac{\bar{\e}(s^{>}_{\q})}{|\S{\q^{>}}|} \sum_{s \in \S{\q^{>}}} \bar{\e}(s) \bar{\e}^{MW/W}_{\q}(ss^{>}_{\q}) \cPkt{W, s}(\q).
\]
We rewrite it as 
\begin{align*}
\r_{MW}(\q, \bar{\e} \bar{\e}^{MW/W}_{\q}) & = \frac{\bar{\e} \bar{\e}^{MW/W}_{\q}(s^{>}_{\q})}{|\S{\q^{>}}|} \sum_{s \in \S{\q^{>}}} \bar{\e} \bar{\e}^{MW/W}_{\q}(s) \bar{\e}^{MW/W}_{\q}(ss^{>}_{\q}) \cPkt{W, s}(\q) \\
& = \frac{\bar{\e}(s^{>}_{\q})}{|\S{\q^{>}}|} \sum_{s \in \S{\q^{>}}} \bar{\e}(s) \cPkt{W, s}(\q).
\end{align*}
Note $\cPkt{W, s}(\q)$ only depends on the image of $s$ in $\S{\q}$, so 
\[
\sum_{s \in \S{\q^{>}}} \bar{\e}(s) \cPkt{W, s}(\q) = \begin{cases}
                                                                        \frac{|\S{\q^{>}}|}{|\S{\q}|} \sum_{s \in \S{\q}} \bar{\e}(s) \cPkt{W, s}(\q),  & \text{ if $\bar{\e} \in \D{\S{\q}}$,} \\
                                                                        0,  & \text{ otherwise.}
                                                                        \end{cases}
\]
If $\bar{\e} \in \D{\S{\q}}$, then $\bar{\e}(s_{\q}) = \bar{\e}(s^{>}_{\q})$, and it follows that
\[
\r_{MW}(\q, \bar{\e} \bar{\e}^{MW/W}_{\q}) =  \frac{\bar{\e}(s_{\q})}{|\S{\q}|} \sum_{s \in \S{\q}} \bar{\e}(s) \cPkt{W, s}(\q) = \r_{W}(\q, \bar{\e}).
\]
If $\bar{\e} \notin \D{\S{\q}}$, $\r_{MW}(\q, \bar{\e} \bar{\e}^{MW/W}_{\q}) = 0$. This finishes the proof.

\end{proof}

In general, for $\q \in \cQ{G}$, we define 
\[
\r_{MW}(\q, \bar{\e}) = \r_{\q_{np}} \rtimes \r_{MW}(\q_{p}, \bar{\e}),
\]
for $\bar{\e} \in \D{\S{\q^{>}}}$. Since 
\[
\r_{W}(\q, \bar{\e}) =  \r_{\q_{np}} \rtimes \r_{W}(\q_{p}, \bar{\e}) 
\]
for $\bar{\e} \in \D{\S{\q}}$, we again have 
\[
\r_{MW}(\q, \bar{\e}) = \begin{cases}
                           \r_{W}(\q, \bar{\e} \bar{\e}^{MW/W}_{\q}), & \text{ if $\bar{\e} \bar{\e}^{MW/W}_{\q} \in \D{\S{\q}}$,} \\
                            0, & \text{ otherwise.}
                          \end{cases}
\]

The main purpose of introducing M{\oe}glin-Waldspurger's normalization is that one will have a recursive formula for $f_{N^{\theta}, MW}(\r_{\q})$ with $\q \in \cQ{G}$ having discrete diagonal restriction. Here we will occasionally write $\r(\q)$ for $\r_{\q}$. To introduce the formula, let us fix $(\rho, a, b) \in Jord(\q)$ such that $inf(a, b) > 1$. Recall we also put $A = (a+b)/2 - 1$, $B = |a-b|/2$, and $\zeta = \zeta_{a, b} = \text{Sign}(a - b)$ if $a \neq b$ and arbitrary otherwise. Then it is the same to require $A \neq B$ for the fixed Jordan block. Let $\q'$ be obtained from $\q$ by removing $(\rho, a, b)$. Then we can define an element in the Grothendieck group of representations of $GL(N)$ as follows.
\begin{align*}
\r(\q)_{(\rho, A, B, \zeta)} := & \+_{C \in ]B, A]} (-1)^{A-C} \begin{pmatrix}<\zeta B, \cdots, -\zeta C> \times \Jac^{\theta}_{\zeta (B+2), \cdots, \zeta C} \r(\q', (\rho, A, B+2, \zeta)) \times <\zeta C, \cdots, -\zeta B>\end{pmatrix} \\
& \+ (-1)^{[(A-B+1)/2]} \r(\q', (\rho, A, B+1, \zeta), (\rho, B, B, \zeta)).
\end{align*} 
We impose the normalized actions of M{\oe}glin-Waldspurger on $\r(\q', (\rho, A, B+2, \zeta))$ and $\r(\q', (\rho, A, B+1, \zeta), (\rho, B, B, \zeta))$, and we denote the resulting action on $\r(\q)_{(\rho, A, B, \zeta)}$ by $\theta_{MW}(\q)_{(\rho, A, B, \zeta)}$. The next theorem shows the relation between $\r(\q)_{(\rho, A, B, \zeta)}$ and $\r(\q)$.

\begin{theorem}
\label{thm: MW formula}
Suppose $\q \in \cQ{G}$ has discrete diagonal restriction, then 
\[
f_{N^{\theta}, MW}(\r_{\q}) = f_{N^{\theta}, MW}(\r(\q)_{(\rho, A, B, \zeta)}).
\]
\end{theorem}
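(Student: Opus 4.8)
The plan is to prove the equality of these two twisted characters by peeling off, one segment at a time, the contribution of the fixed Jordan block $(\rho, A, B, \zeta)$ to $\r_{\q}$, keeping track at each stage of the normalized intertwiner $\theta_{MW}$, and organizing the resulting identities in the Grothendieck group of $\theta_{N}$-stable representations of $GL(N)$ into the alternating sum that defines $\r(\q)_{(\rho, A, B, \zeta)}$. The core of this computation is carried out in (\cite{MW:2006}, Section 5); what follows is the structure of the argument and the reductions that bring it to that form.

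First I would reduce to $\q = \q_{p}$. Since $\r_{\q} \cong \r_{\q_{np}} \times \r_{\q_{p}} \times \r_{\q_{np}}^{\vee}$ with $\theta_{MW}(\q)$ induced from $\theta_{MW}(\q_{p})$, and since both $\r_{\q}$ and $\r(\q)_{(\rho, A, B, \zeta)}$ only alter the block $(\rho, A, B, \zeta) \in Jord(\q_{p})$, parabolic descent for twisted traces on $GL(N)$ — which is compatible with the inductive construction of $\theta_{MW}$ — reduces the asserted equality to the one for $\q_{p}$. So assume $\q = \q_{p}$, fix a total order $>_{\q}$ on $Jord(\q)$ satisfying $(\mathcal{P})$, and proceed by induction (internal to the proof) on the size $A - B = \inf(a, b) - 1$ of the block being resolved; the alternating sum over $C \in\, ]B, A]$ will arise precisely as the output of this recursion.

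For the inductive step, recall from Section~\ref{sec: M-W normalization} that, $\q$ having discrete diagonal restriction, $\r_{\q}$ is the unique irreducible subrepresentation of
\[
<\zeta B, \cdots, -\zeta A> \times \r_{\q'} \times <\zeta A, \cdots, -\zeta B>
\]
with $\theta_{MW}(\q)$ induced from $\theta_{MW}(\q')$, where $\q'$ replaces $(\rho, A, B, \zeta)$ by $(\rho, A-1, B+1, \zeta)$; on twisted characters, parabolic descent identifies the twisted trace of this whole (reducible) induced module, carrying the induced intertwiner, with $f_{{N'}^{\theta}, MW}(\r_{\q'})$ for the corresponding general linear group. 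Passing to the Grothendieck group one writes $[\r_{\q}]$ as this induced module minus its remaining irreducible constituents; using the explicit Jacquet-module formulas of (\cite{Xu:preprint3}, Section 5) and the combinatorics of generalized segments, one checks that every such constituent is of the form $\r(\q'')$ for a parameter $\q''$ obtained from $\q$ either by $(\rho, A, B, \zeta) \mapsto (\rho, A, B+2, \zeta)$ — followed by a further $\Jac^{\theta}$-truncation of the outer segment, which is the origin of the factor $\Jac^{\theta}_{\zeta(B+2), \cdots, \zeta C}$ and of the alternation over $C$ — or by $(\rho, A, B, \zeta) \mapsto \{(\rho, A, B+1, \zeta), (\rho, B, B, \zeta)\}$, each again carrying the $\theta_{MW}$-intertwiner induced from its own smaller parameter. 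All these $\q''$ have strictly smaller $A - B$ for the relevant block, so applying the induction hypothesis and collecting multiplicities telescopes the expression into the claimed alternating sum, with the signs $(-1)^{A-C}$ and $(-1)^{[(A-B+1)/2]}$ emerging from the bookkeeping.

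I expect the main obstacle to be the precise Jordan--H\"older analysis of the reducible module $<\zeta B, \cdots, -\zeta A> \times \r_{\q'} \times <\zeta A, \cdots, -\zeta B>$: determining all of its constituents and their multiplicities, ruling out spurious terms, and verifying that the recursion telescopes to exactly the indicated signs. A second, more mechanical, difficulty is propagating the intertwiner $\theta_{MW}$ coherently through every Jacquet reduction; this is controlled by the well-definedness of $\theta_{MW}$ recorded in Section~\ref{sec: M-W normalization} and by its behaviour under $\Jac^{\theta}$, together with Theorem~\ref{thm: sign} whenever a comparison with the Whittaker normalization is needed.
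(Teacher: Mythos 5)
The paper does not prove this theorem; it records the statement and explicitly defers the entire argument to \cite{MW:2006}, noting only that it ``involves some complicated computations of Jacquet modules.'' Your proposal, in the end, does the same thing (``the core of this computation is carried out in \cite{MW:2006}, Section 5''), so as a \emph{proof} it is incomplete for the same reason the paper's paragraph is not a proof. But the ``structure of the argument'' that you do supply has several real problems worth flagging.

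First, the reduction to $\q = \q_p$ is vacuous: discrete diagonal restriction is \emph{defined} to imply $\q = \q_p$ (see Section~\ref{sec: M-W normalization}), so there is nothing to reduce. Likewise, for DDR parameters one always uses the natural order, so there is no choice of $>_{\q}$ to fix. These are small, but they suggest the hypotheses were not tracked carefully.

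Second, and more seriously, the recursion you start from is not the recursion appearing in the definition of $\r(\q)_{(\rho, A, B, \zeta)}$, and you never explain how to get from one to the other. The inductive definition of $\theta_{MW}$ peels one full ``layer'' off the block, replacing $(\rho, A, B, \zeta)$ by $(\rho, A-1, B+1, \zeta)$ (i.e.\ $b \mapsto b-2$). The alternating sum defining $\r(\q)_{(\rho, A, B, \zeta)}$ instead replaces $(\rho, A, B, \zeta)$ by $(\rho, A, B+2, \zeta)$ (i.e.\ $a \mapsto a+2$, $b \mapsto b-2$) followed by $\Jac^{\theta}$-truncations, or splits it into two blocks $(\rho, A, B+1, \zeta), (\rho, B, B, \zeta)$. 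These are genuinely different operations on Jordan blocks, and the assertion that ``one checks that every such constituent'' of the induced module you start from ``is of the form $\r(\q'')$'' with exactly those shapes, with exactly those Jacquet truncations, with exactly those signs, is the \emph{entire} content of the theorem. It is not an incidental Jordan--H\"older bookkeeping step that can be waved away; it is the proof. Nothing in your sketch indicates why the constituents take that form or why the multiplicities work out.

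Third, the base and degenerate cases are not addressed. When $A - B = 1$, the block $(\rho, A, B+2, \zeta)$ has $B + 2 > A$ and the block $(\rho, A-1, B+1, \zeta)$ appearing in the $\theta_{MW}$ recursion degenerates to $A' < B'$; some convention must be invoked, and whether the $\Jac^{\theta}$ terms vanish, truncate, or need separate treatment is not obvious and not discussed. An induction on $A - B$ that does not pin down its base case is not yet an induction.

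Finally, a point of emphasis: the identity is one of \emph{normalized twisted characters}, not of virtual representations. The second property of $\theta_{MW}$ used in the proof of Theorem~\ref{thm: sign} --- its comparison with the unipotent normalization $\theta_U$ and the attendant sign $(-1)^{|\mathcal{Z}(\q)|}$ --- is exactly the kind of input that must be fed through every step of any decomposition, and it is doing real work, not just ``propagating the intertwiner coherently.'' Your last paragraph gestures at this but does not confront it. In short: the high-level idea (induct on $A - B$, use the inductive definition of $\theta_{MW}$) is reasonable and is consistent with how Moeglin--Waldspurger set up the argument, but the middle of the proof, which is where all the content lives, is left as an assertion.
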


The proof of this theorem (see \cite{MW:2006}) involves some complicated computations of Jacquet modules, and it is fair to say that M{\oe}glin-Waldspurger's normalization is somehow artificially made for this theorem. This theorem has an immediate consequence on the Arthur packets for $G$.

For $\q \in \cQ{G}$ having discrete diagonal restriction, we write
\begin{align}
\label{eq: stable distribution MW}
\cPkt{MW}(\q) := \sum_{\bar{\e} \in \D{\S{\q}}} \bar{\e}(s_{\q})\r_{MW}(\q, \bar{\e}).
\end{align}
Then we have the following proposition.

\begin{proposition}
\label{prop: recursive formula for packet}
Suppose $\q \in \cQ{G}$ has discrete diagonal restriction and we fix $(\rho, A, B, \zeta) \in Jord(\q)$ such that $A > B$, then 
\begin{align*}
\cPkt{MW}(\q) = & \+_{C \in ]B, A]} (-1)^{A-C} <\zeta B, \cdots, -\zeta C> \rtimes \bar{\Jac}_{\zeta (B+2), \cdots, \zeta C} \cPkt{MW}(\q', (\rho, A, B+2, \zeta))  \\
& \+ (-1)^{[(A-B+1)/2]} \cPkt{MW}(\q', (\rho, A, B+1, \zeta), (\rho, B, B, \zeta)), 
\end{align*}
where $\q'$ is obtained from $\q$ by removing $(\rho, A, B, \zeta)$.
\end{proposition}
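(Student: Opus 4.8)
The plan is to read off this statement from Theorem~\ref{thm: MW formula} by transporting that identity through the twisted endoscopic transfer from $GL(N)$ to $G$; indeed, the proposition is nothing but the ``$G$-side'' of the $GL(N)$-identity in Theorem~\ref{thm: MW formula}. Write $X$ for the asserted right-hand side, and set $\q'' = (\q', (\rho, A, B+2, \zeta))$ and $\q''' = (\q', (\rho, A, B+1, \zeta), (\rho, B, B, \zeta))$, where $\q'$ is $\q$ with $(\rho, A, B, \zeta)$ removed. The first thing to check is that $\q''$ and $\q'''$ again have discrete diagonal restriction: passing from $\q$ to $\q''$, resp.\ to $\q'''$, only replaces the segment $[B, A]$ attached to $(\rho, A, B, \zeta)$ by the sub-segment $[B+2, A]$, resp.\ by the two disjoint sub-segments $[B+1, A]$ and $[B, B]$, so disjointness of the $\rho$-segments is preserved. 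Hence $\cPkt{MW}(\q'')$ and $\cPkt{MW}(\q''')$ are defined by \eqref{eq: stable distribution MW}, and each is a stable distribution by Theorem~\ref{thm: Arthur packet}(1) and Proposition~\ref{prop: MW/W discrete}. Since parabolic induction and the functor $\bar{\Jac}_{x}$ are compatible with endoscopic transfer, they preserve stability, so $X$ is a stable distribution on $G$.

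The main step is then to compute the twisted transfer of $X$ to $GL(N)$. Two compatibilities of the twisted transfer $f \mapsto f^{G}$ are needed: first, that the standard parabolic of $G$ with Levi $GL(k) \times G_{-}$ corresponds, on the $GL(N)$-side, to the standard parabolic with Levi $GL(k) \times GL(N_{-}) \times GL(k)$ carrying a representation on the middle factor and a representation together with its contragredient on the two outer factors, so that $f^{G}_{G}(\tau \rtimes \Theta_{-}) = f_{N^{\theta}}(\tau \times \Theta_{-}^{GL} \times \tau^{\vee})$ for a stable distribution $\Theta_{-}$ on $G_{-}$ whose $GL(N_{-})$-transfer is $\Theta_{-}^{GL}$; and second, that the twisted transfer intertwines $\bar{\Jac}_{x}$ on the $G$-side with $\Jac^{\theta}_{x}$ on the $GL$-side (see \cite{Xu:preprint3}, Section~5, and \cite{MW:2006}). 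Feeding in Proposition~\ref{prop: MW/W discrete}(2)(a) for $\q''$ and $\q'''$ (so that the $GL$-transfers of $\cPkt{MW}(\q'')$ and $\cPkt{MW}(\q''')$ are the M{\oe}glin--Waldspurger twisted characters of $\r_{\q''}$ and $\r_{\q'''}$ respectively), using $<\zeta C, \cdots, -\zeta B> = <\zeta B, \cdots, -\zeta C>^{\vee}$, and using that the normalizations of the relevant intertwining operators are exactly those defining $\theta_{MW}(\q)_{(\rho, A, B, \zeta)}$ in Section~\ref{sec: M-W normalization}, one would obtain, term by term in the defining formula for $\r(\q)_{(\rho, A, B, \zeta)}$,
\[
f^{G}_{G}(X) = f_{N^{\theta}, MW}\big(\r(\q)_{(\rho, A, B, \zeta)}\big), \qquad f \in C^{\infty}_{c}(GL(N)),
\]
the signs $(-1)^{A-C}$ and $(-1)^{[(A-B+1)/2]}$ simply passing through this $\mathbb{Z}$-linear computation.

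Finally, Theorem~\ref{thm: MW formula} identifies $f_{N^{\theta}, MW}(\r(\q)_{(\rho, A, B, \zeta)})$ with $f_{N^{\theta}, MW}(\r_{\q})$, which by Proposition~\ref{prop: MW/W discrete}(2)(a) equals $f^{G}_{G}(\cPkt{MW}(\q))$; hence $f^{G}_{G}(X - \cPkt{MW}(\q)) = 0$ for every $f \in C^{\infty}_{c}(GL(N))$. Since $X - \cPkt{MW}(\q)$ is a stable distribution on $G$ and the twisted spectral transfer is injective on stable distributions (equivalently, the geometric transfer $C^{\infty}_{c}(GL(N)) \to \sH(G)$ is surjective onto the space of stable orbital integrals), this forces $X = \cPkt{MW}(\q)$, which is the proposition. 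The hard part of the argument will be the second step: verifying that the twisted transfer commutes, with the correct M{\oe}glin--Waldspurger normalizations, with parabolic induction (yielding the $\tau \times (\,\cdot\,) \times \tau^{\vee}$ pattern on the $GL(N)$-side) and with the interchange $\bar{\Jac}_{x} \leftrightarrow \Jac^{\theta}_{x}$; granting this bookkeeping, Theorem~\ref{thm: MW formula} does the rest.
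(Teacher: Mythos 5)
Your proposal is correct and follows the same route as the paper's proof: transfer the defining formula for $\r(\q)_{(\rho, A, B, \zeta)}$ term-by-term to $G$ via the twisted endoscopic transfer (using its compatibility with parabolic induction and Jacquet modules), invoke Theorem~\ref{thm: MW formula} together with the character relation \eqref{eq: MW character relation GL(N)}, and conclude by injectivity of the twisted spectral transfer on stable distributions. You have simply spelled out the bookkeeping that the paper compresses into one sentence.
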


\begin{proof}
This proposition follows easily from Theorem~\ref{thm: MW formula} and the twisted character relation \eqref{eq: MW character relation GL(N)}, together with the compatibility of the twisted endoscopic transfer with parabolic inductions and Jacquet modules (see \cite{Xu:preprint3}, Section 6).
\end{proof}

From this formula, one can see the case of parameters with discrete diagonal restriction can be reduced to the case of elementary parameters. Later on, we will give a recursive formula of M{\oe}glin for $\r_{MW}(\q, \e)$, or more precisely for $\r_{M}(\q, \e)$ (see Section~\ref{sec: discrete diagonal restriction} for its definition), in the case of discrete diagonal restriction again, which is clearly motivated by the formula here. But in order to give M{\oe}glin's formula, we need to first study the Arthur packets for elementary parameters.

\section{Elementary Arthur packet}
\label{sec: elementary Arthur packet}

Let us recall $\q \in \cQ{G}$ is elementary if $\q \circ \Delta \in \cPdt{G}$ and $A = B$ for all $(\rho, A, B, \zeta) \in Jord(\q)$. And we have the following theorem about elementary Arthur packets due to M{\oe}glin \cite{Moeglin:2006}.

\begin{theorem}[M{\oe}glin]
\label{thm: elementary Arthur packet}
Suppose $\q \in \cQ{G}$ is elementary, then $\r_{W}(\q, \bar{\e})$ is always nonzero and irreducible. Moreover, $\r_{W}(\q, \bar{\e}) \neq \r_{W}(\q, \bar{\e}')$ if $\bar{\e} \neq \bar{\e}'$.
\end{theorem}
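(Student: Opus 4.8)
\textbf{Proof proposal for Theorem~\ref{thm: elementary Arthur packet}.}

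The plan is to proceed by induction on $\sum_{(\rho, a, b) \in Jord(\q)} ab\, d_{\rho}$, reducing an elementary parameter to one of strictly smaller size by peeling off the Jordan block of smallest $B$-value (for a fixed $\rho$), exactly as in the construction of $\theta_{MW}(\q)$ recalled in Section~\ref{sec: M-W normalization}. Fix $\rho$ and let $B_{0}$ be minimal with $(\rho, B_{0}, B_{0}, \zeta_{0}) \in Jord(\q)$. The first case is $B_{0} \neq 0$: here one has the embedding $\r_{\q} \hookrightarrow \rho||^{\zeta_{0}B_{0}} \times \r_{\q'} \times \rho||^{-\zeta_{0}B_{0}}$ as the unique irreducible subrepresentation, where $\q'$ is obtained by decreasing $B_{0}$ by one, and correspondingly on the group side $\r_{W}(\q, \bar{\e})$ should be realized as the unique irreducible subrepresentation of $\rho||^{\zeta_{0}B_{0}} \rtimes \r_{W}(\q', \bar{\e}')$, with $\bar{\e}'$ the restriction of $\bar{\e}$ and $S_{\q} \cong S_{\q'}$ (removing or shrinking a single block of the form $(\rho, B, B)$ does not change the component group when $B_{0} \geq 1$). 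The second case, $B_{0} = 0$, is handled by peeling off the two smallest blocks $(\rho, 0, 0, \zeta_{0})$ and $(\rho, B_{1}, B_{1}, \zeta_{1})$ at once via the embedding into $<\zeta_{1}B_{1}, \cdots, 0> \rtimes \r_{W}(\q', \bar{\e}') \rtimes \cdots$; here one must track carefully how the component group changes, since removing a $(\rho, 1, 1)$-block (i.e.\ $\alpha = 1$, $a = b = 1$) does affect $S_{\q}$.

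The key steps, in order, are: (i) translate the Jacquet-module embeddings for $\r_{\q}$ into embeddings for $\r_{W}(\q, \bar{\e})$ using the character identities of Theorem~\ref{thm: Arthur packet} together with the compatibility of endoscopic transfer with parabolic induction and Jacquet modules (as cited from \cite{Xu:preprint3}); concretely, apply $\bar{\Jac}_{\zeta_{0}B_{0}}$ to both sides of the stable distribution identity and use Frobenius reciprocity to pin down a unique irreducible subrepresentation; (ii) invoke the inductive hypothesis to conclude $\r_{W}(\q', \bar{\e}')$ is nonzero and irreducible, so the induced representation $\rho||^{\zeta_{0}B_{0}} \rtimes \r_{W}(\q', \bar{\e}')$ has finite length and, by the basic reducibility criterion of Proposition~\ref{prop: cuspidal reducibility} applied along the chain of parabolic reductions, a unique irreducible subrepresentation — which forces $\r_{W}(\q, \bar{\e})$ nonzero and irreducible; (iii) prove the injectivity $\bar{\e} \neq \bar{\e}' \Rightarrow \r_{W}(\q, \bar{\e}) \neq \r_{W}(\q, \bar{\e}')$ by a counting/linear-independence argument: the $|\D{\S{\q}}|$ representations $\r_{W}(\q, \bar{\e})$ appear with the signs $\bar{\e}(s s_{\q})$ in the endoscopic character identities for all $s \in \S{\q^{>}}$, and since the characters $\bar{\e}$ are linearly independent functions on $\S{\q}$, two distinct $\bar{\e}$ cannot give the same irreducible character of $G$ — alternatively, one reads off distinctness from the Jacquet modules, since $\bar{\e}$ and $\bar{\e}'$ differing on some block translates (via the parabolic reduction) into $\r_{W}(\q, \bar{\e})$ and $\r_{W}(\q, \bar{\e}')$ having different cuspidal support or sitting inside inequivalent induced representations.

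I expect the main obstacle to be step (i) in the degenerate subcase $B_{0} = 0$ (and its sub-subcase where one of the peeled blocks is $(\rho, 1, 1)$): there the passage from the $GL(N)$-side embedding to the $G$-side statement is not a formal consequence of a single Jacquet functor, because the component group jumps and the relevant induced representation $<\zeta_{1}B_{1}, \cdots, 0> \rtimes (\cdots)$ can a priori reduce in more than one way. One must show that exactly one irreducible constituent survives and carries the correct character of $\S{\q}$, which requires combining the reducibility criterion Proposition~\ref{prop: cuspidal reducibility} with a careful bookkeeping of which characters $\bar{\e} \in \D{\S{\q}}$ pull back from $\D{\S{\q'}}$ — this is precisely where M{\oe}glin's delicate analysis in \cite{Moeglin:2006} is needed, and I would cite it for the hardest reducibility input rather than reprove it. The irreducibility and nonvanishing then follow cleanly, and the injectivity in step (iii) is comparatively soft, following from linear independence of characters once nonvanishing and irreducibility are in hand.
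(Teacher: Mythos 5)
Your proposal takes a genuinely different route from the paper, and it has a real gap at the center. The paper's strategy is: first construct, independently of Arthur's theory, a candidate family of irreducible representations $\r^{\Sigma_{0}}(\q,\e)$ of $G^{\Sigma_{0}}$ by M{\oe}glin's recursive procedure (Section~\ref{subsec: construction}), establish their ``basic properties'' (Jacquet-module, non-unitary irreducibility, unitary reducibility) by a simultaneous induction, and prove that the generalized Aubert involution $inv_{<X_{0}}$ preserves this class and its irreducibility. Only then does the paper match the candidates to $\cPkt{\q}$ via the commutative diagram with twisted endoscopic transfer (Theorem~\ref{thm: Arthur packet via Aubert dual}), passing from the already-known tempered packets to elementary ones. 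You instead try to work directly with $\r_{W}(\q,\bar\e)$ as defined by Arthur, peeling off a single Jordan block via the $GL(N)$-side embedding $\r_{\q}\hookrightarrow \rho||^{\zeta_{0}B_{0}}\times\r_{\q'}\times\rho||^{-\zeta_{0}B_{0}}$ and hoping to promote this to an embedding of $\r_{W}(\q,\bar\e)$.

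The gap is at exactly the step you flag as a ``key step'': you cannot isolate individual $\r_{W}(\q,\bar\e)$ by applying $\bar{\Jac}_{\zeta_{0}B_{0}}$ to the character identities of Theorem~\ref{thm: Arthur packet}. Those identities constrain only the signed sums $\sum_{\bar\e}\bar\e(s s_{\q})f_{G}(\r_{W}(\q,\bar\e))$ over $s\in\S{\q}$; recovering a single $\r_{W}(\q,\bar\e)$ from them requires inverting via $\frac{1}{|\S{\q}|}\sum_{s}\bar\e(s)(\cdot)$, and this inversion produces a well-defined virtual character but tells you nothing a priori about nonvanishing, irreducibility, or even about the Jacquet module of the individual term (Jacquet functors don't commute past the Fourier inversion in any useful way without first knowing the $\r_{W}(\q,\bar\e)$ are linearly independent — which is what you are trying to prove). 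The paper circumvents this circularity precisely by building the candidates $\r^{\Sigma_{0}}(\q,\e)$ first and proving their properties independently, so that matching them to the packet is a finite linear-algebra problem with both sides known. Your step (iii) has a related weakness: linear independence of the characters $\bar\e$ on $\S{\q}$ does not formally force $\r_{W}(\q,\bar\e)\neq\r_{W}(\q,\bar\e')$; distinctness in the paper comes from the explicit M{\oe}glin construction having distinct Jacquet-module invariants, not from character orthogonality. Finally, Proposition~\ref{prop: cuspidal reducibility} only controls reducibility at the cuspidal base; along your inductive chain the representations $\r_{W}(\q',\bar\e')$ are not cuspidal, and the reducibility control you need is exactly the content of the ``basic properties'' in Section~\ref{subsec: construction}, which are established jointly with the construction — so you cannot cite them as black boxes without importing essentially the whole of M{\oe}glin's argument, at which point you have reproduced the paper's approach rather than offered an alternative to it.
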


The main difficulty of this theorem remains at proving certain generalized Aubert involution (see Section~\ref{subsec: Aubert dual for G}) would take irreducible representations viewed as $\sH(G)$-modules in elementary Arthur packets to irreducible representations viewed as $\sH(G)$-modules up to a sign in the corresponding Grothendieck group. But this does not admit a direct approach. So instead, we will follow \cite{Moeglin:2006} to construct systematically a class of representations which generalizes the construction of discrete series representations of M{\oe}glin and Tadi{\'c} (see \cite{MoeglinTadic:2002} and also \cite{Xu:preprint3}, Section 10). This class of representations will form the candidates for elements in the elementary Arthur packets. In fact, what M{\oe}glin constructed are representations of $G^{\Sigma_{0}}$, but we can then take the irreducible representations of $G$ viewed as $\sH(G)$-modules defined by their restriction to $G$. The point is it is easier to show the generalized Aubert involution preserve this class of representations of $G^{\Sigma_{0}}$ and also their irreducibility. In the end, we are going to show the corresponding $\sH(G)$-modules are really elements in the elementary Arthur packets. 

First we need to define parabolic induction and Jacquet module on the category $\Rep(G^{\Sigma_{0}})$ of finite-length smooth representations of $G^{\Sigma_{0}}$. Let $P = MN$ be a standard parabolic subgroup of $G$. If $M$ is $\theta_{0}$-stable, we write $M^{\Sigma_{0}} := M \rtimes \Sigma_{0}$. Otherwise, we let $M^{\Sigma_{0}} = M$.
Suppose $\sigma^{\Sigma_{0}} \in \Rep(M^{\Sigma_{0}})$, $\r^{\Sigma_{0}} \in \Rep(G^{\Sigma_{0}})$. 

\begin{enumerate}

\item If $M^{\theta_{0}} = M$, we define the normalized parabolic induction $\Ind^{G^{\Sigma_{0}}}_{P^{\Sigma_{0}}} \sigma^{\Sigma_{0}}$ to be the extension of the representation $\Ind^{G}_{P}(\sigma^{\Sigma_{0}}|_{M})$ by an induced action of $\Sigma_{0}$, and we define the normalized Jacquet module $\Jac_{P^{\Sigma_{0}}} \r^{\Sigma_{0}}$ to be the extension of the representation $\Jac_{P}(\r^{\Sigma_{0}}|_{G})$ by an induced action of $\Sigma_{0}$.

\item If $M^{\theta_{0}} \neq M$, we define the normalized parabolic induction $\Ind^{G^{\Sigma_{0}}}_{P^{\Sigma_{0}}} \sigma^{\Sigma_{0}}$ to be $\Ind^{G^{\Sigma_{0}}}_{G} \Ind^{G}_{P}(\sigma^{\Sigma_{0}}|_{M})$, and we define the normalized Jacquet module $\Jac_{P^{\Sigma_{0}}} \r^{\Sigma_{0}}$ to be $\Jac_{P}(\r^{\Sigma_{0}}|_{G})$.

\end{enumerate}
It follows from the definition that
\[
(\Jac_{P^{\Sigma_{0}}} \r^{\Sigma_{0}})|_{M} = \Jac_{P}(\r^{\Sigma_{0}}|_{G}).
\]
And 
\[
(\Ind^{G^{\Sigma_{0}}}_{P^{\Sigma_{0}}} \sigma^{\Sigma_{0}})|_{G} = \Ind^{G}_{P}(\sigma^{\Sigma_{0}}|_{M}),
\] 
unless $G$ is special even orthogonal and $M^{\Sigma_{0}} = M$, in which case 
\[
(\Ind^{G^{\Sigma_{0}}}_{P^{\Sigma_{0}}} \sigma^{\Sigma_{0}})|_{G} = \Ind^{G}_{P}(\sigma^{\Sigma_{0}}|_{M}) \+ (\Ind^{G}_{P}(\sigma^{\Sigma_{0}}|_{M}))^{\theta_{0}}.
\] 
We can also define $\Jac_{x}$ on $\Rep(G^{\Sigma_{0}})$ as in the introduction.

\subsection{Construction of a class of representations}
\label{subsec: construction}

The construction of M{\oe}glin is by induction on the rank of the groups and it depends also on certain so-called {\bf basic properties}, which have to be established at the same time again by induction. So let us assume for $G = G(n')$ with $n' < n$ and elementary $\q \in \cQ{G}$, the irreducible representation $\r^{\Sigma_{0}}(\q, \e)$ of $G^{\Sigma_{0}}$ is well defined and distinct for $\e \in \D{\S{\q}^{\Sigma_{0}}}$. 


Let $b_{\rho, \q, \e} \in Jord_{\rho}(\q_{d})$ be the biggest integer such that $\e$ is ``$\rho$-cuspidal" for $Jord_{\rho, cusp}(\q) := \{ (\rho, \alpha, \delta_{\alpha}) \in Jord_{\rho}(\q) : \alpha \leqslant b_{\rho, \q, \e} \}$, i.e.,
\begin{enumerate}

\item if $(\rho, \alpha, \delta_{\alpha}) \in Jord_{\rho, cusp}(\q)$, then $(\rho, \alpha - 2, \delta_{\alpha - 2}) \in Jord_{\rho, cusp}(\q)$ as long as $\alpha - 2 > 0$;

\item if $(\rho, \alpha, \delta_{\alpha}), (\rho, \alpha - 2, \delta_{\alpha - 2}) \in Jord_{\rho, cusp}(\q)$, then $\e(\rho, \alpha, \delta_{\alpha}) \e(\rho, \alpha - 2, \delta_{\alpha - 2}) = -1$;

\item if $(\rho, 2, \delta_{2}) \in Jord_{\rho, cusp}(\q)$, then $\e(\rho, 2, \delta_{2}) = -1$.

\end{enumerate}
We allow $b_{\rho, \q, \e}$ to be zero. Let $a_{\rho, \q, \e} \in Jord_{\rho}(\q_{d})$ be the smallest integer such that $a_{\rho, \q, \e} > b_{\rho, \q, \e}$, and let $\delta_{\rho, \q, \e}$ be the associated sign. If such $a_{\rho, \q, \e}$ does not exist, we say $a_{\rho, \q, \e} = \infty$.

Along with our assumption on the existence of $\r^{\Sigma_{0}}(\q, \e)$, we also assume they satisfy the following basic properties.

{\bf Basic Properties} (\cite{Moeglin:2006}, Section 2.3):

\begin{enumerate}

\item (Jacquet module): If $\Jac_{\rho||^{x}}\r^{\Sigma_{0}}(\q, \e) \neq 0$, then there exists $b_{\rho, \q, \e} < \alpha \in Jord_{\rho}(\q_{d})$ such that $x = \delta_{\alpha} \alpha$. 

\item (Non-unitary irreducibility) : For $x \geqslant 1/2$, if $2x - 1 \notin Jord_{\rho}(\q_{d}) \cup \{0\}$ or $0 < x \leqslant (b_{\rho, \q, \e} - 1)/2$, then $\rho||^{x} \rtimes \r^{\Sigma_{0}}(\q, \e)$ is irreducible.

\item (Unitary reducibility) : Suppose $Jord_{\rho}(\q_{d})$ contains odd integers. Then $\rho \rtimes \r^{\Sigma_{0}}(\q, \e)$ is irreducible if $1 \in Jord_{\rho}(\q_{d})$, and is semisimple of length 2 without multiplicities otherwise. Moreover, let $\sigma^{\Sigma_{0}}$ be an irreducible subrepresentation of $\rho \rtimes \r^{\Sigma_{0}}(\q, \e)$ in both cases, then $\rho \times \cdots \times \rho \rtimes \sigma^{\Sigma_{0}}$ is irreducible.

\end{enumerate}

\begin{remark}
Property (1) is proved in (\cite{Moeglin:2006}, Section 2.5); Property (2) is proved in (\cite{Moeglin:2006}, Section 2.7). In the tempered case, Property (1) can be deduced easily from (\cite{Xu:preprint3}, Lemma 9.2). But, the general proof of Property (1) depends on Property (2). Property (2) is not obvious even in the tempered case, and its proof in the tempered case is more or less the same as in the general case. A fundamental case of Property (2) is when $\r^{\Sigma_{0}}(\q, \e)$ is supercuspidal, and that follows from (\cite{Xu:preprint3}, Corollary 9.1) (cf. Proposition~\ref{prop: cuspidal reducibility}). Property (3) is proved in (\cite{Moeglin:2006}, Section 2.8) without assuming any unitarity results of Arthur, and in the tempered case it follows easily from Arthur's theory.
\end{remark}

Based on our assumptions, now we can give the construction for $\r^{\Sigma_{0}}(\q, \e)$. 

\begin{definition}
\label{def: constructing elementary Arthur packet}

Suppose $\q \in \cQ{G(n)}$ is an elementary parameter and $\e \in \D{\S{\q}^{\Sigma_{0}}}$.

\begin{enumerate}

\item If $a_{\rho, \q, \e} = \infty$ for all $\rho$, then let $(\p_{cusp}, \e_{cusp}) := (\q_{d}, \e)$, and we define $\r^{\Sigma_{0}}(\q, \e)$ to be $\r_{W}^{\Sigma_{0}}(\p_{cusp}, \e_{cusp})$ in Theorem~\ref{thm: discrete series full orthogonal group}, which is supercuspidal by (\cite{Xu:preprint3}, Theorem 3.3) (cf. Theorem~\ref{thm: supercuspidal parametrization}). 

\item If $a_{\rho, \q, \e} > b_{\rho, \q, \e} + 2$ or $b_{\rho, \q, \e} = 0$, we define 
\[
\r^{\Sigma_{0}}(\q, \e) \hookrightarrow \rho||^{\delta_{\rho, \q, \e}(a_{\rho, \q, \e} -1)/2} \rtimes \r^{\Sigma_{0}}(\q', \e')
\]
to be the unique irreducible subrepresentation, where $(\q' ,\e')$ is obtained from $(\q, \e)$ by changing $(\rho, a_{\rho, \q, \e}, \delta_{\rho, \q, \e})$ to $(\rho, a_{\rho, \q, \e} - 2, \delta_{\rho, \q, \e})$. 

\item If $a_{\rho, \q, \e} = b_{\rho, \q, \e} + 2$, we need to divide into three cases. 

         \begin{enumerate}
         
         \item If $Jord_{\rho}(\q_{d})$ contains even integers and $b_{\rho, \q, \e} \neq 0$, then we define
         \[
         \r^{\Sigma_{0}}(\q, \e) \hookrightarrow <\delta_{\rho, \q, \e}(a_{\rho, \q, \e} -1)/2, \cdots, \delta_{\rho, \q, \e}1/2> \rtimes \r^{\Sigma_{0}}(\q_{-}, \e_{-})
         \]
         to be the unique irreducible subrepresentation, where $(\q_{-}, \e_{-})$ is obtained from $(\q, \e)$ by removing $(\rho, a_{\rho, \q, \e}, \delta_{\rho, \q, \e})$, and changing $(\rho, \alpha,
         \delta_{\alpha})$ to $(\rho, \alpha, -\delta_{\rho, \q, \e})$ with 
         \[
         \e_{-}(\rho, \alpha, -\delta_{\rho, \q, \e}) = -\e(\rho, \alpha, \delta_{\alpha})
         \] 
         for all $\alpha \leqslant b_{\rho, \q, \e}$. Moreover,
         \[
         \r^{\Sigma_{0}}(\q, \e) \hookrightarrow <\delta_{\rho, \q, \e}(a_{\rho, \q, \e} -1)/2, \cdots, -\delta_{\rho, \q, \e}(b_{\rho, \q, \e} -1)/2> \rtimes \r^{\Sigma_{0}}(\q', \e').         
         \]        
         where $(\q' ,\e')$ is obtained from $(\q, \e)$ by removing $a_{\rho, \q, \e}$ and $b_{\rho, \q, \e}$ from $Jord_{\rho}(\q_{d})$. 
                  
         \item If $Jord_{\rho}(\q_{d})$ contains odd integers and $b_{\rho, \q, \e} \neq 1$, then we define $\r^{\Sigma_{0}}(\q, \e)$ to be the unique common irreducible subrepresentation of 
         \[
         <\delta_{\rho, \q, \e}(a_{\rho, \q, \e} -1)/2, \cdots, 0> \rtimes \r^{\Sigma_{0}}(\q_{-}, \e_{-})
         \]
         and 
         \[
         <\delta_{\rho, \q, \e}(a_{\rho, \q, \e} -1)/2, \cdots, -\delta_{\rho, \q, \e}(b_{\rho, \q, \e} -1)/2> \rtimes \r^{\Sigma_{0}}(\q', \e').
         \]
         Here $(\q', \e')$ is obtained from $(\q, \e)$ by removing $a_{\rho, \q, \e}$ and $b_{\rho, \q, \e}$ from $Jord_{\rho}(\q_{d})$; $(\q_{-}, \e_{-})$ is obtained from $(\q, \e)$ by removing $(\rho, a_{\rho, \q, \e}, \delta_{\rho,
         \q, \e})$ and $(\rho, 1, \delta_{1})$, and changing $(\rho, \alpha, \delta_{\alpha})$ to $(\rho, \alpha, -\delta_{\rho, \q, \e})$ with 
         \[
         \e_{-}(\rho, \alpha, -\delta_{\rho, \q, \e}) = -\e(\rho, \alpha, \delta_{\alpha})
         \]
         for $1 < \alpha \leqslant b_{\rho, \q, \e}$. 
         
         \item If $a_{\rho, \q, \e} = 3, b_{\rho, \q, \e} = 1$, we have $(\q_{-}, \e_{-}) = (\q', \e')$ in the notation of $(b)$. By Property 3, $\sigma^{\Sigma_{0}} = \rho \rtimes \r^{\Sigma_{0}}(\q', \e')$ is semisimple of length
         2, and hence we can write $\sigma^{\Sigma_{0}} = \r^{\Sigma_{0}}_{+} \+ \r^{\Sigma_{0}}_{-}$ according to the following two cases.
         
                  \begin{enumerate}
                  
                  \item When $Jord_{\rho}(\q_{d})$ only contains $2$ elements, we fix arbitrary parametrization in $\sigma^{\Sigma_{0}}$, and we define $\r^{\Sigma_{0}}(\q, \e)$ to be the unique irreducible subrepresentation of $\rho||^{\delta_{3}} \rtimes \r^{\Sigma_{0}}_{\zeta}$, with $\zeta = \e(3)\delta_{3}$.

                  \item When $|Jord_{\rho}(\q_{d})| > 2$, i.e., $a_{\rho, \q', \e'} \neq \infty$, we can specify the parametrization in $\sigma^{\Sigma_{0}}$ as follows. Let $(\q'', \e'')$ be obtained from $(\q', \e')$ by
                  changing $(\rho, a_{\rho, \q', \e'}, \delta_{\rho, \q', \e'})$ to $(\rho, 1, \delta_{\rho, \q', \e'})$. Let 
                  \[
                  \Pi^{\Sigma_{0}} = \rho \times <\delta_{\rho, \q', \e'}(a_{\rho, \q', \e'} -1)/2, \cdots, \delta_{\rho, \q', \e'}> \rtimes \r^{\Sigma_{0}}(\q'', \e''),
                  \]
                  \[
                  \sigma^{\Sigma_{0}}_{q} = <\delta_{\rho, \q', \e'}(a_{\rho, \q', \e'} -1)/2, \cdots, 0> \rtimes \r^{\Sigma_{0}}(\q'', \e''),
                  \]
                  and
                  \[
                  \sigma^{\Sigma_{0}}_{s} = <\rho \times <\delta_{\rho, \q', \e'}(a_{\rho, \q', \e'} -1)/2, \cdots, \delta_{\rho, \q', \e'}>> \rtimes \r^{\Sigma_{0}}(\q'', \e'').
                  \]
                  There is an exact sequence
                  \[
                  \xymatrix{0 \ar[r] & \sigma^{\Sigma_{0}}_{s} \ar[r] & \Pi^{\Sigma_{0}} \ar[r] & \sigma^{\Sigma_{0}}_{q} \ar[r] & 0 \\
                                 & & \sigma^{\Sigma_{0}} \ar@{^{(}->}[u] & &}.
                  \]                  
                  We set $\r^{\Sigma_{0}}_{+} = \sigma^{\Sigma_{0}} \cap (s.s.\sigma^{\Sigma_{0}}_{q})$ and $\r^{\Sigma_{0}}_{-} = \sigma^{\Sigma_{0}} \cap (s.s.\sigma^{\Sigma_{0}}_{s})$. Then we define $\r^{\Sigma_{0}}(\q, \e)$ to be the unique irreducible
                  subrepresentation of $\rho||^{\delta_{3}} \rtimes \r^{\Sigma_{0}}_{\zeta}$, with $\zeta = \e(a_{\rho, \q', \e'}) \delta_{\rho, \q', \e'} \e(3) \delta_{3}$. Under such choice 
                  this parametrization is compatible with Arthur's parametrization of discrete series representations in the case $\q = \q_{d}$ (cf. Proposition~\ref{prop: parabolic reduction}, and also \cite{Xu:preprint3}, Proposition 9.3), and it also satisfies Theorem~\ref{thm: Aubert dual for G}. 
                  \end{enumerate}
                           
       \end{enumerate}

\end{enumerate}

\end{definition}

\begin{remark}
\label{rk: construction}
The uniqueness properties in the construction should follow from the property about Jacquet modules, i.e., Property (1). The parametrization of representations of $G^{\Sigma_{0}}$ in this construction is not uniquely determined due to the choices we make in Step (c - i). To fix this one can use the (twisted) endoscopy theory. In the tempered case, there are unique choices to be made here so that this parametrization is the same as Arthur's (cf. Theorem~\ref{thm: discrete series}, and also \cite{Xu:preprint3}, Theorem 2.2). In the nontempered case, we can fix the parametrization by that in the tempered case through the generalized Aubert involution, and we will denote such parametrization by $\r_{M}^{\Sigma_{0}}(\q, \e)$ later on.
\end{remark}

In the next few sections, we would like to show $\cPkt{\q}$ consists of $\sH(G)$-modules obtained from restriction of $\r^{\Sigma_{0}}(\q, \e)$ for $\e \in \D{\S{\q}^{\Sigma_{0}}}$.
To do so, we will introduce two kinds of generalized Aubert involution operators, one on the Grothendieck group of representations of $G^{\Sigma_{0}}$ (similarly also for representations of $G$ viewed as $\sH(G)$-modules), and the other on that of $GL(N) \rtimes <\theta_{N}>$. We will start with $G^{\Sigma_{0}}$ following (\cite{Moeglin:2006}, Section 4).

\subsection{Aubert involution for $G^{\Sigma_{0}}$}
\label{subsec: Aubert dual for G}

Let us fix a positive integer $X_{0}$ and write $x_{0} = (X_{0} - 1) / 2$. We also fix a self-dual irreducible unitary supercuspidal representation $\rho$ of $GL(d_{\rho})$. We denote by $\mathcal{P}^{\Sigma_{0}}_{d_{\rho}}$ the set of $\Sigma_{0}$-conjugacy classes of standard parabolic subgroups $P$ of $G$ whose Levi component $M$ is isomorphic to 
\begin{align}
\label{eq: Aubert dual for G Levi}
GL(a_{1}d_{\rho}) \times \cdots \times GL(a_{l}d_{\rho}) \times G(n - \sum_{i \in [1,l]}a_{i}d_{\rho}).
\end{align}
Here we also require $G(n - \sum_{i \in [1,l]}a_{i}d_{\rho}) \neq SO(2)$ when $d_{\rho} \neq 1$. Let $A_{M}$ be the maximal split central torus of $M$. For $P \in \mathcal{P}^{\Sigma_{0}}_{d_{\rho}}$ and $\sigma^{\Sigma_{0}} \in \Rep(M^{\Sigma_{0}})$, we denote by $\sigma^{\Sigma_{0}}_{< x_{0}}$ the direct sum of  irreducible constitutes of $\sigma$ whose cuspidal support on the general linear factors consist only of $\rho||^{x}$ with $|x| < x_{0}$. In particular, when $G(n - \sum_{i \in [1,l]}a_{i}d_{\rho}) = SO(2) \cong GL(1)$, we also impose this condition on $G(n - \sum_{i \in [1,l]}a_{i}d_{\rho})$.

We define the generalized Aubert involution for $G^{\Sigma_{0}}$ with respect to $(\rho, X_{0})$ as follows. For any $\r^{\Sigma_{0}} \in \Rep(G^{\Sigma_{0}})$, 
\[
inv_{< X_{0}}(\r^{\Sigma_{0}}) := \sum_{P \in \mathcal{P}^{\Sigma_{0}}_{d_{\rho}}} (-1)^{dim A_{M}} \Ind^{G^{\Sigma_{0}}}_{P^{\Sigma_{0}}}(\widetilde{\Jac}_{P^{\Sigma_{0}}} (\r^{\Sigma_{0}})_{< x_{0}}),
\] 
where
\[
\widetilde{\Jac}_{P^{\Sigma_{0}}}(\r^{\Sigma_{0}}) = \begin{cases}
                                                         \Jac_{P^{\Sigma_{0}}}(\r^{\Sigma_{0}}) \otimes \x_{0},  & \text{ if } G(n - \sum_{i \in [1,l]}a_{i}d_{\rho}) = SO(2), \\
                                                         \Jac_{P^{\Sigma_{0}}}(\r^{\Sigma_{0}}), & \text{ otherwise. }  
                                                         \end{cases}
\]
Analogously, we can define $inv_{\leqslant X_{0}}$ if we change all strict inequalities to inequalities here. 
Just as the usual Aubert involution, we have the following result. 

\begin{proposition}[\cite{Moeglin:2006}, Proposition 4]
\label{prop: Aubert dual for G}
$inv_{< X_{0}}$ is an involution on the Grothendieck group of finite-length smooth representations of $G^{\Sigma_{0}}$.
\end{proposition}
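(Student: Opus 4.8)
The plan is to mimic the classical proof that the Aubert--Zelevinsky involution is an involution, adapted to the non-connected group $G^{\Sigma_{0}}$ and to the truncated version $inv_{<X_{0}}$ built from the subcategory of parabolics $\mathcal{P}^{\Sigma_{0}}_{d_{\rho}}$ and the ``$<x_{0}$'' cut on the general linear cuspidal support. First I would reduce to a purely combinatorial identity on the Grothendieck group by expanding $inv_{<X_{0}}(inv_{<X_{0}}(\r^{\Sigma_{0}}))$ into a double sum over pairs $(P, Q)$ of parabolics in $\mathcal{P}^{\Sigma_{0}}_{d_{\rho}}$, using transitivity of Jacquet modules and the fact that parabolic induction and Jacquet restriction are exact (so everything descends to Grothendieck groups). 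The key input is a Mackey-type decomposition: $\Jac_{Q^{\Sigma_{0}}}\circ \Ind^{G^{\Sigma_{0}}}_{P^{\Sigma_{0}}}$ decomposes along double cosets $W_{M_Q}\backslash W / W_{M_P}$, and on our category $\mathcal{P}^{\Sigma_{0}}_{d_{\rho}}$ the relevant Weyl-group combinatorics is exactly that of the ``GL-part'' permuting the blocks $GL(a_id_{\rho})$ together with the sign changes coming from $G(n-\sum a_id_{\rho})$. One then checks that the $(-1)^{\dim A_M}$ weights make the alternating sum over intermediate Levis collapse to the identity, via the standard Euler-characteristic cancellation on the poset of parabolics (the ``$\sum_{M\leq L\leq M'} (-1)^{\dim A_M - \dim A_L} = 0$ unless $M=M'$'' identity).

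Second, I would handle the three features that distinguish this from the classical statement. (i) The $\Sigma_{0}$-equivariance: when $M^{\theta_0}=M$ the functors $\Ind^{G^{\Sigma_{0}}}_{P^{\Sigma_{0}}}$ and $\Jac_{P^{\Sigma_{0}}}$ are defined by extending the $G$-level functors with an induced $\Sigma_0$-action, and when $M^{\theta_0}\neq M$ one further inducts from $G$ to $G^{\Sigma_{0}}$; in both cases the compatibility relations $(\Jac_{P^{\Sigma_{0}}}\r^{\Sigma_{0}})|_M = \Jac_P(\r^{\Sigma_{0}}|_G)$ recorded in the excerpt let me check the Mackey formula first on $G$ and then track the $\Sigma_0$-action, noting that $\mathcal{P}^{\Sigma_{0}}_{d_{\rho}}$ consists of $\Sigma_0$-conjugacy classes so the $\theta_0$-orbits of parabolics are already accounted for. (ii) The $SO(2)$ exclusion and the twist by $\x_0$ in $\widetilde{\Jac}_{P^{\Sigma_{0}}}$: I would verify that the convention ``$G(n-\sum a_id_{\rho})\neq SO(2)$ when $d_\rho\neq 1$'' makes $\mathcal{P}^{\Sigma_{0}}_{d_{\rho}}$ closed under the operations appearing in the double sum, and that $\x_0\otimes\x_0 = 1$ together with $\x_0$ being a character of the component group makes the twist cancel in $inv_{<X_{0}}\circ inv_{<X_{0}}$; this is a bookkeeping point, not a conceptual one. (iii) The truncation $(\cdot)_{<x_0}$: I would show that applying the cut to $\r^{\Sigma_0}$ and then to the result is the same as applying it once, because the GL-cuspidal support of $\Ind^{G^{\Sigma_0}}_{P^{\Sigma_0}}(\widetilde{\Jac}_{P^{\Sigma_0}}(\r^{\Sigma_0})_{<x_0})$ already lies in $\{\rho||^x : |x|<x_0\}\cup(\text{support of }\r^{\Sigma_0})$, so the inner and outer truncations are mutually compatible and the combinatorial cancellation is not disturbed. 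For the $inv_{\leqslant X_{0}}$ variant the argument is verbatim the same with $<$ replaced by $\leqslant$.

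I would organize the writeup by first proving the plain (untruncated, untwisted) identity, which is essentially \cite{Aubert:1995}/\cite{SchneiderStuhler:1997} transported to $G^{\Sigma_{0}}$, and then peeling off the truncation and the $SO(2)$/$\x_0$ modifications as lemmas. The main obstacle I expect is precisely the interaction between the truncation $(\cdot)_{<x_0}$ and the Mackey double-coset decomposition: one has to be sure that restricting to the GL-cuspidal-support condition does not break the exactness/cancellation argument, i.e. that the truncation functor commutes appropriately with $\Jac$ and $\Ind$ along the parabolics in $\mathcal{P}^{\Sigma_{0}}_{d_{\rho}}$ (which only involve $\rho$-blocks), so that the truncated double sum still telescopes. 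Everything else --- the sign bookkeeping with $(-1)^{\dim A_M}$, the $\Sigma_0$-equivariance, the $\x_0$ cancellation --- is routine once the classical skeleton is in place, and I would cite \cite{Moeglin:2006} for the details of the verification that she carries out.
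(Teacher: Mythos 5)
The paper itself does not give a proof of this proposition; it cites \cite{Moeglin:2006}, Proposition 4. So there is no in-text argument to compare against, and your proposal must be assessed on its own merits.

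Your overall framework --- expand $inv_{<X_0}\circ inv_{<X_0}$ into a double sum over $P,Q\in\mathcal{P}^{\Sigma_0}_{d_\rho}$, apply the Bernstein--Zelevinsky geometric lemma to $\Jac_Q\circ\Ind_P$, and invoke the standard alternating-sum cancellation on the poset of intermediate Levis --- is the natural adaptation of Aubert/Schneider--Stuhler and is certainly the skeleton of Moeglin's argument. Your handling of the $\Sigma_0$-equivariance via the compatibility relations $(\Jac_{P^{\Sigma_0}}\r^{\Sigma_0})|_M = \Jac_P(\r^{\Sigma_0}|_G)$, and of the $\x_0$-twist (using $\x_0^2=1$), is correct.

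However, your step (iii) does not establish what is needed, and the way you dismiss the truncation as ``bookkeeping'' conflicts with what you later, correctly, identify as the main obstacle. The observation that the cuspidal support of $\Ind^{G^{\Sigma_0}}_{P^{\Sigma_0}}(\widetilde{\Jac}_{P^{\Sigma_0}}(\r^{\Sigma_0})_{<x_0})$ is contained in $\{\rho||^x:|x|<x_0\}\cup(\text{supp }\r^{\Sigma_0})$ is trivially true (Jacquet module and parabolic induction preserve cuspidal support) but irrelevant: the outer truncation is applied after $\Jac_Q$, and the real question is how $(\cdot)_{<x_0}$ interacts with the Mackey terms of $\Jac_Q\circ\Ind_P$. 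The functor $(\cdot)_{<x_0}$ does \emph{not} commute with Jacquet restriction. Concretely, a constituent of $\Jac_P(\pi)_{<x_0}$ has small $\rho$-exponents on its $GL$-factors but arbitrary cuspidal data on its $G(n_-)$-factor; a further Jacquet restriction along $P\cap w^{-1}Qw$ that ``digs into'' $G(n_-)$ can expose new $GL$-blocks carrying $\rho||^x$ with $|x|\geq x_0$, which are then killed by the outer truncation. So the truncated double sum is a genuine sub-sum of the untruncated one, not the full Mackey formula pre- and post-composed with a fixed projector. The correct argument has to show that the surviving Mackey terms are exactly the Mackey terms for the Aubert involution relative to the ``small $\rho$-block'' --- equivalently, that one can first fix a cuspidal datum and observe that $inv_{<X_0}$ restricted to the corresponding Bernstein component is parabolically induced from the ordinary Aubert involution on a smaller group built out of the $\rho||^x$, $|x|<x_0$, pieces --- and then let the classical cancellation run on that sub-poset. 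Related to this: $\mathcal{P}^{\Sigma_0}_{d_\rho}$ is not closed under forming the intermediate Levis appearing in the geometric lemma (intersections of two Levis of the prescribed form need not have $GL$-blocks of size divisible by $d_\rho$); those extra Mackey terms contribute zero after truncation, for support reasons, but this is a point you must check rather than assume. Once these two points are nailed down your plan goes through, but they constitute the actual content of the proof, not routine verification.
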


However, unlike the usual Aubert involution it is by no means clear that $inv_{< X_{0}}$ preserves irreducibility. Because of this we would like to show it preserves irreducibly at least for the class of representations that we have constructed in Section~\ref{subsec: construction}. The key ingredient of showing this is the following proposition. 

\begin{proposition}[\cite{Moeglin:2006}, Proposition 3]
\label{prop: submodule}
Let $\r^{\Sigma_{0}}(\q, \e)$ be a representation defined as in Section~\ref{subsec: construction}, and let $\mathcal{E}$ be an ordered multi-set of half-integers such that $\forall x \in \mathcal{E}, |x| < (a_{\rho, \q, \e} - 1)/2$. If $\r^{\Sigma_{0}}$ is an irreducible subquotient of $\times_{x \in \mathcal{E}} \rho||^{x} \rtimes \r^{\Sigma_{0}}(\q, \e)$, then there exists an ordered multi-set $\mathcal{E}'$ satisfying 
\[
\{\mathcal{E}'\} \cup \{- \mathcal{E}'\} =\{\mathcal{E}\} \cup \{- \mathcal{E}\},
\]
such that
\[
\r^{\Sigma_{0}} \hookrightarrow \times_{x \in \mathcal{E}'} \rho||^{x} \rtimes \r^{\Sigma_{0}}(\q, \e).
\]
\end{proposition}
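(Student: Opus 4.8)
The plan is to deduce the statement directly from the geometric lemma (the Bernstein--Zelevinsky/Tadi\'c formula for Jacquet modules of parabolically induced representations) together with Property~(1) of the Basic Properties, which pins down exactly which Jacquet modules of $\r^{\Sigma_{0}}(\q,\e)$ can be nonzero. Write $\mathcal{E} = (x_{1}, \dots, x_{k})$ with $k = |\mathcal{E}|$, the case $k = 0$ being trivial. Let $P^{\Sigma_{0}}$ be the standard parabolic of $G^{\Sigma_{0}}$ with Levi factor $GL(d_{\rho})^{k} \times G(n')^{\Sigma_{0}}$, where $G(n')^{\Sigma_{0}}$ carries $\r^{\Sigma_{0}}(\q,\e)$, so that $\times_{x \in \mathcal{E}}\rho||^{x} \rtimes \r^{\Sigma_{0}}(\q,\e) = \Ind^{G^{\Sigma_{0}}}_{P^{\Sigma_{0}}}(\rho||^{x_{1}} \otimes \cdots \otimes \rho||^{x_{k}} \otimes \r^{\Sigma_{0}}(\q,\e))$. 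Since $\r^{\Sigma_{0}}$ is an irreducible subquotient of this induced representation, its Jacquet module along $P^{\Sigma_{0}}$ (or the opposite parabolic) is nonzero, and each of its irreducible constituents occurs in the corresponding Jacquet module of the induced representation.

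The main step is then to describe those constituents by the geometric lemma. Each Mackey/Weyl-group term either permutes and possibly changes the sign of some of the supercuspidal blocks $\rho||^{x_{i}}$ among the general linear factors while leaving the classical-group factor untouched, or else it absorbs some $\rho||^{\pm x_{i}}$ into the classical factor, i.e.\ it involves a nonzero $\Jac_{\rho||^{y}}(\r^{\Sigma_{0}}(\q,\e))$ for some $y$ with $|y| = |x_{i}| < (a_{\rho,\q,\e}-1)/2$. By Property~(1) all terms of the second kind vanish. Hence every irreducible constituent of $\Jac_{P^{\Sigma_{0}}}(\r^{\Sigma_{0}})$ has the shape $\rho||^{\epsilon_{1}x_{w(1)}} \otimes \cdots \otimes \rho||^{\epsilon_{k}x_{w(k)}} \otimes \r^{\Sigma_{0}}(\q,\e)$ for some permutation $w$ and signs $\epsilon_{i} \in \{\pm 1\}$; in particular the classical-group factor is precisely $\r^{\Sigma_{0}}(\q,\e)$, unchanged. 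The $\Sigma_{0}$-equivariance is automatic since parabolic induction and the Jacquet functor on $\Rep(G^{\Sigma_{0}})$ are defined through their counterparts on $\Rep(G)$; one should also respect the mild exclusion $G(n') \neq SO(2)$ when $d_{\rho} \neq 1$ that was built into the setup, which keeps the combinatorics in the regime where these statements hold.

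To conclude, pick an irreducible constituent $\delta$ of $\Jac_{P^{\Sigma_{0}}}(\r^{\Sigma_{0}})$ that occurs as a quotient (one exists since the Jacquet functor is exact and its target is a nonzero finite-length module). By the previous paragraph $\delta = \rho||^{\epsilon_{1}x_{w(1)}} \otimes \cdots \otimes \rho||^{\epsilon_{k}x_{w(k)}} \otimes \r^{\Sigma_{0}}(\q,\e)$, and Frobenius reciprocity (Bernstein's second adjointness, up to the $w_{0}$-conjugation relating the Jacquet modules along $P^{\Sigma_{0}}$ and its opposite, which only permutes and sign-changes the $\rho$-blocks and is absorbed into the reordering we are allowed) yields
\[
\r^{\Sigma_{0}} \hookrightarrow \rho||^{\epsilon_{1}x_{w(1)}} \times \cdots \times \rho||^{\epsilon_{k}x_{w(k)}} \rtimes \r^{\Sigma_{0}}(\q,\e).
\]
Setting $\mathcal{E}' := (\epsilon_{1}x_{w(1)}, \dots, \epsilon_{k}x_{w(k)})$ gives $\{\mathcal{E}'\} \cup \{-\mathcal{E}'\} = \{\mathcal{E}\} \cup \{-\mathcal{E}\}$ as multi-sets, which is the assertion. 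Alternatively one can run an induction on $k$, peeling off one block $\rho||^{x}$ at a time using the same vanishing input; this variant would give slightly more control on the order of $\mathcal{E}'$ should that be needed downstream.

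The main obstacle is the geometric-lemma bookkeeping in the second paragraph: one must verify that the only surviving Mackey terms are those that never move a block $\rho||^{x_{i}}$ through the classical group, and that the classical factor comes out literally as $\r^{\Sigma_{0}}(\q,\e)$ rather than as some other member of the constructed family. This is exactly where the hypothesis $|x| < (a_{\rho,\q,\e}-1)/2$ is indispensable, through Property~(1); and Property~(1) in turn rests on Property~(2) (non-unitary irreducibility), which is the genuinely deep ingredient we are allowed to assume here. Everything else --- exactness of the Jacquet functor, existence of an irreducible quotient, and the adjunctions --- is formal.
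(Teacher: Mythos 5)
There is a genuine gap in the geometric-lemma bookkeeping of your second paragraph, and it is exactly at the point you flag as the ``main obstacle.'' You assert that any Mackey term which moves a block $\rho||^{\pm x_{i}}$ into the classical factor necessarily involves a nonzero $\Jac_{\rho||^{y}}(\r^{\Sigma_{0}}(\q,\e))$ with $|y| = |x_{i}|$, and then kill it by Property~(1). But that is not what happens. When a Weyl element pushes $\rho||^{\pm x_{i}}$ into the $G(n')$-part of the Levi, it must simultaneously \emph{pull} a $\rho||^{y}$ \emph{out} of $\r^{\Sigma_{0}}(\q,\e)$ into the $GL$-part to preserve the type of the Levi; this $y$ is constrained only by the requirement $\Jac_{\rho||^{y}}(\r^{\Sigma_{0}}(\q,\e)) \neq 0$, and by Property~(1) such $y$ satisfy $|y| \geq (a_{\rho,\q,\e}-1)/2 > |x_{i}|$. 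In other words, Property~(1) tells you the $y$'s that survive are \emph{large}, not that they coincide with the $x_{i}$'s, so these terms are not killed at all. Unless $\r^{\Sigma_{0}}(\q,\e)$ is supercuspidal (which it is not in general in this construction), $\Jac_{P^{\Sigma_{0}}}(\Pi)$ does have constituents of the form $\rho||^{y}\otimes\cdots\otimes\bigl(\rho||^{\pm x_{i}}\rtimes\cdots\bigr)$ with $|y|$ large, and the conclusion ``every irreducible constituent of $\Jac_{P^{\Sigma_{0}}}(\r^{\Sigma_{0}})$ has the shape $\rho||^{\epsilon_{1}x_{w(1)}}\otimes\cdots\otimes\rho||^{\epsilon_{k}x_{w(k)}}\otimes\r^{\Sigma_{0}}(\q,\e)$'' does not follow.

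This does not mean the proposition is false (it is M\oe glin's), but the mechanism is necessarily more delicate: one has to argue that a constituent of the required ``nice'' shape nevertheless occurs as an irreducible \emph{quotient} of the Jacquet module of the specific subquotient $\r^{\Sigma_{0}}$, not merely that all constituents of the Jacquet module of the full induced representation are nice. The inductive variant you gesture at in the last sentence of paragraph three (peeling off one $\rho||^{x}$ at a time) is closer to being workable, but even there one must compare cuspidal supports, commute the ``small'' segments $\rho||^{x_{i}}$ past the ``large'' $\rho||^{y}$'s that come out of $\r^{\Sigma_{0}}(\q,\e)$, and refold the latter back into the classical factor to recover $\r^{\Sigma_{0}}(\q,\e)$ on the nose; this is a substantive argument (also relying on Property~(2)), not merely formal bookkeeping. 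As written, the proof does not deliver this.
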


Combining Proposition~\ref{prop: Aubert dual for G} and Proposition~\ref{prop: submodule}, one can show the following theorem. 

\begin{theorem}[\cite{Moeglin:2006}, Theorem 4.1]
\label{thm: irreducibility of Aubert dual}
$inv_{< X_{0}}\r^{\Sigma_{0}}(\q, \e)$ is irreducible with a sign in the Grothendieck group of representations of $G^{\Sigma_{0}}$. Moreover, the corresponding irreducible representation $|inv_{< X_{0}}\r^{\Sigma_{0}}(\q, \e)|$ also belongs to the class of representations constructed in Section~\ref{subsec: construction}.
\end{theorem}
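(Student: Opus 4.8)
The plan is to argue by induction on the $\bar{F}$-rank $n$ of $G$, run in parallel with the inductive construction of $\r^{\Sigma_{0}}(\q,\e)$ in Section~\ref{subsec: construction} and with the \textbf{basic properties} recorded there. The base case is when $\r^{\Sigma_{0}}(\q,\e)$ is supercuspidal (item (1) of Definition~\ref{def: constructing elementary Arthur packet}): then $\Jac_{P^{\Sigma_{0}}}\r^{\Sigma_{0}}(\q,\e)=0$ for every proper $P\in\mathcal{P}^{\Sigma_{0}}_{d_{\rho}}$, so only the term $P=G$ survives in the defining sum of $inv_{<X_{0}}$ and $inv_{<X_{0}}\r^{\Sigma_{0}}(\q,\e)=\r^{\Sigma_{0}}(\q,\e)$, which lies in the constructed class with sign $+1$. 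For the inductive step I would fix the $\rho$ and $X_{0}$ attached to the involution and divide into cases according to how $x_{0}=(X_{0}-1)/2$ sits relative to the integers $a_{\rho,\q,\e}$, $b_{\rho,\q,\e}$ and the endpoints of the segments $[A,B]$ with $(\rho,A,B,\zeta)\in Jord(\q)$; these are the places where the behaviour of $inv_{<X_{0}}$ on the building blocks $\rho||^{x}$ changes.

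Within each case I would use the recursive description from Definition~\ref{def: constructing elementary Arthur packet}, writing $\r^{\Sigma_{0}}(\q,\e)$ as the unique irreducible subrepresentation (cases (2) and (3)(a)) or the unique common irreducible subrepresentation of two induced representations (case (3)(b)), or as a summand of a length-two semisimple representation (case (3)(c)), in all cases induced from data involving $\r^{\Sigma_{0}}(\q',\e')$ on a group of strictly smaller rank. The first technical ingredient is a commutation formula expressing $inv_{<X_{0}}$ of these inducing representations in terms of $inv$ applied to $\r^{\Sigma_{0}}(\q',\e')$: the factors $\rho||^{x}$ with $|x|\geqslant x_{0}$ are inert under $inv_{<X_{0}}$ and pull straight through the induction, while the factors with $|x|<x_{0}$ are absorbed into the dualization and alter the inducing segment in a controlled way. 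This formula rests on the non-unitary and unitary reducibility properties and on Proposition~\ref{prop: submodule}, which guarantees that every irreducible subquotient of $\times_{x\in\mathcal{E}}\rho||^{x}\rtimes\r^{\Sigma_{0}}(\q',\e')$ with $|x|<(a_{\rho,\q',\e'}-1)/2$ is in fact a subrepresentation after rearranging $\mathcal{E}$ to some $\mathcal{E}'$; this rigidity is precisely what forces the socle to be irreducible and, combined with the induction hypothesis and Proposition~\ref{prop: Aubert dual for G} (applied a second time to pin down the sign), lets one identify $inv_{<X_{0}}\r^{\Sigma_{0}}(\q,\e)$ with $\pm\r^{\Sigma_{0}}(\q^{\sharp},\e^{\sharp})$ for an explicit $(\q^{\sharp},\e^{\sharp})$ read off from the segments and the character $\e$.

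The \textbf{main obstacle} I expect is not a single conceptual point but the control of the a priori virtual representation: the definition of $inv_{<X_{0}}$ is an alternating sum over parabolics, and showing that it collapses to $\pm$ a single genuine irreducible is exactly where the rigidity of Proposition~\ref{prop: submodule} must be deployed, case by case. The hardest case is Definition~\ref{def: constructing elementary Arthur packet}(3)(c), where $\r^{\Sigma_{0}}(\q,\e)$ is a constituent of a length-two semisimple $\rho\rtimes\r^{\Sigma_{0}}(\q',\e')$ and one must verify that $inv_{<X_{0}}$ respects the decomposition into $\r^{\Sigma_{0}}_{+}\oplus\r^{\Sigma_{0}}_{-}$ and identifies each summand with the correct sign $\zeta$; likewise the boundary cases where $x_{0}$ equals $(a_{\rho,\q,\e}-1)/2$, $(b_{\rho,\q,\e}-1)/2$ or a segment endpoint need separate treatment, since there the ``inert versus absorbed'' dichotomy for the $\rho||^{x}$ degenerates. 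Throughout one must also keep $(\q^{\sharp},\e^{\sharp})$ consistent with the parity constraints defining $\D{\S{\q^{\sharp}}^{\Sigma_{0}}}$ and with the parametrization fixed in Remark~\ref{rk: construction}, so that the conclusion is literally that $|inv_{<X_{0}}\r^{\Sigma_{0}}(\q,\e)|$ equals some $\r^{\Sigma_{0}}(\q^{\sharp},\e^{\sharp})$ and not merely that it is abstractly isomorphic to a representation of the constructed type. The full case analysis and sign bookkeeping are carried out in \cite{Moeglin:2006}, Section~4.
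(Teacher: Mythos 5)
Your proposal follows the same strategy that the paper indicates (and that Moeglin carries out in \cite{Moeglin:2006}, Section 4): induct on rank through the recursive construction of Section~\ref{subsec: construction}, with Proposition~\ref{prop: Aubert dual for G} and the rigidity of Proposition~\ref{prop: submodule} forcing $inv_{< X_{0}}\r^{\Sigma_{0}}(\q, \e)$ to collapse to a single signed irreducible of the constructed type. The one point to phrase carefully is that the ``commutation formula'' you invoke is really the Jacquet-module compatibility \eqref{eq: compatible with involution} (recorded by the paper immediately after the theorem); one commutes $inv_{< X_{0}}$ past $\Jac_{P^{\Sigma_{0}}, < x}$ rather than directly through the parabolic induction, and then uses Proposition~\ref{prop: submodule} to lift the resulting control at the Levi level back up to $G^{\Sigma_{0}}$ -- commuting the truncated involution through the induction itself would beg the question, since irreducibility of the induced pieces under $inv_{< X_{0}}$ is exactly what is being established by the induction on rank.
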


One can also determine the sign in this theorem. Let $Jord(\q, \rho, < X_{0}) = \{\alpha \in Jord_{\rho}(\q_{d}) : \alpha < X_{0}\}$, and we define 
\[
\beta(\q, \rho, < X_{0}) := \begin{cases}
                                        (-1)^{|Jord(\q, \rho, < X_{0})|(|Jord(\q, \rho, < X_{0})| - 1)/2} \cdot \prod_{\alpha \in Jord(\q, \rho, < X_{0})}(-1)^{(\alpha - 1)/2}, \\
                                        \text{ if $Jord_{\rho}(\q_{d})$ contains odd integers; } \\
                                        \prod_{\alpha \in Jord(\q, \rho, < X_{0})} (-1)^{\alpha/2}, \text{ if $Jord_{\rho}(\q_{d})$ contains even integers. }                                        
                                        \end{cases}
\]

\begin{proposition} [\cite{Moeglin:2006}, Proposition 4.2]
\label{prop: sign for G}
\[
\beta(\q, \rho, < X_{0}) inv_{< X_{0}} \r^{\Sigma_{0}}(\q, \e) = \begin{cases}
                                                                                 \prod_{\alpha \in Jord(\q, \rho, < X_{0})} \e(\rho, \alpha, \delta_{\alpha}) |inv_{< X_{0}} \r^{\Sigma_{0}}(\q, \e)|, \\
                                                                                 \text{ if $Jord_{\rho}(\q_{d})$ contains even integers; } \\                                                                                
                                                                                 |inv_{< X_{0}} \r^{\Sigma_{0}}(\q, \e)|, \text{ if $Jord_{\rho}(\q_{d})$ contains odd integers. }                                                                                  
                                                                                 \end{cases}                                                                                 
\]
\end{proposition}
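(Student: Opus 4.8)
The plan is to prove the proposition by induction on the rank $n$ of $G$, running in parallel with M\oe glin's recursive construction of $\r^{\Sigma_{0}}(\q,\e)$ in Definition~\ref{def: constructing elementary Arthur packet}. Since $inv_{<X_{0}}$ only rearranges the part of the cuspidal support lying in $\{\rho||^{x}:|x|<x_{0}\}$, and since both $\beta(\q,\rho,< X_{0})$ and $\prod_{\alpha}\e(\rho,\alpha,\delta_{\alpha})$ depend only on $Jord_{\rho}(\q_{d})$, one first reduces to the case $Jord(\q)=Jord_{\rho}(\q)$: any Jordan block attached to a $\rho'\not\cong\rho$ can be peeled off by parabolic induction, through which $inv_{<X_{0}}$ passes unchanged and which affects neither side of the asserted identity.

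The two preliminary facts I would establish are: (i) a multiplicativity lemma saying that $inv_{<X_{0}}$ is compatible with the parabolic inductions occurring in the construction — concretely, if $\tau=\langle x,\dots,y\rangle$ is a $GL$-segment all of whose exponents lie in $(-x_{0},x_{0})$, then in the Grothendieck group $inv_{<X_{0}}(\tau\rtimes\sigma^{\Sigma_{0}})=\widehat{\tau}\rtimes inv_{<X_{0}}(\sigma^{\Sigma_{0}})$, where $\widehat{\tau}$ is the ordinary Aubert dual of $\tau$ (again a $GL$-segment, carried with its standard sign), while a $GL$-segment straddling the threshold $|x|=x_{0}$ is first split as $\langle x,\dots,y\rangle\hookrightarrow\langle x,\dots,z+1\rangle\times\langle z,\dots,y\rangle$ with the cut placed at $x_{0}$ and the outer factor carried through untouched; and (ii) the identification, furnished by Theorem~\ref{thm: irreducibility of Aubert dual}, of $|inv_{<X_{0}}\r^{\Sigma_{0}}(\q,\e)|$ as a member $\r^{\Sigma_{0}}(\q^{*},\e^{*})$ of the constructed class, where one must first work out explicitly (by the same kind of Jacquet-module analysis used to prove the basic properties) that $\q^{*}$ is obtained from $\q$ by flipping $\delta_{\alpha}\mapsto-\delta_{\alpha}$ on the blocks with $\alpha< X_{0}$, with the corresponding transport of $\e$, so that the construction recursion for $\q^{*}$ runs in lock-step with that for $\q$.

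The base case is that of supercuspidal $\r^{\Sigma_{0}}(\q,\e)$, i.e. $a_{\rho,\q,\e}=\infty$: here all proper Jacquet modules vanish, hence $inv_{<X_{0}}\r^{\Sigma_{0}}(\q,\e)=\r^{\Sigma_{0}}(\q,\e)$ and the sign is $+1$; one checks directly, using the constraints of Theorem~\ref{thm: supercuspidal parametrization} — which in the even case force $Jord_{\rho}(\q_{d})=\{2,4,\dots\}$ and $\e(\rho,2j,\delta_{2j})=(-1)^{j}$, and in the odd case force $Jord_{\rho}(\q_{d})=\{1,3,\dots\}$ — that $\beta(\q,\rho,< X_{0})\prod_{\alpha}\e(\rho,\alpha,\delta_{\alpha})=1$ in the even case and $\beta(\q,\rho,< X_{0})=1$ in the odd case, matching the claimed sign. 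For the inductive step I would go through the cases (2), (3a), (3b), (3c-i), (3c-ii) of Definition~\ref{def: constructing elementary Arthur packet} one at a time: in each, $\r^{\Sigma_{0}}(\q,\e)$ is the distinguished irreducible subrepresentation of $\tau\rtimes\r^{\Sigma_{0}}(\q',\e')$ for an explicit segment $\tau$ and a strictly smaller datum $(\q',\e')$; applying fact (i) and the inductive hypothesis gives $inv_{<X_{0}}\r^{\Sigma_{0}}(\q,\e)=\pm\,\widehat{\tau}\rtimes\r^{\Sigma_{0}}((\q')^{*},(\e')^{*})$, whose distinguished irreducible subrepresentation is $\r^{\Sigma_{0}}(\q^{*},\e^{*})$ by the construction recursion for $\q^{*}$ together with the uniqueness built into Proposition~\ref{prop: submodule}. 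Collecting the three sources of sign — the factors $(-1)^{\dim A_{M}}$ in the definition of $inv_{<X_{0}}$, the Aubert sign of $\widehat{\tau}$ over $GL$, and the inductive sign for $(\q',\e')$ — and comparing with how $\beta(\q,\rho,< X_{0})\prod\e$ (resp. $\beta(\q,\rho,< X_{0})$) differs from its primed counterpart when the relevant blocks $\alpha< X_{0}$ are added, removed, or have $\delta_{\alpha}$ flipped, then yields the result.

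The main obstacle is the sign bookkeeping in the cases where $\tau$ straddles the threshold $x_{0}$: there the generalized Aubert involution of $\tau$ a priori produces a linear combination of induced modules, and one must show that after passing to the distinguished subquotient it collapses to a single irreducible with a well-defined sign — this is exactly where Proposition~\ref{prop: submodule}, applied with $\mathcal{E}$ the multiset of exponents of $\tau$ (all of absolute value $<(a_{\rho,\q,\e}-1)/2$ in the relevant situations), does the work. A secondary difficulty is case (3c), in which $\r^{\Sigma_{0}}(\q,\e)$ is built from the length-two representation $\sigma^{\Sigma_{0}}=\r^{\Sigma_{0}}_{+}\oplus\r^{\Sigma_{0}}_{-}$: one has to verify that $inv_{<X_{0}}$ interchanges or preserves the two constituents in the way dictated by the sign recipe $\zeta=\e(3)\delta_{3}$ (resp. $\zeta=\e(a_{\rho,\q',\e'})\delta_{\rho,\q',\e'}\e(3)\delta_{3}$), which requires re-examining, under the involution, the short exact sequences defining $\r^{\Sigma_{0}}_{\pm}$. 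Once these points are settled, the remaining comparison of signs is a finite computation, which I would organize according to the parity of the integers in $Jord_{\rho}(\q_{d})$, matching the two branches in the definition of $\beta$.
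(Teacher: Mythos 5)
Your overall strategy — induction that tracks M{\oe}glin's construction recursion, a multiplicativity lemma for $inv_{<X_0}$ against parabolic induction, and the identification $|inv_{<X_0}\r^{\Sigma_0}(\q,\e)|=\r^{\Sigma_0}(\q^{\sharp},\e^{\sharp})$ from Theorem~\ref{thm: Aubert dual for G} — is the right shape, and your base-case computation is correct: in the supercuspidal case all proper Jacquet modules vanish, $inv_{<X_0}$ is the identity, and the constraints of Theorem~\ref{thm: supercuspidal parametrization} do give $\beta(\q,\rho,<X_0)\prod_{\alpha}\e(\rho,\alpha,\delta_{\alpha})=1$ (even branch) and $\beta(\q,\rho,<X_0)=1$ (odd branch).

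The inductive step, however, has a genuine gap. You write ``applying fact (i) and the inductive hypothesis gives $inv_{<X_0}\r^{\Sigma_0}(\q,\e)=\pm\,\widehat{\tau}\rtimes\r^{\Sigma_0}((\q')^{*},(\e')^{*})$.'' That equation is ill-typed: by Theorem~\ref{thm: irreducibility of Aubert dual} the left side is $\pm$ a \emph{single irreducible}, while $\widehat{\tau}\rtimes\r^{\Sigma_0}((\q')^{*},(\e')^{*})$ is in general reducible. What fact (i) plus the inductive hypothesis actually gives is an identity for the \emph{induced module}, $inv_{<X_0}\big(\tau\rtimes\r^{\Sigma_0}(\q',\e')\big)=(\text{sign})\,\big[\widehat{\tau}\rtimes\r^{\Sigma_0}((\q')^{*},(\e')^{*})\big]$ in the Grothendieck group. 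To extract from this the sign $\eta$ with $inv_{<X_0}\r^{\Sigma_0}(\q,\e)=\eta\,\r^{\Sigma_0}(\q^{\sharp},\e^{\sharp})$, you need two further ingredients that you do not supply: (a) a multiplicity-one statement that $\r^{\Sigma_0}(\q,\e)$ occurs exactly once as a constituent of $\tau\rtimes\r^{\Sigma_0}(\q',\e')$, so that $\r^{\Sigma_0}(\q^{\sharp},\e^{\sharp})$ occurs exactly once on the right side; and (b) an appeal to involutivity (Proposition~\ref{prop: Aubert dual for G}) to guarantee that the remaining constituents of $\tau\rtimes\r^{\Sigma_0}(\q',\e')$ cannot contribute $\pm\r^{\Sigma_0}(\q^{\sharp},\e^{\sharp})$ under $inv_{<X_0}$ (since an involution that sends $\pm$irreducibles to $\pm$irreducibles is injective on them). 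Without spelling out (a)--(b) the sign is not pinned down, and Proposition~\ref{prop: submodule}, which you invoke, addresses the \emph{existence} of a submodule embedding, not this multiplicity statement.

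Secondly, your handling of the ``straddling'' situation, where the segment $\tau$ attached to a block with $a_{\rho,\q,\e}\geq X_0$ has exponents on both sides of $x_0$, is not sufficient. You propose to split $\tau\hookrightarrow\langle x,\ldots,z+1\rangle\times\langle z,\ldots,y\rangle$ at the threshold and push $inv_{<X_0}$ past the outer factor. But that splitting is a subrepresentation, not an equality in the Grothendieck group, so the virtual identity defining $inv_{<X_0}$ does not pass through it term-by-term. One must instead establish multiplicativity of $inv_{<X_0}$ directly for the Levi $GL(a_1 d_\rho)\times\cdots\times G_-$ actually carrying the support above $x_0$, i.e.\ work with the compatibility relation \eqref{eq: compatible with involution} rather than with an embedding of $GL$-segments. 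Finally, a smaller imprecision: your preliminary reduction to $Jord(\q)=Jord_\rho(\q)$ is not a literal reduction (the $\rho'$-blocks are interleaved with the $\rho$-blocks in the recursion); the correct statement is that $\rho'$-blocks contribute no sign, which one proves by the same induction rather than by a preliminary ``peeling off.''
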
                                                                                                                                                                 
                                                                                                                                        
Next we want to illustrate the second part of Theorem~\ref{thm: irreducibility of Aubert dual}. This makes use of a compatible relation between this Aubert involution and Jacquet module. To describe this relation, let $P = MN$ be in $\mathcal{P}^{\Sigma_{0}}_{d_{\rho}}$ and let $w_{P}$ be a Weyl group element in $W^{\Sigma_{0}}(M) := \Norm(A_{M}, G^{\Sigma_{0}})/M$ sending all positive roots outside $M$ to negative roots. We can also define $inv^{M^{\Sigma_{0}}}_{< X_{0}}$ by taking the usual Aubert involution on the general linear factors of \eqref{eq: Aubert dual for G Levi}. For any representation $\r^{\Sigma_{0}}$ of $G^{\Sigma_{0}}$, let $\Jac_{P^{\Sigma_{0}}, < x}(\r^{\Sigma_{0}}) = (\Jac_{P^{\Sigma_{0}}}(\r^{\Sigma_{0}}))_{< x}$. Then we have 
\begin{align}
\label{eq: compatible with involution}
\Jac_{P^{\Sigma_{0}}, < x} |inv_{< X_{0}} (\r^{\Sigma_{0}})| = \text{Ad}(w_{P}) |inv^{M^{\Sigma_{0}}}_{< X_{0}} \Jac_{P^{\Sigma_{0}}, < x} (\r^{\Sigma_{0}})|
\end{align}
for all $x \leqslant x_{0}$ and $\r^{\Sigma_{0}} \in \Rep(G^{\Sigma_{0}})$ (cf. \cite{Moeglin:2006}, Section 4.3). From this equality, one can easily conclude the following corollary.

\begin{corollary}[\cite{Moeglin:2006}, Corollary 4.3]
Let $\alpha \in Jord_{\rho}(\q)$ with $a_{\rho, \q, \e} < \alpha$.
\begin{enumerate}

\item If $a_{\rho, \q, \e} > b_{\rho, \q, \e} + 2$, then 
\[
|inv_{< \alpha}(\r^{\Sigma_{0}}(\q, \e))| \hookrightarrow \rho||^{-\delta_{\rho, \q, \e}(a_{\rho, \q, \e} - 1)/2} \rtimes |inv_{< \alpha} (\r^{\Sigma_{0}}(\q', \e'))|,
\] 
where $(\q', \e')$ is obtained by changing $(\rho, a_{\rho, \q, \e}, \delta_{\rho, \q, \e})$ to $(\rho, a_{\rho, \q, \e} - 2, \delta_{\rho, \q, \e})$.

\item If $a_{\rho, \q, \e} = b_{\rho, \q, \e} + 2$, then 
\[
|inv_{< \alpha}(\r^{\Sigma_{0}}(\q, \e))| \hookrightarrow <-\delta_{\rho, \q, \e}(a_{\rho, \q, \e} - 1)/2, \cdots, \delta_{\rho, \q, \e}(b_{\rho, \q, \e} - 1)/2> \rtimes |inv_{< \alpha} (\r^{\Sigma_{0}}(\q', \e'))|,
\]
where $(\q', \e')$ is obtained by removing $a_{\rho, \q, \e}$ and $b_{\rho, \q, \e}$ from $Jord_{\rho}(\q_{d})$.

\end{enumerate}
\end{corollary}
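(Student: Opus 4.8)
The proof follows the recursive structure of the construction of $\r^{\Sigma_{0}}(\q, \e)$ in Section~\ref{subsec: construction}, combined with the compatibility \eqref{eq: compatible with involution} of $inv_{< \alpha}$ with Jacquet modules and the irreducibility furnished by Theorem~\ref{thm: irreducibility of Aubert dual}; since the construction proceeds by induction on the rank of $G$, so does this argument. The basic mechanism is the following: because $|inv_{< \alpha}\r^{\Sigma_{0}}(\q,\e)|$ and $|inv_{< \alpha}\r^{\Sigma_{0}}(\q',\e')|$ are both irreducible, the claimed embeddings are, by Frobenius reciprocity, equivalent to the statement that the Levi-constituent appearing on the right-hand side occurs in an appropriate Jacquet module of $|inv_{< \alpha}\r^{\Sigma_{0}}(\q,\e)|$ along a standard parabolic $P^{\Sigma_{0}} \in \mathcal{P}^{\Sigma_{0}}_{d_{\rho}}$ whose general linear factor matches the inducing data.

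For case (1): by Definition~\ref{def: constructing elementary Arthur packet}(2) we have the embedding $\r^{\Sigma_{0}}(\q, \e) \hookrightarrow \rho||^{\delta_{\rho,\q,\e}(a_{\rho,\q,\e}-1)/2} \rtimes \r^{\Sigma_{0}}(\q', \e')$, so $\rho||^{\delta_{\rho,\q,\e}(a_{\rho,\q,\e}-1)/2} \otimes \r^{\Sigma_{0}}(\q', \e')$ sits inside $\Jac_{P^{\Sigma_{0}}}\r^{\Sigma_{0}}(\q, \e)$ for the parabolic with Levi $GL(d_{\rho}) \times G(\cdot)$. Choose $x$ with $(a_{\rho,\q,\e}-1)/2 < x \leqslant x_{0} = (\alpha-1)/2$, which is possible since $a_{\rho,\q,\e} < \alpha$; then this constituent survives in $\Jac_{P^{\Sigma_{0}}, < x}\r^{\Sigma_{0}}(\q, \e)$, and \eqref{eq: compatible with involution} gives
\[
\Jac_{P^{\Sigma_{0}}, < x}|inv_{< \alpha}\r^{\Sigma_{0}}(\q, \e)| = \text{Ad}(w_{P})|inv^{M^{\Sigma_{0}}}_{< \alpha}\Jac_{P^{\Sigma_{0}}, < x}\r^{\Sigma_{0}}(\q, \e)|.
\]
On the right, $inv^{M^{\Sigma_{0}}}_{< \alpha}$ acts on the $GL(d_{\rho})$-factor as the usual Aubert involution, which fixes the supercuspidal representation $\rho||^{\delta_{\rho,\q,\e}(a_{\rho,\q,\e}-1)/2}$, and sends $\r^{\Sigma_{0}}(\q', \e')$ on the $G(\cdot)$-factor to $\pm|inv_{< \alpha}\r^{\Sigma_{0}}(\q', \e')|$ by the inductive hypothesis; $\text{Ad}(w_{P})$ then takes the contragredient of the $GL(d_{\rho})$-factor, changing $\delta_{\rho,\q,\e}$ to $-\delta_{\rho,\q,\e}$. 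Hence $\rho||^{-\delta_{\rho,\q,\e}(a_{\rho,\q,\e}-1)/2} \otimes |inv_{< \alpha}\r^{\Sigma_{0}}(\q', \e')|$ occurs in $\Jac_{P^{\Sigma_{0}}}|inv_{< \alpha}\r^{\Sigma_{0}}(\q, \e)|$ (the sign being irrelevant by irreducibility), and Frobenius reciprocity produces the embedding. Case (2) is identical, starting instead from the embedding of $\r^{\Sigma_{0}}(\q, \e)$ into $<\delta_{\rho,\q,\e}(a_{\rho,\q,\e}-1)/2, \cdots, -\delta_{\rho,\q,\e}(b_{\rho,\q,\e}-1)/2> \rtimes \r^{\Sigma_{0}}(\q', \e')$ supplied by Definition~\ref{def: constructing elementary Arthur packet}(3); all exponents in the segment have absolute value $\leqslant (a_{\rho,\q,\e}-1)/2 < (\alpha-1)/2$, so again nothing is lost to the truncation, and the combined effect of the Aubert involution on the general linear factor and of $\text{Ad}(w_{P})$ replaces the segment representation by $<-\delta_{\rho,\q,\e}(a_{\rho,\q,\e}-1)/2, \cdots, \delta_{\rho,\q,\e}(b_{\rho,\q,\e}-1)/2>$.

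The delicate point is twofold. First, in case (2) one must carefully check that the Aubert involution followed by $\text{Ad}(w_{P})$ carries the segment representation attached to $[\delta_{\rho,\q,\e}(a_{\rho,\q,\e}-1)/2, -\delta_{\rho,\q,\e}(b_{\rho,\q,\e}-1)/2]$ onto the one attached to the reversed-and-negated segment appearing in the statement — a finite but fiddly manipulation of segment representations. Second, and more essentially, one must ensure that the constituent singled out above is genuinely $|inv_{< \alpha}\r^{\Sigma_{0}}(\q', \e')|$ rather than being cancelled, in the Grothendieck group, against some other constituent of the Jacquet module; this is guaranteed by Property~(1) of the Basic Properties applied to $\r^{\Sigma_{0}}(\q', \e')$, which pins down precisely which $\rho||^{x}$ can appear in its Jacquet module and thereby leaves no room for interference. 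I expect this last point to be the main obstacle, as it is where the inductive control over the whole family $\r^{\Sigma_{0}}(\q', \e')$ — not just irreducibility of a single member — is really used.
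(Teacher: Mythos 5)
Your proof is correct and follows the paper's own route: the paper merely records \eqref{eq: compatible with involution} and asserts that ``from this equality, one can easily conclude'' the corollary, and you supply precisely those details — feed the defining embeddings of $\r^{\Sigma_{0}}(\q,\e)$ from Section~\ref{subsec: construction} through \eqref{eq: compatible with involution}, noting that all exponents are $< x_{0}$ since $a_{\rho,\q,\e}<\alpha$, and compute the combined effect of the Aubert involution on the $GL$-factor and of $\mathrm{Ad}(w_{P})$. Your closing concern about cancellation in the Grothendieck group is, however, slightly misattributed: it is not resolved by Property~(1) of the Basic Properties (which constrains which exponents $x$ give $\Jac_{\rho||^{x}}\neq 0$, a different matter) but is already built into \eqref{eq: compatible with involution} itself, which — thanks to Theorem~\ref{thm: irreducibility of Aubert dual} — is an equality of genuine representations with $|\cdot|$ on both sides, so no separate argument is needed.
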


It is easy to see from this corollary that $|inv_{< X_{0}}\r^{\Sigma_{0}}(\q, \e)|$ is in the class of Section~\ref{subsec: construction}. In fact from here one can even describe the pair $(\q^{\sharp}, \e^{\sharp})$, which parametrizes $|inv_{< X_{0}}\r^{\Sigma_{0}}(\q, \e)|$. 

\begin{theorem}[\cite{Moeglin:2006}, Theorem 5]
\label{thm: Aubert dual for G}
For $\r^{\Sigma_{0}}(\q, \e)$, let $\q^{\sharp}$ be obtained from $\q$ by changing $\delta_{\alpha}$ to $-\delta_{\alpha}$ for all $\alpha \in Jord_{\rho}(\q_{d})$ such that $\alpha < X_{0}$, and let $\e^{\sharp} = \e$ under this correspondence. Then one can make suitable choices in the construction of representation corresponding to this new pair $(\q^{\sharp}, \e^{\sharp})$ (see Section~\ref{subsec: construction}, (c-i)) such that $\r^{\Sigma_{0}}(\q^{\sharp}, \e^{\sharp}) = |inv_{< X_{0}}\r^{\Sigma_{0}}(\q, \e)|$.
\end{theorem}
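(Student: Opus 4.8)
The plan is to prove Theorem~\ref{thm: Aubert dual for G} by combining the computation of the Aubert involution on Jacquet modules (equation \eqref{eq: compatible with involution} and its Corollary) with the inductive construction of Section~\ref{subsec: construction}, proceeding by induction on $\sum_{\alpha \in Jord_{\rho}(\q_{d})} \alpha$ (or equivalently on $\dim G$), exactly as in the construction itself. The base case is when $a_{\rho, \q, \e} = \infty$, so that $\r^{\Sigma_{0}}(\q, \e)$ is the supercuspidal $\r_{W}^{\Sigma_{0}}(\q_d, \e)$; here $inv_{< X_0}$ acting on a supercuspidal is (up to sign) again supercuspidal, and one checks directly that changing all $\delta_\alpha$ with $\alpha < X_0$ to $-\delta_\alpha$ is compatible with the parametrization of supercuspidals coming from Theorem~\ref{thm: supercuspidal parametrization}, since the Aubert involution there just corresponds to $\rho \mapsto \rho^\vee$ on the general linear inducing data, which does not change the discrete series packet but reverses the relevant signs in the Jacquet-module criterion of Property (1).

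For the inductive step I would split according to the three cases of Definition~\ref{def: constructing elementary Arthur packet}. In case (2), where $a_{\rho, \q, \e} > b_{\rho, \q, \e} + 2$ or $b_{\rho, \q, \e} = 0$, Corollary~4.3(1) already tells us that $|inv_{< X_0}\r^{\Sigma_0}(\q, \e)|$ embeds into $\rho||^{-\delta_{\rho,\q,\e}(a_{\rho,\q,\e}-1)/2} \rtimes |inv_{< X_0}\r^{\Sigma_0}(\q', \e')|$, and by the inductive hypothesis $|inv_{< X_0}\r^{\Sigma_0}(\q', \e')| = \r^{\Sigma_0}(\q'^{\sharp}, \e'^{\sharp})$. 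Since $a_{\rho,\q,\e} < X_0$ forces $\delta_{\rho,\q,\e}$ to be reversed in passing to $\q^{\sharp}$, the sign $-\delta_{\rho,\q,\e}$ appearing in the embedding is exactly $\delta_{\rho, \q^{\sharp}, \e^{\sharp}}$, so the embedding matches the defining embedding of $\r^{\Sigma_0}(\q^{\sharp}, \e^{\sharp})$ in Definition~\ref{def: constructing elementary Arthur packet}(2); uniqueness of the irreducible subrepresentation (Property (1)) then gives equality. The case $a_{\rho,\q,\e} > X_0$, where $\delta_{\rho,\q,\e}$ is \emph{not} reversed, is handled by observing that then $inv_{<X_0}$ commutes with the relevant $\Jac$ operators by \eqref{eq: compatible with involution}, so the inductive structure is again preserved. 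Case (3a) is essentially the same, using Corollary~4.3(2), and keeping track of the change $(\rho,\alpha,\delta_\alpha) \mapsto (\rho,\alpha,-\delta_{\rho,\q,\e})$ for $\alpha \le b_{\rho,\q,\e}$ together with the correlated sign in $\e_-$.

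The delicate part, and where the ``suitable choices'' clause in the statement comes in, is case (3c), i.e.\ $a_{\rho,\q,\e} = 3$, $b_{\rho,\q,\e} = 1$: here $\rho \rtimes \r^{\Sigma_0}(\q', \e')$ is reducible of length two and $\r^{\Sigma_0}(\q,\e)$ is defined by picking out a constituent $\r^{\Sigma_0}_\zeta$ with $\zeta$ given by an explicit formula in $\e$ and the $\delta$'s. The plan is to apply $inv_{<X_0}$ to the length-two representation $\sigma^{\Sigma_0} = \rho \rtimes \r^{\Sigma_0}(\q', \e')$ and use \eqref{eq: compatible with involution} to identify how $inv_{<X_0}$ permutes (or fixes) the two constituents $\r^{\Sigma_0}_{\pm}$ — one computes their Jacquet modules $\Jac_{\rho||^0}$ (using $\sigma^{\Sigma_0}_q$ versus $\sigma^{\Sigma_0}_s$ in subcase (c-ii)) and tracks the effect of the involution on the subquotient structure of $\Pi^{\Sigma_0}$. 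This is the main obstacle: one must verify that the combination of sign changes $\delta_3 \mapsto -\delta_3$ (since $3 < X_0$ here, otherwise the block is untouched) and $\delta_{\rho,\q',\e'} \mapsto -\delta_{\rho,\q',\e'}$ (if $a_{\rho,\q',\e'} < X_0$) in passing from $(\q,\e)$ to $(\q^{\sharp}, \e^{\sharp})$ is exactly compensated by how $inv_{<X_0}$ acts on $\{\r^{\Sigma_0}_+, \r^{\Sigma_0}_-\}$, so that after making the correlated choice of labelling in the construction of $\r^{\Sigma_0}(\q^{\sharp}, \e^{\sharp})$ the formula $\zeta^{\sharp} = \e^{\sharp}(a_{\rho,\q'^{\sharp},\e'^{\sharp}})\delta_{\rho,\q'^{\sharp},\e'^{\sharp}}\e^{\sharp}(3)\delta_3^{\sharp}$ lands on the constituent $|inv_{<X_0}\r^{\Sigma_0}(\q,\e)|$. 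In effect the choices in (c-i)/(c-ii) are \emph{defined} so as to make this hold; the content of the theorem is that one consistent such choice exists globally over all elementary parameters, which follows once the Jacquet-module bookkeeping of \eqref{eq: compatible with involution} is shown to be self-consistent under the induction. I would close by remarking that, granting Theorem~\ref{thm: irreducibility of Aubert dual}, the only thing left is the identification of $(\q^{\sharp}, \e^{\sharp})$, which is read off the Jacquet-module support via Property (1) and Corollary~4.3.
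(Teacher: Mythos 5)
The paper does not actually prove this theorem — it is quoted as [M{\oe}glin 2006, Theorem 5], and the surrounding text merely observes that Corollary~4.3 and the inductive structure of the construction make the pair $(\q^{\sharp},\e^{\sharp})$ identifiable. Your plan — induct along the construction of Section~\ref{subsec: construction} and read off the parameter of the Aubert dual from Corollary~4.3 and Property~(1) — is therefore exactly the route the paper gestures at. However, several points need tightening.

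First, the base-case discussion is off: when $a_{\rho,\q,\e}=\infty$ the representation $\r^{\Sigma_0}(\q,\e)$ is supercuspidal, so all proper Jacquet modules vanish and $inv_{<X_0}$ is the identity; and since the supercuspidal parametrization in Definition~\ref{def: constructing elementary Arthur packet}(1) depends only on $(\q_d,\e)$, which is unchanged under flipping the $\delta_\alpha$'s, the base case is trivial. Your remark about ``$\rho\mapsto\rho^\vee$ on the general linear inducing data'' is not relevant here (there are no general linear factors in the cuspidal support) and is best deleted.

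Second — and this is the real gap — you correctly identify case (3c) as the crux but do not carry it out. You also somewhat overstate the freedom: in subcase (c-ii), where $|Jord_\rho(\q_d)|>2$, the labelling of $\r^{\Sigma_0}_{\pm}$ is \emph{fixed} by the exact sequence $0\to\sigma^{\Sigma_0}_s\to\Pi^{\Sigma_0}\to\sigma^{\Sigma_0}_q\to0$; only in (c-i), where $|Jord_\rho(\q_d)|=2$, is the choice arbitrary. The theorem's ``suitable choices'' clause thus refers to (c-i) alone, and for the (c-ii) case the identity $\r^{\Sigma_0}(\q^{\sharp},\e^{\sharp})=|inv_{<X_0}\r^{\Sigma_0}(\q,\e)|$ is a genuine claim that must be verified by tracking how $inv_{<X_0}$ interacts with the quotient/sub structure of $\Pi^{\Sigma_0}$, not something that can be ``defined to hold.'' Concretely one has to show that the ratio $\zeta^{\sharp}/\zeta$, determined by whether $3<X_0$ and whether $a_{\rho,\q',\e'}<X_0$, matches the permutation of $\{\r^{\Sigma_0}_+,\r^{\Sigma_0}_-\}$ induced by $inv_{<X_0}$; this is precisely the Jacquet-module bookkeeping you defer, and without it the proof is incomplete. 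Finally, for the case $a_{\rho,\q,\e}>X_0$ you appeal to \eqref{eq: compatible with involution}, but that equality is stated only for $x\leq x_0$; the commutativity you need at the level $|x|=(a_{\rho,\q,\e}-1)/2>x_0$ is a different (easier) statement — the truncated involution simply does not see such Jacquet modules — and should be argued separately.
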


Let $\bar{\Rep}(G)$ be the category of finite-length smooth representations of $G$ viewed as $\sH(G)$-modules. We denote the elements in $\bar{\Rep}(G)$ by $[\r]$ for $\r \in \Rep(G)$, and we call $[\r]$ is irreducible if $\r$ is irreducible. Let
\[
\bar{\Jac}_{P} = \begin{cases}
                         \Jac_{P} + \Jac_{P} \circ \theta_{0},  & \text{ if $G = SO(2n)$ and $M^{\theta_{0}} \neq M$,} \\
                         \Jac_{P},         & \text{ otherwise. }
                         \end{cases}
\]
We can define parabolic induction and Jacquet module on $\bar{\Rep}(G)$ as follows
\[
\Ind^{G}_{P} [\sigma] := [\Ind^{G}_{P} \sigma] \text{ and } \bar{\Jac}_{P} [\r] := [\bar{\Jac}_{P} \r].
\]
Then the generalized Aubert involution $inv_{< X_{0}}$ can also be defined for $\bar{\Rep}(G)$ in an analogous way, i.e., 
\[
\bar{inv}_{< X_{0}}([\r]) := \sum_{P \in \mathcal{P}^{\Sigma_{0}}_{d_{\rho}}} (-1)^{dim A_{M}} \Ind^{G}_{P}(\bar{\Jac}_{P} ([\r])_{< x_{0}}).
\] 
For $\r^{\Sigma_{0}} \in \Rep(G^{\Sigma_{0}})$, we have
\[
[(\Ind^{G^{\Sigma_{0}}}_{P^{\Sigma_{0}}} \, \Jac_{P^{\Sigma_{0}}} \r^{\Sigma_{0}})|_{G}] = \Ind^{G}_{P} \, \bar{\Jac}_{P} [\r^{\Sigma_{0}}|_{G}],
\]
so
\[
[(inv_{< X_{0}} \r^{\Sigma_{0}})|_{G}] = \bar{inv}_{< X_{0}}([\r^{\Sigma_{0}}|_{G}]).
\]

\subsection{Twisted Aubert involution for $GL(N)$}
\label{subsec: Aubert dual for GL(N)}

As in the previous section, we again fix $X_{0}$, $x_{0}$ and $\rho$. We denote by $\mathcal{P}^{\theta_{N}}_{d_{\rho}}$ the set of $\theta_{N}$-invariant standard parabolic subgroups $P$ of $GL(N)$ whose Levi component $M$ is isomorphic to 
\begin{align}
\label{eq: twisted Aubert dual for Levi GL(N)}
GL(a_{1}d_{\rho}) \times \cdots \times GL(a_{l}d_{\rho}) \times GL(N - 2\sum_{i \in [1,l]}a_{i}d_{\rho}) \times GL(a_{l}d_{\rho}) \times \cdots \times GL(a_{1}d_{\rho}). 
\end{align}
Let $A_{M}$ be the maximal split central torus of $M$, and $(A_{M})_{\theta_{N}}$ be the group of its $\theta_{N}$-coinvariants. For $P \in \mathcal{P}^{\theta_{N}}_{d_{\rho}}$ and $\tau \in \Rep(M)$, we denote by $\tau_{< x_{0}}$ the direct sum of  irreducible constitutes of $\tau$ whose cuspidal support on $\times_{i \in [1, l]}GL(a_{i}d_{\rho})$ consists only of $\rho||^{x}$ with $|x| < x_{0}$. Then we define the generalized $\theta_{N}$-twisted Aubert involution for $GL(N)$ with respect to $(\rho, X_{0})$ as follows. For any self-dual representation $\r$ of $GL(N)$, let $\r^{+}$ be an extension of $\r$ to $GL(N) \rtimes <\theta_{N}>$,
\[
inv^{\theta_{N}}_{< X_{0}}(\r^{+}) := \sum_{P \in \mathcal{P}^{\theta_{N}}_{d_{\rho}}} (-1)^{dim (A_{M})_{\theta_{N}}} \Ind^{GL(N)}_{P}(\Jac_{P} (\r^{+})_{< x_{0}}).
\]
We should point out $inv^{\theta_{N}}_{< X_{0}}$ is defined differently from that in (\cite{MW:2006}, Section 3.1). Here $inv^{\theta_{N}}_{< X_{0}}(\r^{+})$ is only an element in the Grothendieck group of representations of $GL(N) \rtimes <\theta_{N}>$ (see \cite{MW:2006}, Section 3.2), even when we take $\r = \r(\q)$. However, if we only consider the $\theta_{N}$-twisted characters of $GL(N)$, we can still get a theorem parallel with Theorem~\ref{thm: Aubert dual for G}. 

\begin{theorem} [\cite{MW:2006}, Proposition 3.1]
\label{thm: Aubert dual for GL(N)}
Let $\q^{\sharp}$ be defined as in Theorem~\ref{thm: Aubert dual for G},  
\[
f_{N}(inv^{\theta_{N}}_{< X_{0}}(\r^{+}(\q))) = f_{N}(\r^{+}(\q^{\sharp})), \,\,\,\,\,\,\,\,\,\,\, f \in C^{\infty}_{c}(GL(N) \rtimes \theta_{N})
\] 
for certain normalization of $\r^{+}(\q^{\sharp})$ with respect to that of $\r^{+}(\q)$.
\end{theorem}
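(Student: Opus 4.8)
The plan is to establish the identity by the same induction-on-rank scheme that underlies Theorem~\ref{thm: Aubert dual for G}, but carried out entirely at the level of $\theta_{N}$-twisted characters: since $inv^{\theta_{N}}_{<X_{0}}(\r^{+}(\q))$ is only an element of the Grothendieck group of $GL(N)\rtimes\langle\theta_{N}\rangle$-modules, and need not be (a sign times) an honest representation, every step is phrased in terms of the twisted distributions $f\mapsto f_{N}(\cdot)$. First I would use compatibility of the $\theta_{N}$-twisted trace with parabolic induction to strip off the Jordan blocks of $\q$ attached to some $\rho'\neq\rho$ (these are untouched by both $inv^{\theta_{N}}_{<X_{0}}$ and the truncation $(\cdot)_{<x_{0}}$, and by $\q\mapsto\q^{\sharp}$), reducing to the case where every block of $\q$ is attached to the fixed $\rho$. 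Then, using the realization \eqref{eq: general representation GL} of $\r_{\q}$ as an iterated twisted Jacquet module $\Jac^{\theta}$ of $\r_{\q_{\gg}}$ for a dominating parameter $\q_{\gg}$ with discrete diagonal restriction, together with the $GL(N)$-analogue of the compatibility \eqref{eq: compatible with involution} between $inv^{\theta_{N}}_{<X_{0}}$ and the truncated twisted Jacquet functors $\Jac^{\theta}_{x}$ with $|x|<x_{0}$ — which moves the involution past the Jacquet modules at the cost of a Weyl-element twist $\text{Ad}(w_{P})$ and replaces it by the ordinary (untwisted) Aubert involution on the general linear blocks supported in $\rho$ — one reduces to the case of discrete diagonal restriction, and finally, by peeling off Jordan blocks with $A>B$ one step at a time as in \eqref{eq: general representation GL} and Theorem~\ref{thm: MW formula}, to elementary $\q$.

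For elementary $\q$ (with discrete diagonal restriction) the representation $\r_{\q}$ is an induced product of Steinberg representations $St(\rho,\alpha)=Sp(St(\rho,\alpha),1)$ for the $\alpha\in Jord_{\rho}(\q_{d})$ with $\delta_{\alpha}=+1$, of Speh representations $Sp(\rho,\alpha)=Sp(St(\rho,1),\alpha)$ for those with $\delta_{\alpha}=-1$. The base case then rests on the classical computation of the Zelevinsky/Aubert dual of a Speh representation: the dual of $Sp(St(\rho,a),b)$ is $Sp(St(\rho,b),a)$, which in the $(\rho,A,B,\zeta)$-notation keeps $A$ and $B$ fixed and flips $\zeta$ — equivalently it flips $\delta_{\alpha}$, interchanging $St(\rho,\alpha)$ and $Sp(\rho,\alpha)$. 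Since the cuspidal support of $St(\rho,\alpha)$ reaches $|x|=(\alpha-1)/2$, the truncated involution $inv^{\theta_{N}}_{<X_{0}}$ performs this flip precisely on the blocks with $\alpha<X_{0}$; taking $\theta_{N}$-twisted characters, computed through the normalized intertwining operator $A_{\r(\q)}(\theta_{N})$, one obtains $f_{N}(inv^{\theta_{N}}_{<X_{0}}(\r^{+}(\q)))=f_{N}(\r^{+}(\q^{\sharp}))$ with $\q^{\sharp}$ as in Theorem~\ref{thm: Aubert dual for G} and with $\r^{+}(\q^{\sharp})$ carrying the normalization inherited from $\r^{+}(\q)$ through the involution.

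For the inductive step with $\q$ of discrete diagonal restriction, I would fix $(\rho,A,B,\zeta)\in Jord(\q)$ with $A>B$, use the embedding $\r_{\q}\hookrightarrow <\zeta B,\cdots,-\zeta A>\times\r_{\q'}\times<\zeta A,\cdots,-\zeta B>$ with $Jord(\q')=Jord(\q)\cup\{(\rho,A-1,B+1,\zeta)\}\setminus\{(\rho,A,B,\zeta)\}$, apply $inv^{\theta_{N}}_{<X_{0}}$ and push it inside by the Jacquet-module compatibility (only the part of the general linear segments with absolute value $<x_{0}$ gets dualized), invoke the inductive hypothesis for $\q'$, and bookkeep the signs $(-1)^{\dim(A_{M})_{\theta_{N}}}$ and the $w_{P}$-twists from the definition of $inv^{\theta_{N}}_{<X_{0}}$. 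The key point is that the block $(\rho,A,B,\zeta)$ and its shift $(\rho,A-1,B+1,\zeta)$ carry the same $\zeta$, so flipping $\delta_{\alpha}$ for $\alpha<X_{0}$ in $\q$ matches the corresponding flip in $\q'$, and the output is again $f_{N}(\r^{+}(\q^{\sharp}))$, up to the claimed normalization.

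The main obstacle, and the step I expect to need the most care, is the control of normalizations and signs. Because $inv^{\theta_{N}}_{<X_{0}}(\r^{+}(\q))$ lives only in the Grothendieck group — in contrast to the $G^{\Sigma_{0}}$-situation of Theorem~\ref{thm: irreducibility of Aubert dual}, where the dual is genuinely irreducible up to sign — the argument cannot work with representations directly, and one must verify that the extension $\r^{+}(\q^{\sharp})$ produced by the recursion carries exactly the intertwining-operator normalization forced by that of $\r^{+}(\q)$ under the involution; this uses the explicit analysis of $inv^{\theta_{N}}_{<X_{0}}$ in (\cite{MW:2006}, Section 3). A second, more routine, ingredient to be pinned down is the $GL(N)$-analogue of the compatibility \eqref{eq: compatible with involution}, i.e. the commutation of $inv^{\theta_{N}}_{<X_{0}}$ with the truncated twisted Jacquet functors; once that is in place, what remains is a lengthy but mechanical bookkeeping of segments and signs, following (\cite{MW:2006}, Section 3).
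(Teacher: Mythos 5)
The paper does not prove this statement at all — it is imported verbatim from \cite{MW:2006}, Proposition~3.1, together with the normalization Lemma~3.2.2, and used as a black box; so there is no ``paper proof'' to compare against, only the original M{\oe}glin--Waldspurger argument. Your overall outline — reduce to a single $\rho$, pass through dominating parameters and \eqref{eq: general representation GL} to the case of discrete diagonal restriction, then to the elementary case, then invoke the Zelevinsky duality $Sp(St(\rho,a),b)^\vee \cong Sp(St(\rho,b),a)$ (which flips $\zeta$ and hence $\delta_\alpha$) truncated to the blocks with $\alpha<X_0$ — is the right shape and matches the spirit of MW:2006.

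Two caveats are worth recording. First, as stated here the theorem concerns \emph{elementary} $\q$ (the notation $\delta_\alpha$ and the definition of $\q^\sharp$ in Theorem~\ref{thm: Aubert dual for G} only make sense in the elementary setting), so most of your reduction scaffolding is vacuous in this instance: every Jordan block already has $A=B$, and there is nothing to ``peel off'' in the discrete-diagonal-restriction step. Everything rests on the base case. Second, and more substantively, the key assertion in that base case — that $inv^{\theta_N}_{<X_0}$ applied to the induced product $\sigma_1\times\cdots\times\sigma_k$ of Steinberg/Speh factors dualizes precisely those factors with $\alpha_i<X_0$ and fixes the others at the twisted-character level — is not established. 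The Jacquet modules of a parabolic product mix all $k$ factors, and it is not a formality that the truncation $|x|<x_0$ isolates the Jacquet module of the inner factors: one must show that the truncated Jacquet modules of $\r^+_\q$ along the parabolics in $\mathcal{P}^{\theta_N}_{d_\rho}$ factorize compatibly with induction from the ``inner'' and ``outer'' blocks, and that the alternating sum then reorganizes as the ordinary Aubert involution on the inner sub-product. That, together with tracking the $\theta_N$-intertwining operator through the construction, is the actual content of (\cite{MW:2006}, Section 3.2). Deferring both of these to MW:2006 means the proposal identifies the correct ingredients but does not yet constitute a proof of the identity.
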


To determine the normalization of $\r^{+}(\q^{\sharp})$ in this theorem, we need the following proposition.

\begin{proposition}[\cite{MW:2006}, Lemma 3.2.2]
Suppose $\r^{+}(\q)$ in Theorem~\ref{thm: Aubert dual for GL(N)} is normalized according to M{\oe}glin-Waldspurger (cf. Section~\ref{sec: M-W normalization}), then the corresponding normalization of $\theta_{N}$ on $\r^{+}(\q^{\sharp})$ differs from $\theta_{MW}(\q^{\sharp})$ by $\beta(\q, \rho, < X_{0})$.
\end{proposition}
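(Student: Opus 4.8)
The final statement to prove is the Proposition following Theorem~\ref{thm: Aubert dual for GL(N)}: if $\r^{+}(\q)$ is normalized à la M\oe glin–Waldspurger, then the normalization of $\theta_{N}$ on $\r^{+}(\q^{\sharp})$ produced by Theorem~\ref{thm: Aubert dual for GL(N)} differs from $\theta_{MW}(\q^{\sharp})$ by the sign $\beta(\q, \rho, < X_{0})$.

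Let me think about how to prove this...

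The setup: we have a parameter $\q$ (with discrete diagonal restriction, presumably, or at least elementary — actually Theorem 6.7 / Theorem~\ref{thm: Aubert dual for GL(N)} is stated for $\r^{+}(\q)$ in the generality needed). We apply the twisted Aubert involution $inv^{\theta_N}_{<X_0}$ with respect to $(\rho, X_0)$. This flips the signs $\delta_\alpha \to -\delta_\alpha$ for all $\alpha \in Jord_\rho(\q_d)$ with $\alpha < X_0$, giving $\q^\sharp$. Theorem~\ref{thm: Aubert dual for GL(N)} says $f_N(inv^{\theta_N}_{<X_0}(\r^+(\q))) = f_N(\r^+(\q^\sharp))$ for some normalization of $\r^+(\q^\sharp)$. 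We want to identify that normalization precisely.

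Strategy: The natural approach is to compare with the $G^{\Sigma_0}$ side, where we have the analogous Theorem~\ref{thm: Aubert dual for G} and Proposition~\ref{prop: sign for G} giving the explicit sign $\beta(\q, \rho, <X_0)$ relating $inv_{<X_0}\r^{\Sigma_0}(\q,\e)$ and $|inv_{<X_0}\r^{\Sigma_0}(\q,\e)| = \r^{\Sigma_0}(\q^\sharp, \e^\sharp)$. The idea: the spectral endoscopic transfer / twisted character relations (Theorem~\ref{thm: Arthur packet}, and the MW-normalized versions in Proposition~\ref{prop: MW/W discrete}) tie together the twisted character $f_{N^\theta}(\r_\q)$ on $GL(N)$ with the stable distribution $\sum_{\bar\e} \bar\e(s_\q) f_G(\r(\q,\bar\e))$ on $G$. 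One then needs the compatibility of the Aubert involution with endoscopic transfer — that is, $inv^{\theta_N}_{<X_0}$ on the $GL(N)$ side corresponds to $inv_{<X_0}$ on the $G$ side under transfer (this is presumably in \cite{MW:2006}, perhaps implicitly used in the proof of Theorem~\ref{thm: Aubert dual for GL(N)}). Combining: on the $G$ side, Proposition~\ref{prop: sign for G} tells us the sign that appears when we pass from $\r^{\Sigma_0}(\q,\e)$ to the representation $\r^{\Sigma_0}(\q^\sharp,\e^\sharp)$; this sign, when we package all the $\e$'s together into the stable distribution via $\sum \e(s_\q)$, produces exactly a global factor of $\beta(\q,\rho,<X_0)$ (the $\e$-dependent part $\prod_{\alpha} \e(\rho,\alpha,\delta_\alpha)$ gets absorbed into a relabeling of the characters — it corresponds to the character $\e_0^{\text{shift}}$ or is handled by the change $\e \mapsto \e^\sharp$). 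Then transferring back to $GL(N)$ and using the uniqueness of the normalization giving the character identity, we conclude that the normalization in Theorem~\ref{thm: Aubert dual for GL(N)} must be $\beta(\q,\rho,<X_0)^{-1} = \beta(\q,\rho,<X_0)$ times $\theta_{MW}(\q^\sharp)$.

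Concretely, the steps I would carry out: \textbf{(1)} Reduce to the case $\q = \q_p$ (and by the recursive structure, essentially to parameters with discrete diagonal restriction or elementary parameters), since both sides behave compatibly under the induction $\r_{\q} = \r_{\q_{np}} \times \r_{\q_p} \times \r_{\q_{np}}^\vee$ and the involution acts trivially on the $\q_{np}$-part when $X_0$ is chosen relative to $Jord_\rho$. \textbf{(2)} Write down the MW-normalized twisted character identity \eqref{eq: MW character relation GL(N)}: $f^G_{MW}(\q) = f_{N^\theta, MW}(\r_\q)$, and similarly for $\q^\sharp$. \textbf{(3)} Apply $inv_{<X_0}$ (on $G^{\Sigma_0}$, hence on $\sH(G)$-modules via $\bar{inv}_{<X_0}$) to the stable distribution $\cPkt{MW}(\q)$ termwise; use Theorem~\ref{thm: irreducibility of Aubert dual}, Proposition~\ref{prop: sign for G}, and Theorem~\ref{thm: Aubert dual for G} to rewrite each $\bar{inv}_{<X_0}\r_{MW}(\q,\bar\e)$ as $\pm \r_{MW}(\q^\sharp, \bar\e^\sharp)$ with the sign given by $\beta(\q,\rho,<X_0)$ and (in the even case) $\prod_\alpha \e(\rho,\alpha,\delta_\alpha)$. \textbf{(4)} Observe that $\prod_\alpha \e(\rho,\alpha,\delta_\alpha)$, summed against $\bar\e(s_\q)$, amounts to evaluating at the shifted element $s_\q \cdot (\text{the element of } \S{\q} \text{ dual to } \prod_\alpha)$; check that this reindexing is exactly compatible with the change $s_\q \leftrightarrow s_{\q^\sharp}$ and the relabeling $\bar\e \leftrightarrow \bar\e^\sharp$, so that the net surviving scalar in front of $\cPkt{MW}(\q^\sharp)$ is $\beta(\q,\rho,<X_0)$. \textbf{(5)} Use the compatibility of $inv^{\theta_N}_{<X_0}$ with the twisted endoscopic transfer $f \mapsto f^G$ (this is the content that makes Theorem~\ref{thm: Aubert dual for GL(N)} work, cf. \cite{MW:2006}) to transport this identity to $GL(N)$: the left side becomes $f_N(inv^{\theta_N}_{<X_0}(\r^+(\q)))$ with $\r^+(\q)$ MW-normalized, the right side becomes $\beta(\q,\rho,<X_0) \cdot f_{N^\theta, MW}(\r_{\q^\sharp})$. \textbf{(6)} Compare with the statement of Theorem~\ref{thm: Aubert dual for GL(N)}, $f_N(inv^{\theta_N}_{<X_0}(\r^+(\q))) = f_N(\r^+(\q^\sharp))$, and invoke linear independence of twisted characters to conclude that the normalization of $\theta_N$ on $\r^+(\q^\sharp)$ in that theorem equals $\beta(\q,\rho,<X_0) \cdot \theta_{MW}(\q^\sharp)$.

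The main obstacle is \textbf{step (4)}: carefully tracking how the $\e$-dependent sign $\prod_{\alpha \in Jord(\q,\rho,<X_0)}\e(\rho,\alpha,\delta_\alpha)$ from Proposition~\ref{prop: sign for G} (present only when $Jord_\rho(\q_d)$ contains even integers) interacts with the pairing $\bar\e \mapsto \bar\e(s_\q)$ and the identification $\D{\S{\q}} \cong \D{\S{\q^\sharp}}$ under $\e \mapsto \e^\sharp$. One must verify that this sign is precisely the discrepancy between $\bar\e(s_\q)$ and $\bar\e^\sharp(s_{\q^\sharp})$ under the correspondence, so that it does \emph{not} leave an extra $\e$-dependent factor but only contributes (possibly) to the global constant — equivalently, that it is absorbed by the relabeling and only $\beta(\q,\rho,<X_0)$ survives as an overall scalar. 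A secondary subtlety is justifying step (5) — that the twisted Aubert involution on $GL(N)$ and the one on $G^{\Sigma_0}$ match up under transfer at the level of twisted characters — but this is essentially established in \cite{MW:2006} in the course of proving Theorem~\ref{thm: Aubert dual for GL(N)}, so I would cite it rather than reprove it. Finally one should double-check the harmless fact $\beta(\q,\rho,<X_0)^2 = 1$ so that ``differs by $\beta$'' is unambiguous.
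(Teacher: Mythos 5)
The paper does not prove this statement; it is cited directly from \cite{MW:2006}, Lemma 3.2.2, where it is established by a direct computation purely on the $GL(N)$ side --- tracking how the Whittaker, unipotent, and M{\oe}glin--Waldspurger normalizations of the intertwining operator behave under the Jacquet functors $\Jac^{\theta}_{x}$ used to build the twisted Aubert involution --- with no reference to the quasisplit classical group $G$ or to endoscopy. (The paper's only original contribution here is the observation that the original statement in \cite{MW:2006} contains a sign error, which the Proposition corrects.)

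Your proposed proof attempts to derive this $GL(N)$ normalization fact indirectly, by transferring from the $G^{\Sigma_{0}}$ side where the analogous sign is explicitly computed (Proposition~\ref{prop: sign for G}). This runs into a genuine circularity, and the problem sits in your step (3). There you apply Theorem~\ref{thm: irreducibility of Aubert dual}, Proposition~\ref{prop: sign for G} and Theorem~\ref{thm: Aubert dual for G} to rewrite $\bar{inv}_{< X_{0}}\r_{MW}(\q,\bar{\e})$. Those three results concern M{\oe}glin's representations $\r^{\Sigma_{0}}(\q,\e)$ (equivalently $\r_{M}(\q,\bar{\e})$), which are built independently of Arthur's theory; they say nothing directly about the Arthur-normalized modules $\r_{MW}(\q,\bar{\e})$. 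The identification $\r_{MW}(\q,\bar{\e}) = \r_{M}(\q,\bar{\e}\,\bar{\e}^{M/MW}_{\q})$ is precisely Theorem~\ref{thm: M/MW elementary}, and the paper's proof of that theorem --- together with the stable-distribution identity $\cPkt{MW}(\q) = \sum_{\bar{\e}}\bar{\e}(s_{\q})\r_{M}(\q,\bar{\e})$ of Theorem~\ref{thm: Arthur packet via Aubert dual} --- uses the Corollary to the present Proposition (equation~\eqref{eq: Aubert dual for GL(N)}) as an input at every inductive Aubert step. So the identification you want to feed into step (3) is downstream of the very statement you are trying to prove. Even in the base case ($\q$ tempered, where $\r_{MW} = \r_{M}$ is known by definition), your steps (5)--(6) require knowing that the spectral transfer of $\sum_{\bar{\e}}\bar{\e}(s_{\q^{\sharp}})\r_{M}(\q^{\sharp},\bar{\e})$ is a scalar multiple of $f_{N^{\theta}, MW}(\r_{\q^{\sharp}})$; that is again Theorem~\ref{thm: Arthur packet via Aubert dual} for the non-tempered $\q^{\sharp}$, which is not yet available. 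Moreover, the single twisted character identity on $GL(N)$ that comes out of your argument gives one scalar equation in two unknowns --- the normalization sign $c$ and the character shift $\bar{\e}^{M/MW}_{\q^{\sharp}}$ --- and cannot disentangle them without the additional elliptic endoscopic identities for all $s \in \S{\q^{\sharp}}$; but that is exactly the extra input used to prove Theorem~\ref{thm: M/MW elementary}, which again needs the Proposition. The cleanest way out is the one \cite{MW:2006} takes: prove the Proposition by a self-contained manipulation of $GL(N)$ intertwining operators and Jacquet modules, so that it can be safely used later as an input on the $G$ side.
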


The careful readers may notice this proposition is slightly different from the original result of M{\oe}glin-Waldspurger, and that is due to a sign mistake in the statement of (\cite{MW:2006}, Lemma 3.2.2). As a consequence of this proposition, we can rewrite Theorem~\ref{thm: Aubert dual for GL(N)} as follows.

\begin{corollary}
\begin{align}
\label{eq: Aubert dual for GL(N)}
f_{N}(inv^{\theta_{N}}_{< X_{0}}(\r^{+}_{MW}(\q))) = \beta(\q, \rho, < X_{0}) f_{N}(\r^{+}_{MW}(\q^{\sharp})), \,\,\,\,\,\,\,\,\,\,\, f \in C^{\infty}_{c}(GL(N) \rtimes \theta_{N})
\end{align}
where $\r^{+}_{MW}(\q)$ and $\r^{+}_{MW}(\q^{\sharp})$ are normalized extensions of $\r(\q)$ and $\r(\q^{\sharp})$ according to M{\oe}glin-Waldspurger.
\end{corollary}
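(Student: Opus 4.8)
The plan is to obtain the corollary as a direct bookkeeping consequence of Theorem~\ref{thm: Aubert dual for GL(N)} together with the proposition immediately preceding it (the corrected form of \cite{MW:2006}, Lemma 3.2.2). The only general fact used is that the $\theta_{N}$-twisted character $f \mapsto f_{N}(\r^{+})$ depends linearly on the chosen normalization of the intertwining operator $A_{\r}(\theta_{N})$ defining the extension $\r^{+}$: rescaling $A_{\r}(\theta_{N})$ by a scalar $c$ rescales $f_{N}(\r^{+})$ by $c$, and $f_{N}$ extends additively to elements of the Grothendieck group of representations of $GL(N) \rtimes <\theta_{N}>$, so that $f_{N}(inv^{\theta_{N}}_{< X_{0}}(\cdot))$ is well defined even though $inv^{\theta_{N}}_{< X_{0}}$ only lives on the Grothendieck group. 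Both assertions are immediate from the definition of $f_{N}$ recalled in Section~\ref{sec: endoscopy}.

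First I would apply Theorem~\ref{thm: Aubert dual for GL(N)} with the M{\oe}glin-Waldspurger normalized extension $\r^{+}_{MW}(\q)$ playing the role of $\r^{+}(\q)$. This yields
\[
f_{N}(inv^{\theta_{N}}_{< X_{0}}(\r^{+}_{MW}(\q))) = f_{N}(\r^{+}(\q^{\sharp})), \qquad f \in C^{\infty}_{c}(GL(N) \rtimes \theta_{N}),
\]
where $\r^{+}(\q^{\sharp})$ is equipped with the specific normalization of $\theta_{N}$ produced by the theorem relative to $\theta_{MW}(\q)$. Next I would invoke the preceding proposition: this normalization of $\theta_{N}$ on $\r(\q^{\sharp})$ differs from $\theta_{MW}(\q^{\sharp})$ precisely by the sign $\beta(\q, \rho, < X_{0})$. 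Since $\beta(\q, \rho, < X_{0}) \in \{\pm 1\}$, linearity of $f_{N}$ in the normalization gives
\[
f_{N}(\r^{+}(\q^{\sharp})) = \beta(\q, \rho, < X_{0}) \, f_{N}(\r^{+}_{MW}(\q^{\sharp})),
\]
and substituting this into the previous identity produces exactly \eqref{eq: Aubert dual for GL(N)}. Here $\q^{\sharp} \in \cQ{G}$ because it is obtained from $\q$ by changing some signs $\delta_{\alpha}$, so $\theta_{MW}(\q^{\sharp})$ and hence $\r^{+}_{MW}(\q^{\sharp})$ are defined by Section~\ref{sec: M-W normalization}.

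I do not expect any genuine obstacle inside this argument: all the substantive input --- that $inv^{\theta_{N}}_{< X_{0}}$ carries $\r^{+}(\q)$ to $\r^{+}(\q^{\sharp})$ at the level of twisted characters, and the exact comparison of the induced normalization with $\theta_{MW}(\q^{\sharp})$ --- is contained in Theorem~\ref{thm: Aubert dual for GL(N)} and the preceding proposition, both of which are available. The one point deserving care, and which I would spell out explicitly, is the bookkeeping of the sign $\beta(\q, \rho, < X_{0})$: it must be read off from the corrected statement of (\cite{MW:2006}, Lemma 3.2.2), since the original reference contains a sign error, so I would indicate precisely where that corrected sign enters --- namely in passing from the unlabelled ``certain normalization'' appearing in Theorem~\ref{thm: Aubert dual for GL(N)} to $\beta(\q, \rho, < X_{0})\,\theta_{MW}(\q^{\sharp})$.
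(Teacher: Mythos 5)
Your argument is correct and matches the paper's intent exactly: the corollary is stated immediately after the proposition with no separate proof, and it is indeed obtained by applying Theorem~\ref{thm: Aubert dual for GL(N)} to $\r^{+}_{MW}(\q)$ and then using the preceding proposition to replace the induced normalization on $\r(\q^{\sharp})$ by $\beta(\q, \rho, < X_{0})\,\theta_{MW}(\q^{\sharp})$, together with linearity of the twisted character in the normalization. Nothing is missing.
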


\subsection{Construction of elementary Arthur packet by Aubert involution}
\label{subsec: construction by Aubert involution}

In the tempered case, we already know $\r(\q, \bar{\e})$ is a $\Sigma_{0}$-orbit of discrete series representations (cf. Proposition~\ref{prop: parabolic reduction} and also \cite{Xu:preprint3}, Proposition 9.3), and moreover its parametrization by $(\q, \bar{\e})$ is the same as Arthur's if we make certain choices in our definition of $\r(\q, \bar{\e})$ (cf. Section~\ref{subsec: construction}, (c-i)). To obtain the nontempered packet, we need to use \eqref{eq: Aubert dual for GL(N)}, and show the following diagram commutes when restricting to distributions associated with elementary parameters.
\begin{align}
\label{diag: twisted compatible with Aubert dual} 
\xymatrix{\D{SI}(G) \ar[d]_{\bar{inv}_{< X_{0}}}  \ar[r] & \D{I}(N^{\theta}) \ar[d]^{inv^{\theta_{N}}_{< X_{0}}} \\
                \D{SI}(G) \ar[r]  & \D{I}(N^{\theta}). }
\end{align}
Here $\D{SI}(G)$ is the space of stable invariant distributions on $G$, $\D{I}(N^{\theta})$ is the space of twisted invariant distributions on $GL(N)$, and the horizontal arrows denote the twisted spectral endoscopic transfers. The commutativity of this diagram (under our restriction) essentially follows from the compatibility of twisted endoscopic transfer with both Jacquet module and parabolic induction, and we will give its proof
in Appendix~\ref{sec: compatibility}. If we apply this diagram to $\cPkt{MW}(\q)$ (see \eqref{eq: stable distribution MW}) and expand using \eqref{eq: MW character relation GL(N)} and \eqref{eq: Aubert dual for GL(N)}, we get
\begin{align*}
f^{G} (\sum_{\bar{\e} \in \D{\S{\q}}} \bar{\e}(s_{\q}) \bar{inv}_{< X_{0}} \r_{MW}(\q, \bar{\e})) & =  \beta(\q, \rho, < X_{0}) f_{N^{\theta}, MW}(\r(\q^{\sharp}))  \\
& = \beta(\q, \rho, < X_{0}) f^{G} (\sum_{\bar{\e} \in \D{\S{\q^{\sharp}}}} \bar{\e}(s_{\q^{\sharp}}) \r_{MW}(\q^{\sharp}, \bar{\e})),
\end{align*}
where $f \in C^{\infty}_{c}(GL(N))$, and $f^{G} \in C^{\infty}_{c}(G)$ is its twisted endoscopic transfer. Hence
\begin{align}
\label{eq: character relation via Aubert dual}
\sum_{\bar{\e} \in \D{\S{\q}}} \bar{\e}(s_{\q}) f_{G}(\bar{inv}_{< X_{0}} \r_{MW}(\q, \bar{\e})) = \beta(\q, \rho, < X_{0}) \sum_{\bar{\e} \in \D{\S{\q^{\sharp}}}} \bar{\e}(s_{\q^{\sharp}}) f_{G}(\r_{MW}(\q^{\sharp}, \bar{\e})),
\end{align}
for any $f \in \sH(G)$. 

\begin{lemma}
\label{lemma: sign}
\[
\e(s_{\q}) / \e(s_{\q^{\sharp}}) = \begin{cases}
                                                   \prod_{\alpha \in Jord(\q, \rho, < X_{0})} \e(\rho, \alpha, \delta_{\alpha}),
                                                   \text{ if $Jord_{\rho}(\q_{d})$ contains even integers, } \\                                                                                
                                                   1, \text{ if $Jord_{\rho}(\q_{d})$ contains odd integers. }                                                                                  
                                                   \end{cases}  
\]
\end{lemma}

\begin{proof}
It suffices to note that 
\[
s_{\q} s_{\q^{\sharp}}(\rho, \alpha, \delta_{\alpha}) = \begin{cases}
                                                                                   -1 & \text{ if $\alpha < X_{0}$ and $\alpha$ is even, } \\
                                                                                   1  & \text{ otherwise. }
                                                                                   \end{cases}
\]
\end{proof}

The equality \eqref{eq: character relation via Aubert dual} suggests we may construct the nontempered Arthur packet by applying the generalized Aubert involution consecutively to tempered packet. So we make the following definition.

\begin{definition}
Suppose $\q \in \cQ{G}$ is elementary, for $\e \in \D{\S{\q}^{\Sigma_{0}}}$ we define
\[
\r^{\Sigma_{0}}_{M}(\q, \e) := \circ_{(\rho, a, \delta_{a}) \in Jord(\q) : \delta_{a} = -1} (|inv_{< a}| \circ |inv_{\leqslant a}|) \r^{\Sigma_{0}}_{W}(\q_{d}, \e)
\]
and
\[
\r_{M}(\q, \bar{\e}) := \circ_{(\rho, a, \delta_{a}) \in Jord(\q) : \delta_{a} = -1} (|\bar{inv}_{< a}| \circ |\bar{inv}_{\leqslant a}|) \r_{W}(\q_{d}, \bar{\e}),
\]
where we have $\D{\S{\q}^{\Sigma_{0}}} \cong \D{\S{\q_{d}}^{\Sigma_{0}}}$ (resp. $\D{\S{\q}} \cong \D{\S{\q_{d}}}$) by identifying $Jord(\q)$ with $Jord(\q_{d})$.
\end{definition}

From Theorem~\ref{thm: Aubert dual for G}, it is clear that $\r^{\Sigma_{0}}_{M}(\q, \e) = \r^{\Sigma_{0}}(\q, \e)$ constructed in Section~\ref{subsec: construction}, but with fixed parametrization determined by that of tempered representations (cf. Remark~\ref{rk: construction}). It follows from Theorem ~\ref{thm: discrete series full orthogonal group} that 
\begin{align}
\label{eq: full orthogonal character}
\r^{\Sigma_{0}}_{M}(\q, \e \e_{0}) \cong \r^{\Sigma_{0}}_{M}(\q, \e) \otimes \x_{0}.
\end{align}
Moreover, $[\r^{\Sigma_{0}}_{M}(\q, \e)|_{G}] = 2 \r_{M}(\q, \bar{\e})$ if $G$ is special even orthogonal and $\S{\q}^{\Sigma_{0}} = \S{\q}$, or $\r_{M}(\q ,\bar{\e})$ otherwise. In particular, $\r_{M}(\q ,\bar{\e})$ is irreducible.

\begin{theorem}
Suppose $\q \in \cQ{G}$ is elementary, then
\label{thm: Arthur packet via Aubert dual}
\[
\cPkt{MW}(\q) = \sum_{\bar{\e} \in \D{\S{\q}}} \bar{\e}(s_{\q})\r_{M}(\q, \bar{\e}).
\]
\end{theorem}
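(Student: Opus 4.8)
I would prove this by a double induction: first reducing the general elementary parameter to the tempered case $\q = \q_d$ via the Aubert involution, and second verifying that each application of $|\bar{inv}_{<a}|$ (or $|\bar{inv}_{\leqslant a}|$) is compatible with the stable distribution $\cPkt{MW}$. The starting point is the tempered case: when $\q = \q_d \in \cPdt{G}$ we have $s_{\q_d}$-invariance of $\cPkt{MW}(\q_d)$ by Theorem~\ref{thm: Arthur packet} together with the identification $\r_M(\q_d, \bar{\e}) = \r_W(\q_d, \bar{\e})$ made for discrete series (Theorem~\ref{thm: discrete series}), so the statement holds there trivially, with $\e^{MW/W}_{\q_d}$ trivial since $\q_d$ is tempered. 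The induction then runs over the signs $\delta_a = -1$ appearing in $Jord(\q)$, peeling them off one at a time by the composite $|inv_{<a}| \circ |inv_{\leqslant a}|$ as in the definition of $\r_M(\q, \bar{\e})$.

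The heart of the argument is the identity \eqref{eq: character relation via Aubert dual}, which says
\[
\sum_{\bar{\e} \in \D{\S{\q}}} \bar{\e}(s_{\q}) f_{G}(\bar{inv}_{< X_{0}} \r_{MW}(\q, \bar{\e})) = \beta(\q, \rho, < X_{0}) \sum_{\bar{\e} \in \D{\S{\q^{\sharp}}}} \bar{\e}(s_{\q^{\sharp}}) f_{G}(\r_{MW}(\q^{\sharp}, \bar{\e})),
\]
and its analogue with $\leqslant X_0$. The strategy is to combine this with the precise sign information: Proposition~\ref{prop: sign for G} tells us how $inv_{<X_0}$ acts on each individual $\r^{\Sigma_0}(\q,\e)$ up to the sign $\beta(\q, \rho, < X_0)$ and the product $\prod_{\alpha \in Jord(\q, \rho, < X_0)} \e(\rho, \alpha, \delta_\alpha)$, while Lemma~\ref{lemma: sign} computes exactly the ratio $\e(s_\q)/\e(s_{\q^\sharp})$ in terms of the same product. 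The point is that these two sign contributions are designed to cancel: when $Jord_\rho(\q_d)$ contains even integers, the $\prod \e(\rho,\alpha,\delta_\alpha)$ factor from Proposition~\ref{prop: sign for G} matches the ratio from Lemma~\ref{lemma: sign}, and the $\beta$ factors on both sides of \eqref{eq: character relation via Aubert dual} cancel against the $\beta$ in Proposition~\ref{prop: sign for G}; when $Jord_\rho(\q_d)$ contains odd integers both auxiliary factors are trivial. Tracking these through, one gets $\bar{inv}_{<X_0}\r_M(\q,\bar\e) = \pm\,\r_M(\q^\sharp, \bar\e)$ (the sign being exactly $\beta(\q,\rho,<X_0)$ times the parity factor), and applying the composite $|inv_{<a}|\circ|inv_{\leqslant a}|$ twice returns one to a genuine (signless) representation; one then checks that the signs accumulated through the whole composition $\circ_{(\rho,a,\delta_a): \delta_a=-1}$ collapse so that $\cPkt{MW}(\q) = \sum_{\bar\e} \bar\e(s_\q)\r_M(\q,\bar\e)$ with the correct signs. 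Here one should also invoke the second part of Theorem~\ref{thm: irreducibility of Aubert dual} and Theorem~\ref{thm: Aubert dual for G} to know that each $|inv|$ stays inside the constructed class, so that $\r_M(\q,\bar\e)$ is well-defined (and irreducible, by Theorem~\ref{thm: elementary Arthur packet} together with the restriction statement just before this theorem).

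The main obstacle I expect is the bookkeeping of signs across the iterated involutions. Each step contributes a $\beta$-factor and a parity-of-$\e$ factor, and the subtlety is that the parameter changes at each step ($\q \rightsquigarrow \q^\sharp$ flips some $\delta_\alpha$), so $Jord(\q', \rho, < a')$ at a later step depends on which signs have already been flipped. One has to verify that when the dust settles the product of all these signs is precisely what is needed for $\cPkt{MW}(\q)$ to equal $\sum_{\bar\e \in \D{\S{\q}}} \bar\e(s_\q)\r_M(\q,\bar\e)$ — in particular that the net sign agrees with $\bar\e^{MW/W}_\q$ being absorbed correctly (this is consistent with Proposition~\ref{prop: MW/W}, which identifies $\r_{MW}(\q,\bar\e)$ with $\r_W(\q, \bar\e\bar\e^{MW/W}_\q)$). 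A clean way to organize this is to prove by induction the stronger statement that $\bar{inv}_{<X_0}\r_M(\q,\bar\e) = \varepsilon(\q,\rho,X_0,\bar\e)\,\r_M(\q^\sharp,\bar\e)$ for an explicit sign $\varepsilon$, and then sum against $\bar\e(s_\q)$, using Lemma~\ref{lemma: sign} at each stage to re-index the sum over $\D{\S{\q^\sharp}}$; the telescoping of the $\beta$'s and the $s_\q s_{\q^\sharp}$ corrections is then a finite, if delicate, computation, and the definition of $\r_M(\q,\bar\e)$ via the paired composite $|inv_{<a}|\circ|inv_{\leqslant a}|$ is exactly what makes the leftover signs square to one.
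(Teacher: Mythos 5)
Your proposal is correct and follows essentially the same route as the paper: start from the tempered case, apply the generalized Aubert involution step by step via \eqref{eq: character relation via Aubert dual}, and control the sign at each stage by combining Proposition~\ref{prop: sign for G} with Lemma~\ref{lemma: sign}. The ``stronger statement'' you suggest proving by induction is precisely the paper's identity \eqref{eq: Arthur packet via Aubert dual}, namely $\bar{\e}(s_{\q})\beta(\q, \rho, < X_{0}) \, \bar{inv}_{< X_{0}} \r_{M}(\q, \bar{\e}) = \bar{\e}(s_{\q^{\sharp}}) \, \r_{M}(\q^{\sharp}, \bar{\e})$, and once one has it the summation against $\bar{\e}(s_{\q})$ telescopes exactly as you describe.
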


\begin{proof}
Note in the tempered case $\r_{M}(\q, \bar{\e}) =  \r_{W}(\q, \bar{\e}) = \r_{MW}(\q, \bar{\e})$, so this is already known. Then from the tempered packet, one can apply the generalized Aubert involution and use the equality \eqref{eq: character relation via Aubert dual} step by step. At last, note 
\begin{align}
\label{eq: Arthur packet via Aubert dual}
\bar{\e}(s_{\q})\beta(\q, \rho, < X_{0}) \bar{inv}_{< X_{0}} \r_{M}(\q, \bar{\e}) = \bar{\e}(s_{\q^{\sharp}}) \r_{M}(\q^{\sharp}, \bar{\e}),
\end{align}
which follows from Proposition~\ref{prop: sign for G} and Lemma~\ref{lemma: sign}.

\end{proof}

At this point, we have shown the elementary Arthur packets of $G$ do contain irreducible representations of $G$ viewed as $\sH(G)$-modules obtained by restriction from the class of representations of $G^{\Sigma_{0}}$ constructed in Section~\ref{subsec: construction}. However, to prove Theorem~\ref{thm: elementary Arthur packet} we still need to find the relation between $\r_{W}(\q, \bar{\e})$ and $\r_{M}(\q, \bar{\e})$. One may think of this as a problem of parametrization, but in fact it is much more subtle than that for we do not know a priori that $\r_{W}(\q, \bar{\e})$ is irreducible or not. Nonetheless, we will show they are irreducible, and at same time compute the difference of parametrization between $\r_{W}(\q, \bar{\e})$ and $\r_{M}(\q, \bar{\e})$.

To describe this difference, we have to introduce a special element $\e^{M/MW}_{\q} \in \D{\S{\q}^{\Sigma_{0}}}$. It is defined in the following way.

\begin{definition}
\label{def: M/MW elementary}
Suppose $\q \in \cQ{G}$ is elementary, and $\alpha \in Jord_{\rho}(\q_{d})$.
\begin{enumerate}

\item If $\alpha$ is even, $\e^{M/MW}_{\q}(\rho, \alpha, \delta_{\alpha}) = 1$.

\item If $\alpha$ is odd, let $m = \sharp \{\alpha' \in Jord_{\rho}(\q_{d}): \alpha' > \alpha, \delta_{\alpha'} = -1\}$ and $n = \sharp \{\alpha' \in Jord_{\rho}(\q_{d}): \alpha' < \alpha\}$. Then
\[
\e^{M/MW}_{\q}(\rho, \alpha, \delta_{\alpha}) = \begin{cases}
                                                                            (-1)^{m} & \text{ if $\delta_{\alpha} = +1$, } \\
                                                                            (-1)^{m+n} & \text{ if $\delta_{\alpha} = -1$. }
                                                                            \end{cases}
\]

\end{enumerate}
\end{definition}

\begin{theorem}
\label{thm: M/MW elementary}
Suppose $\q \in \cQ{G}$ is elementary, then
\[
\r_{M}(\q, \bar{\e}) = \r_{MW}(\q, \bar{\e} \bar{\e}^{M/MW}_{\q}).
\]
\end{theorem}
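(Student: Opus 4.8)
The plan is to compare the two normalizations $\theta_M = \theta_W$ (the definition used in the construction of $\r_M$, inherited from the tempered parametrization via iterated Aubert involutions) and $\theta_{MW}$, and track how the parametrizing character of an irreducible constituent changes under each elementary step of M\oe glin's construction in Section~\ref{subsec: construction by Aubert involution}. Concretely, one has on the one hand Theorem~\ref{thm: Arthur packet via Aubert dual}, which expresses $\cPkt{MW}(\q)$ as $\sum_{\bar\e} \bar\e(s_\q) \r_M(\q, \bar\e)$, and on the other hand the definition \eqref{eq: stable distribution MW}, which expresses the same stable distribution as $\sum_{\bar\e} \bar\e(s_\q) \r_{MW}(\q, \bar\e)$. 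Since the elementary Arthur packet is multiplicity free and its members are irreducible and distinct (Theorem~\ref{thm: elementary Arthur packet}), comparing these two expansions forces the existence of a permutation of $\D{\S{\q}}$ matching $\r_M(\q, \bar\e)$ with $\r_{MW}(\q, \bar\e')$; the content of the theorem is that this permutation is translation by the explicit character $\bar\e^{M/MW}_\q$.

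The key steps, in order, are as follows. First, reduce to $\q = \q_p$ and fix $\rho$; since all the constructions and both normalizations are built up $\rho$ by $\rho$ and factor compatibly through parabolic induction, it suffices to prove the multiplicativity of $\e^{M/MW}_\q$ over Jordan blocks and to verify the formula one $\rho$-string at a time. Second, set up the induction on $|Jord_\rho(\q_d)|$ that mirrors the recursive definition of $\r^{\Sigma_0}_M(\q,\e)$: the base case is $\q = \q_d$ tempered, where $\r_M = \r_W = \r_{MW}$ (no even $B$'s, hence $\theta_{MW} = \theta_W$ by the first clause of Section~\ref{sec: M-W normalization}) and $\e^{M/MW}_\q$ is trivial by Definition~\ref{def: M/MW elementary} applied to the tempered case --- here every $\delta_\alpha = +1$, so $m = 0$ and clause (2) gives $1$; one should double-check this matches. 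Third, carry out the inductive step: each Aubert involution $|inv_{<X_0}|$ applied in the construction changes $\q$ to $\q^\sharp$ (Theorem~\ref{thm: Aubert dual for G}) and, by \eqref{eq: Arthur packet via Aubert dual}, multiplies the $\bar\e$-coefficient of the stable distribution by $\bar\e(s_\q)/\bar\e(s_{\q^\sharp})$ times $\beta(\q,\rho,<X_0)$; comparing with how the same involution acts on the $\r_{MW}$-side --- which is governed by Proposition~\ref{prop: MW/W} together with the sign $\beta(\q,\rho,<X_0)$ from the corollary after Theorem~\ref{thm: Aubert dual for GL(N)} --- isolates exactly the discrepancy character. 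One then checks that the accumulated discrepancy, after reversing the pair of involutions $|inv_{<a}| \circ |inv_{\leqslant a}|$ attached to each block with $\delta_a = -1$, equals $\bar\e^{M/MW}_\q$; the counts $m$ and $n$ in Definition~\ref{def: M/MW elementary} should emerge precisely as $|Jord(\q,\rho,<X_0)|$-type contributions from $\beta$ and from Lemma~\ref{lemma: sign} ($\e(s_\q)/\e(s_{\q^\sharp})$), cross-referenced with Proposition~\ref{prop: sign for G}. Fourth, verify $\e^{M/MW}_\q \in \D{\S{\q}^{\Sigma_0}}$, i.e.\ $\prod_{(\rho,\alpha,\delta_\alpha)} \e^{M/MW}_\q(\rho,\alpha,\delta_\alpha) = 1$; this is a parity count of the odd integers in each $Jord_\rho(\q_d)$ weighted by the $m$'s and $n$'s, analogous to the computation in the proof of Proposition~\ref{prop: MW/W discrete}, part (1).

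The main obstacle will be the bookkeeping in the inductive step: reconciling the signs coming from three sources --- $\beta(\q,\rho,<X_0)$, the ratio $\bar\e(s_\q)/\bar\e(s_{\q^\sharp})$ of Lemma~\ref{lemma: sign}, and the ``$|inv|$ versus $inv$'' sign of Proposition~\ref{prop: sign for G} --- and showing that in the composition $|inv_{<a}| \circ |inv_{\leqslant a}|$ over all negative-sign blocks they telescope exactly into the two-case formula of Definition~\ref{def: M/MW elementary}. The subtlety is that $inv_{<a}$ and $inv_{\leqslant a}$ differ by whether the integer $a$ itself is in the cutoff set, so the contributions of the block $(\rho,a,\delta_a)$ to the various $\beta$'s and to $m,n$ must be tracked with care; the parity of $a$ (even versus odd) splits the analysis, matching the two clauses of the definition. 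A secondary technical point is ensuring the choices made in Section~\ref{subsec: construction}, (c-i) --- which fix the parametrization $\r^{\Sigma_0}_M$ once and for all via the tempered case --- are compatible with the recursion, so that the identity \eqref{eq: Arthur packet via Aubert dual} can legitimately be iterated; this is guaranteed by the last sentence of Section~\ref{subsec: construction}, (c-i-b) and by Theorem~\ref{thm: Aubert dual for G}, but it should be invoked explicitly.
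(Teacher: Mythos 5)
The central gap is in your third step, and it traces back to an overclaim in the setup. Comparing the two expansions of the single stable distribution $\cPkt{MW}(\q)$ --- from Theorem~\ref{thm: Arthur packet via Aubert dual} and from \eqref{eq: stable distribution MW} --- does not force the matching $\r_M(\q,\bar\e)=\r_{MW}(\q,\bar\e')$ to be any particular permutation of $\D{\S{\q}}$; multiplicity-freeness only forces the matching to preserve the single value $\bar\e(s_\q)$. Once $\S{\q}$ is larger than $\mathbb{Z}/2\mathbb{Z}$ the permutation is grossly underdetermined. Your inductive step inherits the same defect: feeding the inductive hypothesis and \eqref{eq: Arthur packet via Aubert dual} into \eqref{eq: character relation via Aubert dual} returns only the $s=1$ stable identity
\[
\sum_{\bar\e}\bar\e(s_{\q^\sharp})f_G(\r_M(\q^\sharp,\bar\e))=\sum_{\bar\e}\bar\e(s_{\q^\sharp})f_G(\r_{MW}(\q^\sharp,\bar\e)),
\]
which is a consistency check (it is Theorem~\ref{thm: Arthur packet via Aubert dual} for $\q^\sharp$), not a constraint on the bijection.

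What the paper's proof actually rests on, and what your sketch omits, is the compatibility of the generalized Aubert involution with the spectral transfer from \emph{all} elliptic endoscopic groups $H$ of $G$, not merely the twisted transfer from $GL(N)$: this is the commutative diagram \eqref{diag: compatible with Aubert dual}, the elliptic-endoscopic analogue of \eqref{diag: twisted compatible with Aubert dual} that you do implicitly invoke. Applying that diagram to $\cPkt{MW}(\q_H)$ for each $s\in\S{\q}$ with $(H,\q_H)\to(\q,s)$, and combining with the inductive hypothesis and \eqref{eq: Arthur packet via Aubert dual}, yields a family of endoscopic character identities for $\q^\sharp$ indexed by $s$; linear independence of the characters $f_G(\r_M(\q^\sharp,\bar\e))$ over all $s\in\S{\q^\sharp}$ (guaranteed by Theorem~\ref{thm: elementary Arthur packet}) is what pins down the bijection, and only then does the residual sign computation $\beta(\q_H,\rho,<X_0)\,\beta(\q,\rho,<X_0)=\e^{M/MW}_\q\,\e^{M/MW}_{\q^\sharp}(s)$ come in. You do name several of the right sign ingredients ($\beta$, Lemma~\ref{lemma: sign}, Proposition~\ref{prop: sign for G}), but without the endoscopic diagram and the for-all-$s$ linear-independence step there is no identity into which they can be slotted. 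A minor secondary point: the base-case reasoning ``no even $B$'s, hence $\theta_{MW}=\theta_W$ by the first clause'' is off, since for tempered $\q$ one has $B=(a-1)/2$ which can be positive; the correct observation is that $\mathcal{Z}_{MW/W}(\q)=\emptyset$ in the tempered case because every $b=1$ is odd, so $\e^{MW/W}_\q$ is trivial and $\r_{MW}=\r_W$ follows from Proposition~\ref{prop: MW/W discrete}.
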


\begin{proof}
The idea is similar to the proof of Theorem~\ref{thm: Arthur packet via Aubert dual} that we have to apply the generalized Aubert involution step by step. First note in the tempered case, we have by definition $\r_{M}(\q ,\bar{\e}) = \r_{MW}(\q, \bar{\e})$, and it is easy to check that $\e_{\q}^{M/MW} = 1$ in this case. Next, let us assume $\q$ is some elementary parameter satisfying the theorem, and we would like to prove the theorem for $\q^{\sharp}$. In fact this is the critical step in our proof. To be more precise, we have now
\[
\r_{M}(\q, \bar{\e}) = \r_{MW}(\q, \bar{\e} \bar{\e}^{M/MW}_{\q})
\]
under our assumption, and we want to show
\[
\r_{M}(\q^{\sharp}, \bar{\e}) := |\bar{inv}_{< X_{0}} \r_{M}(\q, \bar{\e})| = \r_{MW}(\q^{\sharp}, \bar{\e} \bar{\e}^{M/MW}_{\q^{\sharp}}).
\]
The main ingredient of the proof is a commutative diagram analogous to the diagram \eqref{diag: twisted compatible with Aubert dual}. Note we can identify $\S{\q}$ with $\S{\q^{\sharp}}$, and for any $s \in \S{\q} \cong \S{\q^{\sharp}}$, let $(H, \q_{H}) \rightarrow (\q, s)$ and $(H, \q_{H}^{\sharp}) \rightarrow (\q^{\sharp}, s)$, where $H = G_{I} \times G_{II}$ and $\q_{H} = \q_{I} \times \q_{II}$. Then the following diagram commutes when restricting to distributions associated with elementary parameters, and this again follows from the compatibility of endoscopic transfer with Jacquet module and parabolic induction (see \cite{Hiraga:2004} and Appendix~\ref{sec: compatibility}).
\begin{align}
\label{diag: compatible with Aubert dual} 
\xymatrix{\D{SI}(H) \ar[d]_{\bar{inv}^{H}_{< X_{0}}}  \ar[r] & \D{I}(G) \ar[d]^{\bar{inv}_{< X_{0}}} \\
                \D{SI}(H) \ar[r]  & \D{I}(G). }
\end{align}
Here $\D{I}(G)$ is the space of invariant distributions on $G$, $\D{SI}(H)$ is the space of stable invariant distributions on $H$, and the horizontal arrows denote the spectral endoscopic transfers. We define
\[
\bar{inv}^{H}_{< X_{0}} := \bar{inv}^{\,G_{I}}_{< X_{0}} \otimes \bar{inv}^{G_{II}}_{< X_{0}}
\]
with $\bar{inv}^{\,G_{I}}_{< X_{0}}$ respecting $\rho \otimes \eta'$, where $\eta' = \eta_{I}$ in (Example~\ref{eg: endoscopy} (1), (2)), and $\eta' = 1$ in (Example~\ref{eg: endoscopy} (3)). Applying this diagram to $\cPkt{MW}(\q_{H}) := \cPkt{MW}(\q_{I}) \otimes \cPkt{MW}(\q_{II})$, we get 
\[
\beta(\q_{H}, \rho, < X_{0}) f^{H}_{MW}(\q^{\sharp}_{H}) = \sum_{\bar{\e} \in \D{\S{\q}}} \bar{\e}(s s_{\q}) f_{G}(inv_{< X_{0}} \r_{MW}(\q, \bar{\e})), \,\,\,\,\,\,\,\,\,\,\,\,\,\, f \in \sH(G),
\]
where $\beta(\q_{H}, \rho, < X_{0}) = \beta(\q_{I}, \rho \otimes \eta', < X_{0}) \beta(\q_{II}, \rho, < X_{0}) $. By our assumption, the right hand side can be written as
\[
\sum_{\bar{\e} \in \D{\S{\q}}} \bar{\e}(s s_{\q}) f_{G}(inv_{< X_{0}}  \r_{M}(\q, \bar{\e} \bar{\e}^{M/MW}_{\q})) = \sum_{\bar{\e} \in \D{\S{\q}}} \bar{\e} \bar{\e}^{M/MW}_{\q}(s s_{\q}) f_{G}(inv_{< X_{0}} \r_{M}(\q, \bar{\e})).
\]
Combining \eqref{eq: Arthur packet via Aubert dual}, we have 
\begin{align*}
f^{H}_{MW}(\q^{\sharp}_{H}) & = \beta(\q_{H}, \rho, < X_{0}) \sum_{\bar{\e} \in \D{\S{\q}}} \bar{\e} \bar{\e}^{MW}_{\q}(s s_{\q}) f_{G}(inv_{< X_{0}} \r_{M}(\q, \bar{\e})) \\
& = \beta(\q_{H}, \rho, < X_{0}) \sum_{\bar{\e} \in \D{\S{\q}}} \bar{\e} \bar{\e}^{M/MW}_{\q} (s s_{\q}) \beta(\q, \rho, < X_{0}) \bar{\e}(s_{\q} s_{\q^{\sharp}}) f_{G}(\r_{M}(\q^{\sharp}, \bar{\e})) \\
& = \beta(\q_{H}, \rho, < X_{0}) \beta(\q, \rho, < X_{0}) \bar{\e}^{M/MW}_{\q} (s s_{\q}) \sum_{\bar{\e} \in \D{\S{\q}}} \bar{\e}(s s_{\q^{\sharp}}) f_{G}(\r_{M}(\q^{\sharp}, \bar{\e})).
\end{align*}
Finally, it is a simple fact that
\(
\e^{M/MW}_{\q}(s_{\q}) = 1.
\)
So 
\begin{align}
\label{eq: compatible with Aubert dual}
f^{H}_{MW}(\q^{\sharp}_{H}) = \beta(\q_{H}, \rho, < X_{0}) \beta(\q, \rho, < X_{0}) \bar{\e}^{M/MW}_{\q} (s) \sum_{\bar{\e} \in \D{\S{\q}}} \bar{\e}(s s_{\q^{\sharp}}) f_{G}(\r_{M}(\q^{\sharp}, \bar{\e})).
\end{align}
On the other hand, we have from the character relation that 
\[
f^{H}_{MW}(\q^{\sharp}_{H}) = \sum_{\bar{\e} \in \D{\S{\q^{\sharp}}}} \bar{\e}(s s_{\q^{\sharp}}) f_{G}(\r_{MW}(\q^{\sharp}, \bar{\e})).
\]
Since we know from linear algebra that $\r_{MW}(\q^{\sharp}, \bar{\e})$ are completely determined by these identities for all $s \in \S{\q^{\sharp}}$, it remains for us to show
\[
\beta(\q_{H}, \rho, < X_{0}) \beta(\q, \rho, < X_{0}) = \e^{M/MW}_{\q}  \e^{M/MW}_{\q^{\sharp}}(s).
\]
If $Jord_{\rho}(\q_{d})$ contains even integers, then it is easy to show from the definitions that both sides are equal to $1$. So now let us assume $Jord_{\rho}(\q_{d})$ contains odd integers. Note $Jord(\q) = Jord(\q_{I} \otimes \eta') \sqcup Jord(\q_{II})$. Let $u = |Jord(\q_{I}, \rho \otimes \eta', < X_{0})|$ and $v = |Jord(\q_{II}, \rho, < X_{0})|$, then

\begin{align*}
\beta(\q_{H}, \rho, < X_{0}) \beta(\q, \rho, < X_{0}) = (-1)^{u(u - 1)/2 + v(v - 1)/2 - (u + v)(u + v - 1)/2}
= (-1)^{u v}
\end{align*}
On the other hand, we can index $Jord_{\rho}(\q_{d})$ according to the natural order of integers and assume $Jord(\q_{I}, \rho \otimes \eta', < X_{0}) = \{\alpha_{t_{j}}\}_{j =1}^{u}$. Then
\[
\e^{M/MW}_{\q}  \e^{M/MW}_{\q^{\sharp}}(s) = \prod^{u}_{j = 1}(-1)^{(u + v - t_{j}) + (t_{j} - 1)} 
= (-1)^{u(u + v - 1)}
= (-1)^{u v}.
\]
This finishes the proof.

\end{proof}

\begin{corollary}
Suppose $\q \in \cQ{G}$ is elementary, let
\(
\e^{M/W}_{\q}: = \e^{M/MW}_{\q} \e^{MW/W}_{\q}.
\)
Then
\[
\r_{W}(\q, \bar{\e} \bar{\e}^{M/W}_{\q}) = \r_{M}(\q, \bar{\e}).
\]

\end{corollary}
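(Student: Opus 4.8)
The proof of this corollary is essentially a bookkeeping exercise that composes the two comparison results already established. The plan is to combine Theorem~\ref{thm: M/MW elementary}, which relates M\oe glin's parametrization $\r_{M}$ to the M\oe glin--Waldspurger normalization $\r_{MW}$ via the character $\e^{M/MW}_{\q}$, with Proposition~\ref{prop: MW/W} (or Proposition~\ref{prop: MW/W discrete}), which relates $\r_{MW}$ to the Whittaker normalization $\r_{W}$ via the character $\e^{MW/W}_{\q}$. Chaining these two identities should immediately produce the desired relation with $\e^{M/W}_{\q} = \e^{M/MW}_{\q}\e^{MW/W}_{\q}$.

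More precisely, here is the order in which I would carry it out. First, note that since $\q$ is elementary, it has discrete diagonal restriction, so Proposition~\ref{prop: MW/W discrete}(2) applies and gives $\r_{MW}(\q, \bar{\e}') = \r_{W}(\q, \bar{\e}'\bar{\e}^{MW/W}_{\q})$ for every $\bar{\e}' \in \D{\S{\q}}$, using that $\e^{MW/W}_{\q} \in \D{\S{\q}^{\Sigma_{0}}}$ by part (1) of that proposition. Second, apply Theorem~\ref{thm: M/MW elementary}: $\r_{M}(\q, \bar{\e}) = \r_{MW}(\q, \bar{\e}\bar{\e}^{M/MW}_{\q})$. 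Third, substitute $\bar{\e}' = \bar{\e}\bar{\e}^{M/MW}_{\q}$ into the first identity to get
\[
\r_{M}(\q, \bar{\e}) = \r_{MW}(\q, \bar{\e}\bar{\e}^{M/MW}_{\q}) = \r_{W}(\q, \bar{\e}\bar{\e}^{M/MW}_{\q}\bar{\e}^{MW/W}_{\q}) = \r_{W}(\q, \bar{\e}\bar{\e}^{M/W}_{\q}),
\]
where the last equality is the definition of $\e^{M/W}_{\q}$. This is the whole argument; rearranging gives the statement as written, $\r_{W}(\q, \bar{\e}\bar{\e}^{M/W}_{\q}) = \r_{M}(\q, \bar{\e})$, upon replacing $\bar{\e}$ by $\bar{\e}$ (the map $\bar{\e} \mapsto \bar{\e}\bar{\e}^{M/W}_{\q}$ is a bijection of $\D{\S{\q}}$ onto itself once one checks $\e^{M/W}_{\q}$ restricts trivially enough, but in fact the identity holds verbatim for each fixed $\bar{\e}$).

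The only genuine point that needs care—and the one I would flag as the main (minor) obstacle—is the bookkeeping of which group the characters live in and in what sense the products make sense: $\e^{M/MW}_{\q}$ and $\e^{MW/W}_{\q}$ are a priori elements of $\D{\S{\q}^{\Sigma_{0}}}$ (or of $\D{\S{\q^{>}}^{\Sigma_{0}}}$ in the non-discrete-diagonal-restriction generality), while $\bar{\e}$ ranges over $\D{\S{\q}}$, so one must verify that all products appearing are well-defined on $\S{\q}$ and that the substitution is legitimate. For elementary $\q$ this is transparent because $\S{\q^{>}} = \S{\q}$ (no multiplicities), so there is no subtlety with the $\e_{0}$ ambiguity beyond what is already handled in the cited results. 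Since both Theorem~\ref{thm: M/MW elementary} and Proposition~\ref{prop: MW/W discrete} are stated and proved for this exact class of parameters, no new input is required, and the corollary follows formally.

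\begin{proof}
Since $\q$ is elementary it has discrete diagonal restriction, so Proposition~\ref{prop: MW/W discrete} applies. By part (1) of that proposition, $\e^{MW/W}_{\q} \in \D{\S{\q}^{\Sigma_{0}}}$, and by part (2), for every $\bar{\e}' \in \D{\S{\q}}$ we have
\[
\r_{MW}(\q, \bar{\e}') = \r_{W}(\q, \bar{\e}'\bar{\e}^{MW/W}_{\q}).
\]
On the other hand, Theorem~\ref{thm: M/MW elementary} gives
\[
\r_{M}(\q, \bar{\e}) = \r_{MW}(\q, \bar{\e}\bar{\e}^{M/MW}_{\q}).
\]
Taking $\bar{\e}' = \bar{\e}\bar{\e}^{M/MW}_{\q}$ in the first identity and combining with the second, we obtain
\[
\r_{M}(\q, \bar{\e}) = \r_{MW}(\q, \bar{\e}\bar{\e}^{M/MW}_{\q}) = \r_{W}(\q, \bar{\e}\bar{\e}^{M/MW}_{\q}\bar{\e}^{MW/W}_{\q}) = \r_{W}(\q, \bar{\e}\bar{\e}^{M/W}_{\q}),
\]
where the last equality uses the definition $\e^{M/W}_{\q} = \e^{M/MW}_{\q}\e^{MW/W}_{\q}$. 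This is the asserted identity.
\end{proof}
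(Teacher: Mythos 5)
Your proof is correct and is essentially the same as the paper's, which simply cites Proposition~\ref{prop: MW/W discrete} (implicitly combined with the just-proved Theorem~\ref{thm: M/MW elementary}); you have spelled out the chaining of the two identities explicitly, including the sanity check that an elementary parameter has discrete diagonal restriction so that Proposition~\ref{prop: MW/W discrete} applies verbatim.
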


\begin{proof}
It is clear from Proposition~\ref{prop: MW/W discrete}.
\end{proof}

In particular, this proves Theorem~\ref{thm: elementary Arthur packet}.

\begin{corollary}
\label{cor: full orthogonal character elementary}
Suppose $G$ is special even orthogonal and $\q \in \cQ{G}$ is elementary. For $\bar{\e} \in \D{\S{\q}}$, let $\r_{W}(\q, \bar{\e}) = [\r]$. Then $\r^{\theta_{0}} \cong \r$ if and only if $\S{\q}^{\Sigma_{0}} \neq \S{\q}$.
\end{corollary}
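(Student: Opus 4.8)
The plan is to move the question to the group $G^{\Sigma_0}$, where it becomes a statement about when the constructed irreducible representation $\r^{\Sigma_0}_M(\q, \e)$ of $G^{\Sigma_0}$ remains irreducible upon restriction to $G$. First I would use the relation $\r_W(\q, \bar\e) = \r_M(\q, \bar\e\,\bar\e^{M/W}_\q)$ proved just above (which combines Theorem~\ref{thm: M/MW elementary} with Proposition~\ref{prop: MW/W discrete}). Setting $\bar\eta := \bar\e\,\bar\e^{M/W}_\q \in \D{\S{\q}}$, this gives $[\r] = \r_W(\q,\bar\e) = \r_M(\q,\bar\eta)$, so it is equivalent to decide whether $\r_M(\q, \bar\eta)^{\theta_0} \cong \r_M(\q, \bar\eta)$.

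Next I would invoke the two facts recorded in Section~\ref{subsec: construction by Aubert involution}: for any lift $\eta \in \D{\S{\q}^{\Sigma_0}}$ of $\bar\eta$, the representation $\r^{\Sigma_0}_M(\q, \eta)$ of $G^{\Sigma_0}$ is irreducible (Theorem~\ref{thm: irreducibility of Aubert dual}), and, as an $\sH(G)$-module,
\[
[\r^{\Sigma_0}_M(\q, \eta)|_G] = \begin{cases} 2\,\r_M(\q, \bar\eta), & \text{if } \S{\q}^{\Sigma_0} = \S{\q}, \\ \r_M(\q, \bar\eta), & \text{if } \S{\q}^{\Sigma_0} \neq \S{\q}. \end{cases}
\]
(Recall that $\S{\q}^{\Sigma_0} = \S{\q}$ precisely when every $n_{(\rho, a, b)}$, for $(\rho,a,b) \in Jord(\q)$, is even.) Then I would apply Clifford theory for the index-two subgroup $G \subset G^{\Sigma_0}$ to the irreducible $\Pi := \r^{\Sigma_0}_M(\q, \eta)$: the restriction $\Pi|_G$ is either irreducible --- in which case $\r := \Pi|_G$ satisfies $\r^{\theta_0} \cong \r$ and $[\Pi|_G] = [\r]$ --- or it is $\r \oplus \r^{\theta_0}$ with $\r \not\cong \r^{\theta_0}$ --- in which case $[\r] = [\r^{\theta_0}]$ as $\sH(G)$-modules (since functions in $\sH(G)$ are $\Sigma_0$-invariant), so $[\Pi|_G] = 2[\r]$ and the two constituents are distinct. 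Comparing with the displayed formula: when $\S{\q}^{\Sigma_0} \neq \S{\q}$ the multiplicity is $1$, which forces $\Pi|_G$ to be irreducible and hence $\r^{\theta_0} \cong \r$; when $\S{\q}^{\Sigma_0} = \S{\q}$ the multiplicity is $2$, which forces $\Pi|_G = \r \oplus \r^{\theta_0}$ with $\r \not\cong \r^{\theta_0}$. Since $[\r] = \r_M(\q, \bar\eta) = \r_W(\q, \bar\e)$, this is exactly the asserted equivalence.

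The only delicate point is the bookkeeping that identifies the $\sH(G)$-module multiplicities $1$ and $2$ with the actual $G$-length of $\Pi|_G$ --- using that $\Pi|_G$ is multiplicity-free and that $[\r]$ and $[\r^{\theta_0}]$ coincide as $\sH(G)$-modules. Once this is set up, the argument is a direct consequence of Theorem~\ref{thm: elementary Arthur packet}, the compatibility of the Aubert involution with restriction, and the restriction formula above, and I do not foresee a substantive obstacle.
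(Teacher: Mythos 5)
Your proof is correct. The paper's one-line justification invokes \eqref{eq: full orthogonal character}, which says $\r^{\Sigma_0}_M(\q,\e)\otimes\x_0 \cong \r^{\Sigma_0}_M(\q,\e\e_0)$; combined with the injectivity of $\e\mapsto\r^{\Sigma_0}_M(\q,\e)$, this tests directly whether $\Pi\otimes\x_0\cong\Pi$, which by Clifford theory for the index-two pair $G\subset G^{\Sigma_0}$ is exactly the criterion for $\Pi|_G$ to reduce (and $\e_0\neq 1$ is equivalent to $\S{\q}^{\Sigma_0}\neq\S{\q}$). You instead cite the multiplicity statement $[\r^{\Sigma_0}_M(\q,\e)|_G]=2\r_M(\q,\bar\e)$ or $\r_M(\q,\bar\e)$, which already encodes the $G$-length of $\Pi|_G$, and then unwind the Clifford decomposition. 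The two cited inputs are parallel facts, both recorded in Section~\ref{subsec: construction by Aubert involution} as consequences of Theorem~\ref{thm: discrete series full orthogonal group} transported through the Aubert involution, so neither route is circular; yours is slightly more roundabout (it passes through the multiplicity formula, which is itself a consequence of \eqref{eq: full orthogonal character} plus distinctness), but the Clifford-theoretic bookkeeping you perform --- that $[\r]$ and $[\r^{\theta_0}]$ coincide as $\sH(G)$-modules and that $\Pi|_G$ is multiplicity-free --- is exactly right and fills in the detail the paper leaves implicit. The preliminary reduction via $\r_W(\q,\bar\e)=\r_M(\q,\bar\e\,\bar\e^{M/W}_\q)$ is also correctly applied.
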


\begin{proof}
This follows from \eqref{eq: full orthogonal character}.
\end{proof}

If $\q \in \cQ{G}$ is elementary, we can define $\Pkt{\q}^{\Sigma_{0}}$ to be the set of irreducible representations of $G^{\Sigma_{0}}$, whose restriction to $G$ belongs to $\cPkt{\q}$. Then it follows from Corollary~\ref{cor: full orthogonal character elementary} and Theorem~\ref{thm: character relation full orthogonal group} that there is a canonical bijection between 
\[
\xymatrix{\D{\S{\q}^{\Sigma_{0}}}  \ar[r] & \Pkt{\q}^{\Sigma_{0}} \\
                 \e \ar@{{|}->}[r]  & \r^{\Sigma_{0}}_{W}(\q, \e),}
\]
such that 
\begin{itemize}

\item
$[\r^{\Sigma_{0}}_{W}(\q, \e)|_{G}] = 2 \r_{W}(\q , \bar{\e})$ if $G$ is special even orthogonal and $\S{\q}^{\Sigma_{0}} = \S{\q}$, or $\r_{W}(\q ,\bar{\e})$ otherwise.

\item For any $s \in \S{\q}^{\Sigma_{0}}$ but not in $\S{\q}$ and $(H, \q_{H}) \rightarrow (\q, s)$, the following identity holds

\begin{align*}
f^{H}_{W}(\q_{H}) = \sum_{\bar{\e} \in \D{\S{\q}}} \e(ss_{\q}) f_{G}(\r_{W}^{\Sigma_{0}}(\q ,\e)) \,\,\,\,\,\,\,\,\,\,\, f \in C^{\infty}_{c}(G \rtimes \theta_{0}).
\end{align*}

\end{itemize}

Let us define $\r^{\Sigma_{0}}_{MW}(\q, \e) := \r_{W}^{\Sigma_{0}}(\q, \e \e^{MW/W}_{\q})$ for $\e \in \D{\S{\q}^{\Sigma_{0}}}$, then we can show in the same way as Proposition~\ref{prop: MW/W discrete} that for any $s \in \S{\q}^{\Sigma_{0}}$ but not in $\S{\q}$ and $(H, \q_{H}) \rightarrow (\q, s)$,

\begin{align*}
f^{H}_{MW}(\q_{H}) = \sum_{\bar{\e} \in \D{\S{\q}}} \e(ss_{\q}) f_{G}(\r_{MW}^{\Sigma_{0}}(\q ,\e)) \,\,\,\,\,\,\,\,\,\,\, f \in C^{\infty}_{c}(G \rtimes \theta_{0}).
\end{align*}

At last, we can extend Theorem~\ref{thm: M/MW elementary} to $G^{\Sigma_{0}}$.

\begin{theorem}
\label{thm: M/MW elementary full orthogonal group}
Suppose $\q \in \cQ{G}$ is elementary, then
\[
\r^{\Sigma_{0}}_{M}(\q, \e) = \r^{\Sigma_{0}}_{MW}(\q, \e \e_{\q}^{M/MW}).
\]
\end{theorem}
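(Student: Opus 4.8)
The plan is to deduce Theorem~\ref{thm: M/MW elementary full orthogonal group} from its already-established analogue for $G$, namely Theorem~\ref{thm: M/MW elementary}, by a careful bookkeeping of what happens under restriction to $G$ and under the twisting action of $\x_0$. First I would recall that by construction $\r^{\Sigma_0}_M(\q,\e)$ and $\r^{\Sigma_0}_{MW}(\q,\e)$ are both irreducible representations of $G^{\Sigma_0}$ whose restrictions to $G$ are governed by $\r_M(\q,\bar\e)$ and $\r_{MW}(\q,\bar\e)$ respectively: indeed $\r^{\Sigma_0}_{MW}(\q,\e) := \r^{\Sigma_0}_W(\q,\e\e^{MW/W}_{\q})$ by definition, and $\r^{\Sigma_0}_M(\q,\e) = \r^{\Sigma_0}(\q,\e)$ by Theorem~\ref{thm: Aubert dual for G} with the parametrization fixed via the tempered case. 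Theorem~\ref{thm: M/MW elementary} already gives $\r_M(\q,\bar\e) = \r_{MW}(\q,\bar\e\,\bar\e^{M/MW}_{\q})$, so restricting both sides of the asserted identity to $G$ yields exactly the same equality (using $[\r^{\Sigma_0}_M(\q,\e)|_G]$ and $[\r^{\Sigma_0}_{MW}(\q,\e)|_G]$ as recorded just above the theorem). Hence the two sides of the $G^{\Sigma_0}$-statement have the same restriction to $G$, and the only remaining ambiguity is a possible twist by $\x_0$ when $\S{\q}^{\Sigma_0} = \S{\q}$ (the case where the restriction to $G$ stays irreducible but admits two inequivalent extensions differing by $\x_0$).

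The second step is therefore to pin down that potential $\x_0$-discrepancy. When $\S{\q}^{\Sigma_0}\neq\S{\q}$ the restriction functor is injective on the relevant irreducible representations of $G^{\Sigma_0}$ (the two extensions are distinguished by their restrictions), so equality of restrictions forces equality in $\Rep(G^{\Sigma_0})$ and we are done immediately. When $\S{\q}^{\Sigma_0}=\S{\q}$, I would use the character identity for $G^{\Sigma_0}$ displayed right before the theorem, namely $f^H_{MW}(\q_H) = \sum_{\bar\e}\e(ss_\q)f_G(\r^{\Sigma_0}_{MW}(\q,\e))$ for $s\in\S{\q}^{\Sigma_0}\setminus\S{\q}$, together with the analogue of \eqref{eq: compatible with Aubert dual} obtained by running the diagram \eqref{diag: compatible with Aubert dual} for these twisted-endoscopic $s$. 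Concretely, the computation carried out in the proof of Theorem~\ref{thm: M/MW elementary} — applying $\bar{inv}_{<X_0}$ step by step and using \eqref{eq: Arthur packet via Aubert dual}, Proposition~\ref{prop: sign for G} and Lemma~\ref{lemma: sign} — goes through verbatim with $\D{\S{\q}}$ replaced by $\D{\S{\q}^{\Sigma_0}}$, $f_G(\cdot)$ replaced by twisted characters $f_G(\cdot)$ on $G\rtimes\theta_0$, and $s$ ranging over $\S{\q}^{\Sigma_0}$, because all the ingredients (the Aubert involution $inv_{<X_0}$ on $G^{\Sigma_0}$, the signs $\beta(\q,\rho,<X_0)$ and $\e^{M/MW}_\q$, the factorization through elliptic/twisted-elliptic endoscopy) are already set up at the level of $G^{\Sigma_0}$. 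The key linear-algebra point is that the extended family of identities over all $s\in\S{\q}^{\Sigma_0}$ determines the $\r^{\Sigma_0}_{MW}(\q,\e)$ uniquely, so matching the two sides reduces once more to the sign identity $\beta(\q_H,\rho,<X_0)\beta(\q,\rho,<X_0) = \e^{M/MW}_\q\,\e^{M/MW}_{\q^\sharp}(s)$, which was already verified in the proof of Theorem~\ref{thm: M/MW elementary} and is insensitive to whether one works with $\S{\q}$ or $\S{\q}^{\Sigma_0}$.

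To organize the induction cleanly I would, exactly as in Theorem~\ref{thm: M/MW elementary}, start from the tempered base case $\q = \q_d$, where $\r^{\Sigma_0}_M(\q_d,\e) = \r^{\Sigma_0}_W(\q_d,\e) = \r^{\Sigma_0}_{MW}(\q_d,\e)$ (here $\e^{MW/W}_{\q_d}=1$ since all $B=0$, and one checks $\e^{M/MW}_{\q_d}=1$ directly from Definition~\ref{def: M/MW elementary} because all $\delta_\alpha=+1$ and the counts $m$ vanish), and then pass from $\q$ to $\q^\sharp$ by applying $|inv_{<X_0}|\circ|inv_{\leqslant X_0}|$ to the Jordan block being flipped, invoking \eqref{eq: Arthur packet via Aubert dual} at the $G^{\Sigma_0}$-level and \eqref{eq: full orthogonal character} to track the $\x_0$-twist. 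The compatibility $\e^{M/MW}_\q(s_\q)=1$ (noted in the previous proof) ensures the $s_\q$-normalizations on both sides agree, so no stray $\x_0$ appears. The main obstacle I anticipate is not any new idea but the verification that the twisted-endoscopic character identity and the commutative diagram \eqref{diag: compatible with Aubert dual} hold with the extra $\theta_0$-twist for $s$ outside $\S{\q}$ — this is the content of Theorem~\ref{thm: character relation full orthogonal group} combined with the compatibility of (twisted) endoscopic transfer with Jacquet modules and parabolic induction, and one must be careful that the Aubert involution $\bar{inv}^H_{<X_0} = \bar{inv}^{G_I}_{<X_0}\otimes\bar{inv}^{G_{II}}_{<X_0}$ is taken with respect to $\rho\otimes\eta'$ on the $G_I$-factor exactly as in the proof of Theorem~\ref{thm: M/MW elementary}. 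Once that bookkeeping is in place, the sign computation is identical to the one already done, and the theorem follows.
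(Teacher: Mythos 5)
Your overall strategy is the same as the paper's: reduce to the $G$-level Theorem~\ref{thm: M/MW elementary} via restriction, identify the residual ambiguity as a possible twist by $\x_0$, and resolve it by running the inductive Aubert-involution argument at the $G^{\Sigma_0}$-level against the twisted-endoscopic character identity for some $s$ lying in $\S{\q}^{\Sigma_0}$ but outside $\S{\q}$, together with the same sign computation $\beta(\q_H,\rho,<X_0)\beta(\q,\rho,<X_0) = \e^{M/MW}_{\q}\e^{M/MW}_{\q^{\sharp}}(s)$. That is exactly what the paper does, and your remarks about $\bar{inv}^{H}_{<X_0}$ respecting $\rho\otimes\eta'$ and about the base tempered case are all correct.

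However, you have the two cases of the dichotomy interchanged, and the interchange is not cosmetic. When $\S{\q}^{\Sigma_0}\neq\S{\q}$ the character $\e_0$ is nontrivial in $\D{\S{\q}^{\Sigma_0}}$, and by \eqref{eq: full orthogonal character} one has $\r^{\Sigma_0}_M(\q,\e\e_0)\cong\r^{\Sigma_0}_M(\q,\e)\otimes\x_0$ with $\e\e_0\neq\e$, yet both of these irreducible representations of $G^{\Sigma_0}$ restrict to the \emph{same} $\r_M(\q,\bar\e)$ (the restriction in this case is irreducible, as recorded in the bijection stated just above the theorem). So restriction is emphatically \emph{not} injective here, and the claim that equality of restrictions "forces equality in $\Rep(G^{\Sigma_0})$" fails precisely where you invoke it; this is in fact the nontrivial case, and it is the one the paper treats. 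Conversely, when $\S{\q}^{\Sigma_0}=\S{\q}$ one has $\e_0=1$, the $\x_0$-twist is trivial on the $G^{\Sigma_0}$-representation, and the restriction (which is $2\r_M(\q,\bar\e)$, not irreducible) does determine the $G^{\Sigma_0}$-representation — this is the easy case. Moreover, the character identity you invoke for the case $\S{\q}^{\Sigma_0}=\S{\q}$ requires choosing $s\in\S{\q}^{\Sigma_0}\setminus\S{\q}$, which is empty in that case, so the argument cannot even be started there. Once you swap the two cases — handling $\S{\q}^{\Sigma_0}=\S{\q}$ trivially and reserving the twisted-endoscopy/Aubert-involution machinery for $\S{\q}^{\Sigma_0}\neq\S{\q}$ — your argument becomes the paper's proof.
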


\begin{proof}
We can assume $G$ is special even orthogonal and $\S{\q}^{\Sigma_{0}} \neq \S{\q}$. Since $\r_{M}(\q, \bar{\e}) = \r_{MW}(\q, \bar{\e} \bar{\e}_{\q}^{M/MW})$, then 
\[
\r^{\Sigma_{0}}_{MW}(\q, \e \e_{\q}^{M/MW}) =  \r^{\Sigma_{0}}_{M}(\q, \e) \text{ or } \r^{\Sigma_{0}}_{M}(\q, \e) \otimes \x_{0}.
\]
Note when $\q$ is tempered, $\e_{\q}^{M/MW} = \e_{\q}^{MW/W} = 1$ and $\r^{\Sigma_{0}}_{M}(\q, \e) = \r^{\Sigma_{0}}_{W}(\q, \e) = \r^{\Sigma_{0}}_{MW}(\q, \e)$. So as in the proof of Theorem~\ref{thm: M/MW elementary}, we can assume
\[
\r^{\Sigma_{0}}_{M}(\q, \e) = \r^{\Sigma_{0}}_{MW}(\q, \e \e_{\q}^{M/MW})
\]
for some parameter $\q$ by induction, and the critical step is to show 
\[
\r^{\Sigma_{0}}_{M}(\q^{\sharp}, \e) := |inv_{< X_{0}} \r^{\Sigma_{0}}_{M}(\q, \e)| = \r^{\Sigma_{0}}_{MW}(\q^{\sharp}, \e \e^{M/MW}_{\q^{\sharp}}).
\]
We identity $\S{\q}^{\Sigma_{0}} \cong \S{\q^{\sharp}}^{\Sigma_{0}}$, and choose $s^{*} \in \S{\q}^{\Sigma_{0}}$ but not in $\S{\q}$. Let $(H, \q_{H}) \rightarrow (\q, s^{*})$ and $(H, \q_{H}^{\sharp}) \rightarrow (\q^{\sharp}, s)$, where $H = G_{I} \times G_{II}$ and $\q_{H} = \q_{I} \times \q_{II}$. Then we can have the following commutative diagram analogous to \eqref{diag: compatible with Aubert dual} (see Appendix~\ref{sec: compatibility}):
\begin{align}
\label{diag: twisted even orthogonal compatible with Aubert dual} 
\xymatrix{\D{SI}(H) \ar[d]_{inv^{H}_{< X_{0}}}  \ar[r] & \D{I}(G^{\theta_{0}}) \ar[d]^{inv_{< X_{0}}} \\
                \D{SI}(H) \ar[r]  & \D{I}(G^{\theta_{0}}). }
\end{align}
Here $\D{I}(G^{\theta_{0}})$ is the space of $\theta_{0}$-twisted invariant distributions on $G$, and the horizontal arrows denote the twisted spectral endoscopic transfers. We define
\[
inv^{H}_{< X_{0}} := inv^{\,G_{I}}_{< X_{0}} \otimes inv^{G_{II}}_{< X_{0}}
\]
with $inv^{\,G_{I}}_{< X_{0}}$ (resp. $inv^{G_{II}}_{< X_{0}}$) respecting $\rho \otimes \eta_{I}$ (resp. $\rho \otimes \eta_{II}$). Applying this diagram to $\cPkt{MW}(\q_{H}) := \cPkt{MW}(\q_{I}) \otimes \cPkt{MW}(\q_{II})$, one can show 
\begin{align*}
f^{H}_{MW}(\q^{\sharp}_{H}) & = \beta(\q_{H}, \rho, < X_{0}) \beta(\q, \rho, < X_{0}) \e^{M/MW}_{\q} (s^{*}) \sum_{\bar{\e} \in \D{\S{\q}}} \e(s^{*} s_{\q^{\sharp}}) f_{G}(\r^{\Sigma_{0}}_{M}(\q^{\sharp}, \e)) 
\end{align*}
for $f \in C^{\infty}_{c}(G \rtimes \theta_{0})$ (cf. \eqref{eq: compatible with Aubert dual}). As in the proof of Theorem~\ref{thm: M/MW elementary}, we also have
\[
\beta(\q_{H}, \rho, < X_{0}) \beta(\q, \rho, < X_{0}) = \e^{M/MW}_{\q}  \e^{M/MW}_{\q^{\sharp}}(s^{*}).
\]
Since 
\[
f^{H}_{MW}(\q^{\sharp}_{H}) = \sum_{\bar{\e} \in \D{\S{\q^{\sharp}}}} \e(s^{*} s_{\q^{\sharp}}) f_{G}(\r^{\Sigma_{0}}_{MW}(\q^{\sharp}, \e)),
\]
then
\begin{align*}
\sum_{\bar{\e} \in \D{\S{\q^{\sharp}}}} \e(s^{*} s_{\q^{\sharp}}) f_{G}(\r^{\Sigma_{0}}_{MW}(\q^{\sharp}, \e)) & = \sum_{\bar{\e} \in \D{\S{\q}}} \e \e^{M/MW}_{\q^{\sharp}}(s^{*} s_{\q^{\sharp}}) f_{G}(\r^{\Sigma_{0}}_{M}(\q^{\sharp}, \e)) \\
& = \sum_{\bar{\e} \in \D{\S{\q^{\sharp}}}} \e (s^{*} s_{\q^{\sharp}}) f_{G}(\r^{\Sigma_{0}}_{M}(\q^{\sharp}, \e \e^{M/MW}_{\q^{\sharp}})).
\end{align*}
By the linear independence of twisted characters, we have for any $\bar{\e} \in \D{\S{\q^{\sharp}}}$
\[
\e(s^{*} s_{\q^{\sharp}}) f_{G}(\r^{\Sigma_{0}}_{MW}(\q^{\sharp}, \e)) =  \e (s^{*} s_{\q^{\sharp}}) f_{G}(\r^{\Sigma_{0}}_{M}(\q^{\sharp}, \e \e^{M/MW}_{\q^{\sharp}})),
\]
and hence $f_{G}(\r^{\Sigma_{0}}_{MW}(\q^{\sharp}, \e)) = f_{G}(\r^{\Sigma_{0}}_{M}(\q^{\sharp}, \e \e^{M/MW}_{\q^{\sharp}}))$, i.e., $\r^{\Sigma_{0}}_{M}(\q^{\sharp}, \e) = \r^{\Sigma_{0}}_{MW}(\q^{\sharp}, \e \e_{\q^{\sharp}}^{M/MW})$.

\end{proof}

\begin{remark}
Later on we will see M{\oe}glin defines $\r^{\Sigma_{0}}_{M}(\q, \e)$ in the general case, and if one also extends the definition of $\e^{M/MW}_{\q}$ 
to the general case, then Theorem~\ref{thm: M/MW elementary full orthogonal group} is still valid (see Theorem~\ref{thm: M/MW general full orthogonal group}).
\end{remark}

\section{Case of discrete diagonal restriction}
\label{sec: discrete diagonal restriction}

In this section, we would like to look into the Arthur packets associated with parameters having discrete diagonal restrictions. To be more precise, we want to give a parametrization of irreducible constituents of $\r_{W}(\q, \bar{\e})$ (or equivalently $\r_{MW}(\q, \bar{\e})$) in this case. This parametrization is given by M{\oe}glin and we will follow her paper \cite{Moeglin:2009} closely.

As in the elementary case, we start by constructing certain elements in the Grothendieck group of representations of $G^{\Sigma_{0}}$. These elements are parametrized by $\q \in \cQ{G}$ with discrete diagonal restriction and $\e \in \D{\S{\q}^{\Sigma_{0}}}$.

\begin{definition}
Suppose $\q \in \cQ{G}$ has discrete diagonal restriction, and there exists $(\rho, A, B, \zeta) \in Jord(\q)$ such that $A > B$. Let $\e \in \D{\S{\q}^{\Sigma_{0}}}$ and $\eta_{0} := \e(\rho, A, B, \zeta)$. Then we define
\begin{align*}
\r_{M}^{\Sigma_{0}}(\q, \e) := & \+_{C \in ]B, A]} (-1)^{A-C} <\zeta B, \cdots, -\zeta C> \rtimes \Jac_{\zeta (B+2), \cdots, \zeta C} \r_{M}^{\Sigma_{0}}(\q', \e', (\rho, A, B+2, \zeta; \eta_{0})) \\
& \+_{\eta = \pm 1} (-1)^{[(A - B + 1) / 2]} \eta^{A - B + 1} \eta_{0}^{A - B} \r_{M}^{\Sigma_{0}}(\q', \e', (\rho, A, B+1, \zeta; \eta), (\rho, B, B, \zeta; \eta \eta_{0})),
\end{align*}
where $\q'$ is obtained from $\q$ by removing $(\rho, A, B, \zeta)$, and $\e'(\cdot)$ is the restriction of $\e(\cdot)$. 
\end{definition}

\begin{remark}

\begin{enumerate}
\item
When $A = B + 1$ and $\eta_{0} = -1$, the term involving $(\rho, A, B+2, \zeta, \eta_{0})$ does not appear for $\e'(\cdot)$ does not define a character of $\D{\S{\q'}^{\Sigma_{0}}}$ in this case. 

\item

It is clear by induction that
\begin{align}
\label{eq: full orthogonal character 1}
\r_{M}^{\Sigma_{0}}(\q, \e \e_{0})  \cong  \r_{M}^{\Sigma_{0}}(\q, \e) \otimes \x_{0}.
\end{align}

\item

We could also define $\r_{M}(\q, \bar{\e})$ in a similar way. Let 
\[
Jord(\q^{1}) = Jord(\q') \cup \{(\rho, A, B + 2, \zeta)\},
\] 
and 
\[
Jord(\q^{2}) = Jord(\q') \cup \{(\rho, A, B + 1, \zeta), (\rho, B, B, \zeta)\}.
\] 
We can identify $\S{\q} \cong \S{\q^{1}}$ by sending $(\rho, A, B, \zeta)$ to $(\rho, A, B+2, \zeta)$, and map $s \in \S{\q}$ into $\S{\q^{2}}$ by letting 
\[
s(\rho, A, B + 1, \zeta) = s(\rho, B, B, \zeta) := s(\rho, A, B, \zeta).
\]
Then $\S{\q} \hookrightarrow \S{\q^{2}}$ is of index $1$ or $2$. We denote the image of $\bar{\e}$ in $\D{\S{\q^{1}}}$ by $\bar{\e}_{1}$. Let us define
\begin{align*}
\r_{M}(\q, \bar{\e}) := & \+_{C \in ]B, A]} (-1)^{A-C} <\zeta B, \cdots, -\zeta C> \rtimes \bar{\Jac}_{\zeta (B+2), \cdots, \zeta C} \r_{M}(\q^{1}, \bar{\e}_{1}) \\
& \+_{\bar{\e} \leftarrow \bar{\e}_{2} \in \D{\S{\q^{2}}}} (-1)^{[(A - B + 1) / 2]} \e_{2}(\rho, A, B+1, \zeta)^{A - B + 1} \e(\rho, A, B, \zeta)^{A - B}  \r_{M}(\q^{2}, \bar{\e}_{2}).
\end{align*} 
By induction again one observes 
the restriction of $\r^{\Sigma_{0}}_{M}(\q, \e)$ to $G$ viewed as $\sH(G)$-modules is $2 \r_{M}(\q, \bar{\e})$ if $G$ is special even orthogonal and $\S{\q}^{\Sigma_{0}} = \S{\q}$, or $\r_{M}(\q, \bar{\e})$ otherwise. Later we will show $\r_{M}^{\Sigma_{0}}(\q, \e)$ is a representation of $G^{\Sigma_{0}}$, and $\r_{M}(\q, \bar{\e})$ consists of irreducible representations of $G$ viewed as $\sH(G)$-modules in the restriction of $\r_{M}^{\Sigma_{0}}(\q, \e)$ to $G$ without multiplicities. 


\end{enumerate}

\end{remark}

Next we want to show $\cPkt{\q}$ consists of $\r_{M}(\q, \bar{\e})$, and furthermore we would like to compute the difference between the parametrizations of $\r_{M}(\q, \bar{\e})$ and $\r_{MW}(\q, \bar{\e})$. To do so, we need to extend the definition of $\e^{M/MW}_{\q} \in \D{\S{\q}^{\Sigma_{0}}}$ in the previous section.

\begin{definition}
\label{def: M/MW discrete}
Suppose $\q \in \cQ{G}$ has discrete diagonal restriction, and $(\rho, a, b) \in Jord(\q)$.
\begin{enumerate}

\item If $a + b$ is odd, $\e^{M/MW}_{\q}(\rho, a, b) = 1$.

\item If $a + b$ is even, let 
\[
m = \sharp \{(\rho, a', b') \in Jord(\q): a', b' \text{ odd}, \zeta_{a', b'} = -1, |a' - b'| > |a - b| \},
\] 
and 
\[
n = \sharp \{(\rho, a' , b')  \in Jord(\q): a', b' \text{ odd}, |a' - b'| < |a - b| \}.
\] 
Then
\[
\e^{M/MW}_{\q}(\rho, a, b) = \begin{cases}
                                              1 &\text{ if $a, b$ even, } \\
                                              (-1)^{m} &\text{ if $a, b$ odd, $\zeta_{a, b} = +1$, } \\
                                              (-1)^{m + n} &\text{ if $a, b$ odd, $\zeta_{a, b} = -1$. }
                                              \end{cases}
\]

\end{enumerate}
\end{definition}

There is a simple fact about this character $\e^{M/MW}_{\q}$.

\begin{lemma}
Suppose $\q \in \cQ{G}$ has discrete diagonal restriction, then $\e^{M/MW}_{\q}(s_{\q}) = 1$.
\end{lemma}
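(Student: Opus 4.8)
The plan is to compute $\e^{M/MW}_{\q}(s_{\q})$ directly from the definition of the pairing and the explicit formulas for $\e^{M/MW}_{\q}$ (Definition~\ref{def: M/MW discrete}) and for $s_{\q}$ (recall $s_{\q}(\rho, a, b) = -1$ if $b$ is even and $1$ if $b$ is odd). By definition of the inner product on $\Two^{Jord(\q)_{p}}$, we have
\[
\e^{M/MW}_{\q}(s_{\q}) = \prod_{\substack{(\rho, a, b) \in Jord(\q) \\ b \text{ even}}} \e^{M/MW}_{\q}(\rho, a, b),
\]
so the task reduces to showing that this restricted product over Jordan blocks with $b$ even is $+1$. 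Since $\q$ has discrete diagonal restriction, $Jord(\q)$ is multiplicity free and for each fixed $\rho$ the segments $[A,B]$ are disjoint, which is the combinatorial input that makes the counting manageable.

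The first observation is that if $b$ is even and $a + b$ is odd, then by case (1) of Definition~\ref{def: M/MW discrete} the contribution is $1$; similarly if $a, b$ are both even then $a+b$ is even but the first subcase of (2) gives $1$. So the only Jordan blocks $(\rho, a, b)$ with $b$ even that can contribute a nontrivial sign are those with $a$ odd and $b$ even (so that $a, b$ have opposite parity but we are in case (2) — wait, $a+b$ odd puts us in case (1)). Let me re-examine: $b$ even forces, for a nontrivial contribution, that $a + b$ be even, hence $a$ even, hence $a, b$ both even, which gives $1$. Thus \emph{every} Jordan block with $b$ even contributes $1$, and the product is trivially $+1$. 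I would double-check this parity bookkeeping carefully, since the definition of $\e^{M/MW}_{\q}(\rho, a, b)$ is only $\pm 1$ in the subcases where $a, b$ are both \emph{odd}, and those automatically have $b$ odd, hence are excluded from the product defining $\e^{M/MW}_{\q}(s_{\q})$.

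So the proof is essentially immediate once one writes out the pairing: the blocks contributing to $\e^{M/MW}_{\q}(s_{\q})$ are exactly those with $b$ even, and on all such blocks $\e^{M/MW}_{\q}$ takes the value $1$ by Definition~\ref{def: M/MW discrete}. The only potential subtlety — and the single point I would verify with care — is the interplay between the parity condition ``$b$ even'' coming from $s_{\q}$ and the case division ``$a+b$ odd'' versus ``$a+b$ even'' (and within the latter, ``$a,b$ even'' versus ``$a,b$ odd'') in the definition of $\e^{M/MW}_{\q}$: one must confirm that $b$ even never lands in the subcases $a,b$ odd, which is clear since $b$ cannot be simultaneously even and odd. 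There is no real obstacle here; this lemma is a sanity check that will be used to simplify the character relations (exactly as the analogous identity $\e^{M/MW}_{\q}(s_{\q}) = 1$ was used in the proof of Theorem~\ref{thm: M/MW elementary}), and I would present it as a one-paragraph proof.

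\begin{proof}
By the definition of the inner product on $\Two^{Jord(\q)_{p}}$ and the formula
\[
s_{\q}(\rho, a, b) = \begin{cases} -1, & b \text{ even}, \\ 1, & b \text{ odd}, \end{cases}
\]
we have
\[
\e^{M/MW}_{\q}(s_{\q}) = \prod_{\substack{(\rho, a, b) \in Jord(\q) \\ b \text{ even}}} \e^{M/MW}_{\q}(\rho, a, b).
\]
Fix a Jordan block $(\rho, a, b) \in Jord(\q)$ with $b$ even. If $a$ is odd, then $a + b$ is odd and case (1) of Definition~\ref{def: M/MW discrete} gives $\e^{M/MW}_{\q}(\rho, a, b) = 1$. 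If $a$ is even, then $a, b$ are both even and the first subcase of case (2) gives $\e^{M/MW}_{\q}(\rho, a, b) = 1$. In either case the factor is $1$, so the product equals $1$.
\end{proof}
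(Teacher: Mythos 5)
Your proof is correct and is essentially the same as the paper's one-line argument: $\e^{M/MW}_{\q}(\rho,a,b)=1$ whenever $b$ is even, because $b$ even forces either $a+b$ odd (case (1)) or $a,b$ both even (first subcase of case (2)), and the nontrivial values in Definition~\ref{def: M/MW discrete} occur only when $a,b$ are both odd. You simply spell out the parity bookkeeping that the paper compresses into a single sentence.
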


\begin{proof}
From the definition, we see $\e^{M/MW}_{\q}(\rho, a, b) = 1$ if $b$ is even. Then
\[
\e^{M/MW}_{\q}(s_{\q}) = \prod_{\substack{(\rho, a, b) \in Jord(\q) \\ b \text{ even }}} \e^{M/MW}_{\q}(\rho, a, b) = 1.
\]
\end{proof}

\begin{theorem}
\label{thm: M/MW discrete}
Suppose $\q \in \cQ{G}$ has discrete diagonal restriction, then
\[
\r_{M}(\q, \bar{\e}) = \r_{MW}(\q, \bar{\e} \bar{\e}^{M/MW}_{\q}).
\]
\end{theorem}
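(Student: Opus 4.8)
The plan is to argue by induction on the rank of $G$, following the recursive structure already set up for parameters with discrete diagonal restriction. The base case is when $\q$ is elementary, where the statement is exactly Theorem~\ref{thm: M/MW elementary}. For the inductive step, I would fix $(\rho, A, B, \zeta) \in Jord(\q)$ with $A > B$ (if no such block exists, $\q$ is elementary and we are done) and compare the two recursive formulas at our disposal: the defining formula for $\r_{M}(\q, \bar{\e})$ (the ``Remark'' part of the definition, expanding it in terms of $\r_{M}(\q^{1}, \bar{\e}_1)$ and $\r_{M}(\q^{2}, \bar{\e}_2)$ where $Jord(\q^1) = Jord(\q') \cup \{(\rho, A, B+2, \zeta)\}$ and $Jord(\q^2) = Jord(\q') \cup \{(\rho, A, B+1, \zeta), (\rho, B, B, \zeta)\}$), and the recursive formula for $\cPkt{MW}(\q)$ given in Proposition~\ref{prop: recursive formula for packet}. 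Applying Proposition~\ref{prop: MW/W discrete}(2) to rewrite everything in terms of $\r_{W}$, and noting that both $\q^1$ and $\q^2$ have strictly smaller rank and still satisfy discrete diagonal restriction, the inductive hypothesis applies to them.

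The heart of the matter is then a bookkeeping identity relating $\bar\e^{M/MW}_{\q}$ to $\bar\e^{M/MW}_{\q^1}$ and $\bar\e^{M/MW}_{\q^2}$, together with the sign factors $(-1)^{A-C}$, $(-1)^{[(A-B+1)/2]}$, and the extra factors $\eta^{A-B+1}\eta_0^{A-B}$ that appear in the two recursive formulas. Concretely, under the identification $\S{\q} \cong \S{\q^1}$ sending $(\rho, A, B, \zeta) \mapsto (\rho, A, B+2, \zeta)$ one needs $\bar\e^{M/MW}_{\q} = \bar\e^{M/MW}_{\q^1}$ (as characters, via this identification), since removing the pair $(\rho, A, B, \zeta)$ and inserting $(\rho, A, B+2, \zeta)$ changes neither $|a-b|$ nor the parities nor the signs $\zeta_{a',b'}$ entering Definition~\ref{def: M/MW discrete}; one should check this block-by-block. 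For the $\q^2$-term the analysis is more delicate: the new blocks $(\rho, A, B+1, \zeta)$ and $(\rho, B, B, \zeta)$ have $A - (B+1)$ and $0$ as their ``$|a-b|$'' values, and exactly one of $A, B+1$ (resp. one of $B, B$) may be the parity that triggers the $(-1)^{m+n}$ case in Definition~\ref{def: M/MW discrete}; here the extra sign $\eta^{A-B+1}\eta_0^{A-B}$ in the $\r_M$-recursion is designed precisely to absorb the discrepancy between $\bar\e^{M/MW}_{\q}$ restricted to $Jord(\q')$ and $\bar\e^{M/MW}_{\q^2}$ restricted there, plus the contribution of the two new blocks. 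I expect this sign-matching for the $\q^2$-term to be the main obstacle, and it will need a careful case split on the parities of $A$ and $B$ (equivalently on whether $a, b$ are both even, both odd, or mixed for the fixed block $(\rho,A,B,\zeta)$), mirroring the case split $(*-1)$--$(*-6)$ in the proof of Theorem~\ref{thm: sign}.

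Once the character identity
\[
\bar\e^{M/MW}_{\q}\big|_{Jord(\q')} = \bar\e^{M/MW}_{\q^1}\big|_{Jord(\q')} = \bar\e^{M/MW}_{\q^2}\big|_{Jord(\q')}
\]
together with the matching of the boundary signs is established, the inductive step is completed by substituting into both recursive formulas, applying the inductive hypothesis to $\q^1$ and $\q^2$, and observing that Proposition~\ref{prop: recursive formula for packet} expands $\cPkt{MW}(\q) = \sum_{\bar\e} \bar\e(s_\q)\,\r_{MW}(\q,\bar\e)$ the same way that the $\r_M$-recursion expands $\sum_{\bar\e}\bar\e(s_\q)\,\r_M(\q,\bar\e)$ after the twist by $\bar\e^{M/MW}_{\q}$; since the characters $\{\bar\e(s_\q)\}$ separate elements of $\D{\S{\q}}$ (the pairing is a perfect pairing), matching the two sides as linear combinations over $\bar\e$ forces $\r_M(\q,\bar\e) = \r_{MW}(\q,\bar\e\,\bar\e^{M/MW}_{\q})$ term by term. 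The lemma $\e^{M/MW}_{\q}(s_\q) = 1$ proved just above is what guarantees the twist is compatible with the normalization $\bar\e(s_\q)$ appearing in $\cPkt{MW}(\q)$, so no stray global sign is introduced.
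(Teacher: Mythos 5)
Your proposal has the right overall shape — induction from the elementary base case and reduction to a sign identity comparing $\e^{M/MW}$ for $\q$, $\q^{1}$ and $\q^{2}$, which is indeed the identity the paper verifies (equation \eqref{eq: M/MW discrete 1}). But the closing step of the inductive argument is wrong, and the gap is not cosmetic.

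You propose to match the two sides of $\cPkt{MW}(\q) = \cPkt{M}(\q)$ ``as linear combinations over $\bar\e$'' and appeal to the perfect pairing on $\D{\S{\q}}$ to conclude $\r_{M}(\q,\bar\e) = \r_{MW}(\q,\bar\e\bar\e^{M/MW}_{\q})$ term by term. This does not work: $\cPkt{MW}(\q) = \sum_{\bar\e} \bar\e(s_{\q})\r_{MW}(\q,\bar\e)$ is a \emph{single} element of the Grothendieck group, not a vector indexed by $\bar\e$, and Proposition~\ref{prop: recursive formula for packet} is one equation, the case $s = 1$. The pairing-separates-characters argument would let you recover the individual $\r_{MW}(\q,\bar\e)$ by Fourier inversion only if you knew $\cPkt{MW,s}(\q)$ for \emph{every} $s \in \S{\q}$, not just $s = 1$. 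For $s \neq 1$ you have no recursive formula for $\cPkt{MW,s}(\q)$ available a priori; the one in Proposition~\ref{prop: recursive formula for packet} is derived from the twisted character identity with $GL(N)$ and is stable.

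The missing ingredient is precisely the step the paper takes: for each $s \neq 1$, pass to the elliptic endoscopic group $(H, \q_H) \to (\q, s)$ with $\q_H = \q_I \times \q_{II}$, write $\cPkt{MW,s}(\q)$ as the spectral transfer of $\cPkt{MW}(\q_I) \otimes \cPkt{MW}(\q_{II})$, apply the recursive formula (which \emph{is} the stable one, hence available) to the factor containing $(\rho,A,B,\zeta)$, and then transfer back using the compatibility of endoscopic transfer with Jacquet modules and parabolic induction. This is what produces the identity $\cPkt{M,s}(\q) = \e^{M/MW}_{\q}(s)\,\cPkt{MW,s}(\q)$ for arbitrary $s$, which together with Fourier inversion over $\S{\q}$ yields the theorem. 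Without the endoscopy you only prove the $s = 1$ equation (which is the preceding lemma $\cPkt{MW}(\q) = \cPkt{M}(\q)$), and that is strictly weaker than the theorem. Your sign identity is correct in content and essentially matches \eqref{eq: M/MW discrete 1}; it is the mechanism for using it that needs the endoscopic step.
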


Before we prove the theorem, for any $s \in \S{\q}$ let
\begin{align*}
\cPkt{MW, s}(\q) &:= \sum_{\bar{\e} \in \D{\S{\q}}} \bar{\e}(ss_{\q}) \r_{MW}(\q, \bar{\e}), \\
\cPkt{M, s}(\q) &:= \sum_{\bar{\e} \in \D{\S{\q}}} \bar{\e}(ss_{\q}) \r_{M}(\q, \bar{\e}).
\end{align*}
In particular, $\cPkt{MW}(\q) = \cPkt{MW, 1}(\q)$ and we denote $\cPkt{M}(\q) = \cPkt{M, 1}(\q)$. For $\cPkt{M, s}(\q)$, we have the following recursive formula.

\begin{lemma}
\label{lemma: recursive formula for endoscopy}
Suppose $\q \in \cQ{G}$ has discrete diagonal restriction and $s \in \S{\q}$. Let $(\rho, A, B, \zeta) \in Jord(\q)$ such that $A > B$, then
\begin{align*}
\cPkt{M, s}(\q) = & \+_{C \in ]B, A]} (-1)^{A-C} <\zeta B, \cdots, -\zeta C> \rtimes \bar{\Jac}_{\zeta (B+2), \cdots, \zeta C} \cPkt{M, s}(\q', (\rho, A, B+2, \zeta))  \\
& \+ (-1)^{[(A-B+1)/2]} \cPkt{M, s}(\q', (\rho, A, B+1, \zeta), (\rho, B, B, \zeta)),
\end{align*}
where we let $s(\rho, A, B, \zeta) = s(\rho, A, B + 2, \zeta) = s(\rho, A, B + 1, \zeta) = s(\rho, B, B, \zeta)$.
\end{lemma}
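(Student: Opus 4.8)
The statement to prove is Lemma~\ref{lemma: recursive formula for endoscopy}: the recursive formula for $\cPkt{M, s}(\q)$ in the case of discrete diagonal restriction, which upgrades the $s=1$ recursion (Proposition~\ref{prop: recursive formula for packet}, phrased there for $\cPkt{MW}$) to arbitrary $s \in \S{\q}$ and for the M{\oe}glin parametrization $\r_M$ rather than $\r_{MW}$.

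The plan is to reduce everything to the definition of $\r_M(\q,\bar\e)$ together with Fourier inversion over the component group. First I would recall that, by the very definition given just before the lemma, $\r_{M}(\q, \bar{\e})$ is the alternating sum over $C \in\, ]B,A]$ of $<\zeta B, \cdots, -\zeta C> \rtimes \bar{\Jac}_{\zeta(B+2), \cdots, \zeta C}\, \r_{M}(\q^{1}, \bar{\e}_{1})$ plus the term $(-1)^{[(A-B+1)/2]}$ times a sum over $\bar\e_2 \in \D{\S{\q^{2}}}$ lying above $\bar\e$ of a sign times $\r_{M}(\q^{2}, \bar{\e}_{2})$, where $\q^1 = (\q', (\rho, A, B+2,\zeta))$ and $\q^2 = (\q', (\rho, A, B+1,\zeta), (\rho, B, B, \zeta))$. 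Now I would multiply this by $\bar\e(s s_\q)$ and sum over $\bar\e \in \D{\S{\q}}$. For the first group of terms, the map $\bar\e \mapsto \bar\e_1$ is the identification $\D{\S{\q}} \cong \D{\S{\q^1}}$ coming from $\S{\q}\cong\S{\q^1}$ (sending $(\rho,A,B,\zeta)$ to $(\rho,A,B+2,\zeta)$), and under this identification the image of $s$ is the $s'$ with $s'(\rho,A,B+2,\zeta) = s(\rho,A,B,\zeta)$ and $s_\q$ corresponds to $s_{\q^1}$ (since $s_\q$ depends only on the parity of the $b$'s, which is unchanged by $B \mapsto B+2$); hence $\sum_{\bar\e}\bar\e(ss_\q)\r_M(\q^1,\bar\e_1) = \cPkt{M,s}(\q^1)$. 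Since parabolic induction and $\bar\Jac$ are additive (linear on the Grothendieck group), they commute with the summation, giving exactly the first line of the claimed formula with $s(\rho,A,B+1,\zeta) = s(\rho,B,B,\zeta) = s(\rho,A,B,\zeta)$ as stipulated.

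The second group of terms is where the genuine combinatorial bookkeeping lies, and I expect it to be the main obstacle. Here one must interchange the sum over $\bar\e \in \D{\S{\q}}$ with the sum over $\bar\e_2 \in \D{\S{\q^2}}$ above $\bar\e$. The point is that $\D{\S{\q^2}}$ is either equal to or a double cover of $\D{\S{\q}}$ (dually $\S{\q}\hookrightarrow\S{\q^2}$ has index $1$ or $2$), so summing over $\bar\e$ and then over the fibre is the same as summing once over all of $\D{\S{\q^2}}$, at the cost of a normalization factor that I must track carefully. One then needs that the pullback to $\S{\q}$ of the element $s^{(2)}\in\S{\q^2}$ defined by $s^{(2)}(\rho,A,B+1,\zeta) = s^{(2)}(\rho,B,B,\zeta) = s(\rho,A,B,\zeta)$ recovers $s$, and that the character $\bar\e_2$ evaluated at $s^{(2)}s_{\q^2}$ equals $\bar\e(ss_\q)$ times the sign factor $\e_2(\rho,A,B+1,\zeta)^{A-B+1}\e(\rho,A,B,\zeta)^{A-B}$ already present in the definition of $\r_M(\q,\bar\e)$ — the two signs are designed precisely so that after resumming one obtains $\cPkt{M,s}(\q^2)$ on the nose. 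The delicate check is that $s_{\q}$ corresponds to (the image of) $s_{\q^2}$ and that, in the index-$2$ case, the extra character $\e_0$ of $\S{\q^2}$ that becomes trivial on $\S{\q}$ is handled consistently; this is the same mechanism already used in the proof of Proposition~\ref{prop: MW/W}, so I would model the argument on that proof.

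Finally I would assemble the two pieces: the left side $\cPkt{M,s}(\q) = \sum_{\bar\e}\bar\e(ss_\q)\r_M(\q,\bar\e)$ becomes $\+_{C}(-1)^{A-C}<\zeta B,\cdots,-\zeta C>\rtimes\bar\Jac_{\zeta(B+2),\cdots,\zeta C}\cPkt{M,s}(\q',(\rho,A,B+2,\zeta)) \+ (-1)^{[(A-B+1)/2]}\cPkt{M,s}(\q',(\rho,A,B+1,\zeta),(\rho,B,B,\zeta))$, with the convention on $s$-values exactly as stated in the lemma, using implicitly that for $A = B+1$, $\eta_0 = -1$ the first-type term with $(\rho,A,B+2,\zeta)$ drops out (as noted in the remark after the definition of $\r_M^{\Sigma_0}$) so no ill-defined character appears. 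One subtlety worth flagging: this is all taking place in the Grothendieck group, so I do not need to know yet that $\r_M(\q,\bar\e)$ is an honest representation (that comes later); I only use additivity of $\Ind$ and $\bar\Jac$ and the finite Fourier transform over abelian $2$-groups, which makes the argument purely formal once the index-$2$ normalization and the sign matching are pinned down.
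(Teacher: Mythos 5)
Your overall strategy coincides with the paper's: expand $\r_M(\q,\bar\e)$ by its recursive definition, multiply by $\bar\e(ss_\q)$, sum over $\bar\e$, and show the sign factors in the definition are exactly what is needed to reassemble $\cPkt{M,s}(\q^1)$ and $\cPkt{M,s}(\q^2)$. You correctly identify the two identities that must hold, namely $\bar\e_1(ss_{\q^1})=\bar\e(ss_\q)$ and $\bar\e_2(ss_{\q^2})=\e_2(\rho,A,B+1,\zeta)^{A-B+1}\e(\rho,A,B,\zeta)^{A-B}\,\bar\e(ss_\q)$.

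However, several of your remarks about how the second identity should be verified are off, and the verification itself is skipped. First, you say ``the delicate check is that $s_\q$ corresponds to (the image of) $s_{\q^2}$'' — it does not, and that mismatch is precisely the source of the sign. The actual check is the computation $\bar\e_2(s_{\q^2})/\bar\e(s_\q)=\e_2(\rho,A,B+1,\zeta)^{A-B+1}\e(\rho,A,B,\zeta)^{A-B}$, carried out by writing $\bar\e(s_\q)=\prod_{(\rho,A,B,\zeta)\in Jord(\q)}\e(\rho,A,B,\zeta)^{A-\zeta B}$ and using $\e_2(\rho,A,B+1,\zeta)\e_2(\rho,B,B,\zeta)=\e(\rho,A,B,\zeta)$; that three-line manipulation is the real content of the lemma and you should actually do it. Second, the character trivial on $\S{\q}$ in the index-two situation is the one flipping both $(\rho,A,B+1,\zeta)$ and $(\rho,B,B,\zeta)$, not $\e_0$: the latter is specific to the special even orthogonal case and is irrelevant to the $\S{\q}\hookrightarrow\S{\q^2}$ fibration. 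Third, there is no ``normalization factor'' to track when you interchange the sum over $\bar\e\in\D{\S{\q}}$ with the sum over the fibre of $\D{\S{\q^2}}\twoheadrightarrow\D{\S{\q}}$; it is a clean partition of $\D{\S{\q^2}}$ and the double sum is identically $\sum_{\bar\e_2\in\D{\S{\q^2}}}$. None of these is a fatal error of approach, but together they indicate that the heart of the proof — the explicit sign bookkeeping — was not actually carried through, only gestured at.
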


\begin{proof}
By definition we have for any $\bar{\e} \in \D{\S{\q}}$,
\begin{align*}
\bar{\e}(ss_{\q}) \, \r_{M}(\q, \bar{\e}) = & \+_{C \in ]B, A]} (-1)^{A-C} <\zeta B, \cdots, -\zeta C> \rtimes \bar{\Jac}_{\zeta (B+2), \cdots, \zeta C}  \, \bar{\e}(ss_{\q}) \, \r_{M}(\q^{1}, \bar{\e}_{1}) \\
& \+_{\bar{\e} \leftarrow \bar{\e}_{2} \in \D{\S{\q^{2}}}} (-1)^{[(A - B + 1) / 2]} \e_{2}(\rho, A, B+1, \zeta)^{A - B + 1} \e(\rho, A, B, \zeta)^{A - B} \, \bar{\e}(ss_{\q}) \, \r_{M}(\q^{2}, \bar{\e}_{2}).
\end{align*}
So it suffices to show $\bar{\e}_{1}(ss_{\q^{1}}) = \bar{\e}(ss_{\q})$ and 
\[
\bar{\e}_{2}(ss_{\q^{2}}) = \e_{2}(\rho, A, B+1, \zeta)^{A - B + 1} \e(\rho, A, B, \zeta)^{A - B} \bar{\e}(ss_{\q}).
\]
The first one is easy for $s_{\q^{1}} = s_{\q}$ under our identification. For the second one, note $\bar{\e}_{2}(s) = \bar{\e}(s)$ and 
\[
\bar{\e}(s_{\q}) = \prod_{(\rho, a, b) \in Jord(\q)} \e(\rho, a, b)^{b-1} = \prod_{(\rho, A, B, \zeta) \in Jord(\q)} \e(\rho, A, B, \zeta)^{A - \zeta B} 
\]
Then
\[
\bar{\e}_{2}(s_{\q^{2}}) / \bar{\e}(s_{\q}) = \e_{2}(\rho, A, B+1, \zeta)^{A - \zeta (B + 1)} \e_{2}(\rho, B, B, \zeta)^{B - \zeta B} / \e(\rho, A, B, \zeta)^{A - \zeta B}.
\]
Using the fact that $\e_{2}(\rho, A, B+1, \zeta) \e_{2}(\rho, B, B, \zeta) = \e(\rho, A, B, \zeta)$, we have
\begin{align*}
\bar{\e}_{2}(s_{\q^{2}}) / \bar{\e}(s_{\q}) & = \e_{2}(\rho, A, B+1, \zeta)^{A - \zeta (B + 1)} \e(\rho, A, B, \zeta)^{B - \zeta B} \e_{2}(\rho, A, B+1, \zeta)^{-B + \zeta B} / \e(\rho, A, B, \zeta)^{A - \zeta B}  \\
& = \e_{2}(\rho, A, B+1, \zeta)^{A - B - 1} \e(\rho, A, B, \zeta)^{B - A} = \e_{2}(\rho, A, B+1, \zeta)^{A - B + 1} \e(\rho, A, B, \zeta)^{A - B}. 
\end{align*}
This finishes the proof.

\end{proof}

\begin{lemma}
Suppose $\q \in \cQ{G}$ has discrete diagonal restriction, then $\cPkt{MW}(\q) = \cPkt{M}(\q)$. 
\end{lemma}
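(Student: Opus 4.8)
The plan is to run an induction in which both sides satisfy the same recursion, terminating at elementary parameters where the identity is already available. Since $\q$ has discrete diagonal restriction it is multiplicity free, so
\[
d(\q) := \sum_{(\rho, A, B, \zeta) \in Jord(\q)} (A - B)
\]
is a well-defined non-negative integer, and I would induct on $d(\q)$, allowing the underlying group to vary among the quasisplit symplectic and special orthogonal groups. If $d(\q) = 0$ then $A = B$ for all $(\rho, A, B, \zeta) \in Jord(\q)$, i.e.\ $\q$ is elementary, and Theorem~\ref{thm: Arthur packet via Aubert dual} gives $\cPkt{MW}(\q) = \sum_{\bar{\e} \in \D{\S{\q}}} \bar{\e}(s_{\q})\,\r_{M}(\q, \bar{\e})$, which by the definition of $\cPkt{M, s}$ specialized at $s = 1$ is exactly $\cPkt{M}(\q)$.

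For the inductive step, assume $d(\q) > 0$ and fix $(\rho, A, B, \zeta) \in Jord(\q)$ with $A > B$, writing $\q'$ for $\q$ with this block removed. Proposition~\ref{prop: recursive formula for packet} expresses $\cPkt{MW}(\q)$ in terms of $\cPkt{MW}(\q', (\rho, A, B+2, \zeta))$ and $\cPkt{MW}(\q', (\rho, A, B+1, \zeta), (\rho, B, B, \zeta))$ via the operations $\+_{C \in ]B, A]} (-1)^{A-C}\, <\zeta B, \cdots, -\zeta C> \rtimes \bar{\Jac}_{\zeta(B+2), \cdots, \zeta C}(-)$ and $(-1)^{[(A-B+1)/2]}(-)$; Lemma~\ref{lemma: recursive formula for endoscopy} taken at $s = 1$ — which forces the transported $s$ to be trivial on $(\rho, A, B+2, \zeta)$, $(\rho, A, B+1, \zeta)$ and $(\rho, B, B, \zeta)$, hence trivial on both reduced parameters — produces the same expression with $\cPkt{MW}$ replaced throughout by $\cPkt{M}$. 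Both $(\q', (\rho, A, B+2, \zeta))$ and $(\q', (\rho, A, B+1, \zeta), (\rho, B, B, \zeta))$ again have discrete diagonal restriction, since for the fixed $\rho$ the new segments $[A, B+2]$, resp.\ $[A, B+1]$ together with $[B, B]$, lie inside $[A, B]$ and so stay pairwise disjoint and disjoint from the segments of the untouched blocks; moreover $d$ drops by $2$, resp.\ $1$ (with the usual convention that a block vanishes once its second index would exceed its first, which only lowers $d$ further, as happens when $A = B+1$). The induction hypothesis then gives $\cPkt{MW} = \cPkt{M}$ for these smaller parameters, and comparing the two recursions term by term yields $\cPkt{MW}(\q) = \cPkt{M}(\q)$.

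I expect no genuine obstacle. The two points worth checking carefully are that the parameters occurring in the recursion retain discrete diagonal restriction with strictly smaller $d(\cdot)$ — including the degenerate cases $A = B+1$ and $A = B+2$, where a block disappears or becomes elementary — and that Lemma~\ref{lemma: recursive formula for endoscopy} at $s = 1$ reproduces the recursion of Proposition~\ref{prop: recursive formula for packet} with $\cPkt{M}$ in place of $\cPkt{MW}$. The latter is immediate because the signs $(-1)^{A-C}$ and $(-1)^{[(A-B+1)/2]}$, the segments $<\zeta B, \cdots, -\zeta C>$, and the Jacquet functors $\bar{\Jac}_{\zeta(B+2), \cdots, \zeta C}$ appear identically in both statements, so the whole argument is bookkeeping around a single structural observation.
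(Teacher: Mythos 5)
Your proof is correct and follows the same route as the paper, which simply invokes Proposition~\ref{prop: recursive formula for packet} and Lemma~\ref{lemma: recursive formula for endoscopy} (at $s=1$) to reduce to the elementary case; you have merely made the induction on $d(\q)$ and the verification that the reduced parameters remain of discrete diagonal restriction explicit, which the paper leaves to the reader.
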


\begin{proof}
Lemma~\ref{lemma: recursive formula for endoscopy} and Proposition~\ref{prop: recursive formula for packet} allows us to reduce this lemma to the case of elementary Arthur packets, where the statement is already known.
\end{proof}

Now we can give the poof of Theorem~\ref{thm: M/MW discrete}.

\begin{proof}
Since $\e^{M/MW}_{\q}(s_{\q}) = 1$, it is enough to show $\cPkt{M, s}(\q) = \e^{M/MW}_{\q}(s) \cPkt{MW, s}(\q)$ for all $s \in \S{\q}$. From the previous lemma, we know this is true for $s = 1$. So we can assume $s \neq 1$ in the rest of the proof. By induction, we may assume the theorem is true for $\q^{1}$ and $\q^{2}$, i.e.,
\begin{align*}
\cPkt{M, s}(\q^{1}) & = \e^{M/MW}_{\q^{1}}(s) \cPkt{MW, s}(\q^{1}), \\
\cPkt{M, s}(\q^{2}) & = \e^{M/MW}_{\q^{2}}(s) \cPkt{MW, s}(\q^{2}).
\end{align*}
Suppose $(H, \q_{H}) \rightarrow (\q ,s)$ and $\q_{s} := \q_{H} = \q_{I} \times \q_{II}$. We can assume $(\rho, A, B, \zeta) \in Jord(\q_{II})$ for the other case is similar. Let $\q^{1}_{s} = \q^{1}_{I} \times \q^{1}_{II}$ and $\q^{2}_{s} = \q^{2}_{I} \times \q^{2}_{II}$. In particular, $\q_{I} = \q^{1}_{I} = \q^{2}_{I}$. Note $\cPkt{MW, s}(\q^{1})$ (resp. $\cPkt{MW, s}(\q^{2})$) is the spectral endoscopic transfer of $\cPkt{MW}(\q^{1}_{I}) \otimes \cPkt{MW}(\q^{1}_{II})$ (resp. $\cPkt{MW}(\q^{2}_{I}) \otimes \cPkt{MW}(\q^{2}_{II})$). By the compatibility of endoscopic transfer with Jacquet module and parabolic induction, we can conclude $\cPkt{M, s}(\q)$ is the spectral endoscopic transfer of 
\begin{align*}
& \+_{C \in ]B, A]} (-1)^{A-C} \e^{M/MW}_{\q^{1}}(s) <\zeta B, \cdots, -\zeta C> \rtimes \bar{\Jac}_{\zeta (B+2), \cdots, \zeta C} (\cPkt{MW}(\q^{1}_{I}) \otimes \cPkt{MW}(\q^{1}_{II})) \\
&\+ (-1)^{[(A-B+1)/2]}  \e^{M/MW}_{\q^{2}}(s) \cPkt{MW}(\q^{2}_{I}) \otimes \cPkt{MW}(\q^{2}_{II}).
\end{align*}
Note $\bar{\Jac}_{\zeta D} \cPkt{MW}(\q^{1}_{I}) = 0$ for any $B+2 \leqslant D \leqslant A$, which follows from the corresponding vanishing fact for Jacquet modules of $\r(\q^{1}_{I})$. 
Then we can rewrite it as 
\begin{align*}
& \+_{C \in ]B, A]} (-1)^{A-C} \e^{M/MW}_{\q^{1}}(s) \cPkt{MW}(\q^{1}_{I}) \otimes \begin{pmatrix} <\zeta B, \cdots, -\zeta C> \rtimes \bar{\Jac}_{\zeta (B+2), \cdots, \zeta C} \cPkt{MW}(\q^{1}_{II}) \end{pmatrix} \\
&\+ (-1)^{[(A-B+1)/2]}  \e^{M/MW}_{\q^{2}}(s) \cPkt{MW}(\q^{2}_{I}) \otimes \cPkt{MW}(\q^{2}_{II}).
\end{align*}
If we can show 
\begin{align}
\label{eq: M/MW discrete 1}
\e^{M/MW}_{\q}(s) = \e^{M/MW}_{\q^{1}}(s) = \e^{M/MW}_{\q^{2}}(s),
\end{align}
then that means $\cPkt{M, s}(\q)$ is the spectral endoscopic transfer of $\e^{M/MW}_{\q}(s) \cPkt{MW}(\q_{I}) \otimes \cPkt{MW}(\q_{II})$. Hence 
\[
\cPkt{M, s}(\q) = \e^{M/MW}_{\q}(s) \cPkt{MW, s}(\q).
\]
Finally, it is an easy exercise to verify \eqref{eq: M/MW discrete 1}. In fact, one can assume $s(\rho, A, B, \zeta) = 1$, then the set of Jordan blocks $(\rho, a', b')$ such that $s(\rho, a', b') = -1$ is the same for $\q, \q^{1}$ and $\q^{2}$, and it is enough to show $\e^{M/MW}_{\q}(\rho, a', b') = \e^{M/MW}_{\q^{1}}(\rho, a', b') = \e^{M/MW}_{\q^{2}}(\rho, a', b')$ for any $(\rho, a', b')$ in this set. Recall
\begin{align*}
(\rho, A, B+2, \zeta) & = (\rho, a+2\zeta, b-2\zeta), \\
(\rho, A, B+1, \zeta) & = (\rho, a+ \zeta, b-\zeta), \\
(\rho, B, B, \zeta) & = (\rho, sup(0, a-b) + 1, sup(0, b-a) + 1). 
\end{align*}
One checks easily that the contribution of $(\rho, A, B, \zeta)$ to the numbers $m, n$ in Definition~\ref{def: M/MW discrete} for $\q$ is the same as $(\rho, A, B+2, \zeta)$ for $\q^{1}$, and $(\rho, A, B+1, \zeta), (\rho, B, B, \zeta)$ for $\q^{2}$ modulo 2. Then the rest is clear.

\end{proof}

One consequence of Theorem~\ref{thm: M/MW discrete} is that $\r_{M}(\q, \bar{\e})$ is an $\sH(G)$-module, which is by no means clear from our definition. In fact, the main goal of \cite{Moeglin:2009} is to show $\r^{\Sigma_{0}}_{M}(\q, \e)$ is a representation of $G^{\Sigma_{0}}$ and characterize its irreducible constituents, which also implies $\r_{M}(\q, \bar{\e})$ is an $\sH(G)$-module independent of Arthur's theory. 

\begin{theorem}[\cite{Moeglin:2009}, Theorem 4.2]
\label{thm: M parametrization}
Suppose $\q \in \cQ{G}$ has discrete diagonal restriction, and there exists $(\rho, A, B, \zeta) \in Jord(\q)$ such that $A > B$. Let $\e \in \D{\S{\q}^{\Sigma_{0}}}$ and $\eta_{0} := \e(\rho, A, B, \zeta)$. Then we have
\begin{align*}
\r_{M}^{\Sigma_{0}}(\q, \e) & = \+_{l \in [0, [(A - B + 1)/2]]} \+_{\eta = \pm 1 \, : \, \eta_{0} = \eta^{A - B + 1} \prod_{C \in [B+l, A-l]} (-1)^{[C]}} <<\zeta B, \cdots, -\zeta A> \\ 
& \times \cdots \times <\zeta (B + l - 1), \cdots, -\zeta (A - l + 1)>  \rtimes \r_{M}^{\Sigma_{0}}(\q', \e', \cup_{C \in [B+l, A-l]}(\rho, C, C, \zeta; \eta(-1)^{[C]}))>,
\end{align*}
where $\q'$ is obtained from $\q$ by removing $(\rho, A, B, \zeta)$, and $\e'(\cdot)$ is the restriction of $\e(\cdot)$. In particular, when $l = (A - B + 1)/2$ and $\eta_{0} = 1$, we will just take one value for $\eta$, since both values give the same term.
\end{theorem}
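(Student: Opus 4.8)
The plan is to establish the closed formula by induction on the quantity $A-B$ attached to the distinguished Jordan block $(\rho,A,B,\zeta)$, unwinding the recursive definition of $\r_M^{\Sigma_{0}}(\q,\e)$ given above and feeding in the inductive hypothesis for the two subparameters that occur. Along the way one carries the auxiliary fact that $\r_M^{\Sigma_{0}}(\q,\e)$ is a genuine semisimple multiplicity-free representation, equal to the direct sum of the unique irreducible subrepresentations of the induced modules on the right-hand side; the identity produced in the Grothendieck group then upgrades to an identity of honest representations once these subrepresentations are known to exist and to be pairwise inequivalent.

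First I would treat the base case $A=B+1$, where $[(A-B+1)/2]=1$ and $l$ runs over $\{0,1\}$. Here the block $(\rho,A,B+1,\zeta)$ in the second line of the recursion is already elementary, the first line degenerates (one must handle the degenerate shift $(\rho,A,B+2,\zeta)$ exactly as in the remarks following the definition of $\r_M^{\Sigma_{0}}$), and a direct comparison of the surviving terms with the $l=0$ and $l=1$ pieces of the asserted formula does the job: one matches the coefficient $\eta^{A-B+1}\eta_0^{A-B}$ against the constraint $\eta_0=\eta^{A-B+1}\prod_{C\in[B+l,A-l]}(-1)^{[C]}$ and checks the collapse to a single value of $\eta$ when $l=(A-B+1)/2$ and $\eta_0=1$. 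The needed irreducibility and distinctness at this level come from the elementary-packet description of Section~\ref{sec: elementary Arthur packet}.

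For the inductive step I would apply the recursive definition and then the inductive hypothesis to the two subparameters $\q'\cup\{(\rho,A,B+2,\zeta;\eta_0)\}$ and $\q'\cup\{(\rho,A,B+1,\zeta;\eta),(\rho,B,B,\zeta;\eta\eta_0)\}$, each carrying a distinguished block with strictly smaller $A-B$. The contribution of the first subparameter, after the Jacquet functor $\Jac_{\zeta(B+2),\cdots,\zeta C}$ and parabolic induction with $<\zeta B,\cdots,-\zeta C>$, must be computed from the explicit Jacquet-module formulas for the relevant induced representations together with the basic properties recalled earlier (the Jacquet-module property (1) and the non-unitary irreducibility property (2)); the net effect is to shorten the front of the leading segment, hence to shrink the diagonal range $[B+l,A-l]$. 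The contribution of the second subparameter supplies precisely the terms in which the elementary block $(\rho,B,B,\zeta)$ survives at the bottom of the union. Reorganizing the double sum indexed by $C\in\,]B,A]$ and by the inner parameter produced by the inductive hypothesis, the alternating signs $(-1)^{A-C}$ make all cross terms cancel in an inclusion-exclusion fashion, leaving exactly the sum over $l$ in the statement; propagating the signs $(-1)^{[(A-B+1)/2]}\eta^{A-B+1}\eta_0^{A-B}$ through the recursion reproduces the stated constraint on $\eta$.

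The main obstacle is the step passing from the Grothendieck-group computation to the representation-theoretic conclusion: one must show that each induced module
\[
<\,<\zeta B,\cdots,-\zeta A>\times\cdots\times<\zeta(B+l-1),\cdots,-\zeta(A-l+1)>\rtimes \r_M^{\Sigma_{0}}\bigl(\q',\e',\cup_{C\in[B+l,A-l]}(\rho,C,C,\zeta;\eta(-1)^{[C]})\bigr)\,>
\]
indeed possesses a unique irreducible subrepresentation, and that the subrepresentations obtained for distinct pairs $(l,\eta)$ are pairwise inequivalent. The first is proved by a Jacquet-module and Frobenius-reciprocity analysis built on the structure of $\r_M^{\Sigma_{0}}$ of the smaller group furnished by the inductive hypothesis, exactly along the lines of \cite{Moeglin:2009}; the second rules out any collapse in the Grothendieck group and, combined with the positivity carried along the induction, forces the virtual identity to be an identity of representations, completing the induction. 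The sign bookkeeping, though intricate, is routine once the combinatorial pattern is organized.
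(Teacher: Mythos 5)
The paper does not supply a proof of this statement: it is cited directly from \cite{Moeglin:2009}, Theorem~4.2, and the survey simply records the result together with the subsequent remark on the constraint on $\eta$, then builds on it (Proposition~\ref{prop: M parametrization}, Corollary~\ref{cor: M parametrization for G}). So there is no internal proof to compare against. That said, your outline — induction on $A-B$, unwinding the recursive definition of $\r_{M}^{\Sigma_{0}}(\q,\e)$, applying the inductive hypothesis to the two subparameters $\q'\cup\{(\rho,A,B+2,\zeta)\}$ and $\q'\cup\{(\rho,A,B+1,\zeta),(\rho,B,B,\zeta)\}$, and then separately establishing irreducibility and pairwise inequivalence so as to upgrade a virtual identity to a genuine direct-sum decomposition — is a reasonable reconstruction of the shape of the argument, and it matches the way the recursion (motivated by Proposition~\ref{prop: recursive formula for packet}) is set up.

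However, the two steps that actually carry the proof are asserted rather than argued. The claim that ``the alternating signs $(-1)^{A-C}$ make all cross terms cancel in an inclusion-exclusion fashion'' is where the real combinatorial work lies: after substituting the inductive hypothesis into the first subparameter, one must compute $\Jac_{\zeta(B+2),\ldots,\zeta C}$ of each closed-form irreducible, which requires controlling the Jacquet modules of the generalized segments and of $\r_{M}^{\Sigma_{0}}(\q',\e',\ldots)$ using the basic Jacquet-module properties; the resulting telescoping is by no means automatic and has to be verified term by term. More seriously, the uniqueness-of-irreducible-subrepresentation and distinctness claims — exactly what is needed to pass from the Grothendieck group to a representation-theoretic decomposition — are deferred to ``exactly along the lines of \cite{Moeglin:2009}.'' Since that is the very source of the theorem being proved, this step is circular as written. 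As it stands, the sketch identifies a plausible strategy but leaves a genuine gap at the two points where the proof is hard.
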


\begin{remark}
The complicated condition on $\eta$ comes from the fact that $\eta(-1)^{[C]}$ with $\e'(\cdot)$ needs to define a character $\e_{-}$ of $\S{\q_{-}}^{\Sigma_{0}}$, where $Jord(\q_{-})$ is obtained from $Jord(\q')$ by adding $\cup_{C \in [B+l, A-l]}(\rho, C, C, \zeta)$. 
\end{remark}

This theorem shows $\r_{M}^{\Sigma_{0}}(\q, \e)$ is a representation $G^{\Sigma_{0}}$, and allows us to decompose it according to two parameters $\ul, \ueta$, where $\ul$ is an integer-valued function on $Jord(\q)$ and $\ueta$ is a $\Two$-valued function on $Jord(\q)$. In the notations of this theorem, we let $\ul(\rho, A, B, \zeta) = l$ and $\ueta(\rho, A, B, \zeta) = \eta(-1)^{[B + l]}$. Then 
\[
\ul(\rho, A, B, \zeta) \in [0, [(A-B+1)/2]],
\]
and
\begin{align}
\label{eq: endoscopic character formula}
\e(\rho, A, B, \zeta) = \ueta(\rho, A, B, \zeta)^{A - B + 1} (-1)^{[(A-B+1)/2] + \ul(\rho, A, B, \zeta)}.
\end{align}
Let us denote by $\e_{\ul, \ueta}$ the character of $\S{\q}^{\Sigma_{0}}$ defined by $(\ul, \ueta)$ through this formula. Then we define for any pair $(\ul, \ueta)$ such that $\e_{\ul, \ueta} \in \D{\S{\q}^{\Sigma_{0}}}$,
\begin{align*}
\r_{M}^{\Sigma_{0}}(\q, \ul, \ueta) &:= <<\zeta B, \cdots, -\zeta A>  \times \cdots \times <\zeta (B + \ul(\rho, A, B, \zeta) - 1), \cdots, -\zeta (A - \ul(\rho, A, B, \zeta) + 1)>  \\
& \rtimes \r_{M}^{\Sigma_{0}}(\q_{-}, \ul_{-}, \ueta_{-})>,
\end{align*}
where $\q_{-}$ is defined as in the remark, and $\ul_{-}, \ueta_{-}$ are extended from $\ul, \ueta$ by letting $\ul_{-}(\rho, C, C, \zeta) = 0$ and $\ueta_{-}(\rho, C, C, \zeta) = \eta(-1)^{[C]}$. 
In the theorem, M{\oe}glin shows $\r_{M}^{\Sigma_{0}}(\q, \ul, \ueta)$ is irreducible. In fact, one can also show
\begin{align*}
\r_{M}^{\Sigma_{0}}(\q, \ul, \ueta) & \hookrightarrow \times_{(\rho, A, B, \zeta) \in Jord(\q)} 
       \begin{pmatrix}
              \zeta B & \cdots & -\zeta A \\
              \vdots &  & \vdots \\
              \zeta (B + \ul(\rho, A, B, \zeta) - 1) & \cdots & - \zeta (A - \ul(\rho, A, B, \zeta) + 1)
       \end{pmatrix} \\
& \rtimes \r_{M}^{\Sigma_{0}}\Big(\cup_{(\rho, A, B, \zeta) \in Jord(\q)} \cup_{C \in [B + \ul(\rho, A, B, \zeta), A - \ul(\rho, A, B, \zeta)]} (\rho, C, C, \zeta; \ueta(\rho, A, B, \zeta)(-1)^{C - B - \ul(\rho, A, B, \zeta)})\Big)
\end{align*}
as the unique irreducible subrepresentation. We define $\r_{M}(\q, \ul, \ueta)$ to be the irreducible representation of $G$ viewed as $\sH(G)$-module in the restriction of $\r_{M}^{\Sigma_{0}}(\q, \ul, \ueta)$ to $G$. Then
\begin{align*}
\r_{M}(\q, \ul, \ueta) & \hookrightarrow \times_{(\rho, A, B, \zeta) \in Jord(\q)} 
       \begin{pmatrix}
              \zeta B & \cdots & -\zeta A \\
              \vdots &  & \vdots \\
              \zeta (B + \ul(\rho, A, B, \zeta) - 1) & \cdots & - \zeta (A - \ul(\rho, A, B, \zeta) + 1)
       \end{pmatrix} \\
& \rtimes \r_{M}\Big(\cup_{(\rho, A, B, \zeta) \in Jord(\q)} \cup_{C \in [B+\ul(\rho, A, B, \zeta), A - \ul(\rho, A, B, \zeta)]} (\rho, C, C, \zeta; \ueta(\rho, A, B, \zeta)(-1)^{C - B - \ul(\rho, A, B, \zeta)})\Big)
\end{align*}
as the unique irreducible element in $\bar{\Rep}(G)$ forming an $\sH(G)$-submodule.

We define an equivalence relation on pairs $(\ul, \ueta)$, such that 
\(
(\ul, \ueta) \sim_{\Sigma_{0}} (\ul', \ueta')
\)
if and only if $\ul = \ul'$ and $(\ueta/\ueta') (\rho, A, B, \zeta)= 1$ unless $\ul(\rho, A, B, \zeta) = (A - B +1)/2$. It is clear that $\r_{M}^{\Sigma_{0}}(\q, \ul, \ueta) \cong \r_{M}^{\Sigma_{0}}(\q, \ul', \ueta')$ if $(\ul, \ueta) \sim_{\Sigma_{0}} (\ul', \ueta')$. In fact, the converse is also true.

\begin{proposition}
\label{prop: M parametrization}
Suppose $\q \in \cQ{G}$ has discrete diagonal restriction and $\e \in \D{\S{\q}^{\Sigma_{0}}}$, then
\[
\r^{\Sigma_{0}}_{M}(\q, \e) = \bigoplus_{\{(\ul, \ueta): \, \e = \e_{\ul, \ueta} \}/\sim_{\Sigma_{0}}} \r^{\Sigma_{0}}_{M}(\q, \ul, \ueta).
\]
Moreover, $\r_{M}^{\Sigma_{0}}(\q, \ul, \ueta) \cong \r_{M}^{\Sigma_{0}}(\q, \ul', \ueta')$ if and only if $(\ul, \ueta) \sim_{\Sigma_{0}} (\ul', \ueta')$. 
\end{proposition}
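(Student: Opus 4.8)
## Proof proposal for Proposition~\ref{prop: M parametrization}

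The plan is to argue by induction on the number of Jordan blocks $(\rho, A, B, \zeta) \in Jord(\q)$ with $A > B$, the base case being elementary parameters where $\ul$ is forced to be zero and the statement reduces to Theorem~\ref{thm: M/MW elementary} together with the bijectivity of $\e \mapsto \r^{\Sigma_{0}}_{M}(\q, \e)$ in the elementary case (established in Section~\ref{subsec: construction by Aubert involution}). For the inductive step, I would fix one block $(\rho, A, B, \zeta)$ with $A > B$ and $\eta_0 = \e(\rho, A, B, \zeta)$, and compare the two expansions available: Theorem~\ref{thm: M parametrization} expresses $\r^{\Sigma_0}_M(\q,\e)$ as a sum over $l \in [0, [(A-B+1)/2]]$ and admissible signs $\eta$ of the irreducible representations $\r^{\Sigma_0}_M(\q, \ul, \ueta)$ (with $\ul(\rho,A,B,\zeta) = l$, $\ueta(\rho,A,B,\zeta) = \eta(-1)^{[B+l]}$), whereas the inductive hypothesis applied to $\q_-$ (whose blocks with $A'>B'$ are fewer) gives the analogous decomposition of each $\r^{\Sigma_0}_M(\q_-, \ul_-, \ueta_-)$. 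Matching the constraint $\e = \e_{\ul,\ueta}$, which by \eqref{eq: endoscopic character formula} reads $\e(\rho,A,B,\zeta) = \ueta(\rho,A,B,\zeta)^{A-B+1}(-1)^{[(A-B+1)/2]+\ul(\rho,A,B,\zeta)}$, against the admissibility condition on $(l,\eta)$ in Theorem~\ref{thm: M parametrization} shows the index set $\{(\ul,\ueta): \e = \e_{\ul,\ueta}\}/\sim_{\Sigma_0}$ is exactly the reindexed summation set of that theorem, giving the displayed direct sum decomposition.

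The harder half is the "moreover" claim: that $(\ul, \ueta) \mapsto \r^{\Sigma_0}_M(\q, \ul, \ueta)$ is injective modulo $\sim_{\Sigma_0}$. One direction ($\sim_{\Sigma_0}$ implies isomorphism) is the remark preceding the proposition: when $\ul(\rho,A,B,\zeta) = (A-B+1)/2$ the top block has been exhausted into segments and the sign $\ueta(\rho,A,B,\zeta)$ no longer affects the construction. For the converse I would use the unique-irreducible-subrepresentation realization stated just before the proposition, namely
\[
\r_{M}^{\Sigma_{0}}(\q, \ul, \ueta) \hookrightarrow \Big(\times_{(\rho, A, B, \zeta)} X_{(\rho,A,B,\zeta)}(\ul)\Big) \rtimes \r_{M}^{\Sigma_{0}}\Big(\cup (\rho, C, C, \zeta; \ueta(\rho,A,B,\zeta)(-1)^{C-B-\ul})\Big),
\]
where $X_{(\rho,A,B,\zeta)}(\ul)$ is the generalized segment matrix displayed there. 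Because this exhibits $\r^{\Sigma_0}_M(\q,\ul,\ueta)$ as the Langlands-type unique subrepresentation of an explicit induced module, the data $(\ul, \ueta)$ can be recovered: the cuspidal support and the shape of the generalized segments recover $\ul$ (the number of rows $\ul(\rho,A,B,\zeta)$ in the matrix for each block is read off from the multiset of segments appearing), and then the elementary parameter $\r^{\Sigma_0}_M(\cup(\rho,C,C,\zeta;\cdot))$ in the bottom factor recovers the signs $\ueta(\rho,A,B,\zeta)$ via the injectivity of the elementary parametrization (Theorem~\ref{thm: elementary Arthur packet} / Theorem~\ref{thm: M/MW elementary}) — except precisely when $\ul(\rho,A,B,\zeta) = (A-B+1)/2$, in which case no block $(\rho,C,C,\zeta)$ with the relevant sign survives and that coordinate of $\ueta$ is genuinely invisible, which is exactly the $\sim_{\Sigma_0}$ ambiguity.

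I expect the main obstacle to be a bookkeeping subtlety rather than a conceptual one: making sure the "recover $\ul$ from the generalized segments" step is actually well-defined, i.e. that two different functions $\ul$ cannot produce the same multiset of induced segments $\times_{(\rho,A,B,\zeta)} X_{(\rho,A,B,\zeta)}(\ul)$ after the inductively-defined bottom piece also contributes its own segments. This requires knowing that the segments coming from the top block $(\rho,A,B,\zeta)$ — which have endpoints $\zeta B$ down to $-\zeta A$, $\zeta(B+1)$ down to $-\zeta(A-1)$, etc. — are distinguishable from those produced at lower stages of the recursion; this should follow from the disjointness of the segments $[A,B]$ for fixed $\rho$ built into the discrete-diagonal-restriction hypothesis, together with the fact that the bottom factor $\r^{\Sigma_0}_M(\q_-, \ul_-, \ueta_-)$ only involves blocks with strictly smaller $A$-value after one step, so an induction on the natural order keeps the segment ranges from colliding. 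Once that separation is in place, combining it with the remark (for the forward implication) and the elementary-case injectivity (for the signs) closes the argument, and the first display of the proposition follows by collecting terms in Theorem~\ref{thm: M parametrization} according to the value of $\e_{\ul,\ueta}$.
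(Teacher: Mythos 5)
Your proposal is correct and follows essentially the same route as the paper's (extremely terse) proof. The paper reads the direct-sum decomposition off Theorem~\ref{thm: M parametrization} directly via the inductive definition of $\r_M^{\Sigma_0}(\q,\ul,\ueta)$, and disposes of the ``moreover'' claim in one sentence as ``comparing Jacquet modules''; your unique-subrepresentation realization is precisely the concrete form such a Jacquet-module comparison takes, and the bookkeeping subtlety you flag about separating the segments contributed by the top block from those of the bottom piece is a real point that the paper silently elides but that, as you note, is controlled by the disjointness of the $[B_i,A_i]$ built into the discrete-diagonal-restriction hypothesis.
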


\begin{proof}
The only thing which may not be so obvious from Theorem~\ref{thm: M parametrization} is the fact that $\r_{M}^{\Sigma_{0}}(\q, \ul, \ueta) \ncong \r_{M}^{\Sigma_{0}}(\q, \ul', \ueta')$ if $(\ul, \ueta) \nsim_{\Sigma_{0}} (\ul', \ueta')$. But this can be shown by comparing the Jacquet modules of these representations.
\end{proof}

\begin{remark}
If $Jord(\q)$ contains $(\rho, a, b)$ with $a = b$, then our definition of $\r^{\Sigma_{0}}_{M}(\q, \ul, \ueta)$ will depend on the choice of sign $\zeta_{a, b}$. However, it is not hard to show the representation $\r^{\Sigma_{0}}_{M}(\q, \ul, \ueta)$ is independent of $\zeta_{a, b}$.
\end{remark}

If $G$ is special even orthogonal, and $\q \in \cQ{G}$ has discrete diagonal restriction, we define a $\Two$-valued function on $Jord(\q)$ by
\[
\ueta_{0}(\rho, A, B, \zeta) = \begin{cases}
                                             -1, & \text{ if $d_{\rho}$ is odd and $A \in \mathbb{Z}$}, \\
                                              1, & \text{ otherwise. }
                                             \end{cases}
\]
Then $\e_{0}(\rho, A, B, \zeta) = \ueta_{0}(\rho, A, B, \zeta)^{A - B +1}$, and hence $\e_{\ul, \ueta \, \ueta_{0}} = \e_{\ul, \ueta} \, \e_{0}$. In general, we let $\ueta_{0} = 1$ if $G$ is not special even orthogonal.

\begin{corollary}
Suppose $\q \in \cQ{G}$ has discrete diagonal restriction, then
\begin{align}
\label{eq: full orthogonal character 2}
\r_{M}^{\Sigma_{0}}(\q, \ul, \ueta \, \ueta_{0}) \cong \r_{M}^{\Sigma_{0}}(\q, \ul, \ueta) \otimes \x_{0}.
\end{align}
\end{corollary}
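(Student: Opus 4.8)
The plan is to reduce to the already established elementary case \eqref{eq: full orthogonal character} by an induction on $\#\{(\rho, A, B, \zeta) \in Jord(\q) : A > B\}$, exploiting the recursive characterization of $\r_{M}^{\Sigma_{0}}(\q, \ul, \ueta)$ as the unique irreducible subrepresentation of an induced representation, together with the formal fact that tensoring by $\x_{0}$ is an exact autoequivalence of $\Rep(G^{\Sigma_{0}})$ which commutes with normalized parabolic induction $\Ind^{G^{\Sigma_{0}}}_{P^{\Sigma_{0}}}$. First one reduces to $G$ special even orthogonal, since otherwise $\ueta_{0} = 1$ and the statement is vacuous. For the base case $\q$ elementary (so $A = B$ for all $(\rho, A, B, \zeta) \in Jord(\q)$) one has $\ul \equiv 0$ and $\ueta = \e_{\ul, \ueta}$, and a direct check from the definitions shows $\ueta_{0} = \e_{0}$ as characters of $\S{\q}^{\Sigma_{0}}$ — both send $(\rho, \alpha, \delta_{\alpha})$ to $-1$ exactly when $d_{\rho}\alpha$ is odd — so the assertion in this case is literally \eqref{eq: full orthogonal character}.

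For the inductive step I would fix $(\rho, A, B, \zeta) \in Jord(\q)$ with $A > B$ and use the recursive definition $\r_{M}^{\Sigma_{0}}(\q, \ul, \ueta) \hookrightarrow \tau \rtimes \r_{M}^{\Sigma_{0}}(\q_{-}, \ul_{-}, \ueta_{-})$ as the unique irreducible subrepresentation, where $\tau$ is a representation of a product of general linear groups depending only on $\ul$, and $\q_{-}$ is obtained from $\q$ by replacing $(\rho, A, B, \zeta)$ with the blocks $(\rho, C, C, \zeta)$ for $C \in [B + \ul(\rho, A, B, \zeta), A - \ul(\rho, A, B, \zeta)]$; note that $\q_{-}$ still has discrete diagonal restriction and strictly fewer blocks with $A > B$. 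Tensoring by $\x_{0}$ and using that it commutes with $\Ind^{G^{\Sigma_{0}}}_{P^{\Sigma_{0}}}$, that it restricts trivially to the general linear factors of the relevant $\theta_{0}$-stable Levi (those factors lie in $G$, on which $\x_{0}$ is trivial), and that it restricts to the analogous character on the $G_{-}$-factor, one gets that $\r_{M}^{\Sigma_{0}}(\q, \ul, \ueta) \otimes \x_{0}$ is the unique irreducible subrepresentation of $\tau \rtimes (\r_{M}^{\Sigma_{0}}(\q_{-}, \ul_{-}, \ueta_{-}) \otimes \x_{0})$. By the inductive hypothesis applied to $\q_{-}$, the last factor is $\r_{M}^{\Sigma_{0}}(\q_{-}, \ul_{-}, \ueta_{-}\,\ueta_{0}^{\q_{-}})$, where $\ueta_{0}^{\q_{-}}$ denotes the function $\ueta_{0}$ attached to $\q_{-}$.

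What remains, and what I expect to be the only genuinely non-formal point, is the sign bookkeeping: one must check that $\ueta_{-}\,\ueta_{0}^{\q_{-}}$ coincides with the restricted-and-extended $\ueta$-datum that the construction attaches to the pair $(\ul, \ueta\,\ueta_{0})$ on $\q_{-}$. On the blocks inherited from $\q$ the functions $\ueta_{0}$ and $\ueta_{0}^{\q_{-}}$ agree (both depend only on $d_{\rho}$ and on whether the $A$-coordinate is an integer), and on each new block $(\rho, C, C, \zeta)$ one has $\ueta_{0}^{\q_{-}}(\rho, C, C, \zeta) = \ueta_{0}(\rho, A, B, \zeta)$ because $A$, $B$ and every $C \in [B, A]$ share the same fractional part (indeed $A \in \mathbb{Z} \iff a+b$ even $\iff B \in \mathbb{Z}$, and $C \equiv B \pmod 1$). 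Since $\ul$ and $\tau$ are unchanged, and $\e_{\ul, \ueta\,\ueta_{0}} = \e_{\ul, \ueta}\,\e_{0} \in \D{\S{\q}^{\Sigma_{0}}}$ is again an admissible datum, the recursive definition then identifies $\r_{M}^{\Sigma_{0}}(\q, \ul, \ueta) \otimes \x_{0}$ with $\r_{M}^{\Sigma_{0}}(\q, \ul, \ueta\,\ueta_{0})$, which is \eqref{eq: full orthogonal character 2}. As an alternative to the induction, one could instead apply $\x_{0}$-twisting to the fully expanded embedding of $\r_{M}^{\Sigma_{0}}(\q, \ul, \ueta)$ into $(\prod \text{generalized segments}) \rtimes \r_{M}^{\Sigma_{0}}(\q^{\mathrm{el}}, \e^{\mathrm{el}})$ displayed just before Proposition~\ref{prop: M parametrization}, and invoke \eqref{eq: full orthogonal character} directly on the elementary tail $\q^{\mathrm{el}}$; this trades the induction for a single, somewhat heavier, sign computation of essentially the same nature.
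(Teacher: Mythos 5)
Your proposal is correct and matches the paper's approach: the paper's one-line proof ("follows from the formula of $\r_{M}^{\Sigma_{0}}(\q, \ul, \ueta)$ and \eqref{eq: full orthogonal character} in the elementary case") is precisely the route you sketch, reducing via the subrepresentation embedding to the elementary tail and twisting through normalized induction. Your primary route (one block at a time, with the $\ueta_0$-consistency check on the replaced blocks $(\rho,C,C,\zeta)$ using that $C$, $B$, $A$ share a fractional part) is just the iterated form of the paper's direct unwinding, and your stated "alternative" at the end is literally the paper's argument.
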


\begin{proof}
This follows from the formula of $\r_{M}^{\Sigma_{0}}(\q, \ul, \ueta)$ and \eqref{eq: full orthogonal character} in the elementary case.
\end{proof}

We define another equivalence relation on pairs $(\ul, \ueta)$, such that 
\(
(\ul, \ueta) \sim (\ul', \ueta')
\)
if and only if $(\ul, \ueta) \sim_{\Sigma_{0}} (\ul', \ueta')$ or $(\ul, \ueta) \sim_{\Sigma_{0}} (\ul', \ueta' \, \ueta_{0})$. It follows from this corollary that $\r_{M}(\q, \ul, \ueta) = \r_{M}(\q, \ul', \ueta')$ if and only if $(\ul, \ueta) \sim (\ul', \ueta')$. 

\begin{corollary}
\label{cor: M parametrization for G}
Suppose $\q \in \cQ{G}$ has discrete diagonal restriction and $\bar{\e} \in \D{\S{\q}}$, then
\[
\r_{M}(\q, \bar{\e}) = \bigoplus_{\{(\ul, \ueta): \, \bar{\e} = \bar{\e}_{\ul, \ueta} \}/\sim} \r_{M}(\q, \ul, \ueta).
\]
Moreover, 
\[
\bigoplus_{\bar{\e} \leftarrow \e \in \D{\S{\q}^{\Sigma_{0}}}} \r_{M}^{\Sigma_{0}}(\q, \e)
\]
consists of all irreducible representations of $G^{\Sigma_{0}}$, whose restriction to $G$ belong to $\r_{M}(\q, \bar{\e})$.
\end{corollary}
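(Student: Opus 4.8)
The plan is to deduce both assertions from Proposition~\ref{prop: M parametrization} by restricting from $G^{\Sigma_{0}}$ to $G$; the only genuine work is the bookkeeping forced by the two equivalence relations $\sim_{\Sigma_{0}}$ and $\sim$ and by the passage between honest representations and $\sH(G)$-modules. The inputs are: the decomposition $\r^{\Sigma_{0}}_{M}(\q,\e)=\bigoplus_{\{(\ul,\ueta):\,\e=\e_{\ul,\ueta}\}/\sim_{\Sigma_{0}}}\r^{\Sigma_{0}}_{M}(\q,\ul,\ueta)$ with distinct $\sim_{\Sigma_{0}}$-classes giving non-isomorphic summands; the relation $\r^{\Sigma_{0}}_{M}(\q,\ul,\ueta\,\ueta_{0})\cong\r^{\Sigma_{0}}_{M}(\q,\ul,\ueta)\otimes\x_{0}$ from \eqref{eq: full orthogonal character 2}, together with the triviality of $\x_{0}$ on $G$; and the fact, noted just before the corollary, that $\r_{M}(\q,\ul,\ueta)=\r_{M}(\q,\ul',\ueta')$ as $\sH(G)$-modules if and only if $(\ul,\ueta)\sim(\ul',\ueta')$. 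From these one gets at once that $\r^{\Sigma_{0}}_{M}(\q,\ul,\ueta)$ and $\r^{\Sigma_{0}}_{M}(\q,\ul,\ueta\,\ueta_{0})$ have the same restriction to $G$, and that $\r^{\Sigma_{0}}_{M}(\q,\ul,\ueta)|_{G}$ is irreducible exactly when the $\sim_{\Sigma_{0}}$-classes of $(\ul,\ueta)$ and $(\ul,\ueta\,\ueta_{0})$ are distinct.

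For the first identity I would argue by cases, following the definition of $\r_{M}(\q,\bar{\e})$. If $G$ is not special even orthogonal, then $\Sigma_{0}$ is trivial, $\ueta_{0}=1$, $\sim\,=\,\sim_{\Sigma_{0}}$, and every object over $G^{\Sigma_{0}}$ is literally the corresponding object over $G$, so the statement is Proposition~\ref{prop: M parametrization}. If $G$ is special even orthogonal with $\S{\q}^{\Sigma_{0}}\neq\S{\q}$ (equivalently $\e_{0}\neq 1$), then $\e_{\ul,\ueta}\neq\e_{\ul,\ueta\,\ueta_{0}}$ for every $(\ul,\ueta)$, so each $\r^{\Sigma_{0}}_{M}(\q,\ul,\ueta)|_{G}$ is irreducible and equals $\r_{M}(\q,\ul,\ueta)$, and $\r^{\Sigma_{0}}_{M}(\q,\e)|_{G}=\r_{M}(\q,\bar{\e})$; restricting the decomposition of $\r^{\Sigma_{0}}_{M}(\q,\e)$ and observing that a $\sim$-class with $\bar{\e}$-value $\bar{\e}$ is precisely the union of the two distinct $\sim_{\Sigma_{0}}$-classes of $(\ul,\ueta)$ and $(\ul,\ueta\,\ueta_{0})$, whose $\e$-values are $\e$ and $\e\e_{0}$, together with $\r_{M}(\q,\ul,\ueta)=\r_{M}(\q,\ul,\ueta\,\ueta_{0})$, yields the identity with multiplicity one. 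If $G$ is special even orthogonal with $\S{\q}^{\Sigma_{0}}=\S{\q}$, then $\r^{\Sigma_{0}}_{M}(\q,\e)|_{G}=2\,\r_{M}(\q,\bar{\e})$, and one checks that in the restriction of the decomposition of $\r^{\Sigma_{0}}_{M}(\q,\e)$ every $\r_{M}(\q,\ul,\ueta)$ occurs with coefficient $2$ --- either because $\r^{\Sigma_{0}}_{M}(\q,\ul,\ueta)|_{G}$ is reducible and contributes $2\,\r_{M}(\q,\ul,\ueta)$ as an $\sH(G)$-module, or because the two distinct $\sim_{\Sigma_{0}}$-classes $(\ul,\ueta)$, $(\ul,\ueta\,\ueta_{0})$ carry the same $\e$-value and the same restriction to $G$ --- so dividing by $2$ gives the identity.

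For the ``moreover'' part I would sum Proposition~\ref{prop: M parametrization} over the (one- or two-element) preimage of $\bar{\e}$ in $\D{\S{\q}^{\Sigma_{0}}}$, obtaining $\bigoplus_{\bar{\e}\leftarrow\e}\r^{\Sigma_{0}}_{M}(\q,\e)=\bigoplus\r^{\Sigma_{0}}_{M}(\q,\ul,\ueta)$ over all $\sim_{\Sigma_{0}}$-classes with $\bar{\e}_{\ul,\ueta}=\bar{\e}$, each once. I would then compare, for each fixed $\sim$-class appearing in $\r_{M}(\q,\bar{\e})$, with the set of irreducible representations of $G^{\Sigma_{0}}$ whose restriction to $G$ lies in the $\sH(G)$-module $\r_{M}(\q,\ul,\ueta)$: when $\r^{\Sigma_{0}}_{M}(\q,\ul,\ueta)|_{G}$ is irreducible there are exactly two such, namely $\r^{\Sigma_{0}}_{M}(\q,\ul,\ueta)$ and $\r^{\Sigma_{0}}_{M}(\q,\ul,\ueta\,\ueta_{0})$, which are the two distinct $\sim_{\Sigma_{0}}$-classes inside the given $\sim$-class; when $\r^{\Sigma_{0}}_{M}(\q,\ul,\ueta)|_{G}$ is reducible there is exactly one, $\r^{\Sigma_{0}}_{M}(\q,\ul,\ueta)$, and the $\sim$-class is a single $\sim_{\Sigma_{0}}$-class. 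In either case these match the summands of $\bigoplus_{\bar{\e}\leftarrow\e}\r^{\Sigma_{0}}_{M}(\q,\e)$ exactly. The main obstacle is not conceptual but precisely the careful tracking of the factor of $2$ and of the interplay of $\sim_{\Sigma_{0}}$ and $\sim$ across restriction --- concretely, verifying that $\r^{\Sigma_{0}}_{M}(\q,\e)|_{G}=2\,\r_{M}(\q,\bar{\e})$ if and only if $\S{\q}^{\Sigma_{0}}=\S{\q}$ and that this is compatible with the block-wise behaviour of $\ueta_{0}$, for which one uses $\e_{0}(\rho,A,B,\zeta)=\ueta_{0}(\rho,A,B,\zeta)^{A-B+1}$ together with the description of $\sim_{\Sigma_{0}}$.
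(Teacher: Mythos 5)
Your proposal is correct and follows essentially the same route as the paper's proof: both restrict the decomposition of $\r^{\Sigma_{0}}_{M}(\q,\e)$ from Proposition~\ref{prop: M parametrization} to $G$, invoke \eqref{eq: full orthogonal character 2} to collapse the $\ueta_{0}$-twist, and track the factor $m\in\{1,2\}$ coming from whether $\S{\q}^{\Sigma_{0}}=\S{\q}$. The paper simply compresses your three-case discussion into one displayed identity and leaves the ``moreover'' part as a direct consequence.
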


\begin{proof}
We can assume $G$ is special even orthogonal. It follows from Proposition~\ref{prop: M parametrization} that
\[
m \cdot \r_{M}(\q, \bar{\e}) = \r^{\Sigma_{0}}_{M}(\q, \e)|_{G} = \bigoplus_{\{(\ul, \ueta): \, \e = \e_{\ul, \ueta} \}/\sim_{\Sigma_{0}}} \r^{\Sigma_{0}}_{M}(\q, \ul, \ueta)|_{G}
\]
where $m = 2$ if $\S{\q}^{\Sigma_{0}} = \S{\q}$, and $m = 1$ otherwise. By \eqref{eq: full orthogonal character 2}, one can easily see the right hand side is 
\[
m  \bigoplus_{\{(\ul, \ueta): \, \bar{\e} = \bar{\e}_{\ul, \ueta} \}/\sim} \r_{M}(\q, \ul, \ueta).
\]
This proves the first part, and the second part should then be clear.
\end{proof}

Motivated by this corollary, we can define $\Pkt{\q}^{\Sigma_{0}}$ to be the set of irreducible representations of $G^{\Sigma_{0}}$, whose restriction to $G$ belong to $\cPkt{\q}$. In the case $G$ is special even orthogonal and $\q \in \cQ{G}$ has discrete diagonal restriction, suppose $\S{\q}^{\Sigma_{0}} \neq \S{\q}$, then for any $(\ul, \ueta)$,
\[
(\ul, \ueta \, \ueta_{0}) \nsim_{\Sigma_{0}} (\ul, \ueta),
\] 
and hence $\r^{\theta_{0}} \cong \r$ for any irreducible constituent $[\r]$ in $\r_{M}(\q, \bar{\e})$ by \eqref{eq: full orthogonal character 2}. Then it follows from Theorem~\ref{thm: character relation full orthogonal group} that there is a canonical disjoint decomposition 
\[
\Pkt{\q}^{\Sigma_{0}} = \bigsqcup_{\e \in \D{\S{\q}^{\Sigma_{0}}}} \r^{\Sigma_{0}}_{W}(\q, \e)
\]
such that 
\begin{itemize}

\item
$[\r^{\Sigma_{0}}_{W}(\q, \e)|_{G}] = 2 \r_{W}(\q , \bar{\e})$ if $G$ is special even orthogonal and $\S{\q}^{\Sigma_{0}} = \S{\q}$, or $\r_{W}(\q ,\bar{\e})$ otherwise.

\item For any $s \in \S{\q}^{\Sigma_{0}}$ but not in $\S{\q}$ and $(H, \q_{H}) \rightarrow (\q, s)$, the following identity holds

\begin{align*}
f^{H}_{W}(\q_{H}) = \sum_{\bar{\e} \in \D{\S{\q}}} \e(ss_{\q}) f_{G}(\r_{W}^{\Sigma_{0}}(\q ,\e)) \,\,\,\,\,\,\,\,\,\,\, f \in C^{\infty}_{c}(G \rtimes \theta_{0}).
\end{align*}

\end{itemize}

Let us define $\r^{\Sigma_{0}}_{MW}(\q, \e) := \r_{W}^{\Sigma_{0}}(\q, \e \e^{MW/W}_{\q})$ for $\e \in \D{\S{\q}^{\Sigma_{0}}}$, then we can show in the same way as Proposition~\ref{prop: MW/W discrete} that for any $s \in \S{\q}^{\Sigma_{0}}$ but not in $\S{\q}$ and $(H, \q_{H}) \rightarrow (\q, s)$,

\begin{align*}
f^{H}_{MW}(\q_{H}) = \sum_{\bar{\e} \in \D{\S{\q}}} \e(ss_{\q}) f_{G}(\r_{MW}^{\Sigma_{0}}(\q ,\e)) \,\,\,\,\,\,\,\,\,\,\, f \in C^{\infty}_{c}(G \rtimes \theta_{0}).
\end{align*}

At last, we can extend Theorem~\ref{thm: M/MW discrete} to $G^{\Sigma_{0}}$.

\begin{theorem}
\label{thm: M/MW discrete full orthogonal group}
Suppose $\q \in \cQ{G}$ has discrete diagonal restriction, then
\[
\r^{\Sigma_{0}}_{M}(\q, \e) = \r^{\Sigma_{0}}_{MW}(\q, \e \e_{\q}^{M/MW}).
\]
\end{theorem}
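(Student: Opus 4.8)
The plan is to lift the $\sH(G)$-level statement of Theorem~\ref{thm: M/MW discrete} to the full orthogonal group $G^{\Sigma_0}$ by exactly the same inductive mechanism that was used in the proof of Theorem~\ref{thm: M/MW elementary full orthogonal group}, namely by running the generalized Aubert involution through the twisted character identities and invoking the linear independence of twisted characters. First I would reduce to the nontrivial case: $G$ special even orthogonal with $\S{\q}^{\Sigma_0} \neq \S{\q}$ (otherwise $\r^{\Sigma_0}_M(\q,\e)|_G$ determines $\r^{\Sigma_0}_M(\q,\e)$ up to the twist $\x_0$, and the elementary-case formula together with \eqref{eq: full orthogonal character 1} pins down the $\x_0$-ambiguity). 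Since $\r_M(\q,\bar\e) = \r_{MW}(\q,\bar\e\,\bar\e^{M/MW}_\q)$ by Theorem~\ref{thm: M/MW discrete}, on restriction to $G$ we already know $\r^{\Sigma_0}_{MW}(\q,\e\,\e^{M/MW}_\q)$ equals either $\r^{\Sigma_0}_M(\q,\e)$ or $\r^{\Sigma_0}_M(\q,\e)\otimes\x_0$; the whole content is to eliminate the $\x_0$-twist.

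The induction is on the number of Jordan blocks with $A>B$, the base case being the elementary case, which is Theorem~\ref{thm: M/MW elementary full orthogonal group}. For the inductive step, fix $(\rho,A,B,\zeta)\in Jord(\q)$ with $A>B$ and pass to $\q^1, \q^2$ as in the definition of $\r^{\Sigma_0}_M(\q,\e)$ and in Lemma~\ref{lemma: recursive formula for endoscopy}, for which the theorem holds by induction at the $G^{\Sigma_0}$-level. Then I would choose $s^* \in \S{\q}^{\Sigma_0}$ not in $\S{\q}$, let $(H,\q_H)\to(\q,s^*)$ with $H = G_I\times G_{II}$ and $\q_H = \q_I\times\q_{II}$, and apply the compatibility of the twisted spectral endoscopic transfer with Jacquet modules and parabolic inductions (Appendix on compatibility, together with \cite{Hiraga:2004}) to the stable distribution $\cPkt{MW}(\q_I)\otimes\cPkt{MW}(\q_{II})$. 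Exactly the computation in the proof of Theorem~\ref{thm: M/MW discrete} — using the $G^{\Sigma_0}$-analogue of Lemma~\ref{lemma: recursive formula for endoscopy} for $\cPkt{M,s}^{\Sigma_0}$ and the vanishing $\bar{\Jac}_{\zeta D}\cPkt{MW}(\q^1_I) = 0$ for $B+2\leqslant D\leqslant A$ — together with the sign identity $\e^{M/MW}_\q(s^*) = \e^{M/MW}_{\q^1}(s^*) = \e^{M/MW}_{\q^2}(s^*)$ from \eqref{eq: M/MW discrete 1}, yields
\[
f^{H}_{MW}(\q_H) = \e^{M/MW}_\q(s^*)\sum_{\bar\e\in\D{\S{\q}}}\e(s^* s_\q)f_G(\r^{\Sigma_0}_M(\q,\e)), \qquad f\in C^\infty_c(G\rtimes\theta_0),
\]
after noting $\e^{M/MW}_\q(s_\q) = 1$. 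Comparing with $f^{H}_{MW}(\q_H) = \sum_{\bar\e\in\D{\S{\q}}}\e(s^* s_\q)f_G(\r^{\Sigma_0}_{MW}(\q,\e))$ for all such $s^*$, and using the linear independence of twisted characters of $G\rtimes\theta_0$ (the same step that closes Theorem~\ref{thm: M/MW elementary full orthogonal group}), gives $f_G(\r^{\Sigma_0}_{MW}(\q,\e)) = f_G(\r^{\Sigma_0}_M(\q,\e\,\e^{M/MW}_\q))$ for each $\bar\e$, hence the claimed equality $\r^{\Sigma_0}_M(\q,\e) = \r^{\Sigma_0}_{MW}(\q,\e\,\e^{M/MW}_\q)$.

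The main obstacle is justifying the commutative diagram for the twisted ($\theta_0$-twisted) Aubert involution intertwining the twisted spectral transfer from $H$ to $G^{\theta_0}$ — the even-orthogonal analogue of \eqref{diag: compatible with Aubert dual} and \eqref{diag: twisted even orthogonal compatible with Aubert dual} — and in checking that the bookkeeping of the $\x_0$-twist is consistent throughout the recursion via \eqref{eq: full orthogonal character 1} and \eqref{eq: full orthogonal character 2}. This is precisely where one must be careful that the generalized Aubert involution $inv^H_{< X_0} = inv^{\,G_I}_{< X_0}\otimes inv^{G_{II}}_{< X_0}$ is set up with the correct twists $\rho\otimes\eta_I$, $\rho\otimes\eta_{II}$ on the two factors, and that the sign $\beta(\q_H,\rho,<X_0)\,\beta(\q,\rho,<X_0) = \e^{M/MW}_\q\,\e^{M/MW}_{\q^\sharp}(s^*)$ already verified in the proof of Theorem~\ref{thm: M/MW elementary full orthogonal group} propagates correctly; but since all of these ingredients are established earlier (or deferred to the compatibility appendix), the argument is a faithful transcription of the proof of Theorem~\ref{thm: M/MW discrete} with ordinary characters replaced by twisted characters and invariant distributions on $G$ replaced by $\theta_0$-twisted invariant distributions.
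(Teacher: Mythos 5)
Your proposal is correct and reproduces the paper's argument in structure and substance: reduce to $G$ special even orthogonal with $\S{\q}^{\Sigma_{0}} \neq \S{\q}$, fix $s^{*} \in \S{\q}^{\Sigma_{0}}\setminus\S{\q}$, extend Lemma~\ref{lemma: recursive formula for endoscopy} to the $\Sigma_{0}$-level and induct on Jordan blocks with $A>B$ (base case being Theorem~\ref{thm: M/MW elementary full orthogonal group}), invoke \eqref{eq: M/MW discrete 1} and compatibility of twisted endoscopic transfer with Jacquet modules and parabolic inductions, then close by the $G$-level identity of Theorem~\ref{thm: M/MW discrete} together with linear independence of twisted characters. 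One small correction to your closing commentary: the twisted Aubert involution diagram \eqref{diag: twisted even orthogonal compatible with Aubert dual} and the sign bookkeeping $\beta(\q_{H}, \rho, < X_{0})\beta(\q, \rho, < X_{0}) = \e^{M/MW}_{\q}\e^{M/MW}_{\q^{\sharp}}(s^{*})$ enter \emph{only} through the elementary base case, which is already established; the inductive step for discrete diagonal restriction needs nothing beyond the (simpler) compatibility of twisted transfer with Jacquet modules and induction, so there is no new obstacle of the kind your last paragraph worries about.
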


\begin{proof}
We can assume $G$ is special even orthogonal and $\S{\q}^{\Sigma_{0}} \neq \S{\q}$. The proof goes in the same way as that of Theorem~\ref{thm: M/MW discrete}. First we choose $s^{*} \in \S{\q}^{\Sigma_{0}}$ but not in $\S{\q}$, and we define
\begin{align*}
\Pkt{MW, s^{*}}^{\Sigma_{0}}(\q) &:= \sum_{\bar{\e} \in \D{\S{\q}}} \e(s^{*}s_{\q}) \r^{\Sigma_{0}}_{MW}(\q, \e), \\
\Pkt{M, s^{*}}^{\Sigma_{0}}(\q) &:= \sum_{\bar{\e} \in \D{\S{\q}}} \e(s^{*}s_{\q}) \r^{\Sigma_{0}}_{M}(\q, \e).
\end{align*}
Secondly we can extend Lemma~\ref{lemma: recursive formula for endoscopy} to this case, i.e., for $(\rho, A, B, \zeta) \in Jord(\q)$ such that $A > B$, 
\begin{align*}
\Pkt{M, s^{*}}^{\Sigma_{0}}(\q) = & \+_{C \in ]B, A]} (-1)^{A-C} <\zeta B, \cdots, -\zeta C> \rtimes \Jac_{\zeta (B+2), \cdots, \zeta C} \Pkt{M, s^{*}}^{\Sigma_{0}}(\q', (\rho, A, B+2, \zeta))  \\
& \+ (-1)^{[(A-B+1)/2]} \Pkt{M, s^{*}}^{\Sigma_{0}}(\q', (\rho, A, B+1, \zeta), (\rho, B, B, \zeta)),
\end{align*}
where we let $s^{*}(\rho, A, B, \zeta) = s^{*}(\rho, A, B + 2, \zeta) = s^{*}(\rho, A, B + 1, \zeta) = s^{*}(\rho, B, B, \zeta)$. The proof is the same. Then we can show by induction that 
\[
\Pkt{M, s^{*}}^{\Sigma_{0}}(\q) = \e^{M/MW}_{\q}(s^{*}) \Pkt{MW, s^{*}}^{\Sigma_{0}}(\q).
\]
This is because of Theorem~\ref{thm: M/MW elementary full orthogonal group} and the fact that \eqref{eq: M/MW discrete 1} still holds in this case. Finally, since $\r_{M}(\q, \bar{\e}) = \r_{MW}(\q, \bar{\e} \bar{\e}_{\q}^{M/MW})$, we have
\[
\e(s^{*}s_{\q}) \r^{\Sigma_{0}}_{M}(\q, \e) = \e^{M/MW}_{\q}(s^{*}) \cdot \e \e^{M/MW}_{\q} (s^{*}s_{\q}) \r^{\Sigma_{0}}_{MW}(\q, \e \e^{M/MW}_{\q}) = \e(s^{*}s_{\q}) \r^{\Sigma_{0}}_{MW}(\q, \e \e^{M/MW}_{\q})
\]
by the linear independence of twisted characters. Hence $\r^{\Sigma_{0}}_{M}(\q, \e) = \r^{\Sigma_{0}}_{MW}(\q, \e \e_{\q}^{M/MW})$.

\end{proof}



\section{General case}
\label{sec: general case}

In this section, we consider M{\oe}glin's parametrization of elements in $\cPkt{\q}$ for general $\q \in \cQ{G}$. The idea is similar to Section~\ref{sec: M-W normalization}. We first assume $\q = \q_{p}$, and fix an order $>_{\q}$ on $Jord(\q)$ satisfying condition $(\mathcal{P})$. We also choose a parameter $\q_{\gg}$ dominating $\q$ with discrete diagonal restriction and natural order, and we identify $\S{\q^{>}}^{\Sigma_{0}} \cong \S{\q_{\gg}}^{\Sigma_{0}}$. Then we define for $\e \in \D{\S{\q^{>}}^{\Sigma_{0}}}$,
\begin{align}
\label{eq: M general full orthogonal group}
\r_{M}^{\Sigma_{0}}(\q, \e) := \circ _{(\rho, A, B, \zeta) \in Jord(\q)} \Jac_{(\rho, A_{\gg}, B_{\gg}, \zeta) \mapsto (\rho, A, B, \zeta)} \r^{\Sigma_{0}}_{M}(\q_{\gg}, \e),
\end{align}
where the composition is taken in the decreasing order. Since $\r_{M}^{\Sigma_{0}}(\q_{\gg}, \e \e_{0}) \cong \r_{M}^{\Sigma_{0}}(\q_{\gg}, \e) \otimes \x_{0}$, then
\[
\r_{M}^{\Sigma_{0}}(\q, \e \e_{0}) \cong \r_{M}^{\Sigma_{0}}(\q, \e) \otimes \x_{0}.
\]
We also define
\begin{align}
\label{eq: M general}
\r_{M}(\q, \bar{\e}) := \circ _{(\rho, A, B, \zeta) \in Jord(\q)} \bar{\Jac}_{(\rho, A_{\gg}, B_{\gg}, \zeta) \mapsto (\rho, A, B, \zeta)} \r_{M}(\q_{\gg}, \bar{\e}).
\end{align}
It follows from the case of discrete diagonal restriction that 
the restriction of $\r^{\Sigma_{0}}_{M}(\q, \e)$ to $G$ viewed as $\sH(G)$-modules is $2 \r_{M}(\q, \bar{\e})$ if $G$ is special even orthogonal and $\S{\q}^{\Sigma_{0}} = \S{\q}$, or $\r_{M}(\q, \bar{\e})$ otherwise. 

Next we extend the definition of $\e^{M/MW}_{\q} \in \D{\S{\q^{>}}^{\Sigma_{0}}}$ to this case.

\begin{definition}
\label{def: M/MW general}
Suppose $\q = \q_{p} \in \cQ{G}$, and $(\rho, a, b) \in Jord(\q)$. We fix an order $>_{\q}$ on $Jord(\q)$ satisfying condition $(\mathcal{P})$.
\begin{enumerate}

\item If $a + b$ is odd, $\e^{M/MW}_{\q}(\rho, a, b) = 1$.

\item If $a + b$ is even, let 
\[
m = \sharp \{(\rho, a', b') \in Jord(\q): a', b' \text{ odd}, \zeta_{a', b'} = -1, (\rho, a', b') >_{\q} (\rho, a, b) \},
\] 
and 
\[
n = \sharp \{(\rho, a' , b')  \in Jord(\q): a', b' \text{ odd}, (\rho, a', b') <_{\q} (\rho, a, b) \}.
\] 
Then
\[
\e^{M/MW}_{\q}(\rho, a, b) = \begin{cases}
                                              1 &\text{ if $a, b$ even, } \\
                                              (-1)^{m} &\text{ if $a, b$ odd, $\zeta_{a, b} = +1$, } \\
                                              (-1)^{m + n} &\text{ if $a, b$ odd, $\zeta_{a, b} = -1$. }
                                              \end{cases}
\]

\end{enumerate}
\end{definition}


\begin{proposition}
\label{prop: M/MW general}
Suppose $\q = \q_{p} \in \cQ{G}$ and $\bar{\e} \in \D{\S{\q^{>}}}$, then 
\[
\r_{M}(\q, \bar{\e}) = \r_{MW}(\q, \bar{\e} \bar{\e}^{M/MW}_{\q}).
\]
\end{proposition}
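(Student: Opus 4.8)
The plan is to reduce Proposition~\ref{prop: M/MW general} to the case of discrete diagonal restriction (Theorem~\ref{thm: M/MW discrete}), which is already established, together with the compatibility of both constructions with the Jacquet-module/induction passage from the dominating parameter $\q_{\gg}$ down to $\q$. Recall that $\r_M(\q,\bar\e)$ is defined in \eqref{eq: M general} by applying $\circ_{(\rho,A,B,\zeta)\in Jord(\q)}\bar\Jac_{(\rho,A_\gg,B_\gg,\zeta)\mapsto(\rho,A,B,\zeta)}$ to $\r_M(\q_\gg,\bar\e)$ in the decreasing order with respect to $>_\q$, while $\r_{MW}(\q,\bar\e)$ is defined in \eqref{eq: MW general} by applying the same composition of Jacquet functors to $\r_{MW}(\q_\gg,\bar\e)$. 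Since $\q_\gg$ has discrete diagonal restriction, Theorem~\ref{thm: M/MW discrete} gives $\r_M(\q_\gg,\bar\e)=\r_{MW}(\q_\gg,\bar\e\,\bar\e^{M/MW}_{\q_\gg})$. Applying the Jacquet functors to both sides then yields
\[
\r_M(\q,\bar\e)=\r_{MW}(\q,\bar\e\,\bar\e^{M/MW}_{\q_\gg})
\]
once we interpret $\bar\e^{M/MW}_{\q_\gg}$ as a character of $\D{\S{\q^>}}$ via the identification $\S{\q_\gg}\cong\S{\q^>}$. So the whole proposition comes down to the combinatorial identity
\begin{equation*}
\bar\e^{M/MW}_{\q_\gg}=\bar\e^{M/MW}_{\q}\quad\text{as elements of }\D{\S{\q^>}}.
\end{equation*}

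The key step, then, is to verify this last identity directly from Definition~\ref{def: M/MW general} and Definition~\ref{def: M/MW discrete}. First I would fix $\rho$ and a Jordan block $(\rho,a,b)\in Jord(\q_p)$ with $a+b$ even, mapping under the order-preserving bijection to $(\rho,a_\gg,b_\gg)\in Jord(\q_\gg)$ with $a_\gg-a=b_\gg-b\geqslant 0$ and the same sign $\zeta$. Note that the parities of $a_\gg,b_\gg$ agree with those of $a,b$ (since $a_\gg-a=b_\gg-b$ and this common difference can be taken even — or one checks the statement is independent of this choice, as $\q_\gg$ is only a bookkeeping device), and $\zeta_{a_\gg,b_\gg}=\zeta_{a,b}$. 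Thus the case split (a,b even versus a,b odd with $\zeta=\pm1$) is preserved. For $a,b$ odd one must compare the counts $m,n$: in Definition~\ref{def: M/MW general}, $m$ counts $(\rho,a',b')\in Jord(\q)$ with $a',b'$ odd, $\zeta_{a',b'}=-1$, $(\rho,a',b')>_\q(\rho,a,b)$, and $n$ counts $a',b'$ odd with $(\rho,a',b')<_\q(\rho,a,b)$; in Definition~\ref{def: M/MW discrete} applied to $\q_\gg$ (which has discrete diagonal restriction and natural order), the corresponding $m$ counts odd blocks with $\zeta=-1$ and $|a'-b'|>|a-b|$, and $n$ counts odd blocks with $|a'-b'|<|a-b|$. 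Here I use the crucial point that for $\q_\gg$ with discrete diagonal restriction, $|a'_\gg-b'_\gg|=2B'_\gg$ and the natural order orders blocks by $A'_\gg$, so — restricting to a fixed $\rho$ and among blocks of a fixed parity — the natural order coincides with ordering by $|a'-b'|$ (equivalently by $B$), since the segments $[B',A']$ are disjoint. Hence the two definitions of $m$ and $n$ count the same blocks, and $\bar\e^{M/MW}_{\q_\gg}(\rho,a_\gg,b_\gg)=\bar\e^{M/MW}_{\q}(\rho,a,b)$.

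I expect the main obstacle to be the bookkeeping around the order: Definition~\ref{def: M/MW general} allows an arbitrary order $>_\q$ on $Jord(\q_p)$ satisfying only $(\mathcal P)$, whereas the reduction goes through $\q_\gg$ with its \emph{natural} order. The subtlety is that $\bar\e^{M/MW}_{\q}$ as I have defined it depends a priori on the choice of $>_\q$, and one needs to check (i) that for blocks with $a+b$ even the relevant comparisons — namely whether an odd block with $\zeta=-1$ sits above $(\rho,a,b)$, and whether an odd block sits below it — are in fact forced by condition $(\mathcal P)$ to agree with the natural-order comparisons on $\q_\gg$, so that $\bar\e^{M/MW}_\q$ is genuinely independent of the admissible order $>_\q$, and (ii) that the whole argument is unaffected by the ambiguity in $\zeta_{a,b}$ when $a=b$, and by the freedom in choosing $\q_\gg$ (this is the analogue of the remark after Definition~\ref{def: MW/W} that $\theta_{MW}(\q)$ depends only on $>_\q$ and not on $\q_\gg$). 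Once these independence facts are in hand — and they follow the same pattern as the proof of Theorem~\ref{thm: sign}, comparing $\mathcal Z_{MW/W}$ for $\q$ and its dominating parameters block by block — the proposition follows. Finally, for the general case $\q\neq\q_p$ one reduces to $\q=\q_p$ exactly as at the end of Section~\ref{sec: M-W normalization}, since both $\r_M(\q,\bar\e)=\r_{\q_{np}}\rtimes\r_M(\q_p,\bar\e)$ and $\r_{MW}(\q,\bar\e)=\r_{\q_{np}}\rtimes\r_{MW}(\q_p,\bar\e)$ by definition, and $\bar\e^{M/MW}_\q$ only involves $Jord(\q_p)$.
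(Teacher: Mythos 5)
Your proposal is correct and takes the same route as the paper: reduce via the Jacquet-module definitions \eqref{eq: M general} and \eqref{eq: MW general} to the dominating parameter $\q_{\gg}$, invoke Theorem~\ref{thm: M/MW discrete}, and then check the combinatorial identity $\e^{M/MW}_{\q} = \e^{M/MW}_{\q_{\gg}}$, which the paper leaves as ``one checks easily'' and which you carry out in detail (correctly, using that for $\q_\gg$ with discrete diagonal restriction the natural order by $A_\gg$ agrees with the order by $B_\gg = |a_\gg - b_\gg|/2$ since the segments are disjoint). One small slip: in the $(a,b)$-parametrization the dominance shift $A_\gg - A = B_\gg - B = T$ gives $a_\gg - a = T(1+\zeta)$ and $b_\gg - b = T(1-\zeta)$, so $a_\gg - a \neq b_\gg - b$ in general, but both are automatically even, hence the parities of $(a_\gg, b_\gg)$ agree with those of $(a,b)$ as you conclude; also, since the proposition already fixes $\q = \q_{p}$ and an order $>_{\q}$ (on which both $\r_M(\q,\bar\e)$ and $\e^{M/MW}_{\q}$ depend by definition), the closing reduction to $\q_p$ and the worry about independence of the admissible order are unnecessary.
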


\begin{proof}
By the definition of \eqref{eq: MW general} and \eqref{eq: M general}, it suffices to show $\r_{M}(\q_{\gg}, \bar{\e}) = \r_{MW}(\q_{\gg}, \bar{\e}\bar{\e}^{M/MW}_{\q})$. One checks easily $\e^{M/MW}_{\q} = \e^{M/MW}_{\q_{\gg}}$ by the definition. So now this proposition will follow from Theorem~\ref{thm: M/MW discrete} directly.
\end{proof}

As a consequence, we have the following result.

\begin{proposition}
\label{prop: reducibility general}
Suppose $\q = \q_{p} \in \cQ{G}$ and $\bar{\e} \in \D{\S{\q^{>}}}$. Let $\rho$ be a unitary irreducible supercuspidal representation of $GL(d_{\rho})$.

\begin{enumerate}

\item For $\zeta \in \{\pm 1\}$ and segment $[x, y]$ with $0 \leqslant x \leqslant y$, 
\(
\bar{\Jac}_{\zeta x, \cdots, \zeta y} \r_{M}(\q, \bar{\e}) = 0
\)
unless there exists a sequence of Jordan blocks $\{(\rho, A_{i}, B_{i}, \zeta)\}_{i=1}^{n} \subseteq Jord(\q)$ such that $B_{1} = x, A_{n} \geqslant y$, and $B_{i} \leqslant B_{i+1} \leqslant A_{i} + 1$.

\item For $x \in \mathbb{R}$, let $m = \sharp \{ (\rho, A, B, \zeta) \in Jord(\q): \zeta B = x \}$, then $\bar{\Jac}_{\underbrace{x, \cdots, x}_{n}} \, \r_{M}(\q, \bar{\e}) = 0$ if $n > m$.

\end{enumerate}

\end{proposition}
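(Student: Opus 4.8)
The plan is to reduce everything to the case of discrete diagonal restriction, where the needed vanishing results for Jacquet modules follow from the explicit formula in Theorem~\ref{thm: M parametrization} together with the corresponding well-known facts for the representation $\r(\q_{\gg})$ of $GL(N)$. First I would recall the definition \eqref{eq: M general} of $\r_M(\q,\bar\e)$ as an iterated Jacquet module $\bar\Jac_{(\rho,A_\gg,B_\gg,\zeta)\mapsto(\rho,A,B,\zeta)}$ applied to $\r_M(\q_\gg,\bar\e)$ for a dominating parameter $\q_\gg$ with discrete diagonal restriction and natural order. Since the shift Jacquet functors only involve $\rho||^z$ for $z$ ranging over the shift matrices $X^\gg_{(\rho,A,B,\zeta)}$ in \eqref{eq: shift matrix}, and since the statement concerns computing $\bar\Jac_{\zeta x,\dots,\zeta y}$ or $\bar\Jac_{x,\dots,x}$ on $\r_M(\q,\bar\e)$, I would commute these with the shift functors: each application of $\bar\Jac_z$ on $\r_M(\q,\bar\e)$ is controlled, via the combinatorics of Jacquet modules of parabolically induced representations (the formulas in \cite{Xu:preprint3}, Section~5, or M{\oe}glin's lemmas), by the Jacquet modules of $\r_M(\q_\gg,\bar\e)$. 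Thus it suffices to prove the two assertions for parameters with discrete diagonal restriction, then track what the shift operation does to the set $Jord(\q)$.

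For the discrete diagonal restriction case, I would use Proposition~\ref{prop: M parametrization} / Corollary~\ref{cor: M parametrization for G} together with Theorem~\ref{thm: M parametrization}, which exhibit $\r_M^{\Sigma_0}(\q,\ul,\ueta)$ as the unique irreducible subrepresentation of an explicit induced representation from generalized segments indexed by $Jord(\q)$. The key point is that the cuspidal support of $\r_M^{\Sigma_0}(\q,\ul,\ueta)$ on the $GL$-side is the union, over $(\rho,A,B,\zeta)\in Jord(\q)$, of the contents of the generalized segments $\begin{pmatrix}\zeta B & \cdots & -\zeta A\\ \vdots & & \vdots\\ \zeta(B+\ul-1)&\cdots&-\zeta(A-\ul+1)\end{pmatrix}$, plus the cuspidal support of the supercuspidal-building-block $\r_M$ of the elementary parameter $\cup_{C}(\rho,C,C,\zeta;\cdot)$. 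For part (1), the observation is that for $\bar\Jac_{\zeta x,\dots,\zeta y}$ to be nonzero one needs to extract, successively, the exponents $\zeta x,\zeta(x+1),\dots,\zeta y$ from the left edge of these segments; the geometry of the generalized segments (each row a decreasing segment starting at $\zeta B$, consecutive rows shifting by one) forces the existence of a ``chain'' $(\rho,A_1,B_1,\zeta),\dots,(\rho,A_n,B_n,\zeta)$ with $B_1=x$, $A_n\geqslant y$, and overlap condition $B_i\leqslant B_{i+1}\leqslant A_i+1$ — this is exactly M{\oe}glin's compatibility criterion for gluing segments. I would make this precise by induction on $y-x$, peeling off $\bar\Jac_{\zeta x}$ first: by the Jacquet module formula it lands in a segment with $B=x$, and the remaining $\bar\Jac_{\zeta(x+1),\dots,\zeta y}$ must be absorbed either by continuing down the same column or by a neighbouring block whose $B'$ satisfies $B'\leqslant A+1$. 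For part (2), the multiplicity bound: each block $(\rho,A,B,\zeta)$ with $\zeta B=x$ contributes the value $x$ to the cuspidal support exactly once on the relevant left-edge position (the entry $\zeta B$ appears once per block, since in the discrete diagonal restriction case the segments $[A,B]$ for fixed $\rho$ are disjoint), so at most $m$ successive $\bar\Jac_x$ can be nonzero; a further application would require a repeated $\rho||^x$ that is not present.

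The main obstacle I anticipate is handling the shift (domination) step cleanly: after applying the shift functors $\bar\Jac_{(\rho,A_\gg,B_\gg,\zeta)\mapsto(\rho,A,B,\zeta)}$, one must verify that the chain/overlap conditions and the multiplicity count computed for $\q_\gg$ descend to the correct statement for $\q$ — i.e. that the sequence of Jordan blocks witnessing non-vanishing for $\q_\gg$ can be pushed forward to one for $\q$, and conversely, with the $B_i\leqslant B_{i+1}\leqslant A_i+1$ condition preserved under the shifts $A_\gg-A=B_\gg-B\geqslant0$. Since the natural order on $\q_\gg$ and condition $(\mathcal{P})$ on $>_\q$ are designed precisely so that the relevant Jacquet module computations commute with shifting (this is the content underlying Proposition~\ref{prop: MW/W} and \eqref{eq: general representation GL}), I expect this to go through, but it requires care in bookkeeping; I would isolate it as a lemma about how $\bar\Jac_z$ interacts with a single shift functor and then iterate. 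Alternatively — and this may be the cleaner route — one can combine Proposition~\ref{prop: M/MW general}, which identifies $\r_M(\q,\bar\e)$ with $\r_{MW}(\q,\bar\e\bar\e^{M/MW}_\q)$, with the recursive description of $\r_{MW}$ via \eqref{eq: MW general} and the parabolic reduction behaviour, thereby transporting the question back to known vanishing statements for standard modules of $GL(N)$ under twisted Jacquet functors; I would present whichever of the two gives the shorter argument.
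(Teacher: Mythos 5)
Your proposal correctly identifies the first reduction (via Proposition~\ref{prop: M/MW general} and the $\r_{MW}$/$\r_W$ dictionary, so that one may work with $\r_W(\q,\bar\e)$, $\bar\e\in\D{\S{\q}}$), but the mechanism you then propose for proving the vanishing differs from the paper's and is considerably more laborious. After passing to $\r_W(\q,\bar\e)$, the paper does \emph{not} return to the shift-functor description \eqref{eq: MW general}; instead it invokes the Fourier inversion
\[
\r_W(\q,\bar\e)=\frac{\bar\e(s_\q)}{|\S{\q}|}\sum_{s\in\S{\q}}\bar\e(s)\,\cPkt{W,s}(\q),
\]
where each $\cPkt{W,s}(\q)$ is a \emph{spectral endoscopic transfer} of $\cPkt{\q_H}$ for $(H,\q_H)\to(\q,s)$. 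Using the twisted character relation \eqref{eq: nontempered character relation GL(N)} for $H$ and the compatibility of endoscopic transfer with Jacquet functors, the vanishing assertion is carried over to a vanishing of twisted Jacquet modules $\Jac^\theta_{\zeta x,\dots,\zeta y}$ for $\r_{\q_H}=\r_{\q_I}\otimes\r_{\q_{II}}$, and since $Jord(\q_H)=Jord(\q)$ this is the same as checking it for $\r_\q=\times_{(\rho,a,b)}Sp(St(\rho,a),b)$ on $GL(N)$ — a direct combinatorial computation with products of Speh representations. This avoids entirely the problem you flag as the main obstacle: there is no need to commute $\bar\Jac_{\zeta x,\dots,\zeta y}$ with the shift functors $\bar\Jac_{(\rho,A_\gg,B_\gg,\zeta)\mapsto(\rho,A,B,\zeta)}$, nor to run a delicate cuspidal-support analysis through Theorem~\ref{thm: M parametrization} in the discrete-diagonal-restriction case. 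Your ``route~1'' could in principle be made to work (you would need to justify the commutation, then invoke Property~(1) of the elementary packets and the segment geometry of the embedding after Theorem~\ref{thm: M parametrization}), but it buys nothing over the endoscopic route, which turns the whole question into a statement about $GL(N)$ that is immediate. Your ``route~2'' is closest in spirit — it does head for $GL(N)$ — but it substitutes the shift-functor recursion for the Fourier-inversion-plus-transfer mechanism, which is the actual engine of the paper's proof.
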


\begin{proof}

Note $\r_{M}(\q, \bar{\e}) = \r_{MW}(\q, \bar{\e} \bar{\e}^{M/MW})$ and 
\[
\r_{MW}(\q, \bar{\e} \bar{\e}^{M/MW}) = \begin{cases}
                                             \r_{W}(\q, \bar{\e} \bar{\e}^{M/MW} \bar{\e}^{MW/W}), & \text{ if $\bar{\e} \bar{\e}^{M/MW} \bar{\e}^{MW/W} \in \D{\S{\q}}$,} \\
                                             0, & \text{ otherwise.}
                                             \end{cases}
\]
So it suffices to show the proposition for $\r_{W}(\q, \bar{\e})$ and $\bar{\e} \in \D{\S{\q}}$. As we see from the proof of Proposition~\ref{prop: MW/W},  
\[
\r_{W}(\q, \bar{\e}) = \frac{\bar{\e}(s_{\q})}{|\S{\q}|} \sum_{s \in \S{\q}} \bar{\e}(s) \cPkt{W, s}(\q), 
\]
where $\cPkt{W, s}(\q)$ is transferred from $\cPkt{\q_{H}}$ for $(H, \q_{H}) \rightarrow (\q, s)$. By \eqref{eq: nontempered character relation GL(N)}, it suffices to show the vanishing of the corresponding Jacquet modules for $\r_{\q_{H}} := \r_{\q_{I}} \otimes \r_{\q_{II}}$. In fact, it suffices to consider 
\[
\r_{\q} = \times_{(\rho, a, b) \in Jord(\q)} Sp(St(\rho, a), b).
\] 
Then one can check easily that $\Jac^{\theta}_{\zeta x, \cdots, \zeta y} \r_{\q} = 0$ unless there exists a sequence of Jordan blocks 
\[
\{(\rho, A_{i}, B_{i}, \zeta)\}_{i=1}^{n} \subseteq Jord(\q)
\] 
such that $B_{1} = x, A_{n} \geqslant y$, and $B_{i} \leqslant B_{i+1} \leqslant A_{i} + 1$. It is also easy to see $\Jac^{\theta}_{\underbrace{x, \cdots, x}_{n}} \r_{\q} = 0$ if $n > m$. 

\end{proof}

\begin{remark}
This proposition implies the same kind of statements are also true for $\r_{M}^{\Sigma_{0}}(\q, \e)$.
\end{remark}

For functions $\ul(\rho, A, B, \zeta) \in [0, [(A-B+1)/2]]$ and $\ueta(\rho, A, B, \zeta) \in \Two$ on $Jord(\q)$ such that
\[
\e_{\ul, \ueta}(\rho, A, B, \zeta) := \ueta(\rho, A, B, \zeta)^{A - B + 1} (-1)^{[(A-B+1)/2] + \ul(\rho, A, B, \zeta)}
\]
defines a character $\e_{\ul, \ueta}$ of $\S{\q^{>}}^{\Sigma_{0}}$, we define 
\[
\r_{M}^{\Sigma_{0}}(\q, \ul, \ueta) := \circ _{(\rho, A, B, \zeta) \in Jord(\q)} \Jac_{(\rho, A_{\gg}, B_{\gg}, \zeta) \mapsto (\rho, A, B, \zeta)} \r_{M}^{\Sigma_{0}}(\q_{\gg}, \ul, \ueta),
\]
where the composition is taken in the decreasing order,  
\[
\ul(\rho, A, B, \zeta) = \ul(\rho, A_{\gg}, B_{\gg}, \zeta) \text{ and } \ueta(\rho, A, B, \zeta) = \ueta(\rho, A_{\gg}, B_{\gg}, \zeta).
\] 
Then we have the following result about this representation.

\begin{proposition}[\cite{Moeglin:2010}, Proposition 2.8.1]
\label{prop: multiplicity one}
For $\q = \q_{p} \in \cQ{G}$, $\r_{M}^{\Sigma_{0}}(\q, \ul, \ueta)$ only depends on $>_{\q}$, but not on $\q_{\gg}$. Moreover, $\r_{M}^{\Sigma_{0}}(\q, \ul, \ueta)$ is either zero or irreducible. If $\r_{M}^{\Sigma_{0}}(\q, \ul, \ueta) \neq 0$, then
\begin{align*}
\label{eq: multiplicity one}
\r_{M}^{\Sigma_{0}}(\q_{\gg}, \ul, \ueta) \hookrightarrow \Big(\times_{(\rho, A, B, \zeta) \in Jord(\q)} <X^{\gg}_{(\rho, A, B, \zeta)}> \Big) \rtimes \r_{M}^{\Sigma_{0}}(\q, \ul, \ueta),
\end{align*}
where the product is taken in the increasing order.
\end{proposition}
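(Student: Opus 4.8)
The plan is to deduce this from the case of discrete diagonal restriction, which has already been treated in Theorem~\ref{thm: M parametrization} and Proposition~\ref{prop: M parametrization}, together with a careful analysis of the Jacquet module operators in \eqref{eq: M general full orthogonal group}. First I would establish the independence of $\r_{M}^{\Sigma_{0}}(\q, \ul, \ueta)$ on the dominating parameter $\q_{\gg}$. Recall that $\r_{M}^{\Sigma_{0}}(\q, \e)$ already has this independence property built in, once one knows $\theta_{MW}(\q)$ depends only on $>_{\q}$ (Section~\ref{sec: M-W normalization}) and Theorem~\ref{thm: M/MW discrete full orthogonal group}. For the refined objects $\r_{M}^{\Sigma_{0}}(\q, \ul, \ueta)$, I would argue that if $\q_{\gg}$ and $\q_{\gg}'$ are two dominating parameters with discrete diagonal restriction and natural order, one can find a common further-dominating parameter $\q_{\gg\gg}$, reducing the comparison to a single elementary ``shift'' step. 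In that step, one uses the explicit subrepresentation formula in Theorem~\ref{thm: M parametrization}, which expresses $\r_{M}^{\Sigma_{0}}(\q_{\gg}, \ul, \ueta)$ as a subquotient of an induced representation built from generalized segments and a parameter with strictly smaller blocks; applying $\Jac_{(\rho, A_{\gg\gg}, B_{\gg\gg}, \zeta) \mapsto (\rho, A_{\gg}, B_{\gg}, \zeta)}$ and using the standard Jacquet-module formulas (as in \cite{Xu:preprint3}, Section 5) one recovers the formula for $\r_{M}^{\Sigma_{0}}(\q, \ul, \ueta)$ intrinsically.

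Next, for the irreducibility-or-zero dichotomy and the embedding statement, I would proceed by induction on $\sum_{(\rho, A, B, \zeta)}(A_{\gg} - A)$, i.e., on the ``total shift distance'' from $\q_{\gg}$ to $\q$. When this is zero there is nothing to prove. For the inductive step, isolate one block $(\rho, A, B, \zeta)$ with $A_{\gg} > A$ and an intermediate parameter $\q_{1}$ with $\r_{M}^{\Sigma_{0}}(\q, \ul, \ueta) = \Jac_{(\rho, A_{1}, B_{1}, \zeta) \mapsto (\rho, A, B, \zeta)} \r_{M}^{\Sigma_{0}}(\q_{1}, \ul, \ueta)$ where $A_{1} = A+1$, $B_{1} = B+1$. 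By the induction hypothesis $\r_{M}^{\Sigma_{0}}(\q_{1}, \ul, \ueta)$ is irreducible (or zero) and embeds as the unique irreducible subrepresentation into the appropriate standard module $\big(\times <X^{\gg \to 1}>\big) \rtimes \r_{M}^{\Sigma_{0}}(\q_{\gg}, \ul, \ueta)$, wait --- rather the other direction: $\r_{M}^{\Sigma_{0}}(\q_{\gg}, \ul, \ueta) \hookrightarrow \big(\times_{\text{blocks}} <X^{\gg}_{(\rho,A,B,\zeta)}>\big) \rtimes \r_{M}^{\Sigma_{0}}(\q, \ul, \ueta)$. The key technical input here is a ``Jacquet module commutes with the segment induction'' computation combined with Frobenius reciprocity: if $\r_{M}^{\Sigma_{0}}(\q, \ul, \ueta) = \Jac_{\cdots} \sigma$ where $\sigma$ embeds in a standard module, then one must show the specific string $\zeta(B+1), \dots, \zeta A$ (read off from $X^{\gg}_{(\rho, A, B, \zeta)}$) can be pulled out on the left; this follows because the generalized segments appearing in the larger parameter are constructed precisely so that these Jacquet functors extract exactly the extra rows, using the combinatorial properties ($\mathcal{P}$) of the order and the disjointness from discrete diagonal restriction. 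That $\r_{M}^{\Sigma_{0}}(\q, \ul, \ueta)$ is either irreducible or zero then follows: a nonzero Jacquet module of an irreducible representation with a prescribed cuspidal support leading term is, in this context, irreducible by the multiplicity-one property of the relevant Jacquet modules (Proposition~\ref{prop: reducibility general} controls when these vanish).

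I expect the main obstacle to be the last point --- showing that the nonzero image under the composed Jacquet functor \eqref{eq: M general full orthogonal group} is actually \emph{irreducible}, not merely of finite length. This is where one genuinely needs M{\oe}glin's machinery: the naive Jacquet functor of an irreducible representation need not be irreducible, and what saves us is that the relevant generalized segments $X^{\gg}_{(\rho, A, B, \zeta)}$ are ``compatible'' in the sense that their removal is governed by the combinatorics of $>_{\q}$ satisfying ($\mathcal{P}$), so that at each stage the Jacquet module picks out a single irreducible constituent. Concretely, I would invoke the uniqueness of the irreducible subrepresentation in the relevant induced representation (as in the discussion following Theorem~\ref{thm: M parametrization}) and the fact, provable via the explicit Jacquet-module formulas of \cite{Xu:preprint3}, that $\Jac_{(\rho, A_{\gg}, B_{\gg}, \zeta) \mapsto (\rho, A, B, \zeta)}$ applied to $<X^{\gg}_{(\rho, A, B, \zeta)}> \rtimes (\text{irreducible})$ returns that irreducible representation (times a nonzero multiplicity-one coefficient). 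Assembling these across all blocks in decreasing order with respect to $>_{\q}$, using condition ($\mathcal{P}$) to guarantee the steps do not interfere, yields both the irreducibility and the displayed embedding as the unique irreducible subrepresentation. The independence on $\q_{\gg}$ then follows a posteriori from the intrinsic characterization via this embedding.
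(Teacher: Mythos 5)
Your proposal is correct and takes essentially the same approach as the paper: for the independence of $\q_{\gg}$, you pass to a common dominating parameter and use the compatibility of the DDR-case definition with Jacquet modules; for the irreducibility and embedding, you induct on the total shift distance, peeling off one column at a time, using Proposition~\ref{prop: reducibility general} and condition $(\mathcal{P})$ to pin down the Jacquet-module structure and the uniqueness of irreducible subrepresentations to conclude. The paper organizes the induction more explicitly in two levels (first by block in the order $>_{\q}$, then column by column within a block), and the step you flag as the main obstacle — showing the Jacquet module returns an irreducible — is handled by exhibiting a further embedding $\r_{M}^{\Sigma_{0}}(\q^{k-1,l+1},\ul,\ueta)\hookrightarrow <\zeta_k(B_k+l+1),\dots,\zeta_k(A_k+l+1)>\rtimes\sigma_l^{\Sigma_0}$, ruling out the alternative segment starts $C>B_k+l+1$ via Proposition~\ref{prop: reducibility general} and $(\mathcal{P})$, and then using a vanishing statement on iterated Jacquet modules to show that the Jacquet functor applied to the induced representation returns exactly $\sigma_l^{\Sigma_0}$; this is precisely the ``nonzero multiplicity-one coefficient'' mechanism you sketch.
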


\begin{proof}
First, we would like to show $\r_{M}^{\Sigma_{0}}(\q, \ul, \ueta)$ only depends on $>_{\q}$. Suppose there are two dominating parameter $\q^{1}_{\gg}$ and $\q^{2}_{\gg}$ with discrete diagonal restriction and natural order, we can always choose a third one $\q^{*}_{\gg}$ which dominates both $\q^{1}_{\gg}$ and $\q^{2}_{\gg}$. It is clear that 
\[
\r_{M}^{\Sigma_{0}}(\q^{i}_{\gg}, \ul, \ueta) = \circ _{(\rho, A, B, \zeta) \in Jord(\q)} \Jac_{(\rho, A^{*}_{\gg}, B^{*}_{\gg}, \zeta) \mapsto (\rho, A^{i}_{\gg}, B^{i}_{\gg}, \zeta)} \r_{M}^{\Sigma_{0}}(\q^{*}_{\gg}, \ul, \ueta)
\]
for $i = 1, 2$, where the composition is taken in the decreasing order. For all $(\rho', A', B', \zeta') >_{\q} (\rho, A, B, \zeta)$, it is easy to check 
\[
\Jac_{(\rho, A^{i}_{\gg}, B^{i}_{\gg}, \zeta) \mapsto (\rho, A, B, \zeta)} \text{ and } \Jac_{(\rho', A^{'*}_{\gg}, B^{'*}_{\gg}, \zeta') \mapsto (\rho', A^{'i}_{\gg}, B^{'i}_{\gg}, \zeta')} 
\]
commutes (cf. \cite{Xu:preprint3}, Lemma 5.6). Also note 
\[
\Jac_{(\rho, A^{i}_{\gg}, B^{i}_{\gg}, \zeta) \mapsto (\rho, A, B, \zeta)} \circ \Jac_{(\rho, A^{*}_{\gg}, B^{*}_{\gg}, \zeta) \mapsto (\rho, A^{i}_{\gg}, B^{i}_{\gg}, \zeta)} =  \Jac_{(\rho, A^{*}_{\gg}, B^{*}_{\gg}, \zeta) \mapsto (\rho, A, B, \zeta)}.
\]
Then
\begin{align*}
& \circ _{(\rho, A, B, \zeta) \in Jord(\q)} \Jac_{(\rho, A^{i}_{\gg}, B^{i}_{\gg}, \zeta) \mapsto (\rho, A, B, \zeta)} \r_{M}^{\Sigma_{0}}(\q^{i}_{\gg}, \ul, \ueta) \\
= & \circ _{(\rho, A, B, \zeta) \in Jord(\q)} \Jac_{(\rho, A^{*}_{\gg}, B^{*}_{\gg}, \zeta) \mapsto (\rho, A, B, \zeta)} \r_{M}^{\Sigma_{0}}(\q^{*}_{\gg}, \ul, \ueta).
\end{align*}
This finishes the first part of the proof.

Next we index $Jord(\q)$ according to $>_{\q}$, such that 
\[
(\rho_{i}, A_{i}, B_{i}, \zeta_{i}) >_{\q} (\rho_{i-1}, A_{i-1}, B_{i-1}, \zeta_{i-1}).
\]
Let $\q_{\gg}$ be obtained from $\q$ by shifting $(\rho_{i}, A_{i}, B_{i}, \zeta_{i})$ to $(\rho_{i}, A_{i} + T_{i}, B_{i} + T_{i}, \zeta_{i})$. We also define $\q^{k}$ from $\q_{\gg}$ by shifting $(\rho_{i}, A_{i} + T_{i}, B_{i} + T_{i}, \zeta_{i})$ back to $(\rho_{i}, A_{i}, B_{i}, \zeta_{i})$ for $i \leqslant k$. Suppose $\r_{M}^{\Sigma_{0}}(\q, \ul, \ueta) \neq 0$, then $\r_{M}^{\Sigma_{0}}(\q^{k}, \ul, \ueta) \neq 0$ by definition. We would like to show by induction that $\r_{M}^{\Sigma_{0}}(\q^{k}, \ul, \ueta)$ is irreducible and
\begin{align}
\label{eq: multiplicity one}
\r_{M}^{\Sigma_{0}}(\q^{k-1}, \ul, \ueta) \hookrightarrow 
       \begin{pmatrix}
              \zeta_{k} (B_{k} + T_{k}) & \cdots & \zeta_{k} (B_{k} + 1) \\
              \vdots &  & \vdots \\
              \zeta_{k} (A_{k} + T_{k}) & \cdots & \zeta_{k} (A_{k} + 1)
       \end{pmatrix} 
\rtimes \r_{M}^{\Sigma_{0}}(\q^{k}, \ul, \ueta) 
\end{align}
as the unique irreducible subrepresentation. Note $\q^{0} = \q_{\gg}$ and $\q^{n} = \q$, where $n = |Jord(\q)|$. So let us assume $\r_{M}^{\Sigma_{0}}(\q^{k-1}, \ul, \ueta)$ is irreducible. For $0 \leqslant l \leqslant T_{k} - 1$, we denote
\begin{align*}
\tau_{l} :=
       \begin{pmatrix}
              \zeta_{k} (B_{k} + T_{k}) & \cdots & \zeta_{k} (B_{k} + l + 1) \\
              \vdots &  & \vdots \\
              \zeta_{k} (A_{k} + T_{k}) & \cdots & \zeta_{k} (A_{k} + l + 1)
       \end{pmatrix}.  
\end{align*}
Let $\q^{k-1, l}$ be obtained from $\q^{k-1}$ by shifting $(\rho_{k}, A_{k} + T_{k}, B_{k} + T_{k}, \zeta_{k})$
to $(\rho_{k}, A_{k} + l, B_{k} + l, \zeta_{k})$. We claim $\r_{M}^{\Sigma_{0}}(\q^{k-1, l}, \ul, \ueta)$ is irreducible and
\begin{align*}
\r_{M}^{\Sigma_{0}}(\q^{k-1}, \ul, \ueta) \hookrightarrow \tau_{l} \rtimes \r_{M}^{\Sigma_{0}}(\q^{k-1, l}, \ul, \ueta)
\end{align*}
as the unique irreducible subrepresentation. In particular, $\q^{k-1, 0} = \q^{k}$, so this is what we want.

To prove the claim, we assume it is true for $l + 1$, and we would like to establish it for $l$.  
\begin{align*}
\r_{M}^{\Sigma_{0}}(\q^{k-1}, \ul, \ueta) \hookrightarrow \tau_{l+1} \rtimes \r_{M}^{\Sigma_{0}}(\q^{k-1, l+1}, \ul, \ueta). 
\end{align*}
Since
\[
\Jac_{\zeta_{k}(B_{k} + l + 1), \cdots, \zeta_{k}(A_{k} + l + 1)} \r_{M}^{\Sigma_{0}}(\q^{k-1, l+1}, \ul, \ueta) \neq 0,
\]
there exists an irreducible representation $\sigma^{\Sigma_{0}}_{l}$ and $C \in [B_{k} + l + 1, A_{k} + l + 1]$ such that
\[
 \r_{M}^{\Sigma_{0}}(\q^{k-1, l+1}, \ul, \ueta) \hookrightarrow <\zeta_{k} C, \cdots, \zeta_{k}(A_{k} + l + 1)> \rtimes \sigma^{\Sigma_{0}}_{l}.
\]
If $C > B_{k} + l + 1$, then by Proposition~\ref{prop: reducibility general} there exists $(\rho_{i}, A_{i}, B_{i}, \zeta_{i}) \in Jord(\q)$ for $i < k$ such that 
\[
\rho_{i} = \rho_{k}, \zeta_{i} = \zeta_{k}, B_{i} > B_{k} + l +1 \text{ and } A_{i} \geqslant A_{k} + l + 1.
\] 
But this is impossible by the condition $(\mathcal{P})$ on $>_{\q}$. Therefore, we must have $C = B_{k} + l + 1$. It follows $\sigma^{\Sigma_{0}}_{l}$ is a constituent of $\r_{M}^{\Sigma_{0}}(\q^{k-1, l}, \ul, \ueta)$. Apply Proposition~\ref{prop: reducibility general} to $\q^{k-1, l}$, we have
\begin{align}
\label{eq: multiplicity one 1}
\Jac_{\zeta_{k} C', \cdots, \zeta_{k} C''} \sigma^{\Sigma_{0}}_{l} = 0
\end{align}
for $C' \in [B_{k} + l + 1, A_{k} + T_{k}]$, $C'' \in [A_{k} + l + 1, A_{k} + T_{k}]$. To sum up,
\begin{align*}
\r_{M}^{\Sigma_{0}}(\q^{k-1}, \ul, \ueta) \hookrightarrow 
      \tau_{l+1}
 \times
       \begin{pmatrix}
              \zeta_{k} (B_{k} + l + 1) \\
              \vdots  \\
              \zeta_{k} (A_{k} + l + 1)
       \end{pmatrix}    
       \rtimes \sigma^{\Sigma_{0}}_{l}.
\end{align*}
If we apply $\Jac_{(\rho_{k}, A_{k} + T_{k}, B_{k} + T_{k}, \zeta_{k}) \mapsto (\rho_{k}, A_{k} + l, B_{k} + l, \zeta_{k})}$ to 
\begin{align}
\label{eq: multiplicity one 2} 
      \tau_{l+1}
 \times
       \begin{pmatrix}
              \zeta_{k} (B_{k} + l + 1) \\
              \vdots  \\
              \zeta_{k} (A_{k} + l + 1)
       \end{pmatrix}    
       \rtimes \sigma^{\Sigma_{0}}_{l},
\end{align}
we should get $\sigma^{\Sigma_{0}}_{l}$ by \eqref{eq: multiplicity one 1}. So 
\[
\r_{M}^{\Sigma_{0}}(\q^{k-1, l}, \ul, \ueta) := \Jac_{(\rho_{k}, A_{k} + T_{k}, B_{k} + T_{k}, \zeta_{k}) \mapsto (\rho_{k}, A_{k} + l, B_{k} + l, \zeta_{k})} \r_{M}^{\Sigma_{0}}(\q^{k-1}, \ul, \ueta) = \sigma^{\Sigma_{0}}_{l},
\]
and \eqref{eq: multiplicity one 2} has a unique irreducible subrepresentation. Hence
\begin{align*}
\r_{M}^{\Sigma_{0}}(\q^{k-1}, \ul, \ueta) \hookrightarrow \tau_{l} \rtimes \r_{M}^{\Sigma_{0}}(\q^{k-1, l}, \ul, \ueta)
\end{align*}
as the unique irreducible subrepresentation. This finishes the proof of our claim.

\end{proof}

\begin{remark}
It is an interesting problem to determine when $\r_{M}^{\Sigma_{0}}(\q, \ul, \ueta)$ is not zero, and a solution to such problem would have many applications (e.g. \cite{Moeglin:2011}, \cite{Moeglin2:2011}). In a sequel to this paper, we will give a procedure for finding explicit nonvanishing conditions on $(\ul, \ueta)$ for $\r_{M}^{\Sigma_{0}}(\q, \ul, \ueta)$.
\end{remark}

\begin{corollary}
For $\q = \q_{p} \in \cQ{G}$, if $\r_{M}^{\Sigma_{0}}(\q, \ul, \ueta) \cong \r_{M}^{\Sigma_{0}}(\q, \ul', \ueta') \neq 0$, then $(\ul, \ueta) \sim_{\Sigma_{0}}  (\ul', \ueta')$.
\end{corollary}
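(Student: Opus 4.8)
The plan is to deduce this corollary from Proposition~\ref{prop: multiplicity one} by reducing the general case to the case of discrete diagonal restriction, where the analogous statement is already available (the ``moreover'' part of Proposition~\ref{prop: M parametrization}). First I would fix the order $>_{\q}$ and a dominating parameter $\q_{\gg}$ with discrete diagonal restriction and natural order, as in the setup preceding the corollary, so that by definition $\r_{M}^{\Sigma_{0}}(\q, \ul, \ueta)$ is obtained from $\r_{M}^{\Sigma_{0}}(\q_{\gg}, \ul, \ueta)$ by the iterated Jacquet functor $\circ_{(\rho, A, B, \zeta)} \Jac_{(\rho, A_{\gg}, B_{\gg}, \zeta) \mapsto (\rho, A, B, \zeta)}$ taken in decreasing order. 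The key point supplied by Proposition~\ref{prop: multiplicity one} is that when $\r_{M}^{\Sigma_{0}}(\q, \ul, \ueta) \neq 0$ we have the embedding
\[
\r_{M}^{\Sigma_{0}}(\q_{\gg}, \ul, \ueta) \hookrightarrow \Big(\times_{(\rho, A, B, \zeta) \in Jord(\q)} <X^{\gg}_{(\rho, A, B, \zeta)}> \Big) \rtimes \r_{M}^{\Sigma_{0}}(\q, \ul, \ueta)
\]
as the unique irreducible subrepresentation; in particular $\r_{M}^{\Sigma_{0}}(\q, \ul, \ueta)$ is irreducible and $\r_{M}^{\Sigma_{0}}(\q_{\gg}, \ul, \ueta)$ is nonzero, and moreover $\r_{M}^{\Sigma_{0}}(\q, \ul, \ueta)$ can be recovered from $\r_{M}^{\Sigma_{0}}(\q_{\gg}, \ul, \ueta)$ by the prescribed Jacquet module (which is how $\r_{M}^{\Sigma_{0}}(\q, \ul, \ueta)$ was defined).

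So suppose $\r_{M}^{\Sigma_{0}}(\q, \ul, \ueta) \cong \r_{M}^{\Sigma_{0}}(\q, \ul', \ueta') \neq 0$. Then both $\r_{M}^{\Sigma_{0}}(\q_{\gg}, \ul, \ueta)$ and $\r_{M}^{\Sigma_{0}}(\q_{\gg}, \ul', \ueta')$ are nonzero, and each is characterized inside the corresponding induced representation $\big(\times <X^{\gg}_{(\rho,A,B,\zeta)}>\big) \rtimes (-)$ as the unique irreducible subrepresentation attached to the common irreducible $\r_{M}^{\Sigma_{0}}(\q, \ul, \ueta)$. I would like to conclude $\r_{M}^{\Sigma_{0}}(\q_{\gg}, \ul, \ueta) \cong \r_{M}^{\Sigma_{0}}(\q_{\gg}, \ul', \ueta')$; the cleanest way is to observe that the iterated Jacquet functor in \eqref{eq: M general full orthogonal group} is, on the class of representations under consideration, an inverse to the embedding of Proposition~\ref{prop: multiplicity one} (one applies $\Jac_{(\rho, A_{\gg}, B_{\gg}, \zeta) \mapsto (\rho, A, B, \zeta)}$ in decreasing order to both sides and uses the nonvanishing statements in Proposition~\ref{prop: reducibility general}/the argument inside the proof of Proposition~\ref{prop: multiplicity one} to see that applying it to the big induced representation returns exactly $\r_{M}^{\Sigma_{0}}(\q, \ul, \ueta)$), so $\r_{M}^{\Sigma_{0}}(\q_{\gg}, \ul, \ueta)$ is literally determined as an isomorphism class by $\r_{M}^{\Sigma_{0}}(\q, \ul, \ueta)$ together with the data $(\ul, \ueta)$ read off on $Jord(\q) = Jord(\q_{\gg})$. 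Since $\r_{M}^{\Sigma_{0}}(\q_{\gg}, \ul, \ueta)$ depends on $(\ul, \ueta)$ only through its restriction data and both sides give the same irreducible representation of $G_{\gg}^{\Sigma_{0}}$, Proposition~\ref{prop: M parametrization} applied to $\q_{\gg}$ (which has discrete diagonal restriction) yields $(\ul, \ueta) \sim_{\Sigma_{0}} (\ul', \ueta')$ as pairs on $Jord(\q_{\gg})$, hence on $Jord(\q)$ under the identification.

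The main obstacle is the step asserting that $\r_{M}^{\Sigma_{0}}(\q, \ul, \ueta) \cong \r_{M}^{\Sigma_{0}}(\q, \ul', \ueta')$ forces $\r_{M}^{\Sigma_{0}}(\q_{\gg}, \ul, \ueta) \cong \r_{M}^{\Sigma_{0}}(\q_{\gg}, \ul', \ueta')$; a priori an isomorphism downstairs need not lift, since a Jacquet module can lose information. What makes it work here is precisely the uniqueness in Proposition~\ref{prop: multiplicity one}: $\r_{M}^{\Sigma_{0}}(\q_{\gg}, \ul, \ueta)$ is the unique irreducible subrepresentation of a fixed induced representation $\big(\times <X^{\gg}_{(\rho,A,B,\zeta)}>\big) \rtimes \r_{M}^{\Sigma_{0}}(\q, \ul, \ueta)$, and the segments $X^{\gg}_{(\rho, A, B, \zeta)}$ depend only on $\q$, $\q_{\gg}$ and $>_{\q}$ — not on $(\ul, \ueta)$. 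Hence replacing $(\ul, \ueta)$ by $(\ul', \ueta')$ but keeping the same (isomorphic) subrepresentation forces the two inducing data to coincide and the two unique subrepresentations to be isomorphic. I would make sure to spell out this uniqueness argument carefully, and to note that when $\r_{M}^{\Sigma_{0}}(\q, \ul, \ueta) = 0$ there is nothing to prove, so the nonvanishing hypothesis is exactly what is needed to invoke Proposition~\ref{prop: multiplicity one}.
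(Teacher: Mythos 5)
Your proof is correct and takes essentially the same route as the paper: invoke the embedding of Proposition~\ref{prop: multiplicity one} (whose inducing segments depend only on $\q$, $\q_{\gg}$ and $>_{\q}$, not on $(\ul,\ueta)$) together with its uniqueness to lift the isomorphism from $\r_{M}^{\Sigma_{0}}(\q,\cdot)$ up to $\r_{M}^{\Sigma_{0}}(\q_{\gg},\cdot)$, then apply the discrete-diagonal-restriction case (Proposition~\ref{prop: M parametrization}) on $\q_{\gg}$. The paper phrases the lifting step as ``applying \eqref{eq: multiplicity one} step by step,'' i.e.\ one shift at a time from $\q^{n}=\q$ to $\q^{0}=\q_{\gg}$, while you invoke the full product embedding at once; these are the same argument, and your spelled-out version of why the uniqueness is the crucial point is a useful clarification of the paper's terse one-line proof.
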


\begin{proof}
Suppose $\r_{M}^{\Sigma_{0}}(\q, \ul, \ueta) \cong \r_{M}^{\Sigma_{0}}(\q, \ul', \ueta') \neq 0$, then by applying \eqref{eq: multiplicity one} step by step, one can conclude $\r_{M}^{\Sigma_{0}}(\q_{\gg}, \ul, \ueta) \cong \r_{M}^{\Sigma_{0}}(\q_{\gg}, \ul', \ueta')$. This implies $(\ul, \ueta) \sim_{\Sigma_{0}}  (\ul', \ueta')$.
\end{proof}

Let $\r_{M}(\q, \ul, \ueta)$ be the irreducible representation of $G$ viewed as $\sH(G)$-module in the restriction of $\r^{\Sigma_{0}}_{M}(\q, \ul, \ueta)$ to $G$ if $\r^{\Sigma_{0}}_{M}(\q, \ul, \ueta) \neq 0$, and zero otherwise. Then 
\[
\r_{M}(\q, \ul, \ueta) = \circ _{(\rho, A, B, \zeta) \in Jord(\q)} \bar{\Jac}_{(\rho, A_{\gg}, B_{\gg}, \zeta) \mapsto (\rho, A, B, \zeta)} \r_{M}(\q_{\gg}, \ul, \ueta),
\]
where the composition is taken in the decreasing order. The following proposition follows easily from the definitions and similar statements in the case of discrete diagonal restriction (cf. Proposition~\ref{prop: M parametrization} and Corollary~\ref{cor: M parametrization for G}). 

\begin{proposition}
For $\q = \q_{p} \in \cQ{G}$ and $\e \in \D{\S{\q^{>}}^{\Sigma_{0}}}$,
\[
\r^{\Sigma_{0}}_{M}(\q, \e) = \bigoplus_{\{(\ul, \ueta): \, \e = \e_{\ul, \ueta} \}/\sim_{\Sigma_{0}}} \r^{\Sigma_{0}}_{M}(\q, \ul, \ueta),
\]
and 
\[
\r_{M}(\q, \bar{\e}) = \bigoplus_{\{(\ul, \ueta): \, \bar{\e} = \bar{\e}_{\ul, \ueta} \}/\sim} \r_{M}(\q, \ul, \ueta).
\]
Moreover, 
\[
\bigoplus_{\bar{\e} \leftarrow \e \in \D{\S{\q^{>}}^{\Sigma_{0}}}} \r_{M}^{\Sigma_{0}}(\q, \e)
\]
consists of all irreducible representations of $G^{\Sigma_{0}}$, whose restriction to $G$ belong to $\r_{M}(\q, \bar{\e})$.
\end{proposition}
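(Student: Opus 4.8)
The statement to be proved is the final Proposition, which decomposes $\r^{\Sigma_0}_M(\q,\e)$ and $\r_M(\q,\bar\e)$ into the irreducible (or zero) pieces $\r^{\Sigma_0}_M(\q,\ul,\ueta)$ for general $\q=\q_p$, and asserts that the collection of $\r^{\Sigma_0}_M(\q,\e)$ over all $\e$ in $\D{\S{\q^{>}}^{\Sigma_0}}$ lists all irreducible representations of $G^{\Sigma_0}$ restricting into $\r_M(\q,\bar\e)$.

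My plan is to reduce everything to the dominating parameter $\q_\gg$ with discrete diagonal restriction, where the analogous statements are Proposition~\ref{prop: M parametrization} and Corollary~\ref{cor: M parametrization for G}, and then transport these along the iterated Jacquet functor $\circ_{(\rho,A,B,\zeta)\in Jord(\q)}\Jac_{(\rho,A_\gg,B_\gg,\zeta)\mapsto(\rho,A,B,\zeta)}$ used to define $\r^{\Sigma_0}_M(\q,\e)$ in \eqref{eq: M general full orthogonal group} and $\r^{\Sigma_0}_M(\q,\ul,\ueta)$. The first step is to fix the order $>_\q$ satisfying $(\mathcal P)$ and a dominating $\q_\gg$ with discrete diagonal restriction and natural order, and to observe that the construction of $\r^{\Sigma_0}_M(\q,\ul,\ueta)$ commutes with taking direct sums over $(\ul,\ueta)$ with prescribed $\e_{\ul,\ueta}$: since $\r^{\Sigma_0}_M(\q_\gg,\e)=\bigoplus_{\{(\ul,\ueta):\e=\e_{\ul,\ueta}\}/\sim_{\Sigma_0}}\r^{\Sigma_0}_M(\q_\gg,\ul,\ueta)$ holds at the level of $\q_\gg$ by Proposition~\ref{prop: M parametrization}, and the Jacquet functor is additive, applying $\circ_{(\rho,A,B,\zeta)}\Jac_{(\rho,A_\gg,B_\gg,\zeta)\mapsto(\rho,A,B,\zeta)}$ to both sides gives the first displayed identity for $G^{\Sigma_0}$. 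The identification $\D{\S{\q^{>}}^{\Sigma_0}}\cong\D{\S{\q_\gg}^{\Sigma_0}}$ (made in Section~\ref{sec: general case}) is compatible with the parametrization by $(\ul,\ueta)$ because $\e_{\ul,\ueta}$ is defined by the same formula $\ueta^{A-B+1}(-1)^{[(A-B+1)/2]+\ul}$ on corresponding Jordan blocks, and $A_\gg-A=B_\gg-B$ ensures $[(A_\gg-B_\gg+1)/2]$ and $[(A-B+1)/2]$ have the same parity and the ranges $[0,[(A-B+1)/2]]$ match up. I would also invoke Proposition~\ref{prop: multiplicity one} to know that each $\r^{\Sigma_0}_M(\q,\ul,\ueta)$ is either zero or irreducible and that it depends only on $>_\q$, not on $\q_\gg$, so the direct sum decomposition is a genuine decomposition into irreducibles plus zeros.

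For the $G$-side statement, I would pass from $G^{\Sigma_0}$ to $G$ using the relation, recorded just before the proposition and inherited from the discrete-diagonal-restriction case, that the restriction of $\r^{\Sigma_0}_M(\q,\e)$ to $G$ (as $\sH(G)$-module) equals $2\,\r_M(\q,\bar\e)$ when $G$ is special even orthogonal and $\S{\q}^{\Sigma_0}=\S{\q}$, and $\r_M(\q,\bar\e)$ otherwise; the same holds with $(\ul,\ueta)$ in place of $\e$. Combining this with \eqref{eq: full orthogonal character 2} (the twist by $\x_0$ intertwines $\ueta$ and $\ueta\,\ueta_0$), exactly as in the proof of Corollary~\ref{cor: M parametrization for G}, collapses the $\sim_{\Sigma_0}$-classes to $\sim$-classes and yields $\r_M(\q,\bar\e)=\bigoplus_{\{(\ul,\ueta):\bar\e=\bar\e_{\ul,\ueta}\}/\sim}\r_M(\q,\ul,\ueta)$. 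For the ``consists of all'' clause, I would argue that as $\e$ ranges over $\D{\S{\q^{>}}^{\Sigma_0}}$ the pairs $(\ul,\ueta)$ with $\e_{\ul,\ueta}=\e$ exhaust all admissible pairs, so $\bigoplus_{\bar\e\leftarrow\e}\r^{\Sigma_0}_M(\q,\e)$ is the sum of all nonzero $\r^{\Sigma_0}_M(\q,\ul,\ueta)$, and every irreducible representation of $G^{\Sigma_0}$ restricting into some $\r_M(\q,\bar\e)$ is of this form by definition of $\r_M(\q,\bar\e)$ together with the injectivity statement (Corollary following Proposition~\ref{prop: multiplicity one}) that distinct $\sim_{\Sigma_0}$-classes give non-isomorphic nonzero representations.

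The main obstacle I anticipate is not the formal bookkeeping but making rigorous the claim that passing to $\q_\gg$ and back via the Jacquet functor genuinely preserves the decomposition — i.e., that $\Jac$ applied to a direct sum of irreducibles (some possibly becoming zero) gives precisely the direct sum of the images, with no unexpected cancellation or merging. This is where Proposition~\ref{prop: multiplicity one} does the real work: its inductive proof shows that along the chain $\q^0=\q_\gg,\dots,\q^n=\q$ each $\r^{\Sigma_0}_M(\q^k,\ul,\ueta)$ sits as the unique irreducible subrepresentation of a standard module over the next one, hence the Jacquet functor picks out exactly $\r^{\Sigma_0}_M(\q^{k},\ul,\ueta)$ from $\r^{\Sigma_0}_M(\q^{k-1},\ul,\ueta)$ and nothing else. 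Once that multiplicity-one behaviour is in hand, the proposition reduces to additivity of $\Jac$, the combinatorial compatibility of the $(\ul,\ueta)$-parametrization under domination, and citing Proposition~\ref{prop: M parametrization} and Corollary~\ref{cor: M parametrization for G} for the base case; I would write the proof as ``this follows easily from the definitions and the corresponding statements in the discrete diagonal restriction case,'' spelling out only the compatibility of $\e_{\ul,\ueta}$ with the domination identification and the $\x_0$-twist argument for the descent to $G$.
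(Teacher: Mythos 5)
Your proposal is correct and follows exactly the route the paper intends: the paper itself disposes of this proposition in one sentence (``follows easily from the definitions and similar statements in the case of discrete diagonal restriction''), and what you have done is spell out that sentence — reduce to $\q_\gg$ via the definitions \eqref{eq: M general full orthogonal group} and \eqref{eq: MW general full orthogonal group}, invoke Proposition~\ref{prop: M parametrization} and Corollary~\ref{cor: M parametrization for G} for the base case, use additivity of $\Jac$, and cite Proposition~\ref{prop: multiplicity one} and its corollary to see that the summands are distinct irreducibles or zero. One minor imprecision: since $A_\gg - B_\gg = A - B$, the integers $[(A_\gg - B_\gg + 1)/2]$ and $[(A - B + 1)/2]$ are actually equal, not merely of the same parity, which makes the matching of the $(\ul,\ueta)$-ranges even more immediate than you state.
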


As a consequence, for $\q = \q_{p} \in \cQ{G}$ we can define $\Pkt{\q}^{\Sigma_{0}}$ to be the set of irreducible representations of $G^{\Sigma_{0}}$, whose restriction to $G$ belong to $\cPkt{\q}$. In the case $G$ is special even orthogonal, if $\S{\q}^{\Sigma_{0}} \neq \S{\q}$, then $\r^{\theta_{0}} \cong \r$ for any irreducible constituent $[\r]$ in $\r_{M}(\q, \bar{\e})$. So it follows from Theorem~\ref{thm: character relation full orthogonal group} that there is a canonical disjoint decomposition 
\[
\Pkt{\q}^{\Sigma_{0}} = \bigsqcup_{\e \in \D{\S{\q}^{\Sigma_{0}}}} \r^{\Sigma_{0}}_{W}(\q, \e)
\]
such that 
\begin{itemize}

\item
$[\r^{\Sigma_{0}}_{W}(\q, \e)|_{G}] = 2 \r_{W}(\q , \bar{\e})$ if $G$ is special even orthogonal and $\S{\q}^{\Sigma_{0}} = \S{\q}$, or $\r_{W}(\q ,\bar{\e})$ otherwise.

\item For any $s \in \S{\q}^{\Sigma_{0}}$ but not in $\S{\q}$ and $(H, \q_{H}) \rightarrow (\q, s)$, the following identity holds

\begin{align*}
f^{H}_{W}(\q_{H}) = \sum_{\bar{\e} \in \D{\S{\q}}} \e(ss_{\q}) f_{G}(\r_{W}^{\Sigma_{0}}(\q ,\e)) \,\,\,\,\,\,\,\,\,\,\, f \in C^{\infty}_{c}(G \rtimes \theta_{0}).
\end{align*}

\end{itemize}

Let us also define for $\e \in \D{\S{\q^{>}}^{\Sigma_{0}}}$,
\begin{align}
\label{eq: MW general full orthogonal group}
\r^{\Sigma_{0}}_{MW}(\q, \e):= \circ _{(\rho, A, B, \zeta) \in Jord(\q)} \Jac_{(\rho, A_{\gg}, B_{\gg}, \zeta) \mapsto (\rho, A, B, \zeta)} \r^{\Sigma_{0}}_{MW}(\q_{\gg}, \e).
\end{align}
Then we have the following theorem.

\begin{theorem}
\label{thm: M/MW general full orthogonal group}
Suppose $\q = \q_{p} \in \cQ{G}$ and $\e \in \D{\S{\q^{>}}^{\Sigma_{0}}}$, 
\[
\r^{\Sigma_{0}}_{MW}(\q, \e) = \begin{cases}
                           \r^{\Sigma_{0}}_{W}(\q, \e \e^{MW/W}_{\q}), & \text{ if $\e \e^{MW/W}_{\q} \in \D{\S{\q}^{\Sigma_{0}}}$,} \\
                            0, & \text{ otherwise.}
                          \end{cases}
\]
and
\[
\r^{\Sigma_{0}}_{M}(\q, \e) = \r^{\Sigma_{0}}_{MW}(\q, \e \e^{M/MW}_{\q}).
\]

\end{theorem}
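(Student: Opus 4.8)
The plan is to reduce the two assertions to the case of parameters with discrete diagonal restriction, for which they have already been established (Theorem~\ref{thm: M/MW discrete full orthogonal group}, together with Proposition~\ref{prop: MW/W} in the $\sH(G)$-module setting and its $G^{\Sigma_{0}}$-counterpart stated just before this theorem), and then to transport the identities along the Jacquet-module operation $\circ_{(\rho, A, B, \zeta)} \Jac_{(\rho, A_{\gg}, B_{\gg}, \zeta) \mapsto (\rho, A, B, \zeta)}$ used to define both $\r^{\Sigma_{0}}_{M}(\q, \e)$ in \eqref{eq: M general full orthogonal group} and $\r^{\Sigma_{0}}_{MW}(\q, \e)$ in \eqref{eq: MW general full orthogonal group}. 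Concretely: both $\r^{\Sigma_{0}}_{M}$ and $\r^{\Sigma_{0}}_{MW}$ at level $\q$ are obtained from the corresponding objects at level $\q_{\gg}$ (which has discrete diagonal restriction and natural order) by applying the \emph{same} composition of Jacquet functors, in the \emph{same} decreasing order with respect to $>_{\q}$. Hence any equality between $\r^{\Sigma_{0}}_{M}(\q_{\gg}, -)$ and $\r^{\Sigma_{0}}_{MW}(\q_{\gg}, -)$ propagates verbatim to level $\q$, provided the character shift appearing in that equality is the same at both levels.

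First I would handle the equality $\r^{\Sigma_{0}}_{MW}(\q, \e) = \r^{\Sigma_{0}}_{W}(\q, \e\e^{MW/W}_{\q})$ (or $0$). Since $\q_{\gg}$ has discrete diagonal restriction, we have $\r^{\Sigma_{0}}_{MW}(\q_{\gg}, \e) = \r^{\Sigma_{0}}_{W}(\q_{\gg}, \e\e^{MW/W}_{\q_{\gg}})$ whenever $\e\e^{MW/W}_{\q_{\gg}} \in \D{\S{\q_{\gg}}^{\Sigma_{0}}}$ and $0$ otherwise --- this is the $G^{\Sigma_{0}}$-version of Proposition~\ref{prop: MW/W}, recorded in the discussion preceding the theorem. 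One then checks $\e^{MW/W}_{\q} = \e^{MW/W}_{\q_{\gg}}$ under the identification $\S{\q^{>}}^{\Sigma_{0}} \cong \S{\q_{\gg}}^{\Sigma_{0}}$: indeed, by Theorem~\ref{thm: sign} and Definition~\ref{def: MW/W} the set $\mathcal{Z}_{MW/W}$ depends only on the quantities $(a, b)$ modulo their parities, on the signs $\zeta$, on the order $>_{\q}$, and on the relative sizes of $a, a', b, b'$; all of these are preserved under the shift $(\rho, A, B, \zeta)\mapsto(\rho, A_{\gg}, B_{\gg}, \zeta)$ with $A_{\gg}-A = B_{\gg}-B\ge 0$, which increases $a$ and $b$ by the same amount and leaves $\zeta$ and the order unchanged, hence preserves every inequality $a > a'$, $b > b'$, the parities of $a, b$, and so on. Applying $\circ_{(\rho,A,B,\zeta)}\Jac_{(\rho, A_{\gg}, B_{\gg}, \zeta)\mapsto(\rho,A,B,\zeta)}$ to both sides of the discrete-diagonal-restriction identity and invoking \eqref{eq: M general full orthogonal group}, \eqref{eq: MW general full orthogonal group} then gives the first displayed equality, with the dichotomy $\e\e^{MW/W}_{\q}\in\D{\S{\q}^{\Sigma_{0}}}$ versus not corresponding exactly to the dichotomy at level $\q_{\gg}$ (since the inclusion $\D{\S{\q}^{\Sigma_{0}}}\hookrightarrow\D{\S{\q^{>}}^{\Sigma_{0}}}$ and the identification $\D{\S{\q^{>}}^{\Sigma_{0}}}\cong\D{\S{\q_{\gg}}^{\Sigma_{0}}}$ are compatible).

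For the second equality $\r^{\Sigma_{0}}_{M}(\q, \e) = \r^{\Sigma_{0}}_{MW}(\q, \e\e^{M/MW}_{\q})$, the same strategy applies: by Theorem~\ref{thm: M/MW discrete full orthogonal group} we have $\r^{\Sigma_{0}}_{M}(\q_{\gg}, \e) = \r^{\Sigma_{0}}_{MW}(\q_{\gg}, \e\e^{M/MW}_{\q_{\gg}})$, and Proposition~\ref{prop: M/MW general} already records that $\e^{M/MW}_{\q} = \e^{M/MW}_{\q_{\gg}}$ (an immediate check from Definition~\ref{def: M/MW discrete} versus Definition~\ref{def: M/MW general}, since the shift preserves all the parities, the signs $\zeta$, the order $>_{\q}$, and the relative magnitudes $|a'-b'|$ versus $|a-b|$ that enter the counts $m, n$). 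Applying the defining Jacquet composition to both sides and using \eqref{eq: M general full orthogonal group} and \eqref{eq: MW general full orthogonal group} yields the claim. I expect the main --- though essentially bookkeeping --- obstacle to be verifying that the two character shifts $\e^{MW/W}$ and $\e^{M/MW}$ are genuinely insensitive to the choice of dominating parameter $\q_{\gg}$ and to passing between $\q$ and $\q_{\gg}$; this amounts to tracking that none of the defining conditions in Definitions~\ref{def: MW/W}, \ref{def: M/MW discrete}, \ref{def: M/MW general} and Theorem~\ref{thm: sign} are broken by a uniform shift, which is routine but must be done carefully in each case of the parities of $a+b$ and of the sign $\zeta$. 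Everything else is formal transport of an identity through a fixed sequence of functors, exactly as in the proofs of Proposition~\ref{prop: MW/W} and Proposition~\ref{prop: M/MW general}.
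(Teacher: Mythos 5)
Your treatment of the second equality is fine and matches the paper's one-line argument: both $\r^{\Sigma_{0}}_{M}(\q,\e)$ and $\r^{\Sigma_{0}}_{MW}(\q,\e)$ are \emph{defined} (in \eqref{eq: M general full orthogonal group} and \eqref{eq: MW general full orthogonal group}) by the same Jacquet composition from $\q_{\gg}$, so applying it to the discrete-diagonal-restriction identity of Theorem~\ref{thm: M/MW discrete full orthogonal group} and using $\e^{M/MW}_{\q_{\gg}}=\e^{M/MW}_{\q}$ indeed yields the claim.

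The first equality, however, has a genuine gap. Your transport step requires knowing
\[
\circ_{(\rho,A,B,\zeta)}\Jac_{(\rho,A_{\gg},B_{\gg},\zeta)\mapsto(\rho,A,B,\zeta)}\,\r^{\Sigma_{0}}_{W}(\q_{\gg},\e') \;=\; \r^{\Sigma_{0}}_{W}(\q,\e') \text{ or } 0,
\]
but unlike $\r^{\Sigma_{0}}_{MW}$ and $\r^{\Sigma_{0}}_{M}$, the object $\r^{\Sigma_{0}}_{W}(\q,\cdot)$ at a parameter $\q$ with repeated Jordan blocks is \emph{not} defined by a Jacquet composition from a dominating parameter; it comes from Arthur's theory (the canonical decomposition of $\Pkt{\q}^{\Sigma_{0}}$ via the twisted character identities of Theorem~\ref{thm: character relation full orthogonal group}). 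So you cannot ``apply the same functor to both sides'' — the right-hand side of the $\q_{\gg}$-identity does not turn into $\r^{\Sigma_{0}}_{W}(\q,\cdot)$ by definition; that is precisely what needs to be proved. This is also not a cosmetic issue: even granting the $\sH(G)$-module statement of Proposition~\ref{prop: MW/W} on restrictions to $G$, it only determines $\r^{\Sigma_{0}}_{MW}(\q,\e)$ up to the twist by $\x_0$ (i.e.\ up to $\e_0$), and one must rule out the wrong extension. The paper resolves this by mimicking the proof of Proposition~\ref{prop: MW/W} at the level of twisted distributions: it forms the weighted sums $\Pkt{MW,s^*}^{\Sigma_{0}}(\q)$ and $\Pkt{W,s^*}^{\Sigma_{0}}(\q)$ for $s^*\in\S{\q^>}^{\Sigma_0}\setminus\S{\q^>}$, shows $\Pkt{MW,s^*}^{\Sigma_0}(\q)=\e^{MW/W}_\q(s_\q^>s^*)\Pkt{W,s^*}^{\Sigma_0}(\q)$ by pushing the character identity through the Jacquet composition (cf.\ \eqref{eq: MW/W}), and then invokes \emph{linear independence of twisted characters} to isolate the individual terms. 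You should also note that your appeal to ``exactly as in the proof of Proposition~\ref{prop: MW/W}'' is misplaced for this point: that proof itself is not formal transport but goes through Arthur's character relations; only Proposition~\ref{prop: M/MW general} is pure functor bookkeeping, and that is the pattern that applies to the second equality, not the first.
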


\begin{proof}
We can assume $G$ is special even orthogonal and $\S{\q}^{\Sigma_{0}} \neq \S{\q}$. Since 
\[
\r_{MW}(\q, \bar{\e}) = \begin{cases}
                           \r_{W}(\q, \bar{\e} \bar{\e}^{MW/W}_{\q}), & \text{ if $\bar{\e} \bar{\e}^{MW/W}_{\q} \in \D{\S{\q}}$,} \\
                            0, & \text{ otherwise}
                          \end{cases}
\]
we have $\r^{\Sigma_{0}}_{MW}(\q, \e \e^{MW/W}_{\q}) \neq 0$ only if $\e  \in \D{\S{\q}^{\Sigma_{0}}}$.

Let us choose $s^{*} \in \S{\q^{>}}^{\Sigma_{0}}$ but not in $\S{\q^{>}}$, and we denote its image in $\S{\q}^{\Sigma_{0}}$ again by $s^{*}$. Then let us define
\begin{align*}
\Pkt{MW, s^{*}}^{\Sigma_{0}}(\q) &:= \sum_{\bar{\e} \in \D{\S{\q^{>}}}} \e(s^{*}s^{>}_{\q}) \r^{\Sigma_{0}}_{MW}(\q, \e), \\
\Pkt{W, s^{*}}^{\Sigma_{0}}(\q) &:= \sum_{\bar{\e} \in \D{\S{\q}}} \e(s^{*}s_{\q}) \r^{\Sigma_{0}}_{W}(\q, \e).
\end{align*}
As in Proposition~\ref{prop: MW/W} one can show
\[
\Pkt{MW, s^{*}}^{\Sigma_{0}}(\q) = \e^{MW/W}_{\q}(s_{\q}^{>}s^{*}) \Pkt{W, s^{*}}^{\Sigma_{0}}(\q)
\]
(cf. \eqref{eq: MW/W}). By the linear independence of twisted characters, we have for $\e \in \D{\S{\q}^{\Sigma_{0}}}$
\[
\e \e^{MW/W}_{\q}(s^{*}s^{>}_{\q}) \r^{\Sigma_{0}}_{MW}(\q, \e \e_{\q}^{MW/W}) = \e^{MW/W}_{\q}(s_{\q}^{>}s^{*}) \cdot \e(s^{*}s_{\q}) \r^{\Sigma_{0}}_{W}(\q, \e).
\]
And hence
\[
\r^{\Sigma_{0}}_{MW}(\q, \e \e_{\q}^{MW/W}) = \r^{\Sigma_{0}}_{W}(\q, \e).
\]
This proves the first part. The second part follows from the case of the discrete diagonal restriction and the fact that $\e^{M/MW}_{\q_{\gg}} = \e^{M/MW}_{\q}$.

\end{proof}

The following corollary is a direct consequence of Theorem~\ref{thm: M/MW general full orthogonal group}.

\begin{corollary}
Suppose $\q = \q_{p} \in \cQ{G}$ and $\e \in \D{\S{\q^{>}}^{\Sigma_{0}}}$, let
\(
\e^{M/W}_{\q}: = \e^{M/MW}_{\q} \e^{MW/W}_{\q}.
\)
Then
\[
\r^{\Sigma_{0}}_{M}(\q, \e) = \begin{cases}
                           \r^{\Sigma_{0}}_{W}(\q, \e \e^{M/W}_{\q}), & \text{ if $\e \e^{M/W}_{\q} \in \D{\S{\q}^{\Sigma_{0}}}$,} \\
                            0, & \text{ otherwise.}
                          \end{cases}
\]
\end{corollary}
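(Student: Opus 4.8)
The plan is to derive this corollary directly by composing the two displayed identities in Theorem~\ref{thm: M/MW general full orthogonal group}. First I would fix $\q = \q_{p} \in \cQ{G}$ together with an order $>_{\q}$ on $Jord(\q)$ satisfying $(\mathcal{P})$, and take $\e \in \D{\S{\q^{>}}^{\Sigma_{0}}}$. By the definition $\e^{M/W}_{\q} := \e^{M/MW}_{\q} \e^{MW/W}_{\q}$, the second identity of Theorem~\ref{thm: M/MW general full orthogonal group} gives $\r^{\Sigma_{0}}_{M}(\q, \e) = \r^{\Sigma_{0}}_{MW}(\q, \e \e^{M/MW}_{\q})$, and then the first identity applied to the character $\e \e^{M/MW}_{\q} \in \D{\S{\q^{>}}^{\Sigma_{0}}}$ yields
\[
\r^{\Sigma_{0}}_{MW}(\q, \e \e^{M/MW}_{\q}) = \begin{cases} \r^{\Sigma_{0}}_{W}(\q, \e \e^{M/MW}_{\q} \e^{MW/W}_{\q}), & \text{ if } \e \e^{M/MW}_{\q} \e^{MW/W}_{\q} \in \D{\S{\q}^{\Sigma_{0}}}, \\ 0, & \text{ otherwise.} \end{cases}
\]
Since $\e \e^{M/MW}_{\q} \e^{MW/W}_{\q} = \e \e^{M/W}_{\q}$, the right-hand side is exactly the claimed case distinction, so the two identities chain together to give the corollary.

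The only point that needs a word of care is that the composition is legitimate: to apply the first identity of Theorem~\ref{thm: M/MW general full orthogonal group} to the character $\e \e^{M/MW}_{\q}$ one must know $\e \e^{M/MW}_{\q} \in \D{\S{\q^{>}}^{\Sigma_{0}}}$. This is immediate because $\e^{M/MW}_{\q} \in \D{\S{\q^{>}}^{\Sigma_{0}}}$ (established in the course of Proposition~\ref{prop: M/MW general} and its elementary and discrete-diagonal-restriction precursors, Proposition~\ref{prop: MW/W}(1) and Proposition~\ref{prop: MW/W discrete}(1)), and $\D{\S{\q^{>}}^{\Sigma_{0}}}$ is a group under pointwise multiplication of $\Two$-valued functions on $Jord(\q_{p})$. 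I expect no genuine obstacle here; the corollary is a purely formal consequence of Theorem~\ref{thm: M/MW general full orthogonal group}, and the statement in the excerpt already advertises it as "a direct consequence". The proof is therefore a two-line chain of equalities together with the observation that the membership condition $\e \e^{M/MW}_{\q} \e^{MW/W}_{\q} \in \D{\S{\q}^{\Sigma_{0}}}$ appearing after the second application is the same as the condition $\e \e^{M/W}_{\q} \in \D{\S{\q}^{\Sigma_{0}}}$ in the statement, by definition of $\e^{M/W}_{\q}$.

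If one wished to be slightly more explicit, I would write it as: \textit{By Theorem~\ref{thm: M/MW general full orthogonal group}, $\r^{\Sigma_{0}}_{M}(\q, \e) = \r^{\Sigma_{0}}_{MW}(\q, \e \e^{M/MW}_{\q})$. Applying the first part of the same theorem to $\e \e^{M/MW}_{\q} \in \D{\S{\q^{>}}^{\Sigma_{0}}}$, this equals $\r^{\Sigma_{0}}_{W}(\q, \e \e^{M/MW}_{\q} \e^{MW/W}_{\q}) = \r^{\Sigma_{0}}_{W}(\q, \e \e^{M/W}_{\q})$ when $\e \e^{M/MW}_{\q} \e^{MW/W}_{\q} = \e \e^{M/W}_{\q}$ lies in $\D{\S{\q}^{\Sigma_{0}}}$, and vanishes otherwise.} That is the entire argument; no Jacquet module computations or endoscopic character identities are needed beyond those already packaged into Theorem~\ref{thm: M/MW general full orthogonal group}.
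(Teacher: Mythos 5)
Your proposal is correct and matches the paper's intent exactly; the paper introduces this corollary with the phrase "a direct consequence of Theorem~\ref{thm: M/MW general full orthogonal group}" and gives no proof, and the two-line chain you write out — substitute the second identity into the first, then collapse $\e\,\e^{M/MW}_{\q}\,\e^{MW/W}_{\q}$ to $\e\,\e^{M/W}_{\q}$ by definition — is precisely the argument the author has in mind. One small bibliographic slip: when justifying $\e^{M/MW}_{\q}\in\D{\S{\q^{>}}^{\Sigma_{0}}}$ you cite Proposition~\ref{prop: MW/W}(1) and Proposition~\ref{prop: MW/W discrete}(1), but those statements concern $\e^{MW/W}_{\q}$ rather than $\e^{M/MW}_{\q}$; the membership of $\e^{M/MW}_{\q}$ in the character group is rather implicit in Theorem~\ref{thm: M/MW general full orthogonal group}(2) being well-posed (it is needed there for $\r^{\Sigma_{0}}_{MW}(\q, \e\,\e^{M/MW}_{\q})$ to be defined), so the composition you perform is legitimate without the extra citation.
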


Finally for $\q \in \cQ{G}$, 
\[
\cPkt{\q} = \r_{\q_{np}}  \rtimes \cPkt{\q_{p}} 
\]
We define
\[
\Pkt{\q}^{\Sigma_{0}} := \Big(\times_{(\rho, a, b) \in Jord(\q_{np})}Sp(St(\rho, a), b) \Big) \rtimes \Pkt{\q_{p}}^{\Sigma_{0}},
\]
and
\[
\r^{\Sigma_{0}}_{W}(\q, \e) := \Big(\times_{(\rho, a, b) \in Jord(\q_{np})}Sp(St(\rho, a), b) \Big) \rtimes \r^{\Sigma_{0}}_{W}(\q_{p}, \e),
\]
\[
\r^{\Sigma_{0}}_{M}(\q, \e) := \Big(\times_{(\rho, a, b) \in Jord(\q_{np})}Sp(St(\rho, a), b) \Big) \rtimes \r^{\Sigma_{0}}_{M}(\q_{p}, \e)
\]
for $\e \in \D{\S{\q}^{\Sigma_{0}}}$. Then we again have
\[
\r^{\Sigma_{0}}_{M}(\q, \e) = \begin{cases}
                           \r^{\Sigma_{0}}_{W}(\q, \e \e^{M/W}_{\q}), & \text{ if $\e \e^{M/W}_{\q} \in \D{\S{\q}^{\Sigma_{0}}}$,} \\
                            0, & \text{ otherwise.}
                          \end{cases}
\]
For $\ul(\rho, A, B, \zeta) \in [0, [(A-B+1)/2]]$ and $\ueta(\rho, A, B, \zeta) \in \Two$ on $Jord(\q_{p})$ such that $\e_{\ul, \ueta} \in \D{\S{\q^{>}}^{\Sigma_{0}}}$, we also define
\[
\r^{\Sigma_{0}}_{M}(\q, \ul, \ueta) = \Big(\times_{(\rho, a, b) \in Jord(\q_{np})}Sp(St(\rho, a), b) \Big) \rtimes \r^{\Sigma_{0}}_{M}(\q_{p}, \ul, \ueta)
\]
and
\[
\r_{M}(\q, \ul, \ueta) = \Big(\times_{(\rho, a, b) \in Jord(\q_{np})}Sp(St(\rho, a), b) \Big) \rtimes \r_{M}(\q_{p}, \ul, \ueta).
\]

\begin{proposition}[\cite{Moeglin1:2006}, Theorem 6]
For $\q \in \cQ{G}$, $\r^{\Sigma_{0}}_{M}(\q, \ul, \ueta)$ is irreducible or zero.
\end{proposition}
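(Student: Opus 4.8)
The plan is to reduce the statement to the case $\q = \q_{p}$, where it has already been established, and then to prove an irreducibility result for a parabolic induction. By construction,
\[
\r^{\Sigma_{0}}_{M}(\q, \ul, \ueta) = \Big(\times_{(\rho, a, b) \in Jord(\q_{np})}Sp(St(\rho, a), b)\Big) \rtimes \r^{\Sigma_{0}}_{M}(\q_{p}, \ul, \ueta),
\]
so if $\r^{\Sigma_{0}}_{M}(\q_{p}, \ul, \ueta) = 0$ there is nothing to prove. Otherwise it is irreducible by Proposition~\ref{prop: multiplicity one}, and the proposition follows once we know that for every irreducible $\sigma^{\Sigma_{0}} \in \Pkt{\q_{p}}^{\Sigma_{0}}$ the induced representation $\r_{\q_{np}} \rtimes \sigma^{\Sigma_{0}}$ is irreducible, where $\r_{\q_{np}} = \times_{(\rho, a, b) \in Jord(\q_{np})}Sp(St(\rho, a), b)$ and every $(\rho, a, b) \in Jord(\q_{np})$ lies outside $Jord(\q_{p})$, i.e.\ $\rho \otimes \nu_{a} \otimes \nu_{b}$ is either non-self-dual or self-dual of parity opposite to $\D{G}$.

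To prove this I would induct on $|Jord(\q_{np})|$, which reduces matters to a single factor $Sp(St(\rho, a), b)$, and then peel off one segment $<x, \cdots, y>$ of the generalized segment describing $Sp(St(\rho, a), b)$ (see Section~\ref{sec: introduction}) at a time, showing at each stage that the relevant induced representation of the form $<x, \cdots, y> \rtimes (-)$ is irreducible. The inputs are: Property (1) in Section~\ref{subsec: construction} together with Proposition~\ref{prop: reducibility general}, which determine exactly which $\bar{\Jac}_{x}$ of $\sigma^{\Sigma_{0}}$ and of the intermediate representations are nonzero; the cuspidal reducibility criterion Proposition~\ref{prop: cuspidal reducibility}, which locates the reducibility points of $\rho||^{\pm x} \rtimes (-)$ in terms of $Jord_{\rho}$ of the underlying tempered parameter; and the elementary fact that $<x, \cdots, y> \rtimes (-)$ has length at most two, being irreducible unless the segment meets the reducibility locus in a prescribed way. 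Since $(\rho, a, b)$ is of the ``wrong'' parity, its contribution sits off the grid of $Jord_{\rho}(\q_{p})$, and one checks segment by segment that no new reducibility is created; equivalently, a Jacquet module computation shows $\r_{\q_{np}} \rtimes \sigma^{\Sigma_{0}}$ is generated by its unique irreducible subrepresentation and has the semisimplified Jacquet module forced by irreducibility.

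The main obstacle is precisely this Jacquet-module bookkeeping in the single-block step, which becomes delicate when the supercuspidal $\rho$ underlying the chosen block of $\q_{np}$ also underlies blocks of $\q_{p}$ --- which is possible, since for a fixed self-dual $\rho$ the parity of $(\rho, a, b)$ alternates with the parity of $a+b$. In that case the cuspidal supports of $Sp(St(\rho, a), b)$ and of $\sigma^{\Sigma_{0}}$ genuinely overlap, and one must invoke M{\oe}glin's explicit Jacquet-module formulas (\cite{Xu:preprint3}, Section 5) together with the vanishing statements of Proposition~\ref{prop: reducibility general} and the embeddings of Proposition~\ref{prop: multiplicity one} to control every step. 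An alternative route, perhaps cleaner conceptually, is to observe that $Sp(St(\rho, b), a)$ is the Aubert dual of $Sp(St(\rho, a), b)$, to build a $\q_{np}$-block by iterating the generalized Aubert involution of Section~\ref{subsec: Aubert dual for G} from the case $b = 1$, and to invoke the irreducibility-preservation of Theorem~\ref{thm: irreducibility of Aubert dual}; this, however, requires verifying that the generalized Aubert involution commutes with induction along $\r_{\q_{np}}$, a compatibility of the kind established in Appendix~\ref{sec: compatibility}.
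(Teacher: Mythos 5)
The paper gives no proof of this statement: it is cited directly as Theorem~6 of M{\oe}glin's \cite{Moeglin1:2006}. Your reduction to $\q = \q_p$ via the factorization $\r^{\Sigma_{0}}_{M}(\q, \ul, \ueta) = \r_{\q_{np}} \rtimes \r^{\Sigma_{0}}_{M}(\q_{p}, \ul, \ueta)$ and Proposition~\ref{prop: multiplicity one} is exactly right, and the residual claim you extract --- that $Sp(St(\rho,a),b)\rtimes\sigma^{\Sigma_0}$ is irreducible for each $(\rho,a,b)\in Jord(\q_{np})$ and each nonzero $\sigma^{\Sigma_0}$ in the packet --- is indeed the crux.

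Where your write-up falls short is that you treat the case where a self-dual $\rho$ underlies blocks of both $\q_{np}$ and $\q_p$ as a genuine obstacle, when in fact the parity dichotomy you yourself invoke already defuses it. If $(\rho,a,b)\in Jord(\q_{np})$ and $(\rho,a',b')\in Jord(\q_p)$ with the same self-dual $\rho$, then $a+b$ and $a'+b'$ necessarily have opposite parities; consequently the exponents occurring in the generalized segment of $Sp(St(\rho,a),b)$ are half-integers precisely when those occurring in the cuspidal support and the reducibility locus of $\sigma^{\Sigma_0}$ (governed by $Jord_\rho(\q_p)$ via Proposition~\ref{prop: cuspidal reducibility} and Proposition~\ref{prop: reducibility general}) are integers, and vice versa. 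The two systems of exponents therefore never interleave, so neither $GL$-side linking nor $G$-side reducibility points are hit, and the Jacquet-module bookkeeping in your segment-by-segment peeling collapses: one shows $\Jac_x$ of the induced module at the initial exponent of the segment is irreducible and recovers $\r_{\q_{np}'}\rtimes\sigma^{\Sigma_0}$ (with one fewer segment), giving a unique irreducible subrepresentation and forcing irreducibility by unitarity. The non-self-dual $\rho$ case is even easier, since $\rho\not\cong\rho^\vee$ prevents any overlap between the $\rho$- and $\rho^\vee$-directions. So your first route works but needs you to \emph{use} the parity separation rather than treat it as a source of difficulty. Your Aubert-involution alternative is also viable in principle, but it requires the additional commutativity you flag and a correct choice of base case (the base would be $a=1$ giving $Sp(\rho,b)$, not $b=1$; the involution of Section~\ref{subsec: Aubert dual for G} acts by flipping $\zeta$'s on the $G$-side, and its interaction with a $GL$-factor of the wrong parity is not addressed by Appendix~\ref{sec: compatibility}), so as written it is the less promising of the two.
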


As a consequence of this proposition, $\r_{M}(\q, \ul, \ueta)$ is the irreducible representation of $G$ viewed as $\sH(G)$-module in the restriction of $\r^{\Sigma_{0}}_{M}(\q, \ul, \ueta)$ to $G$ if $\r^{\Sigma_{0}}_{M}(\q, \ul, \ueta) \neq 0$, and zero otherwise. To summarize, we obtain M{\oe}glin's multiplicity free result for Arthur packets.

\begin{theorem}[M{\oe}glin]
For $\q \in \cQ{G}$, 
\[
\Pkt{}^{\Sigma_{0}}(\q) := \bigoplus_{\e \in \D{\S{\q}^{\Sigma_{0}}}} \r^{\Sigma_{0}}_{W}(\q, \e)
\]
\[
(\text{ resp. $\cPkt{}(\q) := \bigoplus_{\bar{\e} \in \D{\S{\q}}} \r_{W}(\q, \bar{\e})$ })
\]
is a multiplicity free representation of $G^{\Sigma_{0}}$ (resp. $\sH(G)$-module).
\end{theorem}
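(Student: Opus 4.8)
The plan is to reduce the multiplicity-freeness of $\Pi^{\Sigma_0}(\q)$ (equivalently $\bar\Pi(\q)$) to the already-established parametrization results, exploiting the identity $\r^{\Sigma_0}_M(\q,\e)=\r^{\Sigma_0}_W(\q,\e\e^{M/W}_\q)$ whenever $\e\e^{M/W}_\q\in\widehat{\S{\q}^{\Sigma_0}}$ (and $0$ otherwise), which was proved in the general case just above. Because the twist by $\e^{M/W}_\q$ is a bijection on $\widehat{\S{\q}^{\Sigma_0}}$, the total direct sum $\bigoplus_{\e}\r^{\Sigma_0}_W(\q,\e)$ coincides (up to the zero terms, which contribute nothing) with $\bigoplus_{\e}\r^{\Sigma_0}_M(\q,\e)$. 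So it suffices to prove that the latter is multiplicity free. First I would reduce to the case $\q=\q_p$: since $\Pi^{\Sigma_0}(\q)=\big(\times_{(\rho,a,b)\in Jord(\q_{np})}Sp(St(\rho,a),b)\big)\rtimes\Pi^{\Sigma_0}(\q_p)$ is a parabolic induction from a representation on a smaller group, and the $GL$-factor is a fixed irreducible Speh-type representation, multiplicity-freeness is preserved (the inducing representation on the Levi is multiplicity free and the cuspidal supports on the $GL$-blocks in $Jord(\q_{np})$ are disjoint from those occurring in $\q_p$ by the parity/type constraints, so no extra intertwining can create multiplicities).

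Next, for $\q=\q_p$, I would use the $(\underline l,\underline\eta)$-parametrization. By the Proposition following Proposition~\ref{prop: multiplicity one}, for $\e\in\widehat{\S{\q^{>}}^{\Sigma_0}}$ we have $\r^{\Sigma_0}_M(\q,\e)=\bigoplus_{\{(\underline l,\underline\eta):\,\e=\e_{\underline l,\underline\eta}\}/\sim_{\Sigma_0}}\r^{\Sigma_0}_M(\q,\underline l,\underline\eta)$, and each $\r^{\Sigma_0}_M(\q,\underline l,\underline\eta)$ is irreducible or zero by Proposition~\ref{prop: multiplicity one}. Therefore $\bigoplus_{\e}\r^{\Sigma_0}_M(\q,\e)=\bigoplus_{(\underline l,\underline\eta)/\sim_{\Sigma_0}}\r^{\Sigma_0}_M(\q,\underline l,\underline\eta)$, a sum of irreducibles (discarding zeros) indexed by $\sim_{\Sigma_0}$-classes of pairs $(\underline l,\underline\eta)$. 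Multiplicity-freeness then amounts to the statement: if $\r^{\Sigma_0}_M(\q,\underline l,\underline\eta)\cong\r^{\Sigma_0}_M(\q,\underline l',\underline\eta')\neq 0$ then $(\underline l,\underline\eta)\sim_{\Sigma_0}(\underline l',\underline\eta')$. This is exactly the Corollary stated right after Proposition~\ref{prop: multiplicity one}, whose proof applies the embedding \eqref{eq: multiplicity one} repeatedly to pass from $\q$ up to $\q_{\gg}$, reducing to the discrete-diagonal-restriction case, where the analogous statement is Proposition~\ref{prop: M parametrization} (proved by comparing Jacquet modules).

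Assembling: for $\q=\q_p$, $\Pi^{\Sigma_0}(\q)=\bigoplus_\e\r^{\Sigma_0}_W(\q,\e)=\bigoplus_\e\r^{\Sigma_0}_M(\q,\e)=\bigoplus_{(\underline l,\underline\eta)/\sim_{\Sigma_0}}\r^{\Sigma_0}_M(\q,\underline l,\underline\eta)$ is a sum of irreducibles indexed injectively (modulo $\sim_{\Sigma_0}$) by the pairs, hence multiplicity free; then the parabolic-induction step handles general $\q$. The statement for $\bar\Pi(\q)$ as an $\bar{\mathcal H}(G)$-module follows by restriction to $G$: the restriction of $\r^{\Sigma_0}_M(\q,\underline l,\underline\eta)$ gives $\r_M(\q,\underline l,\underline\eta)$ (or twice it in the special even orthogonal case with $\S{\q}^{\Sigma_0}=\S{\q}$, but then the extra copies are accounted for by the $\ueta_0$-equivalence, i.e. the relation $\sim$ versus $\sim_{\Sigma_0}$), and the already-cited parametrization results show the distinct $\sim$-classes give distinct irreducibles, so $\bar\Pi(\q)=\bigoplus_{(\underline l,\underline\eta)/\sim}\r_M(\q,\underline l,\underline\eta)$ is multiplicity free.

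The main obstacle is the injectivity of the parametrization by $(\underline l,\underline\eta)$-classes, i.e. that non-equivalent pairs yield non-isomorphic (and, crucially, \emph{distinct as subquotients}, not merely generically distinct) irreducible representations; this is where one genuinely needs M{\oe}glin's Jacquet-module computations, propagated from the discrete-diagonal-restriction case via the tower of embeddings \eqref{eq: multiplicity one}, and it is the only place where something beyond formal bookkeeping is required. Everything else — the bijection induced by twisting by $\e^{M/W}_\q$, the vanishing of the $0$-terms, and the reduction through parabolic induction — is routine given the results already in hand.
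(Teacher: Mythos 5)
Your proposal is correct and takes essentially the same route as the paper, which states the theorem as a summary of the section's accumulated results: the $(\ul,\ueta)$-decomposition of $\r^{\Sigma_0}_M(\q,\e)$, the irreducibility or vanishing of $\r^{\Sigma_0}_M(\q,\ul,\ueta)$ (cited from M{\oe}glin), the $\sim_{\Sigma_0}$-injectivity of the $(\ul,\ueta)$-parametrization (Proposition~\ref{prop: M parametrization} propagated via Proposition~\ref{prop: multiplicity one} and its corollary), and the $M/W$ twist relating the two parametrizations; you assemble exactly these pieces. Two small remarks: the twist $\e\mapsto\e\e^{M/W}_\q$ is a bijection of the larger group $\D{\S{\q^{>}}^{\Sigma_0}}$ (not of $\D{\S{\q}^{\Sigma_0}}$, since $\e^{M/W}_\q$ need not lie in the latter), but the identification of the $W$-sum with the $M$-sum goes through exactly as you conclude; and where you argue the reduction to $\q=\q_p$ by disjointness of cuspidal supports between $Jord(\q_{np})$ and $Jord(\q_p)$, the paper instead just invokes \cite{Moeglin1:2006}, Theorem~6 for the general-case irreducibility of $\r^{\Sigma_0}_M(\q,\ul,\ueta)$ — your alternative justification is plausible but is the one place where you substitute a sketch for a citation, and the accompanying distinctness of $\r_{\q_{np}}\rtimes\r^{\Sigma_0}_M(\q_p,\ul,\ueta)$ for inequivalent $(\ul,\ueta)$ deserves to be spelled out via Jacquet modules rather than appealed to as ``no extra intertwining.''
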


\appendix

\section{Compatibility of endoscopic transfer with Aubert involution}
\label{sec: compatibility}

In this section, we want to establish the compatibility of (twisted) endoscopic transfer with generalized (twisted) Aubert involution (cf. \eqref{diag: twisted compatible with Aubert dual}, \eqref{diag: compatible with Aubert dual} and \eqref{diag: twisted even orthogonal compatible with Aubert dual}). We will start by considering the usual (twisted) Aubert involution. Let $F$ be a $p$-adic field and $G$ be a quasisplit connected reductive group over $F$. Let $\theta$ be an $F$-automorphism of $G$ preserving an $F$-splitting. We denote the space of (resp. twisted) invariant distributions on $G$ by $\D{I}(G)$ (resp. $\D{I}(G^{\theta})$), and denote the space of stable invariant distributions on $G$ by $\D{SI}(G)$. Let $\mathcal{P}^{\theta}$ be the set of $\theta$-stable standard parabolic subgroups of $G$. Let $G^{+} = G \rtimes <\theta>$. For any $\r^{+} \in \Rep(G^{+})$, we define the $\theta$-twisted Aubert involution as follows:
\[
inv^{\theta}(\r^{+}) = \sum_{P \in \mathcal{P}^{\theta}} (-1)^{dim(A_{P})_{\theta}} \Ind^{G}_{P} (\Jac_{P} \r^{+})
\] 
where $A_{P}$ is the maximal split central torus of the Levi component $M$ of $P$. Let $H$ be a twisted endoscopic group of $G$, and we denote by $inv^{H}$ the Aubert involution on Grothendieck group of $\Rep(H)$. Then we want to show the following diagram commutes:
\begin{align}
\label{diag: twisted compatible with Aubert dual general} 
\xymatrix{\D{SI}(H) \ar[d]_{inv^{H}}  \ar[r] & \D{I}(G^{\theta}) \ar[d]^{inv^{\theta}} \\
                \D{SI}(H) \ar[r]  & \D{I}(G^{\theta}) }
\end{align}
where the horizontal maps correspond to the twisted spectral endoscopic transfer. To establish this diagram, we need to know the compatibility of twisted endoscopic transfer with Jacque modules, and we will recall its formulation here following (\cite{Xu:preprint3}, Appendix C). 

For simplicity, we will assume there is an embedding 
\[
\xi: \L{H} \rightarrow \L{G},
\]
and $\xi(\L{H}) \subseteq \Cent(s, \L{G})$ and $\D{H} \cong \Cent(s, \D{G})^{0}$ for some semisimple $s \in \D{G} \rtimes \D{\theta}$. We fix ($\D{\theta}$-stable) $\Gal{F}$-splittings $(\mathcal{B}_{H}, \mathcal{T}_{H}, \{\mathcal{X}_{\alpha_{H}}\})$ and $(\mathcal{B}_{G}, \mathcal{T}_{G}, \{\mathcal{X}_{\alpha}\})$ for $\D{H}$ and $\D{G}$ respectively. By taking certain $\D{G}$-conjugate of $\xi$, we can assume $s \in \mathcal{T}_{G} \rtimes \D{\theta}$ and $\xi(\mathcal{T}_{H}) = (\mathcal{T}_{G}^{\D{\theta}})^{0}$ and $\xi(\mathcal{B}_{H}) \subseteq \mathcal{B}_{G}$. Let $W_{H} = W(\D{H}, \mathcal{T}_{H})$ and $W_{G^{\theta}} = W(\D{G}, \mathcal{T}_{G})^{\D{\theta}}$, then $W_{H}$ can be viewed as a subgroup of $W_{G^{\theta}}$. We also view $\L{H}$ as a subgroup of $\L{G}$ through $\xi$. For $P = MN \in \mathcal{P}^{\theta}$ with standard embedding $\L{P} \hookrightarrow \L{G}$, there exists a torus $S \subseteq (\mathcal{T}_{G}^{\D{\theta}})^{0}$ such that $\L{M} = \Cent(S, \L{G})$. Let $W_{M^{\theta}} =  W(\D{M}, \mathcal{T}_{G})^{\D{\theta}}$. We define
\[
W_{G^{\theta}}(H, M) := \{w \in W_{G^{\theta}} | \, \Cent(w(S), \L{H}) \rightarrow W_{F} \text{ surjective }\}.
\]
For any $w \in W_{G^{\theta}}(H, M)$, let us take $g \in \D{G}$ such that $\Int(g)$ induces $w$. Since $\Cent(w(S), \L{H}) \rightarrow W_{F}$ is surjective, $g \L{P} g^{-1} \cap \L{H}$ defines a parabolic subgroup of $\L{H}$ with Levi component $g \L{M} g^{-1} \cap \L{H}$. So we can choose a standard parabolic subgroup $P'_{w} = M'_{w}N'_{w}$ of $H$ with standard embedding $\L{P'_{w}} \hookrightarrow \L{H}$ such that $\L{P'_{w}}$ (resp. $\L{M'_{w}}$) is $\D{H}$-conjugate to $g \L{P} g^{-1} \cap \L{H}$ (resp. $g \L{M} g^{-1} \cap \L{H}$). In particular, $M'_{w}$ can be viewed as a twisted endoscopic group of $M$, and the embedding $\xi_{M'_{w}}: \L{M'_{w}} \rightarrow \L{M}$ is given by the following diagram:
\[
\xymatrix{ \L{P'_{w}} \ar@{^{(}->}[d] & \L{M'_{w}} \ar[l] \ar[r]^{\xi_{M'_{w}}} & \L{M} \ar[r] & \L{P} \ar@{^{(}->}[d]  \\
\L{H} \ar[r]^{\Int(h)} & \L{H} \ar[r]^{\xi} & \L{G} & \L{G} \ar[l]_{\Int(g)} 
}
\]
where $h \in \D{H}$ induces an element in $W_{H}$. Note the choice of $h$ is unique up to $\D{M}'_{w}$-conjugation, and so is $\xi_{M'_{w}}$. If we change $g$ to $h'gm$, where $h' \in \D{H}$ induces an element in $W_{H}$ and $m \in \D{M}$ induces an element in $W_{M^{\theta}}$, then we still get $P'_{w}$, but $\xi_{M'_{w}}$ changes to $\Int(m^{-1}) \circ \xi_{M'_{w}}$ up to $\D{M}'_{w}$-conjugation. To summarize, for any element $w$ in 
\[
W_{H} \backslash W_{G^{\theta}}(H, M)/W_{M^{\theta}}
\]
we can associate a standard parabolic subgroup $P'_{w} = M'_{w} N'_{w}$ of $H$ and a $\D{M}$-conjugacy class of embeddings $\xi_{M'_{w}}: \L{M'_{w}} \rightarrow \L{M}$. Then the following diagram commutes
\begin{align}
\label{diag: compatible with twisted endoscopic transfer general} 
\xymatrix{\D{SI}(H) \ar[d]_{\+_{w} \Jac_{P'_{w}}}  \ar[r] & \D{I}(G^{\theta}) \ar[d]^{\Jac_{P}} \\
                \bigoplus_{w} \D{SI}(M'_{w}) \ar[r]  & \D{I}(M^{\theta}), }
\end{align}
where the sum is over $W_{H} \backslash W_{G^{\theta}}(H, M)/W_{M^{\theta}}$, and the horizontal maps correspond to the twisted spectral endoscopic transfers with respect to $\xi$ on the top and $\xi_{M'_{w}}$ on the bottom. Let us denote the twisted spectral endoscopic transfer from $H$ to $G$ by $\text{Tran}^{G^{\theta}}_{H}$, and the twisted spectral endoscopic transfer from $M'_{w}$ to $M$ by $\text{Tran}^{M^{\theta}}_{M'_{w}}$. Then we can translate the diagram \eqref{diag: compatible with twisted endoscopic transfer general} into the following identity. For $\Theta^{H} \in \D{SI}(H)$,
\begin{align}
\label{eq: compatible with twisted endoscopic transfer general}
\sum_{w} \Tran^{M^{\theta}}_{M'_{w}} \Jac_{P'_{w}} \Theta^{H} = \Jac_{P} \Tran^{G^{\theta}}_{H} \Theta^{H}.
\end{align}
It follows
\[
\sum_{w} \Ind^{G}_{P} \big(\Tran^{M^{\theta}}_{M'_{w}} \Jac_{P'_{w}} \Theta^{H}\big) =  \Ind^{G}_{P} \big(\Jac_{P} \Tran^{G^{\theta}}_{H} \Theta^{H}\big).
\]
By the compatibility of twisted endoscopic transfer with parabolic induction, 
\[
\Ind^{G}_{P} \Tran^{M^{\theta}}_{M'_{w}} \big(\Jac_{P'_{w}} \Theta^{H}\big) = \Tran^{G^{\theta}}_{H} \Ind^{H}_{P'_{w}} \big(\Jac_{P'_{w}} \Theta^{H}\big).
\]
So
\[
\Tran^{G^{\theta}}_{H} \big( \sum_{w} \Ind^{H}_{P'_{w}} \Jac_{P'_{w}} \Theta^{H} \big) = \Ind^{G}_{P} \Jac_{P} \big(\Tran^{G^{\theta}}_{H} \Theta^{H}\big).
\]
We can multiply both sides by $(-1)^{dim (A_{P})_{\theta}}$, and then sum over $P \in \mathcal{P}^{\theta}$,
\[
\Tran^{G^{\theta}}_{H} \big( \sum_{P \in \mathcal{P}^{\theta}} (-1)^{dim(A_{P})_{\theta}} \sum_{w} \Ind^{H}_{P'_{w}} \Jac_{P'_{w}} \Theta^{H} \big) = inv^{\theta} \big(\Tran^{G^{\theta}}_{H} \Theta^{H}\big).
\]
To establish the diagram \eqref{diag: twisted compatible with Aubert dual general}, it is enough to show
\begin{align*}
\sum_{P \in \mathcal{P}^{\theta}} (-1)^{dim(A_{P})_{\theta}} \sum_{w} \Ind^{H}_{P'_{w}} \Jac_{P'_{w}} \Theta^{H} = inv^{H} \Theta^{H}. 
\end{align*}
By the definition 
\[
inv^{H} \Theta^{H} = \sum_{P' \in \mathcal{P}^{H}} (-1)^{dim A_{P'}} \Ind^{H}_{P'} \Jac_{P'} \Theta^{H} 
\]
where $\mathcal{P}^{H}$ denotes the set of standard parabolic subgroups of $H$. So it suffices to prove the following proposition.

\begin{proposition}
\label{prop: combinatorial identity}

For any $P' = M'N' \in \mathcal{P}^{H}$,
\begin{align}
\label{eq: combinatorial identity}
\sum_{P \in \mathcal{P}^{\theta}} (-1)^{dim(A_{P})_{\theta}} a_{M', H, M} = (-1)^{dim A_{P'}},
\end{align}
where 
\[
a_{M', H, M} := \sharp \{w \in W_{H} \backslash W_{G^{\theta}}(H, M)/W_{M^{\theta}} | P'_{w} = P'\}.
\]

\end{proposition}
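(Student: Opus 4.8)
The plan is to prove the combinatorial identity \eqref{eq: combinatorial identity} by a purely geometric argument in the theory of root systems, reducing it to the statement that the Euler characteristic of a certain poset of parabolic subgroups is the Euler characteristic of a cell of dimension $\dim A_{P'}$. First I would set up notation: fix $P' = M'N' \in \mathcal{P}^{H}$, let $\mathfrak{a}_{M'}^{*}$ be the real vector space spanned by the roots of $A_{M'}$ in $H$, and recall that $\dim A_{P'} = \dim \mathfrak{a}_{M'}^{H}$ where $\mathfrak{a}_{M'}^{H}$ is the part orthogonal to $\mathfrak{a}_{H}$. The key observation is that $a_{M', H, M}$ counts the number of double cosets $w \in W_{H}\backslash W_{G^{\theta}}(H, M)/W_{M^{\theta}}$ whose associated standard parabolic of $H$ is exactly $P'$; by the parametrization recalled before the proposition, each such $w$ determines a $\Gal{F}$-stable facet of the $\D{\theta}$-fixed apartment of $\D{G}$, namely the one cut out by the coweights defining $\D{M}$ after conjugation, and the condition $P'_{w} = P'$ pins down which facets of $\mathfrak{a}_{M'}^{*}$ (viewed inside the $\D{\theta}$-coinvariant apartment) are being hit. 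Thus the left-hand side of \eqref{eq: combinatorial identity} is an alternating sum, over $\D{\theta}$-stable standard parabolics $P$ of $G$ weighted by $(-1)^{\dim (A_P)_\theta}$, of the number of chambers of $\mathfrak{a}_{M'}^{H}$ that arise from $P$.

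Next I would organize the sum by the standard Levi subgroups of $H$ that can occur. The crucial reduction is: summing $(-1)^{\dim (A_P)_\theta}$ over all $P \in \mathcal{P}^{\theta}$ giving rise to the fixed standard parabolic $P'$ of $H$ is the same as computing the Euler characteristic of the poset of faces of a convex polyhedral cone — concretely, the faces of the Weyl chamber (or rather the cone $\mathfrak{a}_{M'}^{+}$ of $P'$ inside $\mathfrak{a}_{M'}^{H}$) — and such an Euler characteristic is $(-1)^{\dim \mathfrak{a}_{M'}^{H}} = (-1)^{\dim A_{P'}}$. To make this precise I would: (i) show that for a fixed standard Levi $M'$ of $H$, the map $P \mapsto (P \cap \L{H})$-data induces a bijection between $\D{\theta}$-stable standard parabolics $P$ of $G$ with $W_{H}\backslash W_{G^\theta}(H,M)/W_{M^\theta}$ containing a $w$ with $P'_w = P'$, and the cells of the relevant subdivided apartment; (ii) check that under this bijection $\dim (A_P)_\theta$ corresponds to the codimension of the cell in $\mathfrak{a}_{M'}^{H}$ (possibly with a shift accounting for $\mathfrak{a}_H$ and $\mathfrak{a}_{G^\theta}$ which cancels); (iii) invoke the standard fact that $\sum_{F} (-1)^{\operatorname{codim} F}$ over the nonempty faces $F$ of an $n$-dimensional polyhedral cone with apex equals $(-1)^n$ (this is the Euler–Poincaré relation, or equivalently the statement that the alternating sum of face numbers of a simplicial cone vanishes after adjusting for the apex). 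The case $H = G$ and $\theta = 1$ recovers the classical identity $\sum_{P \supseteq P'} (-1)^{\dim A_P} = (-1)^{\dim A_{P'}}$ underlying the ordinary Aubert involution, so the new content is entirely the bookkeeping for the $\theta$-twist and the endoscopic group $H$.

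The main obstacle, and where I would spend the bulk of the effort, is establishing the bijection in step (i) together with the exact matching of dimensions in step (ii): one must verify that the combinatorial data $(P'_w, \xi_{M'_w})$ attached to a double coset genuinely encodes a facet in a way compatible with the partial order by inclusion of parabolics, and that the counting function $a_{M',H,M}$ has no hidden overcounting coming from the double-coset structure $W_H\backslash W_{G^\theta}(H,M)/W_{M^\theta}$. A clean way to handle this is to work on the level of the coweight lattices: a $\D{\theta}$-stable standard parabolic $P$ of $G$ corresponds to a subset of the $\D{\theta}$-orbits of simple roots of $\D{G}$, hence to a face of the $\D{\theta}$-coinvariant Weyl chamber, and the surjectivity condition defining $W_{G^\theta}(H,M)$ together with the choice of $P'$ exactly selects those faces lying in the sub-chamber corresponding to $P'$. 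Once this dictionary is in place, \eqref{eq: combinatorial identity} follows from the Euler characteristic computation. I would also note that for the applications in the body of the paper one only needs the identity after pairing with distributions supported on a fixed infinitesimal character, so if the general root-combinatorial statement proves unwieldy, an alternative is to deduce it from the known $H=G$ case by descent through the chain of Levi subgroups $M'_w$, using the compatibility \eqref{eq: compatible with twisted endoscopic transfer general} and induction on $\dim G$; but the direct combinatorial proof is cleaner and I would present that.
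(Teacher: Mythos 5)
Your proposal takes a genuinely different route from the paper. The paper follows Hiraga: it works in the group algebra of $W_{G^\theta}$, first showing that $[\xi_H \lif{\xi}_{M^\theta}]_H = \sum_{P'} a_{M',H,M}[\xi_{M'}]_H$ (equation \eqref{eq: algebraic identity}), and then computing $\sum_P (-1)^{r_{res}(M)} \lif{\xi}_{M^\theta}$ and $\sum_{P'} (-1)^{r(M')}[\xi_{M'}]_H$ separately (equations \eqref{eq: identity A} and \eqref{eq: identity B}), observing that both collapse to expressions built from the single element $w^G_- w^{M^H}_-$; equating them gives \eqref{eq: combinatorial identity 1}, hence \eqref{eq: combinatorial identity}. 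You instead want to read the left-hand side of \eqref{eq: combinatorial identity} directly as an Euler characteristic of the face poset of a polyhedral cone.

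The concrete gap is in your steps (i)--(iii), and I do not think it can be repaired without, in effect, redoing the group-algebra bookkeeping. The weight $a_{M',H,M}$ counts double cosets $w \in W_H\backslash W_{G^\theta}(H,M)/W_{M^\theta}$ with $P'_w = P'$, and this count can exceed one for a single fixed $P$; so the natural index set for the alternating sum is the set of \emph{pairs} $(P,w)$, not the set of $P$'s, and nothing in the proposal explains why these pairs assemble into the face poset of a single cone of the right dimension -- your step (i) asserts a bijection at the level of $P$'s, which is not what the count measures. In step (iii), the face Euler--Poincar\'e relation for an $n$-dimensional polyhedral cone with apex gives $\sum_F (-1)^{\dim F} = 0$, not $(-1)^n$, so extracting the asserted $(-1)^{\dim A_{P'}}$ requires precisely the dimension shift -- tracking $\dim A^{\D{G},\D{\theta}}$, converting $\dim(A_P)_\theta$ to $r_{res}(M)$, and isolating the factor $(-1)^{r_{res}(M^H)}$ -- that the paper makes explicit in passing from \eqref{eq: combinatorial identity} to \eqref{eq: combinatorial identity 1}, and that your sketch leaves implicit. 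Until both the multiplicity and the shift are nailed down simultaneously, the geometric reformulation is not a proof. The fallback of deducing the statement from $H=G$ by descent is also not viable: that case is degenerate (it forces $\theta = 1$ and $M'_w = M$, so it carries essentially no combinatorial content), and invoking \eqref{eq: compatible with twisted endoscopic transfer general} to descend would be circular, since the purpose of the appendix is exactly to establish the compatibility of transfer with the (truncated) Aubert involution.
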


Hiraga proved this proposition in the non-twisted case (see \cite{Hiraga:2004}), and we will extend his arguments to prove the twisted case here. First we need to introduce some more notations. 

Let $A^{\D{G}, \D{\theta}}$ be the identity component of $\Gal{F}$-invariant elements in $(\mathcal{T}^{\D{\theta}}_{G})^{0}$, and $A^{\D{H}}$ be the identity component of $\Gal{F}$-invariant elements in $\mathcal{T}_{H}$. By the choice of $\D{G}$-conjugate of $\xi$, we can further assume $\xi(A^{\D{H}}) \subseteq A^{\D{G}, \D{\theta}}$ and there is a $\theta$-stable standard Levi subgroup $M^{H}$ of $G$ such that $\L{M^{H}} = \Cent(A^{\D{H}}, \L{G})$. 

For any $\theta$-stable standard Levi subgroup $M$ of $G$, we denote by $R_{res}(\D{M})$ the root system (not necessarily reduced) obtained by restriction from the root system $R(\D{M}, \mathcal{T}_{G})$ to $(\mathcal{T}^{\D{\theta}}_{G})^{0}$, and we denote the set of simple roots in $R_{res}(\D{M})$ by $\Delta_{res}(\D{M})$. Let $R^{\pm}_{res}(\D{M})$ be the set of positive (negative) roots. We write $r_{res}(M)$ for the number of $\Gal{F}$-orbits in $\Delta_{res}(\D{M})$. Note $\mathcal{P}^{\theta}$ is in bijection with the $\Gal{F}$-stable subsets of $\Delta_{res}(\D{G})$.

For any standard Levi subgroup $M'$ of $H$, we denote by $R(\D{M'})$ the root system $R(\D{M'}, \mathcal{T}_{H})$ and we denote the set of simple roots in $R(\D{M'})$ by $\Delta(\D{M'})$. Let $R^{\pm}(\D{M'})$ be the set of positive (negative) roots. We write $r(M')$ for the number of $\Gal{F}$-orbits in $\Delta(\D{M'})$. Note $\mathcal{P}^{H}$ is in bijection with the $\Gal{F}$-stable subsets of $\Delta(\D{H})$. It is easy to see $R^{\pm}(\D{H}) \subseteq R^{\pm}_{res}(\D{G})$.

If we multiply both sides of \eqref{eq: combinatorial identity} by $(-1)^{dim A^{\D{G}, \D{\theta}}}$, then we will get
\begin{align}
\label{eq: combinatorial identity 1}
\sum_{P \in \mathcal{P}^{\theta}} (-1)^{r_{res}(M)} a_{M', H, M} = (-1)^{r_{res}(M^{H}) + r(M')}.
\end{align}
We will break the proof of this identity into four steps.

{\bf Step 1:}  We fix a $\theta$-stable standard Levi subgroup $M$ of $G$. Let 
\[
D_{M^{\theta}} = \{w \in W_{G^{\theta}} | w^{-1}(\Delta_{res}(\D{M})) \subseteq R^{+}_{res}(\D{G})\}
\]
and
\[
D_{H} = \{ w \in W_{G^{\theta}} | w^{-1}(\Delta(\D{H})) \subseteq R^{+}_{res}(\D{G})\}
\]
We would like to show $D_{H, M^{\theta}} := D^{-1}_{M^{\theta}} \cap D_{H}$ is a set of representatives of $W_{H} \backslash W_{G^{\theta}}/W_{M^{\theta}}$.

\begin{lemma}
\label{lemma: coset representative}
$D_{H}$ (resp. $D_{M^{\theta}}$) is a set of representatives of $W_{H} \backslash W_{G^{\theta}}$ (resp. $W_{M^{\theta}} \backslash W_{G^{\theta}}$).
\end{lemma}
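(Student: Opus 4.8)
\textbf{Proof plan for Lemma~\ref{lemma: coset representative}.}

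The statement to prove is that $D_H$ is a set of representatives for $W_H \backslash W_{G^\theta}$ and, symmetrically, $D_{M^\theta}$ is a set of representatives for $W_{M^\theta} \backslash W_{G^\theta}$. Since the two claims are formally identical (with $\D{H}$, or rather its Weyl group $W_H$ viewed as a reflection subgroup of $W_{G^\theta}$, playing the role of $W_{M^\theta}$), the plan is to prove one general statement: if $W'$ is a parabolic (equivalently, reflection) subgroup of the Coxeter group $W_{G^\theta}$ determined by a $\Gal{F}$-stable subset of $\Delta_{res}(\D{G})$ — which is the case for both $W_H$ (here one uses $R^{\pm}(\D{H}) \subseteq R^{\pm}_{res}(\D{G})$ and that $\D{H} = \Cent(s,\D{G})^0$, so that $W_H$ is generated by those restricted reflections lying in it) and $W_{M^\theta}$ — then the set $\{ w \in W_{G^\theta} : w^{-1}(\Delta') \subseteq R^+_{res}(\D{G})\}$ is a transversal of $W' \backslash W_{G^\theta}$, where $\Delta'$ is the corresponding simple system.

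The argument is the standard Bourbaki/Kostant-section argument for minimal-length coset representatives, adapted to the twisted root system $R_{res}(\D{G})$ with its Weyl group $W_{G^\theta}$. First I would recall that $W_{G^\theta} = W(\D{G},\mathcal{T}_G)^{\D\theta}$ acts as the Weyl group of the (possibly non-reduced) restricted root system $R_{res}(\D{G})$ on $(\mathcal{T}_G^{\D\theta})^0$, with simple system $\Delta_{res}(\D{G})$; this is a genuine Coxeter system, and length, positive systems, etc.\ all behave as usual (one may need to quote that for non-reduced systems the Weyl group of $R_{res}$ coincides with that of its reduced quotient, so no subtlety arises). Then: given any $x \in W_{G^\theta}$, consider the coset $W' x$ and pick an element $w$ of minimal length in it. The standard fact is that $w$ of minimal length in $W'w$ is characterized by $w^{-1}(\alpha) \in R^+_{res}(\D{G})$ for every $\alpha \in \Delta'$ (if some $w^{-1}(\alpha)$ were negative, then $\ell(s_\alpha w) < \ell(w)$ with $s_\alpha w \in W'w$, contradiction; conversely this condition forces minimality via the exchange condition). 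Hence every coset meets $D_H$, and it meets it in exactly one point: if $w, w' \in D_H$ with $W'w = W'w'$, then $w' = u w$ for $u \in W'$ and both have minimal length, forcing $u = 1$ by the uniqueness of minimal-length representatives. Applying this with $W' = W_H$, $\Delta' = \Delta(\D{H})$ gives the first claim; with $W' = W_{M^\theta}$, $\Delta' = \Delta_{res}(\D{M})$ gives the second.

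The main obstacle I anticipate is not the Coxeter-combinatorial core — that is textbook — but the bookkeeping needed to see that $W_H$ really is the parabolic subgroup of $(W_{G^\theta}, \Delta_{res}(\D{G}))$ attached to the subset $\Delta(\D{H}) \subseteq R^+_{res}(\D{G})$, i.e.\ that $W_H$ is generated by the reflections $s_\alpha$ for $\alpha$ in that subset and that these $\alpha$ are simple in $R_{res}$ once one has arranged (via the chosen $\D{G}$-conjugate of $\xi$ making $\xi(\mathcal{B}_H) \subseteq \mathcal{B}_G$ and $\xi(\mathcal{T}_H) = (\mathcal{T}_G^{\D\theta})^0$) that $R^+(\D{H}) \subseteq R^+_{res}(\D{G})$. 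This requires recalling from the endoscopy setup that $\D{H} = \Cent(s,\D{G})^0$ with $s \in \mathcal{T}_G \rtimes \D\theta$, so $R(\D{H},\mathcal{T}_H)$ is a sub-root-system of $R_{res}(\D{G})$ closed and containing, in its positive part, a subset of the simple restricted roots; I would simply cite the compatibility of splittings fixed just before the proposition and Hiraga's treatment \cite{Hiraga:2004} in the non-twisted case, noting that the twisted case is identical once $R_{res}(\D{G})$ replaces $R(\D{G})$ and $W_{G^\theta}$ replaces $W_G$. Once that identification is in place, the transversal statement follows formally as above.
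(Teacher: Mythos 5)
Your approach via minimal-length coset representatives is genuinely different from the paper's. The paper argues geometrically: for $w \in W_{G^{\theta}}$, the intersection $\D{H} \cap w(\mathcal{B}_{G})$ is a Borel subgroup of $\D{H}$ containing $\mathcal{T}_{H}$, there is a unique $w_{H} \in W_H$ carrying it to $\mathcal{B}_{H}$, and then $w_{H}w \in D_H$; simple transitivity of $W_H$ on such Borels gives uniqueness of the representative. For $W_{M^{\theta}}$ the paper runs the same argument after identifying $W_{G^{\theta}}$ and $W_{M^{\theta}}$ with the Weyl groups of $\D{G}^1$ and $\D{M}^1$, the identity components of $\D{\theta}$-invariants. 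This avoids Coxeter combinatorics entirely.

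There is, however, a real gap in your treatment of $W_H$. You assert that $W_H$ is a \emph{parabolic} subgroup of $(W_{G^{\theta}}, \Delta_{res}(\D{G}))$ attached to a subset of simple roots, and you write ``parabolic (equivalently, reflection)'' as if the two notions coincide. Neither is correct. Since $\D{H} = \Cent(s,\D{G})^{0}$ is a pseudo-Levi rather than a Levi, the simple system $\Delta(\D{H})$ sits inside $R^{+}_{res}(\D{G})$ but is generally not a subset of $\Delta_{res}(\D{G})$; for instance when $\L{G} = SO(2n+1,\C)$ and $\D{H}$ has an $SO(2n_{II},\C)$ factor, the corresponding $W(D_{n_{II}})$ is a reflection subgroup of $W(B_n)$ but not a parabolic one. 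So the textbook Bourbaki/Humphreys statement you cite (characterization of minimal-length coset representatives for a subgroup $W_J$ generated by simple reflections $J \subseteq S$) does not apply to $W_H$ directly. The combinatorial core can be saved — one needs the theory of reflection subgroups of Coxeter groups (Deodhar, Dyer): a reflection subgroup $W'$ whose canonical simple system $\Delta'$ lies in $R^{+}_{res}(\D{G})$ has transversal $\{w : w^{-1}(\Delta') \subseteq R^{+}_{res}(\D{G})\}$ — but this is a stronger ingredient than the parabolic case, and you would need to invoke it explicitly. Your argument is fine for $W_{M^{\theta}}$, which \emph{is} parabolic since $\Delta_{res}(\D{M}) \subseteq \Delta_{res}(\D{G})$; the gap is specifically in the $W_H$ half. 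The paper's Borel-subgroup argument sidesteps the issue, which is why it is the cleaner route.
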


\begin{proof}
For any $w \in W_{G^{\theta}}$, let $\D{B}_{H} := \D{H} \cap w(\mathcal{B}_{G})$. Then $\D{B}_{H}$ is a Borel subgroup of $\D{H}$. So there exists a unique $w_{H} \in W_{H}$ such that $w_{H}(\D{B}_{H}) = \mathcal{B}_{H}$. It follows $\mathcal{B}_{H} = w_{H}(\D{H} \cap w(\mathcal{B}_{G})) = \D{H} \cap w_{H}w(\mathcal{B}_{G})$, and hence $w_{H}w \in D_{H}$. By the uniqueness of $w_{H}$, we see $D_{H}$ is a set of representatives of $W_{H} \backslash W_{G^{\theta}}$.

The proof for $W_{M^{\theta}} \backslash W_{G^{\theta}}$ is similar. One just needs to notice $W_{G^{\theta}} \cong W(\D{G}^{1}, (\mathcal{T}^{\D{\theta}}_{G})^{0})$ and $W_{M^{\theta}} \cong W(\D{M}^{1}, (\mathcal{T}^{\D{\theta}}_{G})^{0})$, where $\D{G}^{1}$ (resp. $\D{M}^{1}$) is the identity component of $\D{\theta}$-invariant elements in $\D{G}$ (resp. $\D{M}$). 
\end{proof}

For $w \in W_{G^{\theta}}$, we define
\[
l_{M^{\theta}}(w) = \sharp \{\alpha \in R^{+}_{res}(\D{M}) | w\alpha \in R^{-}_{res}(\D{G})\}
\]
and
\[
l_{H}(w) = \sharp \{\alpha \in R^{+}(\D{H}) | w\alpha \in R^{-}_{res}(\D{G})\}.
\]

\begin{lemma}
\label{lemma: nonempty intersection}
For any $w \in W_{G^{\theta}}$,
\[
D_{H, M^{\theta}} \cap W_{H}wW_{M^{\theta}} \neq \emptyset.
\]
\end{lemma}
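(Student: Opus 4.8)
\textbf{Proof plan for Lemma~\ref{lemma: nonempty intersection}.}
The plan is to produce an explicit element of the double coset $W_H w W_{M^\theta}$ lying in $D_{H,M^\theta} = D_{M^\theta}^{-1} \cap D_H$ by a minimal-length argument, mimicking Hiraga's treatment of the non-twisted case. First I would recall from Lemma~\ref{lemma: coset representative} that $D_{M^\theta}$ (resp. $D_H$) is the set of minimal-length representatives for $W_{M^\theta}\backslash W_{G^\theta}$ (resp. $W_H \backslash W_{G^\theta}$), where ``length'' is measured by $l_{M^\theta}(\cdot)$ (resp. $l_H(\cdot)$) relative to the restricted root system $R_{res}(\D G)$; this is the content of the characterizations via $w^{-1}(\Delta_{res}(\D M)) \subseteq R^+_{res}(\D G)$ and $w^{-1}(\Delta(\D H)) \subseteq R^+_{res}(\D G)$, using that $W_{G^\theta}$ is the Weyl group of the root system $R_{res}(\D G)$ (equivalently of $\D G^1$).

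The core step: I would take $w_0 \in W_H w W_{M^\theta}$ of minimal length in the sense of $l(w_0) := \sharp\{\alpha \in R^+_{res}(\D G) : w_0^{-1}\alpha \in R^-_{res}(\D G)\}$, i.e.\ minimal with respect to the ambient length function of $W_{G^\theta}$ on the double coset. I then claim $w_0^{-1} \in D_{M^\theta}$ and $w_0 \in D_H$. For the first, if $w_0^{-1} \notin D_{M^\theta}$, i.e.\ $w_0 \notin D_{M^\theta}^{-1}$, there is a simple root $\alpha \in \Delta_{res}(\D M)$ with $w_0 \alpha \in R^-_{res}(\D G)$; then the reflection $s_\alpha \in W_{M^\theta}$ satisfies $l(w_0 s_\alpha) < l(w_0)$ while $w_0 s_\alpha$ stays in the same double coset, contradicting minimality. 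Symmetrically, if $w_0 \notin D_H$, there is $\beta \in \Delta(\D H)$ with $w_0^{-1}\beta \in R^-_{res}(\D G)$ (here one uses $R^\pm(\D H) \subseteq R^\pm_{res}(\D G)$ so the comparison makes sense), and $s_\beta w_0$ lies in the same double coset with $l(s_\beta w_0) < l(w_0)$, again a contradiction. Hence $w_0 \in D_{H,M^\theta} \cap W_H w W_{M^\theta}$, which is what we want. Throughout I must be careful that the reflections $s_\alpha$ for $\alpha \in \Delta_{res}(\D M)$ really lie in $W_{M^\theta}$ and $s_\beta$ for $\beta \in \Delta(\D H)$ really lie in $W_H$; this follows because these are Weyl groups of the corresponding restricted/ambient root subsystems, and $\Delta(\D H) \subseteq R_{res}(\D G)$, $\Delta_{res}(\D M) \subseteq R_{res}(\D G)$.

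The main obstacle I anticipate is bookkeeping with the restricted root system rather than a genuine root system: $R_{res}(\D G)$ need not be reduced (as the excerpt explicitly notes), so the standard ``length drops under multiplication by a simple reflection that sends a simple root negative'' lemma must be invoked in the form valid for Coxeter groups acting on (possibly non-reduced) root systems — equivalently, transported to the reduced root system of $\D G^1$ via the identification $W_{G^\theta} \cong W(\D G^1, (\mathcal T_G^{\D\theta})^0)$. I would phrase the entire length argument on $\D G^1$ to stay on safe combinatorial ground, then translate back. A secondary point to verify is $\Gal{F}$-equivariance is not needed for this particular lemma (the statement is about a single $w$, not about $\Gal F$-stable subsets), so no extra care is required there; the $\Gal F$-orbit counting only enters later in Step~1's use of this lemma toward \eqref{eq: combinatorial identity 1}.
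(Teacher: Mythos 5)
Your proof is correct, but it takes a genuinely different route from the paper's. The paper's proof is a sequential \emph{rectification}: using Lemma~\ref{lemma: coset representative} it first picks $w_0$ in the double coset lying in $D_H$, then runs an induction on the partial length $l_{M^\theta}(w_0)$, at each step right-multiplying by $s_\alpha$ for some $\alpha \in \Delta_{res}(\D M)$ to strictly lower $l_{M^\theta}$ \emph{and separately verifying} that the resulting element remains in $D_H$ (this last verification — checking $w_0^{-1}(\Delta(\D H)) \cap \mathbb{Z}_+\alpha = \emptyset$ — is the only real work). Your proof replaces this asymmetric two-stage argument by a single global-minimality argument: pick $w_0$ of minimal ambient length $l(\cdot)$ in $W_H w W_{M^\theta}$, and then both membership conditions, $w_0 \in D_{M^\theta}^{-1}$ and $w_0 \in D_H$, fall out by the same contradiction — a failed condition produces a reflection $s_\alpha$ (in $W_{M^\theta}$, resp.\ $W_H$) that lowers $l(w_0)$ while staying in the double coset. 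This is more symmetric and avoids the separate $D_H$-preservation check entirely. The trade-off: you invoke the reflection criterion $l(ws_\alpha) < l(w) \Leftrightarrow w\alpha \in R^-$ for an \emph{arbitrary} positive $\alpha$, not just a simple one, whereas the paper's length drop is elementary. Your caution about non-reducedness of $R_{res}(\D G)$ is well-placed; phrasing the length argument on the reduced root system of $\D G^1$, as you suggest, handles it. You also correctly note that $\Gal{F}$-equivariance plays no role in this lemma. One more remark: the paper's two-stage rectification foreshadows the mechanics of Proposition~\ref{prop: double coset representative}, where the explicit reduction moves reappear; your more elegant proof is self-contained for the lemma at hand but gives you less reusable machinery for what follows.
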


\begin{proof}
Since $D_{H}$ is a set of representatives of $W_{H} \backslash W_{G^{\theta}}$, we can choose $w_{0} \in W_{H}wW_{M^{\theta}}$ such that $w_{0} \in D_{H}$. Note $w_{0}^{-1} \in D_{M^{\theta}}$ if and only if $l_{M^{\theta}}(w_{0}) = 0$. So we can make an induction on $l_{M^{\theta}}(w_{0})$. Suppose $l_{M^{\theta}}(w_{0}) > 0$, then there exists $\alpha \in \Delta_{res}(\D{M})$ such that $w_{0}\alpha \in R^{-}_{res}(\D{G})$. We claim
\[
l_{M^{\theta}}(w_{0}s_{\alpha}) < l_{M^{\theta}}(w_{0})
\]
where $s_{\alpha}$ is corresponding the simple reflection. To see this, note 
\[
s_{\alpha}(R^{+}_{res}(\D{M}) - \mathbb{Z}_{+} \alpha) = R^{+}_{res}(\D{M}) - \mathbb{Z}_{+} \alpha,
\]
and $w_{0}\alpha \in R^{-}_{res}(\D{G})$. So 
\begin{align*}
l_{M^{\theta}}(w_{0}s_{\alpha}) & = \sharp \{\alpha' \in R^{+}_{res}(\D{M}) - \mathbb{Z}_{+} \alpha | w_{0}s_{\alpha} \alpha' \in R^{-}_{res}(\D{G})\} \\
& = \sharp \{\alpha'' \in R^{+}_{res}(\D{M}) - \mathbb{Z}_{+} \alpha | w_{0} \alpha'' \in R^{-}_{res}(\D{G})\}.
\end{align*}
Then
\[
l_{M^{\theta}}(w_{0}) = l_{M^{\theta}}(w_{0}s_{\alpha}) + |\mathbb{Z}_{+} \alpha \cap R^{+}_{res}(\D{M})| > l_{M^{\theta}}(w_{0}s_{\alpha}).
\]
We still need to show $w_{0}s_{\alpha} \in D_{H}$. For that let us consider $(w_{0}s_{\alpha})^{-1}(\Delta(\D{H})) = s_{\alpha}w_{0}^{-1}(\Delta(\D{H}))$. Since 
\[
s_{\alpha}(R^{+}_{res}(\D{G}) - \mathbb{Z}_{+} \alpha) = R^{+}_{res}(\D{G}) - \mathbb{Z}_{+} \alpha,
\]
we only need to show $w_{0}^{-1}(\Delta(\D{H})) \cap \mathbb{Z}_{+} \alpha = \emptyset$. This is guaranteed by the fact that $w_{0}\alpha \in R^{-}_{res}(\D{G})$.

\end{proof}

Now we have the following proposition.

\begin{proposition}
\label{prop: double coset representative}
$D_{H, M^{\theta}}$ is a set of representatives of $W_{H} \backslash W_{G^{\theta}}/W_{M^{\theta}}$.
\end{proposition}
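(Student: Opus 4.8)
The statement to be proved is Proposition~\ref{prop: double coset representative}: that $D_{H, M^{\theta}} = D^{-1}_{M^{\theta}} \cap D_H$ is a set of representatives for the double cosets $W_{H} \backslash W_{G^{\theta}} / W_{M^{\theta}}$. We already have from Lemma~\ref{lemma: coset representative} that $D_H$ represents $W_H \backslash W_{G^{\theta}}$ and $D_{M^{\theta}}$ represents $W_{M^{\theta}} \backslash W_{G^{\theta}}$, and from Lemma~\ref{lemma: nonempty intersection} that every double coset $W_H w W_{M^{\theta}}$ meets $D_{H, M^{\theta}}$. So the content that remains is \emph{uniqueness}: no double coset meets $D_{H, M^{\theta}}$ in more than one element. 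The plan is to follow the standard Bruhat-decomposition argument for parabolic double cosets, adapted to the restricted-root setting of $W_{G^{\theta}}$ acting on $(\mathcal{T}_G^{\D\theta})^0$.

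First I would record the two defining properties in terms of root systems: $w \in D_{H, M^{\theta}}$ means $w^{-1}(\Delta(\D H)) \subseteq R^+_{res}(\D G)$ and $w(\Delta_{res}(\D M)) \subseteq R^+_{res}(\D G)$ — equivalently $l_H(w^{-1}) = 0$ and $l_{M^{\theta}}(w) = 0$ in the length notation introduced before Lemma~\ref{lemma: nonempty intersection} (note $l_{M^\theta}(w)$ measures $\alpha \in R^+_{res}(\D M)$ sent to $R^-_{res}(\D G)$, and $w^{-1}(\Delta(\D H)) \subseteq R^+_{res}$ is the condition that $w$ is the minimal-length element in $W_H w$). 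Then, given $w_1, w_2 \in D_{H, M^{\theta}}$ with $W_H w_1 W_{M^{\theta}} = W_H w_2 W_{M^{\theta}}$, write $w_2 = u w_1 v$ with $u \in W_H$, $v \in W_{M^{\theta}}$; I want to show $w_1 = w_2$. The key lemma to prove or invoke is the familiar one: if $w$ is minimal in its coset $wW_{M^\theta}$ (i.e.\ $l_{M^\theta}(w) = 0$) then $\ell(wv) = \ell(w) + \ell(v)$ for all $v \in W_{M^\theta}$, and symmetrically $\ell(uw) = \ell(u) + \ell(w)$ when $w$ is minimal in $W_H w$; here $\ell$ is the length function on $W_{G^{\theta}}$ with respect to $R^+_{res}(\D G)$. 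Both statements reduce to the same sign-counting computation already carried out inside the proof of Lemma~\ref{lemma: nonempty intersection} (the identity $l_{M^\theta}(w_0 s_\alpha) < l_{M^\theta}(w_0)$ and its refinement), so I would isolate that computation as a short sublemma about $\ell$ and lengths relative to sub-root-systems, then apply it on both sides.

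With that additivity in hand, uniqueness is a standard induction on $\ell(u) + \ell(v)$. If $v \ne 1$, pick a simple reflection $s_\alpha$ with $\ell(v s_\alpha) < \ell(v)$, $\alpha \in \Delta_{res}(\D M)$; then one compares $\ell(w_2)$ and $\ell(u w_1 v s_\alpha) \cdot$-type expressions using the additivity, and the minimality hypotheses on $w_1, w_2$ force $w_1 \alpha \in R^+$ while also forcing, via $w_2 = u w_1 v$ and $\ell(w_2) = \ell(u) + \ell(w_1) + \ell(v)$, a contradictory length relation unless we can shorten the word. The upshot (as in Bourbaki's treatment of $W_P \backslash W / W_Q$) is that $w_1 \alpha$ must simultaneously be positive (minimality of $w_1$ on the right) and that $u w_1 s_\alpha$ still lies in $D_H$, letting one replace $(u,v)$ by a strictly shorter pair representing the same double coset relation — so by induction $u = v = 1$ and $w_1 = w_2$. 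I would carry this out symmetrically handling $u \ne 1$ first if $v = 1$, which is the mirror image using $\Delta(\D H)$ in place of $\Delta_{res}(\D M)$.

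\textbf{Expected main obstacle.} The delicate point is not the combinatorial skeleton — that is classical — but checking that all the needed properties of the ordinary length function and of ``minimal coset representatives'' genuinely transfer to $W_{G^{\theta}}$ acting on the \emph{restricted} root system $R_{res}(\D G)$, which is generally non-reduced (it can have type $BC_n$ factors). Concretely, $R_{res}(\D G)$ is the root system of the group $\D G^1$ = identity component of $\D\theta$-fixed points (used already in Lemma~\ref{lemma: coset representative}), and $W_{G^{\theta}} \cong W(\D G^1, (\mathcal{T}_G^{\D\theta})^0)$ is a genuine finite Coxeter group, so a well-defined length function $\ell$ exists; but I must be careful that $\Delta_{res}(\D M)$ and $\Delta(\D H)$ are honest subsets of simple roots of a \emph{reduced} system of simple reflections generating $W_{M^\theta}$ and $W_H$ respectively (one should pass to the reduced root system underlying $R_{res}$, noting $W_H$ and $W_{M^\theta}$ are parabolic subgroups of the Coxeter group $W_{G^\theta}$). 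Once that bookkeeping is set up cleanly, the $\mathbb{Z}_{+}\alpha \cap R^+_{res}$ terms appearing in the Lemma~\ref{lemma: nonempty intersection} computation are harmless (they only make lengths drop by $\geq 1$, which is all the induction needs), and the rest is routine.
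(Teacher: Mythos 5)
Your proposal has a real gap, and interestingly it sits precisely inside the paragraph you set aside as ``expected main obstacle'' --- but you identify the wrong obstacle. You flag the non-reducedness of $R_{res}(\widehat{G})$ as the delicate issue; in fact the restricted system being of type $BC$ is harmless. The genuine problem is your claim that ``$W_H$ and $W_{M^\theta}$ are parabolic subgroups of the Coxeter group $W_{G^\theta}$.'' This is fine for $W_{M^\theta}$ (it comes from a $\theta$-stable standard Levi, so corresponds to a subset of restricted simple roots), but it is false for $W_H$ in general: $\widehat{H} = \Cent(s,\widehat{G})^{0}$ for a semisimple $s$, so $W_H$ is only a \emph{reflection} subgroup of $W_{G^\theta}$, not a standard parabolic. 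The arrangement $\xi(\mathcal{B}_H) \subseteq \mathcal{B}_G$ gives $R^{+}(\widehat H) \subseteq R^{+}_{res}(\widehat G)$, but the simple roots $\Delta(\widehat H)$ need not be restricted simple roots of $\widehat G$.

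This matters because your induction hinges on the additivity $\ell(uw) = \ell(u) + \ell(w)$ for $u \in W_H$ when $w$ is minimal in $W_H w$. That identity holds for standard parabolics because $u \in W_P$ permutes $R^{+}\setminus R^{+}(\widehat M)$; for a mere reflection subgroup, $u \in W_H$ can send positive roots outside $R(\widehat H)$ to negative ones, so $N(u)\not\subseteq R^{+}(\widehat H)$ and the intersection $N(u)\cap N(w^{-1})$ controlling the defect of additivity can be nonempty. Hence the ``standard Bruhat for $W_P\backslash W/W_Q$'' framework you invoke does not apply as is, and the sign-counting sublemma you propose to isolate would not close.

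The paper sidesteps this entirely: it never uses the ambient length $\ell$ on $W_{G^\theta}$, but works directly with the two separate functions $l_H$ and $l_{M^\theta}$ defined against $R^{+}(\widehat H)$ and $R^{+}_{res}(\widehat M)$ respectively. Given $w_0, w'_0 \in D_{H,M^\theta}$ with $w'_0 = w_H w_0 w_{M^\theta}$, it picks $\alpha \in \Delta(\widehat H)$ with $w_H^{-1}\alpha < 0$, shows via the $D_H$ and $D_{M^\theta}^{-1}$ conditions on $w_0, w'_0$ that $\beta := w_0^{-1} w_H^{-1}\alpha$ lands in $R^{-}_{res}(\widehat M)$, and then makes the key rewriting $w'_0 = (s_\alpha w_H)\, w_0\, (s_\beta w_{M^\theta})$, which strictly decreases $l_H(w_H^{-1})$ without ever asserting any relation with the length in $W_{G^\theta}$. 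An induction on $l_H(w_H^{-1})$ then reduces to $w_H = 1$, and the $D_{M^\theta}^{-1}$ condition kills $w_{M^\theta}$. You would need to replace your additivity-based induction with something of this shape to make your route go through.
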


\begin{proof}
In view of Lemma~\ref{lemma: nonempty intersection}, we just need to show $W_{H}wW_{M^{\theta}}$ contains a unique element in $D_{H, M^{\theta}}$ for any $w \in W_{G^{\theta}}$. Suppose $w_{0}, w'_{0} \in D_{H, M^{\theta}} \cap W_{H}wW_{M^{\theta}}$, then we can assume
\[
w'_{0} = w_{H}w_{0}w_{M^{\theta}}
\]
for $w_{H} \in W_{H}$ and $w_{M^{\theta}} \in W_{M^{\theta}}$. First we want to show $w_{H}$ can be chosen to be trivial. Note $w_{H} = 1$ if and only if $l_{H}(w_{H}^{-1}) = 0$. Suppose $l_{H}(w_{H}^{-1}) > 0$, then there exists $\alpha \in \Delta(\D{H})$ such that $w_{H}^{-1}(\alpha) \in R^{-}(\D{H})$. Since $w_{0}, w'_{0} \in D_{H}$, we have $\beta = w_{0}^{-1}w_{H}^{-1}\alpha \in R^{-}_{res}(\D{G})$ and $w_{M^{\theta}}^{-1}\beta = (w'_{0})^{-1} \alpha \in R^{+}_{res}(\D{G})$. So $\beta \in R^{-}_{res}(\D{M})$. Hence
\begin{align*}
w'_{0}  &= w_{H}w_{0}w_{M^{\theta}} = (s_{\alpha} \cdot s_{\alpha})w_{H}w_{0}w_{M^{\theta}} = s_{\alpha} w_{H} s_{w_{H}^{-1}\alpha} w_{0} w_{M^{\theta}} \\
& =  (s_{\alpha} w_{H}) w_{0} (s_{w_{0}^{-1}w_{H}^{-1}\alpha}w_{M^{\theta}}) = (s_{\alpha} w_{H}) w_{0} (s_{\beta}w_{M^{\theta}} ).
\end{align*}
As in the proof of Lemma~\ref{lemma: nonempty intersection}, one can show 
\[
l_{H}(w_{H}^{-1}s_{\alpha}) < l_{H}(w_{H}^{-1}).
\]
So by induction on $l_{H}(w_{H}^{-1})$, we can assume 
\[
w'_{0} = w_{0}w_{M^{\theta}}.
\]
Since $w_{0}, w'_{0} \in D^{-1}_{M^{\theta}}$, we must have $w_{M^{\theta}} = 1$ and hence $w'_{0} = w_{0}$.

\end{proof}

Next we would like to describe 
\[
D_{H, M^{\theta}} \cap W_{G^{\theta}}(H, M),
\] 
which is a set of representatives of $W_{H} \backslash W_{G^{\theta}}(H, M)/W_{M^{\theta}}$. Since $\L{M} = \Cent((A_{\D{M}}^{\D{\theta}})^{0}, \L{G})$, $w \in W_{G^{\theta}}(H, M)$ is characterized by the condition that
\[
\Cent(w(A_{\D{M}}^{\D{\theta}})^{0}, \L{H}) \rightarrow W_{F}
\]
is surjective. For $w \in D_{H}$, the above condition is equivalent to requiring $w(A_{\D{M}}^{\D{\theta}})^{0} \subseteq A^{\D{H}}$. So let us define
\[
\lif{D}_{M^{\theta}} = \{w \in D_{M^{\theta}} | w^{-1}(A_{\D{M}}^{\D{\theta}})^{0} \subseteq A^{\D{H}} \}.
\]
Then $\lif{D}_{H, M^{\theta}}: = \lif{D}_{M^{\theta}}^{-1} \cap D_{H}$ is equal to $D_{H, M^{\theta}} \cap W_{G^{\theta}}(H, M)$. 

For $w \in \lif{D}_{H, M^{\theta}}$, it is easy to see $\D{M'_{w}} = w(\D{M}) \cap \D{H}$. So we would like to define $\D{M'_{w}} := w(\D{M}) \cap \D{H}$ for all $w \in D_{H, M^{\theta}}$, and note $M'_{w}$ is only a standard Levi subgroup of $H$ over $\bar{F}$ in this case. For any standard Levi subgroup $M'$ of $H$ over $\bar{F}$, let us define
\[
D_{M'} = \{ w \in W_{G^{\theta}} | w^{-1}(\Delta(\D{M'})) \subseteq R^{+}_{res}(\D{G})\}.
\]
We also define
\[
D_{M', H, M^{\theta}} := \{w \in D_{H, M^{\theta}} | M'_{w} = M'\}
\] 
and
\[
\lif{D}_{M', H, M^{\theta}} := \{w \in \lif{D}_{H, M^{\theta}} | M'_{w} = M'\}.
\] 
It is clear that $\lif{D}_{M', H, M^{\theta}} \neq \emptyset$ only when $M'$ is defined over $F$.

{\bf Step 2:} We again fix a $\theta$-stable standard Levi subgroup $M$ of $G$, and we will take $M'$ to be standard Levi subgroups of $H$ over $\bar{F}$ (if not specified). Let
\[
\lif{\xi}_{M^{\theta}} = \sum_{w \in \lif{D}_{M^{\theta}}} w,
\]
and
\[
\xi_{M'} = \sum_{w \in D_{M'}} w.
\]
For any 
\(
\xi = \sum_{w \in W_{G^{\theta}}} a_{w} w,
\)
let us write
\[
[\xi]_{H} = \sum_{\substack{w \in W_{G^{\theta}} \\ w(A^{\D{H}}) = A^{\D{H}}}} a_{w} w.
\]
Then we want to show 
\begin{align}
\label{eq: algebraic identity}
[\xi_{H}\lif{\xi}_{M^{\theta}}]_{H} = \sum_{P' \in \mathcal{P}^{H}} a_{M', H, M^{\theta}} [\xi_{M'}]_{H}. 
\end{align}

For any $x \in W_{G^{\theta}}$ satisfying $x(A^{\D{H}}) = A^{\D{H}}$, the coefficient of it in $[\xi_{H}\lif{\xi}_{M^{\theta}}]_{H}$ is given by number of pairs $(d_{H}, d_{M^{\theta}}) \in D_{H} \times \lif{D}_{M^{\theta}}$ such that $x = d_{H}d_{M^{\theta}}$, in other words, we need to count $x\lif{D}^{-1}_{M^{\theta}} \cap D_{H}$.

By Proposition~\ref{prop: double coset representative}, it is enough to count 
\begin{align}
\label{eq: intersection}
(x\lif{D}^{-1}_{M^{\theta}} \cap D_{H}) \cap W_{H}wW_{M^{\theta}}
\end{align}
for all $w \in D_{H, M^{\theta}}$. Let
\begin{align}
\label{eq: algebraic decomposition}
w^{-1}x = w_{M^{\theta}}(x, w) \cdot d_{M^{\theta}}(x, w)
\end{align}
for $w_{M^{\theta}}(x, w) \in W_{M^{\theta}}$ and $d_{M^{\theta}}(x, w) \in D_{M^{\theta}}$. Note this decomposition makes sense for all $x \in W_{G^{\theta}}$.

\begin{lemma}
\label{lemma: intersection}
Suppose $x \in W_{G^{\theta}}$ satisfies $x(A^{\D{H}}) = A^{\D{H}}$ and $w \in D_{H, M^{\theta}}$, then $d_{M^{\theta}}(x, w) \in \lif{D}_{M^{\theta}}$ if and only if $w \in \lif{D}_{H, M^{\theta}}$.
\end{lemma}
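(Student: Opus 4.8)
\textbf{Proof plan for Lemma~\ref{lemma: intersection}.}

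The statement to be proved is an equivalence: writing $w^{-1}x = w_{M^{\theta}}(x,w)\cdot d_{M^{\theta}}(x,w)$ with $w_{M^{\theta}}(x,w)\in W_{M^{\theta}}$ and $d_{M^{\theta}}(x,w)\in D_{M^{\theta}}$, one has $d_{M^{\theta}}(x,w)\in\lif{D}_{M^{\theta}}$ if and only if $w\in\lif{D}_{H,M^{\theta}}$. Recalling the definitions, $\lif{D}_{M^{\theta}}=\{v\in D_{M^{\theta}}: v^{-1}(A^{\D{M}}_{\D{\theta}})^{0}\subseteq A^{\D{H}}\}$ and $\lif{D}_{H,M^{\theta}}=\lif{D}^{-1}_{M^{\theta}}\cap D_{H}$, so the whole statement is really a statement about whether $d_{M^{\theta}}(x,w)^{-1}(A^{\D{M}}_{\D{\theta}})^{0}$ lands inside $A^{\D{H}}$. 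The plan is to trace this torus through the factorization \eqref{eq: algebraic decomposition}.

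First I would observe that $w_{M^{\theta}}(x,w)\in W_{M^{\theta}}$ fixes $(A^{\D{M}}_{\D{\theta}})^{0}$ pointwise, since $W_{M^{\theta}}$ is (identified with) the Weyl group of $\D{M}^{1}$ and $(A^{\D{M}}_{\D{\theta}})^{0}$ is its connected center restricted appropriately; hence from \eqref{eq: algebraic decomposition} we get
\[
d_{M^{\theta}}(x,w)^{-1}(A^{\D{M}}_{\D{\theta}})^{0} = d_{M^{\theta}}(x,w)^{-1}w_{M^{\theta}}(x,w)^{-1}(A^{\D{M}}_{\D{\theta}})^{0} = x^{-1}w\,(A^{\D{M}}_{\D{\theta}})^{0}.
\]
Next I would use the two hypotheses to simplify the right-hand side. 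Since $w\in D_{H,M^{\theta}}$, it already lies in $D_{H}$, so $w$ sends the standard objects of $\D{H}$ to standard position, but more to the point $w\in\lif{D}_{H,M^{\theta}}$ means exactly (by the characterization recorded just before Step 2) that $w\,(A^{\D{M}}_{\D{\theta}})^{0}\subseteq A^{\D{H}}$ — equivalently $w^{-1}\in\lif{D}_{M^{\theta}}$. And the hypothesis $x(A^{\D{H}})=A^{\D{H}}$, i.e.\ $x^{-1}(A^{\D{H}})=A^{\D{H}}$, lets me pass the torus through $x^{-1}$. Combining: if $w\in\lif{D}_{H,M^{\theta}}$ then $w\,(A^{\D{M}}_{\D{\theta}})^{0}\subseteq A^{\D{H}}$, so $x^{-1}w\,(A^{\D{M}}_{\D{\theta}})^{0}\subseteq x^{-1}(A^{\D{H}})=A^{\D{H}}$, which by the displayed identity says precisely $d_{M^{\theta}}(x,w)\in\lif{D}_{M^{\theta}}$. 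Conversely, if $d_{M^{\theta}}(x,w)\in\lif{D}_{M^{\theta}}$, the same chain read backwards gives $w\,(A^{\D{M}}_{\D{\theta}})^{0}=x\,x^{-1}w\,(A^{\D{M}}_{\D{\theta}})^{0}\subseteq x(A^{\D{H}})=A^{\D{H}}$, hence $w\in\lif{D}_{H,M^{\theta}}$ since $w\in D_{H}$ already.

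The routine points to check carefully — and where I expect the only real friction — are: (a) that $W_{M^{\theta}}$ genuinely acts trivially on $(A^{\D{M}}_{\D{\theta}})^{0}$, which needs the identification $W_{M^{\theta}}\cong W(\D{M}^{1},(\mathcal{T}^{\D{\theta}}_{G})^{0})$ from the proof of Lemma~\ref{lemma: coset representative} together with the fact that the Weyl group of a connected reductive group fixes its maximal central split torus; and (b) that the factorization \eqref{eq: algebraic decomposition} is meaningful here, i.e.\ $d_{M^{\theta}}(x,w)$ really is the $D_{M^{\theta}}$-representative of the coset $W_{M^{\theta}}\backslash W_{G^{\theta}}$ containing $w^{-1}x$, so that no ambiguity enters the torus computation. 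Once these are in place the equivalence is immediate from the displayed identity, so the lemma is essentially a bookkeeping consequence of the definitions of $\lif{D}_{M^{\theta}}$ and $\lif{D}_{H,M^{\theta}}$ and the two hypotheses on $x$ and $w$.
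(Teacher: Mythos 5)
Your proof is correct and is essentially the same as the paper's: you both read off the identity $xd_{M^{\theta}}(x,w)^{-1}(A_{\D{M}}^{\D{\theta}})^{0}=ww_{M^{\theta}}(x,w)(A_{\D{M}}^{\D{\theta}})^{0}=w(A_{\D{M}}^{\D{\theta}})^{0}$ from the factorization \eqref{eq: algebraic decomposition} and the $W_{M^{\theta}}$-invariance of $(A_{\D{M}}^{\D{\theta}})^{0}$, and then use $x(A^{\D{H}})=A^{\D{H}}$ to conclude. The paper states only this identity and says ``the lemma is clear''; you spell out the resulting chain of equivalences, but the mathematical content is identical.
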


\begin{proof}
Since $xd_{M^{\theta}}(x, w)^{-1}(A_{\D{M}}^{\D{\theta}})^{0} = ww_{M^{\theta}}(x, w)(A_{\D{M}}^{\D{\theta}})^{0} = w(A_{\D{M}}^{\D{\theta}})^{0}$, the lemma is clear.
\end{proof}

Before we give the result for \eqref{eq: intersection}, we would like to consider a slightly general situation.

\begin{proposition}
\label{prop: intersection}
For $x \in W_{G^{\theta}}$ and $w \in D_{H, M^{\theta}}$,
\[
(xD^{-1}_{M^{\theta}} \cap D_{H}) \cap W_{H}wW_{M^{\theta}} = \begin{cases}
                                                                                                       \{xd_{M^{\theta}}(x, w)^{-1}\}, & \text{ if } xd_{M^{\theta}}(x, w)^{-1} \in D_{H} \\
                                                                                                        \emptyset, & \text{ otherwise. } 
                                                                                                        \end{cases}
\]                                                                                                                                                                                                         
\end{proposition}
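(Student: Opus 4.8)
\textbf{Proof proposal for Proposition~\ref{prop: intersection}.}
The plan is to fix $x \in W_{G^{\theta}}$ and $w \in D_{H, M^{\theta}}$ and analyze the set
$(xD^{-1}_{M^{\theta}} \cap D_{H}) \cap W_{H}wW_{M^{\theta}}$ directly using the decomposition \eqref{eq: algebraic decomposition}. First I would observe that every element $y$ lying in $W_{H}wW_{M^{\theta}}$ and in $xD^{-1}_{M^{\theta}}$ must be of the form $y = xd^{-1}$ for some $d \in D_{M^{\theta}}$, and since $y \in W_{H}wW_{M^{\theta}}$ we have $w^{-1}y \in W_{M^{\theta}} \cdot (\text{something})$. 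Writing $w^{-1}x = w_{M^{\theta}}(x, w) \cdot d_{M^{\theta}}(x, w)$ as in \eqref{eq: algebraic decomposition}, the key point is that $d := d_{M^{\theta}}(x, w)$ is the \emph{unique} element of $D_{M^{\theta}}$ in the coset $W_{M^{\theta}} \cdot (w^{-1}x)$ (uniqueness of $D_{M^{\theta}}$-coset representatives, Lemma~\ref{lemma: coset representative}); therefore $xd^{-1} = w \cdot w_{M^{\theta}}(x,w) \in W_{H}wW_{M^{\theta}}$ automatically, and this is the only candidate in the intersection with $xD^{-1}_{M^{\theta}}$ that lands in the double coset $W_{H}wW_{M^{\theta}}$.

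Next I would check that $xd^{-1}$ really does lie in $xD^{-1}_{M^{\theta}}$, which is immediate since $d \in D_{M^{\theta}}$ so $d^{-1} \in D^{-1}_{M^{\theta}}$. The only remaining condition for membership in the set on the left is $xd^{-1} \in D_{H}$. If this holds, then $xd^{-1}$ is the unique element of the intersection (uniqueness follows from the fact that $D_{M^{\theta}}$ is a set of coset representatives, so there is at most one way to write an element of $xD^{-1}_{M^{\theta}}$ lying in a fixed right $W_{M^{\theta}}$-orbit pattern); if $xd^{-1} \notin D_{H}$, then no element of the double coset survives the intersection with $D_{H}$, by the same uniqueness, so the set is empty. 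This gives exactly the case distinction in the statement.

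I expect the main subtlety — and hence the step to write out with the most care — is the uniqueness argument showing that $xd_{M^{\theta}}(x,w)^{-1}$ is the \emph{only} possible element of $W_{H}wW_{M^{\theta}}$ lying in $xD^{-1}_{M^{\theta}}$: one must argue that if $xd^{-1} \in W_{H}wW_{M^{\theta}}$ with $d \in D_{M^{\theta}}$, then necessarily $d = d_{M^{\theta}}(x,w)$. This uses that the double coset $W_{H}wW_{M^{\theta}}$ meets $xD^{-1}_{M^{\theta}}$ in a way controlled by the left $W_{M^{\theta}}$-orbit of $w^{-1}x$: from $xd^{-1} \in W_{H}wW_{M^{\theta}}$ we get $d^{-1} \in W_{M^{\theta}}$-translate patterns on $x^{-1}W_{H}wW_{M^{\theta}}$, and pinning down $d$ amounts to the uniqueness of $D_{M^{\theta}}$-representatives for $W_{M^{\theta}}\backslash W_{G^{\theta}}$ once one knows $w^{-1}xd^{-1} \in W_{M^{\theta}}$. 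Concretely, $w^{-1}(xd^{-1}) = w_{M^{\theta}}(x,w) d_{M^{\theta}}(x,w) d^{-1}$, and demanding this lie in $W_{M^{\theta}}$ (which is forced, after replacing $w$ by the distinguished double-coset representative, by $xd^{-1} \in W_H w W_{M^\theta}$ together with $xd^{-1}, w \in D_H$ — so the $W_H$-part is trivial, as in the proof of Proposition~\ref{prop: double coset representative}) gives $d_{M^{\theta}}(x,w)d^{-1} \in W_{M^{\theta}}$, whence $d = d_{M^{\theta}}(x,w)$ by uniqueness. Once this is in hand, the rest is bookkeeping, and the proposition will follow; this is then exactly the input needed (via Lemma~\ref{lemma: intersection}) to count \eqref{eq: intersection} and push through the combinatorial identity \eqref{eq: algebraic identity}.
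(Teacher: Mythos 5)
Your proposal is correct and follows the paper's argument: writing $y = xd^{-1}$ with $d \in D_{M^{\theta}}$, the crucial step is that $y \in D_H \cap W_H w W_{M^{\theta}}$ forces $y \in wW_{M^{\theta}}$ when $w \in D_{H,M^{\theta}}$, after which the decomposition \eqref{eq: algebraic decomposition} pins down $d = d_{M^{\theta}}(x,w)$ and the converse direction reads off $xd_{M^{\theta}}(x,w)^{-1} = ww_{M^{\theta}}(x,w) \in W_H w W_{M^{\theta}}$. The paper packages that first step as Corollary~\ref{cor: unique expression} (a consequence of the length inequality in Lemma~\ref{lemma: unique expression}), which is the natural citation; your appeal to the argument in the proof of Proposition~\ref{prop: double coset representative} also works, since only the first half of that proof --- the part that shows the $W_H$-component is trivial and uses membership in $D_H$ alone, not in $D^{-1}_{M^{\theta}}$ --- is needed.
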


To prove this proposition, we need the following lemma.

\begin{lemma}
\label{lemma: unique expression}
Suppose $w \in D_{H, M^{\theta}}$, every element in $W_{H}wW_{M^{\theta}}$ has a unique expression as
\[
w_{H}ww_{M^{\theta}}
\]
for $w_{M^{\theta}} \in W_{M^{\theta}}$ and $w_{H} \in D_{M'}^{-1} \cap W_{H}$, where $M' = M'_{w}$. Moreover,
\[
l_{H}(w_{M^{\theta}}^{-1}w^{-1}w_{H}^{-1}) \geqslant l_{H}(w_{H}^{-1}).
\]
\end{lemma}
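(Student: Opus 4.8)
The statement to be proved is Lemma~\ref{lemma: unique expression}: that every element of $W_{H}wW_{M^{\theta}}$, with $w \in D_{H, M^{\theta}}$, has a unique expression $w_{H}ww_{M^{\theta}}$ with $w_{M^{\theta}} \in W_{M^{\theta}}$ and $w_{H} \in D_{M'}^{-1} \cap W_{H}$ where $M' = M'_{w}$, together with the length inequality $l_{H}(w_{M^{\theta}}^{-1}w^{-1}w_{H}^{-1}) \geqslant l_{H}(w_{H}^{-1})$. The approach is the standard double-coset decomposition argument adapted to the restricted root system $R_{res}(\D{G})$ and the twisted Weyl group $W_{G^{\theta}}$, following Hiraga's original (non-twisted) treatment in \cite{Hiraga:2004}. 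First I would establish the parabolic set factorization on the left: since $w \in D_{H}$ conjugates the standard Borel of $\D{H}$ correctly and $\D{M'_{w}} = w(\D{M}) \cap \D{H}$, the group $W_{M'} \cap W_{H}$ (equivalently $W_{H} \cap w W_{M^{\theta}} w^{-1}$) captures exactly the redundancy in writing elements of the double coset, and one can take $D_{M'}^{-1} \cap W_{H}$ as a set of representatives of $W_{H}/(W_{H} \cap wW_{M^{\theta}}w^{-1})$; this is the analogue of Lemma~\ref{lemma: coset representative} applied inside $W_{H}$ relative to the Levi $M'$.

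For existence of the expression: given $v \in W_{H}wW_{M^{\theta}}$, write $v = w_{H}'ww_{M^{\theta}}'$ for some $w_{H}' \in W_{H}$, $w_{M^{\theta}}' \in W_{M^{\theta}}$, then adjust $w_{H}'$ within its coset modulo $W_{H}\cap wW_{M^{\theta}}w^{-1}$ to land in $D_{M'}^{-1}\cap W_{H}$, absorbing the correction into $w_{M^{\theta}}'$ via conjugation by $w$ — exactly the manipulation $w_{H}' = w_{H}(w_{H}^{-1}w_{H}')$ with $w_{H}^{-1}w_{H}' \in W_{H}\cap wW_{M^{\theta}}w^{-1}$, so $w^{-1}(w_{H}^{-1}w_{H}')^{-1}w \in W_{M^{\theta}}$. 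For uniqueness: if $w_{H}ww_{M^{\theta}} = w_{H}''ww_{M^{\theta}}''$ then $w_{H}^{-1}w_{H}'' = w(w_{M^{\theta}}w_{M^{\theta}}''^{-1})w^{-1} \in W_{H}\cap wW_{M^{\theta}}w^{-1}$, and since $w_{H},w_{H}''$ both lie in the chosen coset-representative set $D_{M'}^{-1}\cap W_{H}$, they must coincide, whence $w_{M^{\theta}} = w_{M^{\theta}}''$. This step is essentially bookkeeping once the representative set $D_{M'}^{-1}\cap W_{H}$ is correctly identified.

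For the length inequality: I would argue that for $w_{H} \in D_{M'}^{-1}\cap W_{H}$, the element $w^{-1}w_{H}^{-1}$ sends $\Delta(\D{M'})$-type roots to positive restricted roots in a controlled way — more precisely, $w_{H}^{-1}$ sends $R^{+}(\D{H}) \setminus R^{+}(\D{M'})$ into $R^{+}(\D{H})$ (since $w_{H} \in D_{M'}^{-1}$, $w_{H}(R^{+}(\D{M'})) \subseteq R^{+}(\D{H})$ forces the complement behavior), and then precomposing with $w_{M^{\theta}}^{-1}w^{-1}$ cannot destroy more positivity than $l_{H}(w_{H}^{-1})$ counts, because $w \in D_{H}$ and $w_{M^{\theta}}^{-1} \in W_{M^{\theta}}$ permutes $R^{+}_{res}(\D{M})$ up to roots in $\mathbb{Z}_{+}$-spans. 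This is the same kind of root-counting estimate used in the proof of Lemma~\ref{lemma: nonempty intersection} and Proposition~\ref{prop: double coset representative}, where one shows $l_{M^{\theta}}(w_{0}s_{\alpha}) < l_{M^{\theta}}(w_{0})$; here the inequality is an additivity-of-length statement $l_{H}(w_{M^{\theta}}^{-1}w^{-1}w_{H}^{-1}) = l_{H}(w_{H}^{-1}) + (\text{a nonnegative count})$ coming from the fact that $ww_{M^{\theta}}$ and $w_{H}^{-1}$ act on disjoint packets of roots.

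\textbf{Main obstacle.} The delicate point is the length inequality in the twisted/restricted setting: in the non-twisted case one has a genuine root system and the usual length function with its exchange condition, but here $R_{res}(\D{G})$ need not be reduced and $W_{G^{\theta}}$ is a Weyl group of a possibly non-reduced system, so one must be careful that the counting functions $l_{H}$ and $l_{M^{\theta}}$ behave additively under the relevant factorizations. I expect one must verify that the chosen representative $w_{H} \in D_{M'}^{-1}\cap W_{H}$ genuinely realizes the minimal-length element in $w_{H}(W_{H}\cap wW_{M^{\theta}}w^{-1})$, using that $W_{H}\cap wW_{M^{\theta}}w^{-1} = W_{M'}$ is itself a parabolic subgroup of $W_{H}$ — this identification of the intersection as the Weyl group of $M' = M'_{w}$ is the structural fact that makes the whole argument go through, and checking it requires the description $\D{M'_{w}} = w(\D{M})\cap\D{H}$ together with surjectivity onto $W_F$, i.e. that $w \in \lif{D}_{H,M^{\theta}}$ when one wants $M'$ defined over $F$, though for the lemma as stated (over $\bar F$) only the $\bar F$-structure is needed.
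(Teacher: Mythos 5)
Your plan matches the paper's argument step for step: identify $W_{H}\cap wW_{M^{\theta}}w^{-1}$ with $W_{M'}$, take $D_{M'}^{-1}\cap W_{H}$ as a set of coset representatives for $W_{H}/W_{M'}$ (the $H$-internal analogue of Lemma~\ref{lemma: coset representative}), obtain existence and uniqueness of the expression by conjugating the $W_{M'}$-ambiguity through $w$, and get the length inequality from a root-partition exhibiting $l_{H}(w_{M^{\theta}}^{-1}w^{-1}w_{H}^{-1})$ as $l_{H}(w_{H}^{-1})$ plus a nonnegative count. The uniqueness manipulation you describe is exactly the paper's.

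The one place you misstate an intermediate fact is the claim that ``$w_{H}^{-1}$ sends $R^{+}(\D{H}) \setminus R^{+}(\D{M'})$ into $R^{+}(\D{H})$ since $w_{H} \in D_{M'}^{-1}$.'' This is false: take $M'$ to be the maximal torus of $H$, so $R^{+}(\D{M'})=\emptyset$ and $D_{M'}^{-1}\cap W_{H}=W_{H}$; then any nontrivial $w_{H}$ negates some positive roots of $\D{H}$. What $w_{H}\in D_{M'}^{-1}\cap W_{H}$ actually gives is $w_{H}(R^{\pm}(\D{M'}))\subseteq R^{\pm}(\D{H})$, and the correct partition to run the additivity through is
\[
w_{H}^{-1}(R^{+}(\D{H})) = \big(w_{H}^{-1}(R^{+}(\D{H})) - R^{+}(\D{M'})\big) \sqcup R^{+}(\D{M'}).
\]
On the first piece $w_{M^{\theta}}^{-1}w^{-1}$ preserves positivity — this is where the structural input $w_{H}^{-1}(R^{+}(\D{H})) \cap w(R_{res}(\D{M})) = R^{+}(\D{M'})$ comes in, ensuring those roots never meet $R_{res}(\D{M})$ where $w_{M^{\theta}}^{-1}$ could interfere — so the negative count there is exactly $l_{H}(w_{H}^{-1})$; the second piece, being already inside $R^{+}(\D{M'})$, contributes only a nonnegative extra. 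Your concluding additivity statement is right; it is the partition, not the complement behavior you invoked, that justifies it.
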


\begin{proof}
As in Lemma~\ref{lemma: coset representative}, one can show $D_{M'}^{-1} \cap W_{H}$ is a set of representatives of $W_{H}/W_{M'}$. Then
\[
w_{H}ww_{M^{\theta}} = (d^{-1}_{M'}w_{M'})ww_{M^{\theta}} =  d^{-1}_{M'}w(w^{-1}w_{M'}w)w_{M^{\theta}},
\]
for $d_{M'} \in D_{M'}$ and $w_{M'} \in W_{M'}$. Since $W_{H} \cap w W_{M^{\theta}} w^{-1} = W_{M'}$, we have $w^{-1}w_{M'}w \in W_{M^{\theta}}$. This proves the existence of the expression. To see the uniqueness, we can assume 
\[
w_{H}ww_{M^{\theta}} = w'_{H}ww'_{M^{\theta}}
\]
both in the desired expressions. Then 
\(
w_{H}ww_{M^{\theta}}(w'_{M^{\theta}})^{-1} = w'_{H}w.
\)
So we can rather assume 
\[
w_{H}ww_{M^{\theta}} = w'_{H}w
\]
It follows $ww_{M^{\theta}} = w^{-1}_{H}w'_{H}w \in W_{H}w$. So $ww_{M^{\theta}}w^{-1} \in W_{H}$. Hence
\[
w_{M'} := ww_{M^{\theta}}w^{-1} \in W_{M'}.
\]
Now we get $w_{H}w_{M'} = w'_{H}$. Since $w_{H}, w'_{H} \in D_{M'}^{-1} \cap W_{H}$, we must have $w_{M'} = 1$. Then $w_{H} = w'_{H}$ and $w_{M^{\theta}} = 1$.

Next we want to show 
\[
l_{H}(w_{M^{\theta}}^{-1}w^{-1}w_{H}^{-1}) \geqslant l_{H}(w_{H}^{-1})
\]
for $w_{M^{\theta}} \in W_{M^{\theta}}$ and $w_{H} \in D_{M'}^{-1} \cap W_{H}$. Note
\[
R^{+}(\D{H}) = \big(R^{+}(\D{H}) - w_{H}(R^{+}(\D{M'}))\big) \bigsqcup w_{H}(R^{+}(\D{M'})).
\]
Then
\[
w_{H}^{-1}(R^{+}(\D{H})) = \big( w_{H}^{-1}(R^{+}(\D{H})) - R^{+}(\D{M'}) \big) \bigsqcup R^{+}(\D{M'}).
\]
We claim $\alpha \in  w_{H}^{-1}(R^{+}(\D{H})) - R^{+}(\D{M'})$ is positive if and only if $w_{M^{\theta}}^{-1}w^{-1} \alpha$ is positive. It is clear that for
$\alpha \in  R(\D{H})$, $\alpha$ is positive if and only if $w^{-1} \alpha$ is positive. So we only need to show $w^{-1} \alpha \notin R_{res}(\D{M})$ for $\alpha \in  w_{H}^{-1}(R^{+}(\D{H})) - R^{+}(\D{M'})$, or equivalently, $\alpha \notin w (R_{res}(\D{M}))$. To see this, we consider 
\[
R^{+}(\D{H}) \cap w_{H}w(R_{res}(\D{M})) = R^{+}(\D{H}) \cap w_{H}\big(R(\D{H}) \cap w(R_{res}(\D{M}))\big) =  R^{+}(\D{H}) \cap w_{H}(R(\D{M'})).
\]
Since $w_{H} \in D_{M'}^{-1} \cap W_{H}$, then $w_{H}(R^{\pm}(\D{M'})) \subseteq R^{\pm}(\D{H})$, and we have 
\[
R^{+}(\D{H}) \cap w_{H}(R(\D{M'})) = w_{H}(R^{+}(\D{M'})).
\] 
Therefore,
\[
R^{+}(\D{H}) \cap w_{H}w(R_{res}(\D{M})) = w_{H}(R^{+}(\D{M'})).
\]
Multiply both sides by $w_{H}^{-1}$,
\[
w_{H}^{-1}(R^{+}(\D{H})) \cap w(R_{res}(\D{M})) = R^{+}(\D{M'}).
\]
From this identity, one can easily see $\alpha \notin w (R_{res}(\D{M}))$ for $\alpha \in  w_{H}^{-1}(R^{+}(\D{H})) - R^{+}(\D{M'})$. This shows our claim. Consequently, we have
\[
l_{H}(w_{M^{\theta}}^{-1}w^{-1}w_{H}^{-1}) = l_{H}(w_{H}^{-1}) + \sharp \{ \alpha \in R^{+}(\D{M'}) | w_{M^{\theta}}^{-1}w^{-1} \alpha \in R^{-}_{res}(\D{G})\} \geq l_{H}(w_{H}^{-1}).
\]

\end{proof}

\begin{corollary}
\label{cor: unique expression}
For $w \in D_{H, M^{\theta}}$, $D_{H} \cap W_{H}wW_{M^{\theta}} \subseteq wW_{M^{\theta}}$.
\end{corollary}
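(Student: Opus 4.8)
The statement to prove is Corollary~\ref{cor: unique expression}: for $w \in D_{H, M^{\theta}}$, $D_{H} \cap W_{H}wW_{M^{\theta}} \subseteq wW_{M^{\theta}}$.

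The plan is to deduce this directly from Lemma~\ref{lemma: unique expression}. First I would take an arbitrary element $y \in D_{H} \cap W_{H}wW_{M^{\theta}}$ and use the existence part of Lemma~\ref{lemma: unique expression} to write it uniquely as $y = w_{H} w w_{M^{\theta}}$ with $w_{M^{\theta}} \in W_{M^{\theta}}$ and $w_{H} \in D_{M'}^{-1} \cap W_{H}$, where $M' = M'_{w}$. The goal is then to show $w_{H} = 1$, which is equivalent to $l_{H}(w_{H}^{-1}) = 0$.

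The key step is the length inequality from Lemma~\ref{lemma: unique expression}, namely
\[
l_{H}(w_{M^{\theta}}^{-1} w^{-1} w_{H}^{-1}) \geqslant l_{H}(w_{H}^{-1}).
\]
Note that $w_{M^{\theta}}^{-1} w^{-1} w_{H}^{-1} = (w_{H} w w_{M^{\theta}})^{-1} = y^{-1}$. Since $y \in D_{H}$, by definition $y^{-1}(\Delta(\D{H})) \subseteq R^{+}_{res}(\D{G})$; I would then argue that this forces $l_{H}(y^{-1}) = 0$ — indeed $l_{H}(y^{-1})$ counts positive roots of $\D{H}$ sent by $y^{-1}$ into $R^{-}_{res}(\D{G})$, and since $y^{-1}$ sends the simple roots of $\D{H}$ into $R^{+}_{res}(\D{G})$ and $y^{-1}$ permutes... here one needs that $y^{-1}$ preserves positivity on all of $R^{+}(\D{H})$, which follows because $y \in D_H$ means $y^{-1}$ maps the base $\Delta(\D H)$ to positive roots of the ambient restricted system, hence (writing any positive root of $\D H$ as a nonnegative combination of $\Delta(\D H)$ and using that $y^{-1}$ is linear and maps each simple root into $R^+_{res}(\D G) \cup \{0\}$, together with $R^+(\D H) \subseteq R^+_{res}(\D G)$) maps every element of $R^+(\D H)$ into $R^+_{res}(\D G)$. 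Therefore $l_H(y^{-1}) = 0$, and the inequality gives $0 \geqslant l_{H}(w_{H}^{-1}) \geqslant 0$, so $l_{H}(w_{H}^{-1}) = 0$, whence $w_{H} = 1$ and $y = w w_{M^{\theta}} \in w W_{M^{\theta}}$.

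The main obstacle, such as it is, is the small combinatorial point that $y \in D_H$ implies $l_H(y^{-1}) = 0$: one must be slightly careful that "$y^{-1}$ sends simple roots of $\D H$ into $R^+_{res}(\D G)$" genuinely upgrades to "$y^{-1}$ sends all of $R^+(\D H)$ into $R^+_{res}(\D G)$", using $R^{\pm}(\D H) \subseteq R^{\pm}_{res}(\D G)$ (established in Step 1 of the section) so that $y^{-1}(R^+(\D H))$, being a linear image of positive combinations of things landing in $R^+_{res}(\D G)$, cannot contain any root of $\D H$ that is negative. Once this observation is in place the corollary is immediate from the cited lemma, so I do not anticipate any serious difficulty — it is essentially a bookkeeping corollary packaging the length estimate of Lemma~\ref{lemma: unique expression} into the form needed for the counting argument in Proposition~\ref{prop: intersection}.
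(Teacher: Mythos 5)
Your argument is correct and is essentially the same as the paper's: write $y \in D_H \cap W_H w W_{M^{\theta}}$ as $w_H w w_{M^{\theta}}$ with $w_H \in D_{M'}^{-1} \cap W_H$ via Lemma~\ref{lemma: unique expression}, note $y^{-1} = w_{M^{\theta}}^{-1}w^{-1}w_H^{-1}$ so that $0 = l_H(y^{-1}) \geqslant l_H(w_H^{-1})$, and conclude $w_H = 1$. The only difference is that you spell out why $y \in D_H$ forces $l_H(y^{-1}) = 0$, a standard fact about sets of minimal-length coset representatives that the paper takes as immediate.
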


\begin{proof}
For $w_{H}ww_{M^{\theta}} \in D_{H} \cap W_{H}wW_{M^{\theta}}$, we can assume $w_{H} \in D_{M'}^{-1} \cap W_{H}$ by Lemma~\ref{lemma: unique expression}. Then
\[
0 = l_{H}(w_{M^{\theta}}^{-1}w^{-1}w_{H}^{-1}) \geqslant l_{H}(w_{H}^{-1}).
\]
So $l_{H}(w_{H}^{-1}) = 0$, and hence $w_{H} = 1$.
\end{proof}

Now we will prove Proposition~\ref{prop: intersection}. For $x \in W_{G^{\theta}}$ and
\[
y \in (xD^{-1}_{M^{\theta}} \cap D_{H}) \cap W_{H}wW_{M^{\theta}}, 
\]
we can assume $y = ww_{M^{\theta}}$ for $w_{M^{\theta}} \in W_{M^{\theta}}$ by Corollary~\ref{cor: unique expression}. There exists $d_{M^{\theta}} \in D_{M^{\theta}}$ such that 
\[
xd^{-1}_{M^{\theta}} = y = ww_{M^{\theta}}.
\]
So $w^{-1}x = w_{M^{\theta}}d_{M^{\theta}}$. Compared with \eqref{eq: algebraic decomposition}, we get $d_{M^{\theta}} = d_{M^{\theta}}(x, w)$ and $w_{M^{\theta}} = w_{M^{\theta}}(x, w)$. Then $y = xd_{M^{\theta}}(x, w)^{-1} \in D_{H}$. On the other hand, suppose $xd_{M^{\theta}}(x, w)^{-1} \in D_{H}$, it is clear that $xd_{M^{\theta}}(x, w)^{-1} \in xD^{-1}_{M^{\theta}} \cap D_{H}$. Moreover, $xd_{M^{\theta}}(x, w)^{-1} = ww_{M^{\theta}}(x, w) \in W_{H}wW_{M^{\theta}}$. So 
\[
xd_{M^{\theta}}(x, w)^{-1} \in (xD^{-1}_{M^{\theta}} \cap D_{H}) \cap W_{H}wW_{M^{\theta}}.
\] 
This finishes the proof.

Since there is a decomposition 
\[
D_{H, M^{\theta}} = \bigsqcup_{P'} D_{M', H, M^{\theta}},
\]
where the sum is over all standard parabolic subgroup $P'$ of $H$ over $\bar{F}$, we would like to refine Proposition~\ref{prop: intersection} by restricting to $D_{M', H, M^{\theta}}$.

\begin{proposition}
\label{prop: intersection 1}
For $x \in W_{G^{\theta}}$ and $w \in D_{M', H, M^{\theta}}$, $(xD^{-1}_{M^{\theta}} \cap D_{H}) \cap W_{H}wW_{M^{\theta}} \neq \emptyset$ if and only if $x \in D_{M'}$.
\end{proposition}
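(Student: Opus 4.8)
Proposition~\ref{prop: intersection 1} refines Proposition~\ref{prop: intersection} by imposing the constraint $M'_{w} = M'$ on the double coset representative $w$. The plan is to combine the explicit description of the nonempty intersection furnished by Proposition~\ref{prop: intersection}, namely that
\[
(xD^{-1}_{M^{\theta}} \cap D_{H}) \cap W_{H}wW_{M^{\theta}}
\]
is either empty or equal to $\{xd_{M^{\theta}}(x, w)^{-1}\}$ according as $xd_{M^{\theta}}(x, w)^{-1}$ lies in $D_{H}$ or not, with an analysis of what the condition $M'_{w} = M'$ together with $xd_{M^{\theta}}(x, w)^{-1} \in D_{H}$ forces on $x$. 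Recall $\D{M'_{w}} = w(\D{M}) \cap \D{H}$, so the hypothesis $w \in D_{M', H, M^{\theta}}$ says precisely $w(\D{M}) \cap \D{H} = \D{M'}$, equivalently $R(\D{M'}) = R(\D{H}) \cap w(R_{res}(\D{M}))$.

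First I would show the ``if'' direction: assume $x \in D_{M'}$, i.e. $x^{-1}(\Delta(\D{M'})) \subseteq R^{+}_{res}(\D{G})$, and produce an element of the intersection. By Proposition~\ref{prop: intersection} it suffices to check $y := xd_{M^{\theta}}(x, w)^{-1} \in D_{H}$, that is $y^{-1}(\Delta(\D{H})) \subseteq R^{+}_{res}(\D{G})$. Writing $y = ww_{M^{\theta}}(x, w)$ from the decomposition \eqref{eq: algebraic decomposition}, and using that $w \in D_{H}$ so that $w^{-1}(R^{+}(\D{H})) \cap R^{-}_{res}(\D{G})$ only involves roots outside $w^{-1}$ of the $\D{H}$-positive system, the key point is to decompose $R^{+}(\D{H})$ as $(R^{+}(\D{H}) - w(R^{+}_{res}(\D{M})) \cap R^{+}(\D{H})) \sqcup (R^{+}(\D{H}) \cap w(R^{+}_{res}(\D{M})))$; on the first piece $w^{-1}$ lands outside $R_{res}(\D{M})$ so $w_{M^{\theta}}(x,w)$ (which permutes $R_{res}(\D{M})$) does not change the sign after applying $w^{-1}$, while on the second piece $w^{-1}$ lands in $R^{+}(\D{M'})$ essentially by the identity $R^{+}(\D{H}) \cap w(R_{res}(\D{M})) $ being carried by $w^{-1}$ into $R^{+}(\D{M'})$ — this is exactly the computation already carried out inside the proof of Lemma~\ref{lemma: unique expression} — and then $x \in D_{M'}$ makes these positive as well. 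I expect this sign bookkeeping, done relative to the Levi $M'$ rather than to $H$, to be the main obstacle; it is the place where the hypothesis $M'_{w} = M'$ genuinely enters and where one must be careful that $w_{M^{\theta}}(x,w)$ only mixes roots of $R_{res}(\D{M})$ among themselves.

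For the ``only if'' direction I would run the converse: suppose the intersection is nonempty, so $y = xd_{M^{\theta}}(x, w)^{-1} \in D_{H}$, and deduce $x \in D_{M'}$, i.e. $x^{-1}(\Delta(\D{M'})) \subseteq R^{+}_{res}(\D{G})$. For $\alpha \in \Delta(\D{M'}) \subseteq R^{+}(\D{H})$, using $R^{+}(\D{M'}) = w_{H}^{-1}(R^{+}(\D{H})) \cap w(R_{res}(\D{M})) = R^{+}(\D{H}) \cap w(R_{res}(\D{M}))$ here $w_H=1$ on the relevant piece since $w \in D_H$, write $w^{-1}\alpha \in R^{+}(\D{M})_{res}$ positive, so $w_{M^{\theta}}(x,w)^{-1}w^{-1}\alpha = d_{M^{\theta}}(x,w)^{-1}y^{-1}\alpha$ — rearranging, $x^{-1}\alpha = d_{M^{\theta}}(x,w)^{-1}\big(w_{M^{\theta}}(x,w)^{-1}w^{-1}\alpha\big)$, and since $w_{M^{\theta}}(x,w)^{-1}$ preserves $R_{res}(\D M)$ and $w^{-1}\alpha$ is a positive combination supported on $R_{res}(\D M)$ one sees $w_{M^{\theta}}(x,w)^{-1}w^{-1}\alpha \in R_{res}(\D M)$, after which $y \in D_H$ and $d_{M^\theta}(x,w) \in D_{M^\theta}$ together force the sign to come out positive. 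Finally I would note that the combination of Proposition~\ref{prop: intersection 1} with Proposition~\ref{prop: intersection} yields a clean count: for $w \in D_{M', H, M^{\theta}}$ the intersection \eqref{eq: intersection} has cardinality $1$ if $x \in D_{M'}$ (and then $d_{M^\theta}(x,w) \in \lif D_{M^\theta}$ precisely when $w \in \lif D_{M', H, M^\theta}$ by Lemma~\ref{lemma: intersection}) and $0$ otherwise, which is the identity \eqref{eq: algebraic identity} one needs for the inductive dimension count in Proposition~\ref{prop: combinatorial identity}.
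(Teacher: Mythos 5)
Your proposal follows the same route as the paper: by Proposition~\ref{prop: intersection} the claim reduces to showing $xd_{M^{\theta}}(x,w)^{-1}\in D_{H}$ if and only if $x\in D_{M'}$, and both directions are then settled by chasing positivity through the factorization $x^{-1}=d_{M^{\theta}}(x,w)^{-1}w_{M^{\theta}}(x,w)^{-1}w^{-1}$, together with the identity $R^{+}(\D{H})\cap w(R_{res}(\D{M}))=R^{+}(\D{M'})$ extracted from the proof of Lemma~\ref{lemma: unique expression} (with $w_{H}=1$). One small slip: in the ``if'' direction you say $w^{-1}$ carries $R^{+}(\D{H})\cap w(R_{res}(\D{M}))$ into $R^{+}(\D{M'})$, but it actually lands in $R^{+}_{res}(\D{M})$; the rest of your reasoning (use $x\in D_{M'}$ and $d_{M^{\theta}}(x,w)\in D_{M^{\theta}}$ to force positivity, noting that $w_{M^{\theta}}(x,w)$ only permutes $R_{res}(\D{M})$) recovers the paper's argument. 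Your concluding remark that this, combined with Lemma~\ref{lemma: intersection}, yields the count $|\lif{D}_{M',H,M^{\theta}}|$ matches how the paper deduces \eqref{eq: algebraic identity}.
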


\begin{proof}
By Proposition~\ref{prop: intersection}, it is enough to show $xd_{M^{\theta}}(x, w)^{-1} \in D_{H}$ if and only if $x \in D_{M'}$. Since 
\[
R^{+}(\D{H}) \cap w(R_{res}(\D{M})) = R^{+}(\D{M'})
\]
and $xd_{M^{\theta}}(x, w)^{-1} = ww_{M^{\theta}}(x, w)$, we have
\begin{align*}
d_{M^{\theta}}(x, w)x^{-1}(R^{+}(\D{M'})) & = d_{M^{\theta}}(x, w)x^{-1}(R^{+}(\D{H})) \cap w_{M^{\theta}}(x, w)^{-1}w^{-1}w(R_{res}(\D{M})) \\
& = d_{M^{\theta}}(x, w)x^{-1}(R^{+}(\D{H})) \cap R_{res}(\D{M}).
\end{align*}
If $xd_{M^{\theta}}(x, w)^{-1} \in D_{H}$, then $d_{M^{\theta}}(x, w)x^{-1}(R^{+}(\D{H})) \subseteq R^{+}_{res}(\D{G})$. So 
\[
d_{M^{\theta}}(x, w)x^{-1}(R^{+}(\D{M'})) \subseteq R^{+}_{res}(\D{M}).
\]
Then
\[
x^{-1}(R^{+}(\D{M'})) \subseteq d_{M^{\theta}}(x, w)^{-1}(R^{+}_{res}(\D{M})) \subseteq R^{+}_{res}(\D{G}).
\]
This means $x \in D_{M'}$.

Conversely, suppose $x \in D_{M'}$ then $x^{-1}(R^{+}(\D{M'})) \subseteq R^{+}_{res}(\D{G})$. We can rewrite it as
\[
d_{M^{\theta}}(x, w)^{-1}(d_{M^{\theta}}(x, w)x^{-1})(R^{+}(\D{M'})) \subseteq R^{+}_{res}(\D{G}).
\]
Since $d_{M^{\theta}}(x, w)x^{-1}(R^{+}(\D{M'})) =  w_{M^{\theta}}(x, w)^{-1}w^{-1}(R^{+}(\D{M'})) \subseteq R_{res}(\D{M})$, we must have
\[
d_{M^{\theta}}(x, w)x^{-1}(R^{+}(\D{M'})) \subseteq R_{res}^{+}(\D{M}).
\]
So it is enough to consider
\begin{align*}
d_{M^{\theta}}(x, w)x^{-1}(R^{+}(\D{H}) - R^{+}(\D{M'})) & = w_{M^{\theta}}(x, w)^{-1}w^{-1}(R^{+}(\D{H}) - R^{+}(\D{M'})) \\
& = w_{M^{\theta}}(x, w)^{-1} \big(w^{-1}(R^{+}(\D{H})) - w^{-1}(R^{+}(\D{M'}))\big) \\
& = w_{M^{\theta}}(x, w)^{-1} \big(w^{-1}(R^{+}(\D{H})) - w^{-1}(R^{+}(\D{H}) \cap w(R_{res}(\D{M})))\big) \\
& = w_{M^{\theta}}(x, w)^{-1} \big(w^{-1}(R^{+}(\D{H})) - w^{-1}(R^{+}(\D{H})) \cap R_{res}(\D{M})\big) \\
& = w_{M^{\theta}}(x, w)^{-1} \big(w^{-1}(R^{+}(\D{H})) - R_{res}(\D{M})\big).
\end{align*}
Since $\alpha \in w^{-1}(R^{+}(\D{H})) - R_{res}(\D{M})$ is positive and not in $R_{res}(\D{M})$, then $w_{M^{\theta}}(x, w)^{-1} \alpha$ is also positive. Therefore,
\[
d_{M^{\theta}}(x, w)x^{-1}(R^{+}(\D{H}) - R^{+}(\D{M'})) \subseteq R^{+}_{res}(\D{G}).
\]
This implies $xd_{M^{\theta}}(x, w)^{-1} \in D_{H}$.

\end{proof}

Next, we will modify Proposition~\ref{prop: intersection} and Proposition~\ref{prop: intersection 1} to count \eqref{eq: intersection}.

\begin{proposition}
\label{prop: intersection modified}
For $x \in W_{G^{\theta}}$ satisfying $x(A^{\D{H}}) = A^{\D{H}}$ and $w \in D_{H, M^{\theta}}$,
\[
(x\lif{D}^{-1}_{M^{\theta}} \cap D_{H}) \cap W_{H}wW_{M^{\theta}} = \begin{cases}
                                                                                                       \{xd_{M^{\theta}}(x, w)^{-1}\}, & \text{ if } w \in \lif{D}_{H, M^{\theta}} \text{ and } xd_{M^{\theta}}(x, w)^{-1} \in D_{H} \\
                                                                                                        \emptyset, & \text{ otherwise. } 
                                                                                                        \end{cases}
\]   

\end{proposition}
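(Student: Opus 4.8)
The plan is to obtain this proposition as a direct refinement of Proposition~\ref{prop: intersection}, the only new input being that one intersects with $x\lif{D}^{-1}_{M^{\theta}}$ rather than $xD^{-1}_{M^{\theta}}$, under the extra hypothesis $x(A^{\D{H}}) = A^{\D{H}}$. First I would observe that since $\lif{D}_{M^{\theta}} \subseteq D_{M^{\theta}}$, the set $(x\lif{D}^{-1}_{M^{\theta}} \cap D_{H}) \cap W_{H}wW_{M^{\theta}}$ is contained in $(xD^{-1}_{M^{\theta}} \cap D_{H}) \cap W_{H}wW_{M^{\theta}}$. By Proposition~\ref{prop: intersection}, the latter is either empty or the singleton $\{xd_{M^{\theta}}(x, w)^{-1}\}$, where $d_{M^{\theta}}(x,w)$ comes from the decomposition $w^{-1}x = w_{M^{\theta}}(x,w)\, d_{M^{\theta}}(x,w)$ of \eqref{eq: algebraic decomposition}; moreover, in the nonempty case the representative can be written as $xd_{M^{\theta}}(x,w)^{-1} = w\, w_{M^{\theta}}(x,w) \in W_{H}wW_{M^{\theta}}$. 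Hence the refined set is nonempty exactly when the unique candidate $xd_{M^{\theta}}(x,w)^{-1}$ both lies in $D_{H}$ (the condition already appearing in Proposition~\ref{prop: intersection}) and lies in $x\lif{D}^{-1}_{M^{\theta}}$, i.e. $d_{M^{\theta}}(x,w) \in \lif{D}_{M^{\theta}}$.

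The second step is to identify the condition $d_{M^{\theta}}(x,w) \in \lif{D}_{M^{\theta}}$ with the condition $w \in \lif{D}_{H, M^{\theta}}$. This is precisely Lemma~\ref{lemma: intersection}, which under the standing hypotheses $x(A^{\D{H}}) = A^{\D{H}}$ and $w \in D_{H, M^{\theta}}$ states that $d_{M^{\theta}}(x,w) \in \lif{D}_{M^{\theta}}$ if and only if $w \in \lif{D}_{H, M^{\theta}}$. Feeding this equivalence into the dichotomy of the previous paragraph yields exactly the asserted formula: the intersection equals $\{xd_{M^{\theta}}(x,w)^{-1}\}$ when $w \in \lif{D}_{H, M^{\theta}}$ and $xd_{M^{\theta}}(x,w)^{-1} \in D_{H}$, and it is empty otherwise.

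I do not expect a genuine obstacle, as the whole argument is a bookkeeping combination of Proposition~\ref{prop: intersection} and Lemma~\ref{lemma: intersection}. The one point meriting attention is why the intersection is genuinely empty whenever $w \notin \lif{D}_{H, M^{\theta}}$, even though $xd_{M^{\theta}}(x,w)^{-1}$ might still lie in $D_{H}$: in that case Lemma~\ref{lemma: intersection} forces $d_{M^{\theta}}(x,w) \notin \lif{D}_{M^{\theta}}$, so this candidate fails to lie in $x\lif{D}^{-1}_{M^{\theta}}$, and Proposition~\ref{prop: intersection} guarantees that no other element of $W_{H}wW_{M^{\theta}}$ can occur.
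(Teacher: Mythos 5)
Your argument is correct and follows essentially the same route as the paper's proof: apply Proposition~\ref{prop: intersection} to reduce to the single candidate $xd_{M^{\theta}}(x,w)^{-1}$, then use Lemma~\ref{lemma: intersection} to translate the condition $d_{M^{\theta}}(x,w)\in\lif{D}_{M^{\theta}}$ into $w\in\lif{D}_{H,M^{\theta}}$. Your closing paragraph makes explicit a point the paper leaves implicit, but the substance is identical.
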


\begin{proof}
By Proposition~\ref{prop: intersection},
\[
(xD^{-1}_{M^{\theta}} \cap D_{H}) \cap W_{H}wW_{M^{\theta}} = \begin{cases}
                                                                                                       \{xd_{M^{\theta}}(x, w)^{-1}\}, & \text{ if } xd_{M^{\theta}}(x, w)^{-1} \in D_{H} \\
                                                                                                        \emptyset, & \text{ otherwise. } 
                                                                                                        \end{cases}
\]                                                                                                        
So $(x\lif{D}^{-1}_{M^{\theta}} \cap D_{H}) \cap W_{H}wW_{M^{\theta}} \neq \emptyset$ if and only if $xd_{M^{\theta}}(x, w)^{-1} \in D_{H}$ and $d_{M^{\theta}}(x, w) \in \lif{D}_{M^{\theta}}$. By Lemma~\ref{lemma: intersection}, this is equivalent to requiring $xd_{M^{\theta}}(x, w)^{-1} \in D_{H}$ and $w \in \lif{D}_{H, M^{\theta}}$.

\end{proof}

As a consequence, we can restrict ourselves to the set $\lif{D}_{H, M^{\theta}}$ when counting \eqref{eq: intersection}. Since 
\begin{align}
\label{eq: intersection decomposition}
\lif{D}_{H, M^{\theta}} = \bigsqcup_{P' \in \mathcal{P}^{H}} \lif{D}_{M', H, M^{\theta}},
\end{align}
we can further restrict to each $\lif{D}_{M', H, M^{\theta}}$.

\begin{proposition}
\label{prop: intersection 1 modified}
For $x \in W_{G^{\theta}}$ satisfying $x(A^{\D{H}}) = A^{\D{H}}$ and $w \in \lif{D}_{M', H, M^{\theta}}$, 
\[
(x\lif{D}^{-1}_{M^{\theta}} \cap D_{H}) \cap W_{H}wW_{M^{\theta}} \neq \emptyset
\] 
if and only if $x \in D_{M'}$.
\end{proposition}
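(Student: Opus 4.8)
The plan is to prove Proposition~\ref{prop: intersection 1 modified} by reducing it to Proposition~\ref{prop: intersection 1}, exactly as Proposition~\ref{prop: intersection modified} was deduced from Proposition~\ref{prop: intersection}. The key observation is that for $w \in \lif{D}_{M', H, M^{\theta}}$, the set $\lif{D}_{M', H, M^{\theta}}$ is contained in $D_{M', H, M^{\theta}}$, so Proposition~\ref{prop: intersection 1} applies to tell us that $(xD^{-1}_{M^{\theta}} \cap D_{H}) \cap W_{H}wW_{M^{\theta}} \neq \emptyset$ if and only if $x \in D_{M'}$, and in that case the unique element is $xd_{M^{\theta}}(x, w)^{-1}$.

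First I would invoke Proposition~\ref{prop: intersection modified}: since $x(A^{\D{H}}) = A^{\D{H}}$ and $w \in \lif{D}_{M', H, M^{\theta}} \subseteq \lif{D}_{H, M^{\theta}}$, we have
\[
(x\lif{D}^{-1}_{M^{\theta}} \cap D_{H}) \cap W_{H}wW_{M^{\theta}} = \begin{cases}
\{xd_{M^{\theta}}(x, w)^{-1}\}, & \text{ if } xd_{M^{\theta}}(x, w)^{-1} \in D_{H}, \\
\emptyset, & \text{ otherwise.}
\end{cases}
\]
So the set is nonempty precisely when $xd_{M^{\theta}}(x, w)^{-1} \in D_{H}$. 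Now I would quote the core computation buried in the proof of Proposition~\ref{prop: intersection 1}, namely that for $w \in D_{M', H, M^{\theta}}$ the condition $xd_{M^{\theta}}(x, w)^{-1} \in D_{H}$ is equivalent to $x \in D_{M'}$; the argument there uses the identity $R^{+}(\D{H}) \cap w(R_{res}(\D{M})) = R^{+}(\D{M'})$ together with the decomposition $R^{+}(\D{H}) = (R^{+}(\D{H}) - R^{+}(\D{M'})) \sqcup R^{+}(\D{M'})$ and does not depend on whether $w$ lies in $\lif{D}_{H,M^{\theta}}$ or merely in $D_{H,M^{\theta}}$. Since $\lif{D}_{M', H, M^{\theta}} \subseteq D_{M', H, M^{\theta}}$, that equivalence is available verbatim for our $w$, and combining it with the displayed equation above gives the claim.

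I do not expect a genuine obstacle here, since the statement is a routine refinement of Proposition~\ref{prop: intersection modified} by the already-established Proposition~\ref{prop: intersection 1}; the only point requiring a little care is to make sure that the equivalence "$xd_{M^{\theta}}(x, w)^{-1} \in D_{H} \iff x \in D_{M'}$" proved inside Proposition~\ref{prop: intersection 1} is genuinely insensitive to the extra constraint on $w$ — which it is, because that equivalence is a purely root-theoretic statement about the fixed data $(\D{H}, \D{M'}, \D{M}, w)$ and never uses the hypothesis $w^{-1}(A_{\D{M}}^{\D{\theta}})^{0} \subseteq A^{\D{H}}$. With that remark in place the proof is complete, and it is what feeds into the final combinatorial count of $[\xi_H \lif{\xi}_{M^{\theta}}]_H$ via the decomposition \eqref{eq: intersection decomposition}.
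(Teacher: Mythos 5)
Your proposal is correct and follows essentially the same chain as the paper: both arguments combine Proposition~\ref{prop: intersection modified} (to reduce nonemptiness to the condition $xd_{M^{\theta}}(x, w)^{-1} \in D_{H}$) with the root-theoretic equivalence $xd_{M^{\theta}}(x, w)^{-1} \in D_{H} \iff x \in D_{M'}$ established inside the proof of Proposition~\ref{prop: intersection 1}, using the inclusion $\lif{D}_{M', H, M^{\theta}} \subseteq D_{M', H, M^{\theta}}$. The paper packages this slightly differently (treating the two implications separately and observing that one direction is immediate from $x\lif{D}^{-1}_{M^{\theta}} \subseteq xD^{-1}_{M^{\theta}}$), while you chain the biconditionals directly, but the mathematical content is identical.
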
 

\begin{proof}
By definition, $\lif{D}_{M', H, M^{\theta}} \subseteq D_{M', H, M^{\theta}}$. In view of Proposition~\ref{prop: intersection 1}, it suffices to show for $x \in D_{M'}$, 
\[
(x\lif{D}^{-1}_{M^{\theta}} \cap D_{H}) \cap W_{H}wW_{M^{\theta}} \neq \emptyset.
\]
Since in this case
\[
(xD^{-1}_{M^{\theta}} \cap D_{H}) \cap W_{H}wW_{M^{\theta}} \neq \emptyset,
\]
we have $xd_{M^{\theta}}(x, w)^{-1} \in D_{H}$ by Proposition~\ref{prop: intersection}. Then the result follows from Proposition~\ref{prop: intersection modified} immediately.
\end{proof}

\begin{corollary}
\begin{align*}
[\xi_{H}\lif{\xi}_{M^{\theta}}]_{H} = \sum_{P' \in \mathcal{P}^{H}} a_{M', H, M^{\theta}} [\xi_{M'}]_{H}. 
\end{align*}
\end{corollary}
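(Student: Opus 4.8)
The statement to prove is the corollary asserting
\[
[\xi_{H}\lif{\xi}_{M^{\theta}}]_{H} = \sum_{P' \in \mathcal{P}^{H}} a_{M', H, M^{\theta}} [\xi_{M'}]_{H},
\]
which is precisely equation \eqref{eq: algebraic identity} that we set out to prove. The plan is to assemble the pieces already established in Step 2. The left-hand side $[\xi_{H}\lif{\xi}_{M^{\theta}}]_{H}$ is, by definition of the bracket operation and of the products $\xi_H = \sum_{w \in D_H} w$ and $\lif{\xi}_{M^\theta} = \sum_{w \in \lif{D}_{M^\theta}} w$, the formal sum $\sum_x c_x \, x$ over those $x \in W_{G^\theta}$ with $x(A^{\D{H}}) = A^{\D{H}}$, where $c_x = \sharp(x\lif{D}^{-1}_{M^{\theta}} \cap D_{H})$ counts the pairs $(d_H, d_{M^\theta}) \in D_H \times \lif{D}_{M^\theta}$ with $x = d_H d_{M^\theta}$.

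First I would fix such an $x$ and decompose the count $c_x$ using Proposition~\ref{prop: double coset representative}: since $D_{H, M^{\theta}}$ is a complete set of representatives for $W_{H}\backslash W_{G^{\theta}}/W_{M^{\theta}}$, we have
\[
x\lif{D}^{-1}_{M^{\theta}} \cap D_{H} = \bigsqcup_{w \in D_{H, M^{\theta}}} \Big( (x\lif{D}^{-1}_{M^{\theta}} \cap D_{H}) \cap W_{H}wW_{M^{\theta}} \Big).
\]
Next I would apply Proposition~\ref{prop: intersection modified}, which tells us each summand is either empty or a singleton $\{xd_{M^{\theta}}(x, w)^{-1}\}$, the latter occurring exactly when $w \in \lif{D}_{H, M^{\theta}}$ and $xd_{M^{\theta}}(x, w)^{-1} \in D_{H}$. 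So the union only runs effectively over $\lif{D}_{H, M^{\theta}}$, and then I would use the decomposition \eqref{eq: intersection decomposition}, namely $\lif{D}_{H, M^{\theta}} = \bigsqcup_{P' \in \mathcal{P}^{H}} \lif{D}_{M', H, M^{\theta}}$, together with Proposition~\ref{prop: intersection 1 modified}: for $w \in \lif{D}_{M', H, M^{\theta}}$ the intersection is nonempty if and only if $x \in D_{M'}$, a condition that depends only on $P'$, not on the particular $w$. Hence, for each $P'$ with $x \in D_{M'}$, the number of $w \in \lif{D}_{M', H, M^{\theta}}$ contributing a (singleton) term is exactly $a_{M', H, M^{\theta}}$ by definition of that quantity, and each such $w$ contributes the same element $xd_{M^\theta}(x,w)^{-1}$—here one should note these elements are genuinely distinct as $w$ varies, so they are counted with the right multiplicity; in fact they all equal $x$ modulo $W_{M^\theta}$ and lie in $D_H$, and since $D_H$ represents $W_H \backslash W_{G^\theta}$ uniquely the count is clean. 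Therefore $c_x = \sum_{P' : x \in D_{M'}} a_{M', H, M^{\theta}}$.

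Finally, I would match this against the right-hand side: by definition $[\xi_{M'}]_{H} = \sum_{x \in D_{M'},\, x(A^{\D{H}}) = A^{\D{H}}} x$, so the coefficient of $x$ in $\sum_{P'} a_{M', H, M^{\theta}} [\xi_{M'}]_{H}$ is precisely $\sum_{P' : x \in D_{M'}} a_{M', H, M^{\theta}}$, which agrees with $c_x$ computed above; comparing coefficients of every $x$ proves the identity. I do not anticipate a genuine obstacle here—the content has all been front-loaded into the preceding propositions of Step 2—so the only care needed is bookkeeping: verifying that the bracket $[\,\cdot\,]_H$ really does restrict both sides to the same index set $\{x : x(A^{\D H}) = A^{\D H}\}$ (for the left side this is built into the definition; for the right side one checks that $x \in D_{M'}$ with a nonempty intersection forces $x(A^{\D H}) = A^{\D H}$, which follows from $w \in \lif{D}_{M^\theta}$ via $w(A^{\D M}_{\D\theta})^0 \subseteq A^{\D H}$ and the relation $x d_{M^\theta}(x,w)^{-1} = w w_{M^\theta}(x,w)$), and confirming that the singletons produced for distinct $w$ are not accidentally identified, which is immediate from Proposition~\ref{prop: intersection modified} giving the unique representative $x d_{M^\theta}(x,w)^{-1}$ inside each fixed double coset $W_H w W_{M^\theta}$.
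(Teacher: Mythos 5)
Your proof is correct and follows essentially the same route as the paper, which compresses the argument to a one-line citation of the decomposition $\lif{D}_{H, M^{\theta}} = \bigsqcup_{P'} \lif{D}_{M', H, M^{\theta}}$, Proposition~\ref{prop: intersection 1 modified}, and the identity $a_{M', H, M^{\theta}} = |\lif{D}_{M', H, M^{\theta}}|$. You have merely unwound, coefficient by coefficient, the same bookkeeping via Propositions~\ref{prop: double coset representative} and~\ref{prop: intersection modified} that the paper leaves implicit.
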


\begin{proof}
Since $a_{M', H, M^{\theta}} = |\lif{D}_{M', H, M^{\theta}}|$, this identity is an easy consequence of \eqref{eq: intersection decomposition} and Proposition~\ref{prop: intersection 1 modified}.
\end{proof}

{\bf Step 3:} In this step, we would like to establish the following two identities:

\begin{align}
\label{eq: identity A}
\sum_{P \in \mathcal{P}^{\theta}} (-1)^{r_{res}(M)} \lif{\xi}_{M^{\theta}} = (-1)^{r_{res}(M^{H})} w^{G}_{-}w^{M^{H}}_{-}
\end{align}

\begin{align}
\label{eq: identity B}
\sum_{P' \in \mathcal{P}^{H}} (-1)^{r(M')}[\xi_{M'}]_{H} = [\xi_{H}w^{G}_{-}w^{M^{H}}_{-}]_{H}
\end{align}

Here $w^{G}_{-}$ (resp. $w^{M^{H}}_{-}$) is the longest element in $W_{G}$ (resp. $W_{M^{H}}$). It is an easy exercise to show $w^{G}_{-} \in W_{G^{\theta}}$ (resp. $w^{M^{H}}_{-} \in W_{(M^{H})^{\theta}}$). Moreover, we have $w^{G}_{-}(A^{\D{G}, \D{\theta}}) = A^{\D{G}, \D{\theta}}$ (resp. $w^{M^{H}}_{-}(A^{\D{G}, \D{\theta}}) = A^{\D{G}, \D{\theta}}$), i.e., $w^{G}_{-}, \,w^{M^{H}}_{-} \in W_{G^{\theta}}^{\Gal{F}}$.

First let us consider \eqref{eq: identity A}. Recall the left hand side of \eqref{eq: identity A} is equal to
\[
LHS. \eqref{eq: identity A} = \sum_{P \in \mathcal{P}^{\theta}} (-1)^{r_{res}(M)} \sum_{w \in \lif{D}_{M^{\theta}}}w,
\]
and we make the following observation.

\begin{lemma}
If $w \in \lif{D}_{M^{\theta}}$, then $w \in W_{G^{\theta}}^{\Gal{F}}$.
\end{lemma}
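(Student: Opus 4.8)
The claim is that every $w \in \lif{D}_{M^{\theta}}$ is $\Gal{F}$-invariant. Recall $\lif{D}_{M^{\theta}} = \{w \in D_{M^{\theta}} \mid w^{-1}(A_{\D{M}}^{\D{\theta}})^{0} \subseteq A^{\D{H}}\}$, where $D_{M^{\theta}} = \{w \in W_{G^{\theta}} \mid w^{-1}(\Delta_{res}(\D{M})) \subseteq R^{+}_{res}(\D{G})\}$ is the set of minimal-length representatives of $W_{M^{\theta}} \backslash W_{G^{\theta}}$. The first thing I would note is that the $\Gal{F}$-action on $W_{G^{\theta}}$ preserves $R^{+}_{res}(\D{G})$ (the $\Gal{F}$-splitting data were chosen compatibly), preserves $\Delta_{res}(\D{M})$ since $M$ is defined over $F$, and preserves $A^{\D{H}}$ and $(A_{\D{M}}^{\D{\theta}})^{0}$ for the same reason; hence $\Gal{F}$ stabilizes the subset $\lif{D}_{M^{\theta}} \subseteq W_{G^{\theta}}$ setwise. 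So for $\gamma \in \Gal{F}$ and $w \in \lif{D}_{M^{\theta}}$, both $w$ and $\gamma(w)$ lie in $\lif{D}_{M^{\theta}}$.

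\textbf{Key step.} The point is then uniqueness of minimal-length double-coset (here just coset) representatives combined with the fact that the Galois action is length-preserving. Since $\Gal{F}$ acts on $W_{G^{\theta}}$ by automorphisms preserving the (restricted) root system and its set of positive roots, it preserves the length function $l_{M^{\theta}}$; thus $\gamma(w) \in D_{M^{\theta}}$ whenever $w \in D_{M^{\theta}}$, and $\gamma(w)$ represents the coset $W_{M^{\theta}} \gamma(w) = \gamma(W_{M^{\theta}} w)$. But I would observe that $w$ and $\gamma(w)$ actually represent the \emph{same} coset: this is where the defining condition $w^{-1}(A_{\D{M}}^{\D{\theta}})^{0} \subseteq A^{\D{H}}$ enters. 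Indeed $W_{M^{\theta}}$ is precisely the pointwise stabilizer in $W_{G^{\theta}}$ of $(A_{\D{M}}^{\D{\theta}})^{0}$ (or rather, it acts trivially on the corresponding cocharacter space fixed by $(A_{\D{M}}^{\D{\theta}})^{0}$), so $W_{M^{\theta}} w$ is determined by the torus $w^{-1}(A_{\D{M}}^{\D{\theta}})^{0}$; applying $\gamma$ and using $\gamma$-invariance of $(A_{\D{M}}^{\D{\theta}})^{0}$ gives $\gamma(w)^{-1}(A_{\D{M}}^{\D{\theta}})^{0} = \gamma\big(w^{-1}(A_{\D{M}}^{\D{\theta}})^{0}\big)$, and since this subtorus lies in $A^{\D{H}}$, which is $\gamma$-invariant, one sees $\gamma(w)$ and $w$ have the same image under the map to subtori, hence $W_{M^{\theta}} \gamma(w) = W_{M^{\theta}} w$. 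By uniqueness of the minimal representative, $\gamma(w) = w$. As $\gamma \in \Gal{F}$ was arbitrary, $w \in W_{G^{\theta}}^{\Gal{F}}$.

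\textbf{Main obstacle.} The only delicate point is making precise the assertion that $W_{M^{\theta}}$-cosets in $D_{M^{\theta}}$ are detected by the image subtorus $w^{-1}(A_{\D{M}}^{\D{\theta}})^{0}$, equivalently that $W_{G^{\theta}}(H,M)$-type conditions interact cleanly with the Levi structure; but this is exactly the standard parametrization already invoked in Step~1 (Lemma~\ref{lemma: coset representative} and the discussion of $\lif{D}_{M^{\theta}}$), so I would just cite that. A cleaner alternative, which I would probably use in the writeup, avoids torus bookkeeping entirely: apply $\gamma$ to the chain of inclusions defining $\lif{D}_{M^{\theta}}$ to get $\gamma(w) \in \lif{D}_{M^{\theta}}$, note $\gamma$ fixes $w^{G}_{-}$ and is length-preserving, and then use that $W_{M^{\theta}} \backslash W_{G^{\theta}}$ has a \emph{unique} $\Gal{F}$-fixed representative in each $\Gal{F}$-fixed coset together with the fact that $w$ and $\gamma(w)$ lie in the same coset because $w^{-1}(A_{\D{M}}^{\D{\theta}})^{0}$ is $\Gal{F}$-stable inside $A^{\D{H}}$. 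Either way the proof is short; the substance is purely that all the combinatorial data ($R^{+}_{res}$, $\Delta_{res}(\D{M})$, $A^{\D{H}}$) are defined over $F$.
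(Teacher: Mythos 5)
Your overall strategy is the same as the paper's: show that $\sigma(w)$ again lies in $D_{M^{\theta}}$, show that $\sigma(w)$ lies in the same coset $W_{M^{\theta}}w$, and then invoke uniqueness of coset representatives (Lemma on $D_{M^{\theta}}$) to conclude $\sigma(w) = w$. However, the coset step in your write-up has a genuine gap. You derive that
\[
\gamma(w)^{-1}\big((A_{\D{M}}^{\D{\theta}})^{0}\big) = w^{-1}\big((A_{\D{M}}^{\D{\theta}})^{0}\big)
\]
as subtori and then assert ``hence $W_{M^{\theta}}\gamma(w) = W_{M^{\theta}}w$.'' This inference does not hold: $W_{M^{\theta}}$ is the \emph{pointwise} stabilizer of $(A_{\D{M}}^{\D{\theta}})^{0}$ in $W_{G^{\theta}}$, while equality of image subtori only shows that $\gamma(w)w^{-1}$ stabilizes $(A_{\D{M}}^{\D{\theta}})^{0}$ as a set, i.e.\ normalizes $\D{M}$. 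The normalizer of $W_{M^{\theta}}$ in $W_{G^{\theta}}$ is in general strictly larger than $W_{M^{\theta}}$ (think of block permutations for a general linear group), so ``same image torus'' does not force ``same coset.'' Your later remark that ``cosets in $D_{M^{\theta}}$ are detected by the image subtorus'' is not something the cited coset-representative lemma gives you, and I do not believe it is true.

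What saves the argument — and is exactly what the paper uses — is that you actually have more than image equality: both $(A_{\D{M}}^{\D{\theta}})^{0}$ and $A^{\D{H}} \subseteq A^{\D{G},\D{\theta}}$ are \emph{pointwise} $\Gal{F}$-fixed. So for each $s \in (A_{\D{M}}^{\D{\theta}})^{0}$ one has $\sigma(w)^{-1}(s) = \sigma(w)^{-1}(\sigma(s)) = \sigma\big(w^{-1}(s)\big) = w^{-1}(s)$, the last equality because $w^{-1}(s) \in A^{\D{H}}$ is Galois-fixed. Thus $\sigma(w)^{-1}$ and $w^{-1}$ agree \emph{as maps} on $(A_{\D{M}}^{\D{\theta}})^{0}$, so $\sigma(w)w^{-1}$ fixes that torus pointwise and therefore lies in $W_{M^{\theta}}$. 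With this correction in place, the rest of your proposal (stability of $\lif{D}_{M^{\theta}}$ under $\Gal{F}$, positivity preserved, uniqueness of coset representatives) is fine and gives the lemma.
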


\begin{proof}
For $w \in \lif{D}_{M^{\theta}}$, we have $w^{-1}(\Delta_{res}(\D{M})) \subseteq R^{+}_{res}(\D{G})$ and $w^{-1}(A_{\D{M}}^{\D{\theta}})^{0} \subseteq A^{\D{H}}$ by the definition. We take any $\sigma \in \Gal{F}$. Since $A^{\D{H}} \subseteq A^{\D{G}, \D{\theta}}$, it is easy to see $\sigma(w) \in W_{M^{\theta}} w$. On the other hand, 
\[
\sigma(w)^{-1}(\Delta_{res}(\D{M})) = \sigma(w^{-1}(\Delta_{res}(\D{M}))) \subseteq \sigma(R^{+}_{res}(\D{G})) = R^{+}_{res}(\D{G}).
\]
So $\sigma(w) \in D_{M^{\theta}}$. By Lemma~\ref{lemma: coset representative}, $\sigma(w) = w$. Hence $w \in W_{G^{\theta}}^{\Gal{F}}$.

\end{proof}

As a consequence, we can restrict the summation on the left hand side of \eqref{eq: identity A} to $W_{G^{\theta}}^{\Gal{F}}$. Moreover, for $w \in W_{G^{\theta}}^{\Gal{F}}$, the condition that $w \in \lif{D}_{M^{\theta}}$ is equivalent to
\[
R^{+}_{res}(\D{M^{H}}) \subseteq w^{-1}(R^{+}_{res}(\D{M})) \subseteq R^{+}_{res}(\D{G}).
\]
So

\begin{align*}
LHS. \eqref{eq: identity A} & = \sum_{P \in \mathcal{P}^{\theta}} (-1)^{r_{res}(M)} \sum_{\substack{w \in W_{G^{\theta}}^{\Gal{F}} \\ R^{+}_{res}(\D{M^{H}}) \subseteq w^{-1}(R^{+}_{res}(\D{M})) \subseteq R^{+}_{res}(\D{G})} } w \\
& = \sum_{w \in W_{G^{\theta}}^{\Gal{F}}}  \big( \sum_{\substack{P \in \mathcal{P}^{\theta} \\ w(R^{+}_{res}(\D{M^{H}})) \subseteq R^{+}_{res}(\D{M}) \subseteq w(R^{+}_{res}(\D{G}))}} (-1)^{r_{res}(M)}\big) \, w
\end{align*}
For $w \in W_{G^{\theta}}^{\Gal{F}}$, we define
\[
I_{w} = \{\alpha \in \Delta_{res}(\D{G}) | n_{\beta \alpha} \neq 0 \text{ for some } \beta \in \Delta_{res}(\D{M^{H}}) \},
\]
where $w \beta = \sum_{\alpha \in \Delta_{res}(\D{G})} n_{\beta\alpha} \, \alpha$. Then we have the following lemma.

\begin{lemma}
For $w \in W_{G^{\theta}}^{\Gal{F}}$, 
\[
I_{w} = w (\Delta_{res}(\D{M^{H}}))
\] 
if and only if 
\[
w(R^{+}_{res}(\D{M^{H}})) \subseteq R^{+}_{res}(\D{M}) \subseteq w(R^{+}_{res}(\D{G}))
\]
for some $P \in \mathcal{P}^{\theta}$.
\end{lemma}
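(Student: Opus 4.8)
I would reduce the biconditional to the purely combinatorial assertion that $w$ carries the simple restricted roots of $\D{M^{H}}$ into the simple restricted roots of $\D{G}$, i.e.\ $w(\Delta_{res}(\D{M^{H}}))\subseteq\Delta_{res}(\D{G})$. Indeed, by the definition of $I_{w}$ one has $I_{w}=\bigcup_{\beta\in\Delta_{res}(\D{M^{H}})}\mathrm{supp}(w\beta)$, where $\mathrm{supp}$ records the simple roots occurring with nonzero coefficient; and $I_{w}\subseteq\Delta_{res}(\D{G})$ always. Hence $I_{w}=w(\Delta_{res}(\D{M^{H}}))$ forces $w(\Delta_{res}(\D{M^{H}}))\subseteq\Delta_{res}(\D{G})$, while conversely the latter makes each $w\beta$ equal to its own support and so $I_{w}=\bigcup_{\beta}\{w\beta\}=w(\Delta_{res}(\D{M^{H}}))$. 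So it suffices to prove: $w(\Delta_{res}(\D{M^{H}}))\subseteq\Delta_{res}(\D{G})$ if and only if some $P=MN\in\mathcal{P}^{\theta}$ satisfies the sandwiching condition $w(R^{+}_{res}(\D{M^{H}}))\subseteq R^{+}_{res}(\D{M})\subseteq w(R^{+}_{res}(\D{G}))$.

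\textbf{From simple roots to the Levi $M$.} Assume $w(\Delta_{res}(\D{M^{H}}))\subseteq\Delta_{res}(\D{G})$ and put $\Theta:=w(\Delta_{res}(\D{M^{H}}))$. Since $w\in W_{G^{\theta}}^{\Gal{F}}$ and $M^{H}$ is a standard Levi of $G$ (so $\Delta_{res}(\D{M^{H}})$ is $\Gal{F}$-stable), the set $\Theta$ is a $\Gal{F}$-stable subset of $\Delta_{res}(\D{G})$, hence corresponds to a $\theta$-stable standard Levi $M:=M_{\Theta}$, giving $P=M_{\Theta}N\in\mathcal{P}^{\theta}$ with $R^{+}_{res}(\D{M})=R^{+}_{res}(\D{G})\cap\mathbb{Z}\Theta$. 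Every element of $w(R^{+}_{res}(\D{M^{H}}))$ is a root that is a nonnegative integer combination of $\Theta$, hence lies in $R^{+}_{res}(\D{M})$; and if $\gamma\in R^{+}_{res}(\D{M})$, then $\gamma$ is a nonnegative combination of $\Theta$, so $w^{-1}\gamma$ is a root which is a nonnegative combination of $w^{-1}\Theta=\Delta_{res}(\D{M^{H}})\subseteq\Delta_{res}(\D{G})$, hence positive, i.e.\ $R^{+}_{res}(\D{M})\subseteq w(R^{+}_{res}(\D{G}))$. This establishes the chain.

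\textbf{The converse.} Suppose $P=MN\in\mathcal{P}^{\theta}$ satisfies $w(R^{+}_{res}(\D{M^{H}}))\subseteq R^{+}_{res}(\D{M})\subseteq w(R^{+}_{res}(\D{G}))$; the second inclusion is equivalent to $w^{-1}(R^{+}_{res}(\D{M}))\subseteq R^{+}_{res}(\D{G})$. Fix $\beta\in\Delta_{res}(\D{M^{H}})$ and set $\delta:=w\beta\in R^{+}_{res}(\D{M})$, so $\delta=\sum_{\alpha\in\Delta_{res}(\D{M})}c_{\alpha}\alpha$ with $c_{\alpha}\in\mathbb{Z}_{\geqslant 0}$. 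Applying $w^{-1}$ and writing, for each $\alpha\in\Delta_{res}(\D{M})\subseteq R^{+}_{res}(\D{M})$, $w^{-1}\alpha=\sum_{\gamma\in\Delta_{res}(\D{G})}d_{\alpha\gamma}\gamma$ with $d_{\alpha\gamma}\in\mathbb{Z}_{\geqslant 0}$ (legitimate since $w^{-1}\alpha\in R^{+}_{res}(\D{G})$), I obtain $\beta=\sum_{\gamma}\bigl(\sum_{\alpha}c_{\alpha}d_{\alpha\gamma}\bigr)\gamma$. Because $\beta$ is a simple root of $\D{G}$, all these coefficients vanish except one, equal to $1$; as every $w^{-1}\alpha$ is a nonzero positive root, this forces $c_{\alpha}=0$ for all but one $\alpha_{0}$, with $c_{\alpha_{0}}=1$. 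Thus $\delta=\alpha_{0}\in\Delta_{res}(\D{G})$, so $w(\Delta_{res}(\D{M^{H}}))\subseteq\Delta_{res}(\D{G})$, as desired.

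\textbf{Main obstacle.} The argument is conceptually light but bookkeeping-heavy: every subset and every Weyl-group element in play must be verified to be $\Gal{F}$-stable, and one must work throughout with the restricted root system $R_{res}(\D{G})$ — which may be non-reduced — and its basis $\Delta_{res}(\D{G})$. This $\Gal{F}$-equivariant reformulation is exactly where the present setting departs from Hiraga's non-twisted treatment in \cite{Hiraga:2004}. The technical heart is the positivity/coefficient computation in the converse direction, where I would want to double-check that the elementary fact ``a root lying in the nonnegative $\mathbb{Z}$-span of $\Delta_{res}(\D{G})$ is positive'' is used correctly in the possibly non-reduced case — it is, since $\Delta_{res}(\D{G})$ remains a $\mathbb{Z}$-basis of the root lattice with the standard notion of positivity.
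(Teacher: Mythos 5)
Your proof is correct, and it is essentially the same argument as in the paper: both directions hinge on the observation that $w(\Delta_{res}(\D{M^{H}})) \subseteq \Delta_{res}(\D{G})$ is the operative condition, with the $(2)\Rightarrow(1)$ direction established by expanding $w\beta$ over positive restricted roots, pulling back by $w^{-1}$, and using that a simple root admits only the trivial nonnegative integral decomposition into positive roots, and the $(1)\Rightarrow(2)$ direction established by taking the $\theta$-stable standard Levi attached to the ($\Gal{F}$-stable) subset $I_{w}=w(\Delta_{res}(\D{M^{H}}))$. Your explicit factoring of the biconditional through the intermediate condition $w(\Delta_{res}(\D{M^{H}}))\subseteq\Delta_{res}(\D{G})$, and your explicit check of $\Gal{F}$-stability of $\Theta$ (needed so that $M_\Theta$ gives an element of $\mathcal{P}^{\theta}$), make the same logic slightly more transparent than the paper's presentation, but the mathematical content is identical.
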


\begin{proof}
If there exists $P \in \mathcal{P}^{\theta}$ such that
\[
w(R^{+}_{res}(\D{M^{H}})) \subseteq R^{+}_{res}(\D{M}) \subseteq w(R^{+}_{res}(\D{G})),
\]
then $I_{w} \subseteq \Delta_{res}(\D{M}) \subseteq w(R^{+}_{res}(\D{G}))$. So $w^{-1}(I_{w}) \subseteq R^{+}_{res}(\D{G})$. We claim $w (\Delta_{res}(\D{M^{H}})) \subseteq \Delta_{res}(\D{G})$. Suppose $\beta \in \Delta_{res}(\D{M^{H}})$, since $w \beta \in R^{+}_{res}(\D{M})$, we can assume $w \beta = \sum_{\alpha \in I_{w}} n_{\beta\alpha} \, \alpha$ where $n_{\beta\alpha} \geqslant 0$. Hence
\[
\beta = w^{-1}(w \beta) = \sum_{\alpha \in I_{w}} n_{\beta\alpha} \, (w^{-1} \alpha).
\]
Since $w^{-1} \alpha \in R^{+}_{res}(\D{G})$ for $\alpha \in I_{w}$, this can only happen when $n_{\beta\alpha} = 0$ except for one simple root, i.e., $w\beta \in \Delta_{res}(\D{G})$. This shows our claim. As a consequence, $I_{w} = w (\Delta_{res}(\D{M^{H}}))$.

Conversely, if $I_{w} = w (\Delta_{res}(\D{M^{H}}))$, we can let $M_{I_{w}}$ be the standard Levi subgroup of $G$ associated with the subset of simple roots $I_{w}$. Then we have
\[
w(R^{+}_{res}(\D{M^{H}})) \subseteq R^{+}_{res}(\D{M}_{I_{w}}) \subseteq w(R^{+}_{res}(\D{G})).
\] 
\end{proof}

In view of this lemma, we can assume 
\(
I_{w} = w (\Delta_{res}(\D{M^{H}})).
\) 
Let $M(w)$ be the standard Levi subgroup of $G$ associated with the subset of $\alpha \in \Delta_{res}(\D{G})$ such that $w^{-1}\alpha \in R^{+}_{res}(\D{G})$. It is clear that $M(w) \supseteq M_{I_{w}}$ under our assumption. Then

\begin{align*}
LHS. \eqref{eq: identity A} & = \sum_{\substack{w \in W_{G^{\theta}}^{\Gal{F}} \\ I_{w} = w (\Delta_{res}(\D{M^{H}}))}}  \big( \sum_{\substack{P \in \mathcal{P}^{\theta} \\ M_{I_{w}} \subseteq M \subseteq M(w)}} (-1)^{r_{res}(M)} \big) \, w 
= \sum_{\substack{w \in W_{G^{\theta}}^{\Gal{F}} \\ I_{w} = w (\Delta_{res}(\D{M^{H}})), \, M_{I_{w}} = M(w)}} (-1)^{r_{res}(M_{I_{w}})} \, w 
\end{align*}
Note $r_{res}(M_{I_{w}}) = r_{res}(M^{H})$, so

\begin{align*}
LHS. \eqref{eq: identity A} = (-1)^{r_{res}(M^{H})} \sum_{\substack{w \in W_{G^{\theta}}^{\Gal{F}} \\ I_{w} = w (\Delta_{res}(\D{M^{H}})), \, M_{I_{w}} = M(w)}} \, w 
\end{align*}
Then \eqref{eq: identity A} follows from the following lemma.

\begin{lemma}
Suppose $w \in W_{G^{\theta}}^{\Gal{F}}$ satisfies $I_{w} = w (\Delta_{res}(\D{M^{H}}))$ and $M_{I_{w}} = M(w)$, then $w = w^{G}_{-}w^{M^{H}}_{-}$.
\end{lemma}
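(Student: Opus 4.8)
The plan is to prove the final lemma by a direct characterization of the Weyl element $w$, building on the two constraints $I_w = w(\Delta_{res}(\D{M^H}))$ and $M_{I_w} = M(w)$. First I would unpack the meaning of $M(w) = M_{I_w}$: by definition $M(w)$ is cut out by those $\alpha \in \Delta_{res}(\D{G})$ with $w^{-1}\alpha \in R^+_{res}(\D{G})$, so the equality $M_{I_w} = M(w)$ says precisely that $w^{-1}\alpha \in R^-_{res}(\D{G})$ for every $\alpha \in \Delta_{res}(\D{G}) \setminus I_w$, i.e. $w$ sends all simple roots outside $I_w$ to negatives. Combined with the fact that $w$ maps $R^+_{res}(\D{M^H})$ into $R^+_{res}(\D{G})$ (because $w(\Delta_{res}(\D{M^H})) = I_w \subseteq \Delta_{res}(\D{G})$ and $w$ permutes $R^+_{res}(\D{M^H})$ onto $R^+_{res}(\D{M_{I_w}})$, using the previous lemma), this pins down $w$: it is the element that is ``as positive as possible'' on $\D{M^H}$ and ``as negative as possible'' everywhere else. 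That is exactly the property characterizing $w^G_- w^{M^H}_-$, the product of the longest element of $W_G$ with the longest element of $W_{M^H}$.

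The key steps, in order, would be: (1) verify $w^G_- w^{M^H}_- \in W_{G^\theta}^{\Gal{F}}$, which is the easy exercise already flagged in the text (both longest elements are $\D\theta$-fixed and $\Gal{F}$-fixed, and their product fixes $A^{\D{G},\D\theta}$ and $A^{\D{M^H}}$); (2) compute that $w^G_- w^{M^H}_-$ sends $R^+_{res}(\D{M^H})$ to $R^+_{res}(\D{M^H})$ — since $w^{M^H}_-$ flips $R^+_{res}(\D{M^H})$ to $R^-_{res}(\D{M^H})$ and then $w^G_-$ flips everything, the composite is positive on $\D{M^H}$ — while sending every root in $R^+_{res}(\D{G}) \setminus R^+_{res}(\D{M^H})$ to a negative root; hence for this element $I_{w^G_-w^{M^H}_-} = w^G_-w^{M^H}_-(\Delta_{res}(\D{M^H}))$ and $M_{I_w} = M(w)$ both hold; (3) conversely, show uniqueness: if $w$ satisfies the two hypotheses, then $(w^G_- w^{M^H}_-)^{-1} w$ maps $R^+_{res}(\D{M^H})$ into $R^+_{res}(\D{M^H})$ and also maps $R^+_{res}(\D{G}) \setminus R^+_{res}(\D{M^H})$ into $R^+_{res}(\D{G})$, so it sends $R^+_{res}(\D{G})$ into $R^+_{res}(\D{G})$, forcing it to be the identity. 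Step (3) is where one uses the standard fact that a Weyl group element preserving the positive system is trivial, applied in the restricted root system $R_{res}(\D{G})$ (equivalently in $W_{G^\theta} \cong W(\D{G}^1, (\mathcal{T}_G^{\D\theta})^0)$, as in Lemma~\ref{lemma: coset representative}).

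The main obstacle I anticipate is step (3), specifically the bookkeeping needed to show that $(w^G_- w^{M^H}_-)^{-1} w$ preserves \emph{all} of $R^+_{res}(\D{G})$, not just the two pieces $R^+_{res}(\D{M^H})$ and its complement separately. One has to be careful that ``positive on the Levi part'' and ``positive on the complement'' combine correctly, since a general root of $\D{G}$ is a nonnegative combination of simple roots straddling both $I_w$ and its complement; the argument must use that $w$ (hence the corrected element) sends simple roots outside $I_w$ to negatives but when composed with $w^G_- w^{M^H}_-$ these get corrected, and that $w$ restricted to $\D{M^H}$ is literally $w^{M^H}$ composed appropriately. I would handle this by writing $w = w^G_- w^{M^H}_- u$ with $u \in W_{G^\theta}^{\Gal{F}}$ and checking $u(\Delta_{res}(\D{G})) \subseteq R^+_{res}(\D{G})$ root by root over the two classes of simple roots, then invoking Lemma~\ref{lemma: coset representative} (with $M$ the trivial Levi) to conclude $u = 1$. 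Once this lemma is in hand, plugging it back gives $LHS.\eqref{eq: identity A} = (-1)^{r_{res}(M^H)} w^G_- w^{M^H}_-$, completing the proof of \eqref{eq: identity A}, and the analogous but easier computation inside $\D{H}$ yields \eqref{eq: identity B}; combining \eqref{eq: algebraic identity}, \eqref{eq: identity A}, \eqref{eq: identity B} then gives \eqref{eq: combinatorial identity 1} and hence Proposition~\ref{prop: combinatorial identity}.
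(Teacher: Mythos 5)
Your step (3) has a genuine gap. You propose to set $u := (w^{G}_{-}w^{M^{H}}_{-})^{-1}w = w^{M^{H}}_{-}w^{G}_{-}w$ and show that $u$ preserves $R^{+}_{res}(\D{G})$, checking on simple roots. But the hypotheses of the lemma constrain $w^{-1}$, not $w$: you know $w^{-1}(I_{w}) = \Delta_{res}(\D{M^{H}})$ and $w^{-1}(\Delta_{res}(\D{G}) \setminus I_{w}) \subseteq R^{-}_{res}(\D{G})$, but you do \emph{not} control $w(\Delta_{res}(\D{G}) \setminus \Delta_{res}(\D{M^{H}}))$. Concretely, your claim that $u$ maps $R^{+}_{res}(\D{M^{H}})$ into $R^{+}_{res}(\D{M^{H}})$ requires $w^{G}_{-}\bigl(R^{+}_{res}(\D{M}_{I_{w}})\bigr) \subseteq R^{-}_{res}(\D{M^{H}})$, i.e.\ that the Dynkin involution $\iota$ induced by $w^{G}_{-}$ sends $I_{w}$ onto $\Delta_{res}(\D{M^{H}})$. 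That fact is not among the hypotheses; it is a \emph{consequence} of the lemma you are trying to prove, so invoking it makes the argument circular. The same problem recurs on $\Delta_{res}(\D{G}) \setminus \Delta_{res}(\D{M^{H}})$, where you have no handle on the image under $w$ at all. You correctly flagged this as the obstacle, but the patch you sketch (``check $u$ root by root'') does not resolve it: the difficulty is not bookkeeping but that the hypotheses simply do not describe $w$ on those simple roots.

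The paper sidesteps this by working with $v := w^{M^{H}}_{-}w^{-1}$ rather than $u$, and showing $v(\Delta_{res}(\D{G})) \subseteq R^{-}_{res}(\D{G})$, which characterizes $v$ as $w^{G}_{-}$ and hence gives $w = w^{G}_{-}w^{M^{H}}_{-}$. This choice is forced by the data: the hypotheses describe exactly $w^{-1}(\Delta_{res}(\D{G}))$. On $I_{w}$ one gets $w^{-1}(I_{w}) = \Delta_{res}(\D{M^{H}})$, and $w^{M^{H}}_{-}$ sends this to negatives. On $\Delta_{res}(\D{G}) \setminus I_{w}$, $M_{I_{w}} = M(w)$ gives $w^{-1}(\Delta_{res}(\D{G}) \setminus I_{w}) \subseteq R^{-}_{res}(\D{G})$; the crucial extra observation (which your sketch omits) is that this image avoids $R_{res}(\D{M^{H}})$ --- because $w^{-1}$ carries $R_{res}(\D{M}_{I_{w}})$ bijectively onto $R_{res}(\D{M^{H}})$, so roots outside $R_{res}(\D{M}_{I_{w}})$ go outside $R_{res}(\D{M^{H}})$ --- and $w^{M^{H}}_{-}$ preserves $R^{-}_{res}(\D{G}) \setminus R_{res}(\D{M^{H}})$. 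Both classes of simple roots then land in $R^{-}_{res}(\D{G})$ under $v$, giving $v = w^{G}_{-}$ with no reference to the Dynkin involution or to $w$'s action on the complementary simple roots. I'd recommend you reorganize step (3) around $w^{M^{H}}_{-}w^{-1}$ instead of $u$; your steps (1) and (2) are fine sanity checks but are not logically needed.
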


\begin{proof}
Since $(w^{G}_{-})^{2} = (w^{M^{H}}_{-})^{2} = 1$, it is equivalent to show $w^{M^{H}}_{-}w^{-1} = w^{G}_{-}$, i.e., 
\[
w^{M^{H}}_{-}w^{-1}(\Delta_{res}(\D{G})) \subseteq R^{-}_{res}(\D{G}).
\]
Since $w^{-1}(I_{w}) = \Delta_{res}(\D{M^{H}})$, $w^{M^{H}}_{-}w^{-1}(I_{w}) \subseteq R^{-}_{res}(\D{G})$. Since $M_{I_{w}} = M(w)$, 
\[
w^{-1}(\Delta_{res}(\D{G}) - I_{w}) \subseteq R^{-}_{res}(\D{G}).
\]
By $w^{-1}(I_{w}) = \Delta_{res}(\D{M^{H}})$ again, we have $w^{-1}(\Delta_{res}(\D{G}) - I_{w}) \cap R_{res}(\D{M^{H}}) = \emptyset$. Hence
\[
w^{M^{H}}_{-}w^{-1}(\Delta_{res}(\D{G}) - I_{w}) \subseteq R^{-}_{res}(\D{G}).
\]
This finishes the proof.
\end{proof}

Next let us consider \eqref{eq: identity B}. Recall the left hand side of \eqref{eq: identity B} is equal to

\begin{align*}
LHS. \eqref{eq: identity B} = \sum_{P' \in \mathcal{P}^{H}} (-1)^{r(M')} \sum_{\substack{w \in D_{M'} \\ w(A^{\D{H}}) = A^{\D{H}}}} \, w 
\end{align*}
For $w \in W_{G^{\theta}}$ satisfying $w(A^{\D{H}}) = A^{\D{H}}$, we have for any $\sigma \in \Gal{F}$ and $\alpha \in R(\D{H})$, $w^{-1}(\alpha)$ and $w^{-1}(\sigma_{H}(\alpha))$ are both positive or negative, where $\sigma_{H}$ is the Galois action in $\L{H}$. This is because
\[
w^{-1}(\alpha)|_{A^{\D{H}}} = w^{-1}(\alpha|_{A^{\D{H}}}) = w^{-1}(\sigma_{H}(\alpha)|_{A^{\D{H}}}) = w^{-1}(\sigma_{H}(\alpha))|_{A^{\D{H}}} \neq 0.
\]
So the subset of $\alpha \in \Delta(\D{H})$ satisfying $w^{-1}\alpha \in R^{+}_{res}(\D{G})$ determines a standard Levi subgroup $M'(w)$ of $H$. Then

\begin{align*}
LHS. \eqref{eq: identity B} & = \sum_{\substack{w \in W_{G^{\theta}} \\ w(A^{\D{H}}) = A^{\D{H}}}} \big( \sum_{\substack{P' \in \mathcal{P}^{H} \\ M' \subseteq M'(w)}} (-1)^{r(M')} \big) \, w  
= \sum_{\substack{w \in W_{G^{\theta}} \\ w(A^{\D{H}}) = A^{\D{H}} \\ w^{-1}(\Delta(\D{H})) \subseteq R^{-}_{res}(\D{G})}} \, w
\end{align*}
On the other hand, the right hand side of \eqref{eq: identity B} is equal to

\begin{align*}
RHS. \eqref{eq: identity B} = [\xi_{H}w^{G}_{-}]_{H} \cdot w^{M^{H}}_{-}
\end{align*}
One can check easily that $D_{H}w^{G}_{-}$ consists of $w \in W_{G^{\theta}}$ such that $w^{-1}(\Delta(\D{H})) \subseteq R^{-}_{res}(\D{G})$. So

\begin{align*}
RHS. \eqref{eq: identity B} = \big(\sum_{\substack{w \in W_{G^{\theta}} \\ w(A^{\D{H}}) = A^{\D{H}} \\ w^{-1}(\Delta(\D{H})) \subseteq R^{-}_{res}(\D{G})}} \, w \big) w^{M^{H}}_{-}  = \sum_{\substack{w \in W_{G^{\theta}} \\ w(A^{\D{H}}) = A^{\D{H}} \\ w^{-1}(\Delta(\D{H})) \subseteq R^{-}_{res}(\D{G})}} \, w 
\end{align*}
The last equality is due to the fact that for $w \in W_{G^{\theta}}$ satisfying $w(A^{\D{H}}) = A^{\D{H}}$, 
\[
w^{-1}(\Delta(\D{H})) \subseteq R^{-}_{res}(\D{G})
\] 
if and only if 
\[
(ww^{M^{H}}_{-})^{-1} (\Delta(\D{H})) \subseteq R^{-}_{res}(\D{G}).
\] 
One can show this by restricting the roots to $A^{\D{H}}$. Then the proof is completed by comparing the last expressions of RHS. \eqref{eq: identity B} and LHS. \eqref{eq: identity B}.

{\bf Step 4:} We will establish \eqref{eq: combinatorial identity 1} by using the identities \eqref{eq: identity A} and \eqref{eq: identity B}. First, we multiply \eqref{eq: identity A} by $\xi_{H}$, and compare it with \eqref{eq: identity B},
\begin{align*}
\sum_{P \in \mathcal{P}^{\theta}} (-1)^{r_{res}(M)} [\xi_{H}\lif{\xi}_{M^{\theta}}]_{H} = (-1)^{r_{res}(M^{H})} [\xi_{H}w^{G}_{-}w^{M^{H}}_{-}]_{H} = (-1)^{r_{res}(M^{H})} \sum_{P' \in \mathcal{P}^{H}} (-1)^{r(M')}[\xi_{M'}]_{H}
\end{align*}
Then we can use \eqref{eq: algebraic identity} to expand the left hand side,
\begin{align*}
LHS. = \sum_{P \in \mathcal{P}^{\theta}} (-1)^{r_{res}(M)} \sum_{P' \in \mathcal{P}^{H}} a_{M', H, M^{\theta}} [\xi_{M'}]_{H} = \sum_{P' \in \mathcal{P}^{H}} \big( \sum_{P \in \mathcal{P}^{\theta}} (-1)^{r_{res}(M)} a_{M', H, M^{\theta}}  \big) \, [\xi_{M'}]_{H}
\end{align*}
By the linear independence of $[\xi_{M'}]_{H}$, we get 
\[
\sum_{P \in \mathcal{P}^{\theta}} (-1)^{r_{res}(M)} a_{M', H, M^{\theta}} = (-1)^{r_{res}(M^{H}) + r(M')}
\]
for any $P' \in \mathcal{P}^{H}$.

\subsection{Generalized Aubert involution}

We would like to generalize the diagram ~\eqref{diag: twisted compatible with Aubert dual general} to \eqref{diag: twisted compatible with Aubert dual}, \eqref{diag: compatible with Aubert dual} and \eqref{diag: twisted even orthogonal compatible with Aubert dual}. Note we will only show the commutativity of \eqref{diag: twisted compatible with Aubert dual} and \eqref{diag: compatible with Aubert dual} when restricting to distributions associated with elementary parameters. In fact, they are not commutative in general. Let $G$ be a quasisplit symplectic or special orthogonal group. We fix a positive integer $X_{0}$ and write $x_{0} = (X_{0} - 1) / 2$. We also fix a self-dual irreducible unitary supercuspidal representation $\rho$ of $GL(d_{\rho})$.
Let $\mathcal{P}_{d_{\rho}}$ be the set of standard parabolic subgroups $P$ of $G$ whose Levi component $M$ is isomorphic to 
\begin{align*}
GL(a_{1}d_{\rho}) \times \cdots \times GL(a_{l}d_{\rho}) \times G(n - \sum_{i \in [1,l]}a_{i}d_{\rho}).
\end{align*}
Then we can define for $\r \in \Rep(G)$,
\[
inv_{< X_{0}}(\r) := \sum_{P \in \mathcal{P}_{d_{\rho}}} (-1)^{dim A_{M}} \Ind^{G}_{P}(\Jac_{P} (\r)_{< x_{0}}).
\]
It is clear that
\[
[inv_{< X_{0}}(\r)] = \bar{inv}_{< X_{0}}([\r]).
\]
So \eqref{diag: twisted compatible with Aubert dual} is equivalent to
\begin{align*}
\xymatrix{\D{SI}(G) \ar[d]_{inv_{< X_{0}}}  \ar[r] & \D{I}(N^{\theta}) \ar[d]^{inv^{\theta_{N}}_{< X_{0}}} \\
                \D{SI}(G) \ar[r]  & \D{I}(N^{\theta}). }
\end{align*}
To prove this, we can follow the argument for \eqref{diag: twisted compatible with Aubert dual general}. For $P \in \mathcal{P}^{\theta_{N}}_{d_{\rho}}$, we specialize the diagram~\eqref{diag: compatible with twisted endoscopic transfer general} in our case: 
\begin{align}
\label{diag: truncated Jacquet compatible with twisted endoscopic transfer general} 
\xymatrix{\D{SI}(G) \ar[d]_{\+_{w} (\Jac_{P'_{w}})_{< x_{0}}}  \ar[r] & \D{I}(N^{\theta}) \ar[d]^{(\Jac_{P})_{< x_{0}}} \\
                \bigoplus_{w} \D{SI}(M'_{w}) \ar[r]  & \D{I}(M^{\theta}), }
\end{align}
where the sum is restricted to those $w$ satisfying $P'_{w} \in \mathcal{P}_{d_{\rho}}$. Unlike \eqref{diag: compatible with twisted endoscopic transfer general}, the above diagram may not commute in certain cases when we apply it to distributions not associated with elementary parameters. This is the reason that we want to restrict \eqref{diag: twisted compatible with Aubert dual} (similarly \eqref{diag: compatible with Aubert dual}) to distributions associated with elementary parameters.
By \eqref{diag: truncated Jacquet compatible with twisted endoscopic transfer general}, it suffices to show for any $P' \in \mathcal{P}_{d_{\rho}}$,
\begin{align}
\label{eq: combinatorial identity GL(N)}
\sum_{P \in \mathcal{P}^{\theta_{N}}_{d_{\rho}}} (-1)^{dim(A_{P})_{\theta}} a_{M', G, M} = (-1)^{dim A_{P'}}.
\end{align}
By Proposition~\ref{prop: combinatorial identity}, we have
\begin{align*}
\sum_{P \in \mathcal{P}^{\theta_{N}}} (-1)^{dim(A_{P})_{\theta}} a_{M', G, M} = (-1)^{dim A_{P'}}.
\end{align*}
Therefore \eqref{eq: combinatorial identity GL(N)} follows from the simple fact that $a_{M', G, M} = 0$ when $P \notin \mathcal{P}^{\theta_{N}}_{d_{\rho}}$.

The case of \eqref{diag: compatible with Aubert dual} is similar. For \eqref{diag: twisted even orthogonal compatible with Aubert dual}, let $\mathcal{P}^{\theta_{0}}_{d_{\rho}}$ be the set of $\theta_{0}$-stable standard parabolic subgroups in $\mathcal{P}_{d_{\rho}}$. Then we can define for $\r^{\Sigma_{0}} \in \Rep(G^{\Sigma_{0}})$,
\[
inv^{\theta_{0}}_{< X_{0}}(\r^{\Sigma_{0}}) := \sum_{P \in \mathcal{P}^{\theta_{0}}_{d_{\rho}}} (-1)^{dim (A_{M})_{\theta_{0}}} \Ind^{G^{\Sigma_{0}}}_{P^{\Sigma_{0}}}(\Jac_{P^{\Sigma_{0}}} (\r^{\Sigma_{0}})_{< x_{0}}).
\] 
For $P \in \mathcal{P}^{\theta_{0}}_{d_{\rho}}$ and $G(n - \sum_{i \in [1,l]}a_{i}d_{\rho}) \neq SO(2)$, it is clear that $(A_{M})_{\theta_{0}} = A_{M}$ and $\Jac_{P^{\Sigma_{0}}} = \widetilde{\Jac}_{P^{\Sigma_{0}}}$. If $G(n - \sum_{i \in [1,l]}a_{i}d_{\rho}) = SO(2)$, then $\dim (A_{M})_{\theta_{0}} = \dim(A_{M}) - 1$, but the effect of $\widetilde{\Jac}_{P^{\Sigma_{0}}}$ in taking the twisted character also differs from $\Jac_{P^{\Sigma_{0}}}$ by a negative sign. So we have
\[
f_{G}(inv^{\theta_{0}}_{< X_{0}}(\r^{\Sigma_{0}})) = f_{G}(inv_{< X_{0}}(\r^{\Sigma_{0}})), \,\,\,\,\,\,\,\,\,\,\,\,\, f \in C^{\infty}_{c}(G \rtimes \theta_{0}).
\]
As a result, \eqref{diag: twisted even orthogonal compatible with Aubert dual} is equivalent to
\begin{align*}
\xymatrix{\D{SI}(H) \ar[d]_{inv^{H}_{< X_{0}}}  \ar[r] & \D{I}(G^{\theta_{0}}) \ar[d]^{inv^{\theta_{0}}_{< X_{0}}} \\
                \D{SI}(H) \ar[r]  & \D{I}(G^{\theta_{0}}). }
\end{align*}
The rest of the argument is similar to \eqref{diag: twisted compatible with Aubert dual}.

\bibliographystyle{amsalpha}

\bibliography{reps}

\end{document}